\def\thm@space@setup{%
 \thm@preskip=\parskip \thm@postskip=0pt
}
\def\th@remark{%
  \thm@headfont{\itshape}%
  \normalfont 
  \thm@preskip\parskip \thm@postskip=0pt
}
\renewcommand{\PrintDOI}[1]{%
  \href{http://dx.doi.org/#1}{{\tt DOI:#1}}%
}
\renewcommand{\eprint}[1]{#1}
\numberwithin{equation}{section}
\DeclareSymbolFontAlphabet{\mathbb}{AMSb}	
\DeclareSymbolFontAlphabet{\mathbbl}{bbold}	
\newtheorem{Theorem}{Theorem}[section]
\newtheorem*{Theorem*}{Theorem}
\newtheorem{Def}[Theorem]{Definition}
\newtheorem*{Def*}{Def}
\newtheorem{Lem}[Theorem]{Lemma}
\newtheorem{Prop}[Theorem]{Proposition}
\newtheorem{Cor}[Theorem]{Corollary}
\newtheorem{Rem}[Theorem]{Remark}
\newtheorem{Exa}[Theorem]{Example}
\newcommand\bp{\begin{proof}}
\newcommand\ep{\end{proof}}
\mathchardef\mhyph="2D
\DeclareMathOperator{\id}{\mathrm{id}}
\DeclareMathOperator{\Mor}{\mathrm{Mor}}
\DeclareMathOperator{\Tr}{\mathrm{Tr}}
\DeclareMathOperator{\Irr}{\mathrm{Irr}}
\DeclareMathOperator{\alg}{\mathrm{alg}}
\newcommand{\op}{\mathrm{op}}
\newcommand{\msN}{\mathscr{N}}
\newcommand{\G}{\mathbb{G}}
\newcommand{\ovot}{\bar{\otimes}}
\begin{document}

\title{Equivariant injectivity of crossed products}
\author{Joeri De Ro}
\address{Vrije Universiteit Brussel}
\email{joeri.ludo.de.ro@vub.be}
\maketitle

\begin{abstract} We introduce the notion of a $\G$-operator space $(X, \alpha)$, which consists of an action $\alpha: X \curvearrowleft \G$ of a locally compact quantum group $\G$ on an operator space $X$, and we make a study of the notion of $\G$-equivariant injectivity for such an operator space. Given a $\G$-operator space $(X, \alpha)$, we define a natural associated crossed product operator space $X\rtimes_\alpha \G$, which has canonical actions $X\rtimes_\alpha \G \curvearrowleft \G$ (the adjoint action) and $X\rtimes_\alpha \G\curvearrowleft \check{\G}$ (the dual action) where $\check{\G}$ is the dual quantum group. We then show that if $X$ is a $\G$-operator system, then $X\rtimes_\alpha \G$ is $\G$-injective if and only if $X\rtimes_\alpha \G$ is injective and $\G$ is amenable, and that (under a mild assumption) $X\rtimes_\alpha \G$ is $\check{\G}$-injective if and only if $X$ is $\G$-injective. We discuss how these results generalise and unify several recent results from the literature, and we give new applications.
\end{abstract}

\section{Introduction}

In \cite{Ham91}, Hamana introduced the notion of an $M$-comodule, where $(M, \Delta)$ is a Hopf-von Neumann algebra, as an operator space $X$ together with a completely isometric map $\alpha: X \to X \ovot M$ satisfying the coaction property $(\alpha\otimes \id)\alpha = (\id \otimes \Delta)\alpha$, where $\ovot$ denotes the Fubini tensor product of operator spaces. Two main examples are of particular interest, namely those arising from a locally compact group $G$: 
\begin{itemize}
    \item $M= L^\infty(G)$ with $\Delta(f)(s,t)= f(st)$ for $f\in L^\infty(G)$ and $s,t \in G$.
    \item $M= \mathscr{R}(G)$, the right group von Neumann algebra, with $\check{\Delta}(\rho_g) = \rho_g\otimes \rho_g$ for $g\in G$ and where $\rho: G \to U(B(L^2(G)))$ is the right regular representation of $G$.
\end{itemize}

If $\G$ is a locally compact quantum group with associated Hopf von Neumann algebra $(L^\infty(\G), \Delta)$ (see Section 2 for notations and conventions), it is then natural to define a $\G$-operator space $X$ as an $L^\infty(\G)$-comodule. In particular, keeping in mind that $(\mathscr{R}(G), \check{\Delta})$ can be seen as the function algebra of the dual locally compact quantum group $\check{G}$, the two examples of interest mentioned above fall into this category. Given a $\G$-operator space $(X, \alpha)$, we will write $X^\alpha = \{x\in X: \alpha(x)= x \otimes 1\}$ for its space of $\G$-fixed points. A $\G$-operator system $(X, \alpha)$ is a $\G$-operator space $(X, \alpha)$ such that $X$ is an operator system and $\alpha$ is unital.

We call a $\G$-operator space $X$ $\G$-injective if for every two $\G$-operator spaces $Y,Z$, every $\G$-equivariant complete contraction $\phi: Y \to X$ and every $\G$-equivariant complete isometry $\iota: Y \to Z$, there is a $\G$-equivariant complete contraction $\Phi: Z \to X$ such that $\Phi\circ \iota = \phi$. 

The main goal of this paper is to study the notion of $\G$-injectivity. In particular, we prove the following result that relates different notions of $\G$-injectivity:

\textbf{Proposition \ref{injective}.}\textit{
    Let $(X, \alpha)$ be a $\G$-operator space. The following statements are equivalent:
    \begin{enumerate}
    \item $X$ is injective as an $L^1(\G)$-module for the action $\omega \rhd x:= (\id \otimes \omega)\alpha(x)$ with $\omega \in L^1(\G)$ and $x\in X$.
        \item $(X, \alpha)$ is $\G$-injective.
        \item $X$ is injective (as an operator space) and $(X, \alpha)$ is $\G$-amenable.
    \end{enumerate}}

Here, we call a $\G$-operator space $(X, \alpha)$ $\G$-amenable if there exists a $\G$-equivariant completely contractive map $E: (X \ovot L^\infty(\G), \id \otimes \Delta)\to (X, \alpha)$ such that $E\circ \alpha = \id_X$ (following the terminology of \cite[Definition 6.6]{DD24} for $\G$-dynamical von Neumann algebras). The map $E$ should be thought of as a $\G$-averaging procedure for the action $\alpha$. The result also clarifies the connection with the notion of $L^1(\G)$-module injectivity as discussed in e.g.\ \cite[Section 2]{Cr17a}. In fact, the notion of $\G$-equivariant injectivity (in one form or another) has been studied by many people, see e.g.\ \cite{AD79, AD82, Ham82, Ham85, Ham91, BEW20, MP22} for classical groups and \cite{CN16, Cr17a, Cr17b, Moa18, Cr21, BC21, KKSV22, HHN22, HY22, DH24, AK24} for quantum groups. 

If $(X, \alpha)$ is a $\G$-operator space, we generalise Hamana's definition of the Fubini crossed product \cite[Definition 5.4]{Ham91} and we define the associated crossed product operator space $X\rtimes_\alpha \G$. We have canonical actions $\id \otimes \Delta_r: X\rtimes_\alpha \G \curvearrowleft \G$ and $\id \otimes \check{\Delta}_r: X\rtimes_\alpha \G \curvearrowleft \check{\G}$, so it makes sense to consider the iterated crossed products
$$(X\rtimes_\alpha \G)\rtimes_{\id \otimes \Delta_r} \G, \quad (X\rtimes_\alpha \G)\rtimes_{\id \otimes \check{\Delta}_r}\check{\G}.$$
We show that we always have $(X\rtimes_\alpha \G)\rtimes_{\id \otimes \Delta_r} \G \cong (X\rtimes_\alpha \G)\ovot L^\infty(\check{\G})$ and that under a technical assumption on the $\G$-operator space $X$, we have the Takesaki-Takai identification $(X\rtimes_\alpha\G)\rtimes_{\id \otimes \check{\Delta}_r} \check{\G} \cong X \ovot B(L^2(\G))$ (cfr. \cite[Proposition 5.7]{Ham91}).

We will be especially interested in how the notion of equivariant injectivity behaves with respect to the crossed product construction. We will prove the following general result:

\textbf{Theorem \ref{main result} + Theorem \ref{mainresult2}.}\textit{
   Let $(X, \alpha)$ be a $\G$-operator system. Then
    \begin{enumerate}
        \item $X$ is $\G$-injective if and only if $X\rtimes_\alpha \G$ is $\check{\G}$-injective and $\alpha(X)= (X\rtimes_\alpha \G)^{\id \otimes \check{\Delta}_r}$.
        \item $X\rtimes_\alpha \G$ is $\G$-injective if and only if $X\rtimes_\alpha \G$ is injective (as an operator system) and $\G$ is amenable.
    \end{enumerate}}

The condition $\alpha(X)= (X\rtimes_\alpha \G)^{\id \otimes \check{\Delta}_r}$ is mild, and is for example automatically satisfied for $\G$-dynamical von Neumann algebras.

Similar results and its consequences have appeared in many forms or particular cases in the literature:

\begin{itemize}
    \item In \cite[Theorem 4.3]{DH24}, $(1)$ is proven in the case where $\G$ is a discrete quantum group. In fact, this paper can be seen as a continuation on the work done in \cite{DH24}.
    \item In \cite[Theorem 5.2]{BC21}, $(1)$ was proven in the case that $X$ is an injective von Neumann algebra, $\alpha: X \to X \ovot L^\infty(\G)$ a unital, normal, isometric $*$-morphism and where $\G$ is a locally compact group. In \cite[Theorem 3.5]{MP22}, the injectivity assumption on $X$ in \cite[Theorem 5.2]{BC21} was shown to be unnecessary.
    \item Applying $(1)$ with $X= \mathbb{C}$, we find back the statement that $\G$ is an amenable locally compact quantum group if and only if $L^\infty(\check{\G})$ is $\check{\G}$-injective, which is \cite[Theorem 5.2]{Cr17a}.
    \item Applying $(2)$ with $X= L^\infty(\G)$ and $\alpha = \Delta$, we find that $B(L^2(\G))$ is $\G$-injective if and only if $\G$ is amenable. In particular, if $\G$ is compact, then $B(L^2(\G))$ is $\G$-injective, which has appeared in \cite[Lemma 2.10]{HHN22} and \cite[Lemma 3.12]{DH24}. Compare also with \cite[Theorem 5.5]{CN16}.
    \item Injectivity of crossed products with respect to locally compact groups has been investigated in the von Neumann algebra setting in \cite{AD79, AD82} and in the operator space setting in \cite[Proposition 5.8]{Ham91}. 
\end{itemize}

From Proposition \ref{injective}, it is clear that a good understanding of the notion of an amenable action is crucial for the study of equivariant injectivity. Therefore, we spend some effort investigating (inner) amenable actions. For instance, as an application of general duality results for crossed products (see \cite[Theorem 6.12]{DD24}, Theorem \ref{main result}, Theorem \ref{mainresult2} and Corollary \ref{innercor}), we give a clean and conceptual proof of the following dynamical characterisation of (inner) amenability of a locally compact quantum group:

\textbf{Theorem \ref{amenable}.}\textit{ The following statements are equivalent:
\begin{itemize}
    \item[(a)] $\G$ is inner amenable, i.e. the trivial action $\tau: \mathbb{C}\curvearrowleft\G$ is inner amenable.
    \item[(b)] The action $\check{\Delta}: L^\infty(\check{\G})\curvearrowleft \check{\G}$ is amenable.
    \item[(c)] The action $\Delta_r: L^\infty(\check{\G})\curvearrowleft \G$ is inner amenable.
    \item[(d)] The action $\Delta_r: B(L^2(\G))\curvearrowleft \G$ is inner amenable.
\end{itemize} Moreover, all the following statements are equivalent:
    \begin{enumerate}
        \item $\G$ is amenable, i.e. the trivial action $\mathbb{C}\curvearrowleft \mathbb{G}$ is amenable.
          \item The action $\Delta_r: L^\infty(\check{\G})\curvearrowleft \G$ is amenable.
        \item The action $\Delta_r: B(L^2(\G))\curvearrowleft \G$ is amenable.
        \item The action $\check{\Delta}: L^\infty(\check{\G})\curvearrowleft \check{\G}$ is inner amenable.
        \item $(\mathbb{C}, \tau)$ is $\G$-injective.
         \item $(L^\infty(\check{\G}), \check{\Delta})$ is $\check{\G}$-injective.
         \item $(B(L^2(\G)), \Delta_r)$ is $\G$-injective.
        \item $L^\infty(\check{\G})$ is injective and $\G$ is inner amenable.
    \end{enumerate}}

Here, we call a $\G$-dynamical von Neumann algebra $(M, \alpha)$ inner amenable if there exists a $\G$-equivariant unital completely positive map $E: (M\rtimes_\alpha \G, \id \otimes \Delta_r)\to (M, \alpha)$ such that $E\circ \alpha = \id_M$ \cite[Definition 6.6]{DD24}. Some implications in this result are already known, e.g. $(1)\iff (6)$ is \cite[Theorem 5.2]{Cr17a} and $(1)\iff (8)$ is \cite[Corollary 3.9]{Cr19}, but our proofs follow a different strategy.

We also give an application of our results to the non-commutative Poisson boundary \cite{Izu02, KNR13}:

\textbf{Proposition \ref{Poisson}.}\textit{
If $\mu\in C_u(\G)^*$ is a state, consider the associated non-commutative Poisson boundary $\mathcal{H}_\mu$. 
    \begin{enumerate}
        \item[(a)] If $\check{\G}$ is inner amenable, then the action $\Delta_\mu: \mathcal{H}_\mu\curvearrowleft\G$ is amenable.
        \item[(b)] If $\check{\G}$ is amenable, then $\Delta_\mu: \mathcal{H}_\mu\curvearrowleft\G$ is $\G$-injective. Consequently, $\mathcal{H}_\mu\rtimes_{\Delta_\mu} \G$ is $\check{\G}$-injective.
    \end{enumerate}}

If $\G$ is a discrete quantum group, this follows from the results in \cite{Moa18} or \cite{DH24}. 

We also prove compatibility of equivariant injective envelopes (of operator systems) with crossed products for discrete quantum groups and compact quantum groups:

\textbf{Proposition \ref{injective envelope crossed product} + Proposition \ref{injective envelope crossed product2}.}\textit{ Let $\G$ be a discrete or a compact quantum group and $(X, \alpha)$ a $\G$-operator system. Then 
    $$I_{\check{\G}}^1(X\rtimes_\alpha \G)= I_\G^1(X)\rtimes \G$$
    as $\check{\G}$-operator systems.}

  Using the terminology of \cite{DH24}, Proposition \ref{injective envelope crossed product} can be seen as the analoguous result of \cite[Theorem 5.5]{DH24} but for $\G$-$W^*$-operator systems instead of $\G$-$C^*$-operator systems.

We also prove the following permanence properties:

   \textbf{Theorem \ref{main application} + Proposition \ref{subgroups}.}\textit{ Let $(M, \alpha)$ be a $\G$-dynamical von Neumann algebra, $\mathbb{H}$ a Vaes-closed quantum subgroup of $\G$, and $\alpha_{\mathbb{H}}: M \curvearrowleft \mathbb{H}$ the restriction of the action. Then we have:
    \begin{itemize}
        \item If $(M, \alpha)$ is $\G$-injective, then $(M, \alpha_{\mathbb{H}})$ is $\mathbb{H}$-injective.
        \item If $(M, \alpha)$ is inner $\G$-amenable, then $(M, \alpha_{\mathbb{H}})$ is inner $\mathbb{H}$-amenable.
    \end{itemize}}  
  \section{Preliminaries}

 All vector spaces and algebras are defined over the field $\mathbb{C}$. If $S$ is a subset of some normed algebra $A$, we write $[S]= \overline{\operatorname{span}(S)}^{\|\cdot\|}$. 

\subsection{Operator spaces and operator systems}

For the theory of operator spaces and operator systems, we refer the reader to the standard reference \cite{ER00}. All operator spaces involved are assumed to be complete. Given an inclusion $X\subseteq Y$ of operator spaces, a completely contractive map $\phi: Y \to X$ is called \emph{conditional expectation} if $\phi(x)= x$ for all $x\in X$.
Several kinds of tensor products will play an important role in this paper. We employ the following notations:

\begin{itemize}
    \item $\odot$ will denote the algebraic tensor product of vector spaces.
        \item $\hat{\otimes}$ will denote the \emph{projective tensor product} of operator spaces \cite[Chapter 7]{ER00}.
    \item $\otimes$ will denote the \emph{injective (or minimal) tensor product} of operator spaces \cite[Chapter 8]{ER00}.
    \item $\ovot$ will denote the \emph{Fubini tensor product} of operator spaces. 
\end{itemize}

The Fubini tensor product is not as well-known as the other operator space tensor products. Therefore, we give a brief overview of the relevant facts that we will be needing. The following discussion is taken from \cite[Section 2.1]{DH24}, with the difference that we will be needing operator spaces and completely bounded maps instead of operator systems and unital completely positive maps. Therefore, the discussion was slightly modified accordingly. For proofs of the facts that follow, we refer the reader to \cite[I, Section 3]{Ham82}. We note that Hamana's proofs and statements are given in the context of operator systems, but the proofs are easily adapted to the more general operator space setting. See also \cite[Section 1]{Ham91} for more information.

Let $X\subseteq B(\mathcal{H}),Y\subseteq B(\mathcal{K})$ be concretely represented operator spaces. We define the Fubini tensor product
$$X \ovot Y = \{z \in B(\mathcal{H}\otimes \mathcal{K}): (\mu \ovot \id)(z) \in Y \text{\ and }(\id \ovot \nu)(z)\in X \text{\ for\ all } \mu \in B(\mathcal{H})_*, \nu\in B(\mathcal{K})_*\}$$
which is again an operator space. The Fubini tensor product obeys obvious commutativity, associativity and distributivity (with respect to $\ell^\infty$-direct sums of operator spaces) properties. If either $X$ or $Y$ is finite-dimensional, then $X\ovot Y$ coincides with the algebraic tensor product $X\odot Y$. If $X,Y$ are both von Neumann algebras, then the Fubini tensor product is simply the von Neumann tensor product.

 \begin{Prop}\cite[I, Lemma 3.5]{Ham82}
     Assume that $X_j\subseteq B(\mathcal{H}_j), j=1,2$ are operator spaces and $\phi: X_1\to X_2$ is a completely bounded map. Fix an orthonormal basis $\{e_i\}_{i\in I}$ for $\mathcal{K}$ and let $\{E_{ij}\}_{i,j\in I}\subseteq B(\mathcal{K})$ be the associated matrix units. The assignment
     $$\phi\otimes \id: X_1\ovot B(\mathcal{K})\to X_2\ovot B(\mathcal{K}): \sum_{(i,j)} x_{ij}\otimes E_{ij}\mapsto \sum_{(i,j)} \phi(x_{ij})\otimes E_{ij}$$
     (with the sums converging in the strong topology) is a well-defined completely bounded map with $\|\phi\otimes \id\|_{\operatorname{cb}}= \|\phi\|_{\operatorname{cb}}.$ Moreover, it satisfies the following properties:
     \begin{enumerate}
         \item $\phi\otimes \id$ extends $\phi\odot \id: X_1\odot B(\mathcal{K})\to X_2\odot B(\mathcal{K})$.
         \item $(\phi\otimes \id)((1\otimes a)z(1\otimes b)) = (1\otimes a)(\phi\otimes \id)(z)(1\otimes b)$ for all $a,b\in B(\mathcal{K})$ and all $z\in X_1 \ovot B(\mathcal{K})$.
         \item $(\id \otimes \omega)(\phi\otimes \id)(z) = \phi((\id \otimes \omega)(z))$ for all $z\in X_1\ovot B(\mathcal{K})$ and all $\omega \in B(\mathcal{K})_*$.
     \end{enumerate}
    The properties $(1)$ and $(2)$ determine $\phi\otimes \id$ uniquely in the sense that if $\psi: X_1 \ovot B(\mathcal{K})\to X_2\ovot B(\mathcal{K})$ is a linear map satisfying $(1)$ and $(2)$, then $\psi= \phi\otimes \id$.
 \end{Prop}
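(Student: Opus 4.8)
The plan is to build $\phi\otimes\id$ entrywise and to let the complete boundedness of $\phi$ supply the one nontrivial analytic fact, namely that the result is a genuine bounded operator lying in the Fubini tensor product. Using the chosen basis, identify $\mathcal{H}_j\otimes\mathcal{K}$ with $\bigoplus_{i\in I}\mathcal{H}_j$, so that an operator $T\in B(\mathcal{H}_j\otimes\mathcal{K})$ is recorded by its matrix $[T_{kl}]$ with $T_{kl}=(\id\otimes\omega_{kl})(T)\in B(\mathcal{H}_j)$, where $\omega_{kl}\in B(\mathcal{K})_*$ is the matrix-coefficient functional dual to $E_{kl}$. For $z\in X_1\ovot B(\mathcal{K})$ the defining property of the Fubini tensor product gives $z_{kl}=(\id\otimes\omega_{kl})(z)\in X_1$, so each $\phi(z_{kl})\in X_2$ is defined; I set $w:=\sum_{(k,l)}\phi(z_{kl})\otimes E_{kl}$ and must show that this matrix defines a bounded operator $w\in B(\mathcal{H}_2\otimes\mathcal{K})$.

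This boundedness is the crux, and it is exactly here that mere boundedness of $\phi$ would not suffice. For finite $F\subseteq I$ put $P_F=1\otimes\big(\sum_{i\in F}E_{ii}\big)$. The compression $P_FzP_F$ is the finite matrix $[z_{kl}]_{k,l\in F}$, an element of $M_F(X_1)$ of norm $\le\|z\|$, and applying the amplification $\phi^{(F)}=\phi\otimes\id_{M_F}$ yields
\[
\big\|[\phi(z_{kl})]_{k,l\in F}\big\|=\big\|\phi^{(F)}(P_FzP_F)\big\|\le\|\phi\|_{\cb}\,\|P_FzP_F\|\le\|\phi\|_{\cb}\,\|z\|.
\]
Thus all finite compressions of $[\phi(z_{kl})]$ are uniformly bounded by $\|\phi\|_{\cb}\|z\|$, and by the standard matrix criterion this matrix defines a bounded operator $w$ with $\|w\|\le\|\phi\|_{\cb}\|z\|$, the finite partial sums $P_FwP_F=\sum_{k,l\in F}\phi(z_{kl})\otimes E_{kl}$ converging to $w$ strongly. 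This gives the asserted strong convergence and the norm bound $\|\phi\otimes\id\|\le\|\phi\|_{\cb}$.

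Next I would prove the slice formula $(3)$, which simultaneously shows $w\in X_2\ovot B(\mathcal{K})$. By construction $(\id\otimes\omega_{kl})(w)=\phi(z_{kl})=\phi((\id\otimes\omega_{kl})(z))$, so the two maps $\nu\mapsto(\id\otimes\nu)(w)$ and $\nu\mapsto\phi((\id\otimes\nu)(z))$ agree on the $\omega_{kl}$; both are norm-continuous in $\nu\in B(\mathcal{K})_*$, and finite combinations of the $\omega_{kl}$ are norm-dense in $B(\mathcal{K})_*$, so they coincide, which is $(3)$. In particular $(\id\otimes\nu)(w)=\phi((\id\otimes\nu)(z))\in X_2$ for all $\nu$, and every slice on the $B(\mathcal{K})$-leg trivially lands in $B(\mathcal{K})$, so the Fubini conditions hold and $w\in X_2\ovot B(\mathcal{K})$. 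Property $(1)$ is the elementary-tensor computation: for $z=x\otimes a$ the entries are $a_{kl}x$, whence $w=\sum_{k,l}\phi(x)a_{kl}\otimes E_{kl}=\phi(x)\otimes a$. For complete boundedness I would invoke the canonical identification $M_n(X_1\ovot B(\mathcal{K}))\cong X_1\ovot B(\C^n\otimes\mathcal{K})$, under which the $n$-th amplification of $\phi\otimes\id$ is again a map of the same form (with $\mathcal{K}$ replaced by $\C^n\otimes\mathcal{K}$); the norm estimate then gives $\|(\phi\otimes\id)^{(n)}\|\le\|\phi\|_{\cb}$ for all $n$, so $\|\phi\otimes\id\|_{\cb}\le\|\phi\|_{\cb}$, while restricting to matrix corners $X_1\odot M_n\subseteq X_1\ovot B(\mathcal{K})$ recovers $\phi^{(n)}$ and yields the reverse inequality.

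Finally, property $(2)$ and uniqueness follow by slice manipulations. Writing $\nu'(c):=\nu(acb)$ for $\nu\in B(\mathcal{K})_*$, one has $(\id\otimes\nu)((1\otimes a)y(1\otimes b))=(\id\otimes\nu')(y)$ for all $y$, so $(3)$ forces both $(\id\otimes\nu)\big((\phi\otimes\id)((1\otimes a)z(1\otimes b))\big)$ and $(\id\otimes\nu)\big((1\otimes a)(\phi\otimes\id)(z)(1\otimes b)\big)$ to equal $\phi\big((\id\otimes\nu')(z)\big)$; since this holds for every $\nu$ and elements of the Fubini tensor product are determined by their slices, the two operators coincide, giving $(2)$. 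For uniqueness, if $\psi$ satisfies $(1)$ and $(2)$, then $(1\otimes E_{kk})z(1\otimes E_{ll})=z_{kl}\otimes E_{kl}\in X_1\odot B(\mathcal{K})$, so $(1)$ gives $\psi(z_{kl}\otimes E_{kl})=\phi(z_{kl})\otimes E_{kl}$ while $(2)$ gives $(1\otimes E_{kk})\psi(z)(1\otimes E_{ll})=\psi(z_{kl}\otimes E_{kl})$; hence the $(k,l)$-entry of $\psi(z)$ is $\phi(z_{kl})$ for all $k,l$, forcing $\psi(z)=(\phi\otimes\id)(z)$. The single genuinely analytic point is the uniform-compression and strong-limit argument of the second paragraph; the remaining properties are algebra with slices together with a density argument. (Conceptually, the same construction is the transport of post-composition by $\phi$ under the identification $X\ovot B(\mathcal{K})\cong\CB(B(\mathcal{K})_*,X)$, which explains why the cb-norm is preserved.)
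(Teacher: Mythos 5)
Your proposal is correct and is essentially the argument the paper relies on: the paper does not reprove this statement but defers to \cite[I, Lemma 3.5]{Ham82}, whose proof is exactly this entrywise construction, with complete boundedness supplying the uniform bound on finite compressions (the one genuinely analytic step, as you note) and slice/multiplicativity arguments giving properties (1)--(3) and uniqueness. The only point worth flagging is that deducing $\|\phi\otimes \id\|_{\operatorname{cb}}\geq \|\phi\|_{\operatorname{cb}}$ by restricting to corners $X_1\odot M_n\subseteq X_1\ovot B(\mathcal{K})$ implicitly assumes $\dim \mathcal{K}\geq n$ for every $n$; for finite-dimensional $\mathcal{K}$ one must combine this with the amplification identification $M_n(X_1\ovot B(\mathcal{K}))\cong X_1\ovot B(\mathbb{C}^n\otimes \mathcal{K})$, which you already have in place.
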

 
Of course, we have the analoguous result for $\id \otimes \phi: B(\mathcal{K})\ovot X_1\to B(\mathcal{K})\ovot X_2$ as well. If $X\subseteq B(\mathcal{H})$ is an operator space and if $f\in X^*$, then we can form the slice map $f\otimes \id: X \ovot B(\mathcal{K})\to \mathbb{C}\ovot B(\mathcal{K})\cong B(\mathcal{K})$ and similarly for $\id \otimes f$.

To make sure that everything works well, we need to impose continuity assumptions. This is illustrated by the following results, which will be used all the time without further mention.

\begin{Lem}\label{Fubinitensor}\cite[I, Lemma 3.8]{Ham82}
   Let $X_j\subseteq B(\mathcal{H}_j), j=1,2$ be operator spaces, $Y\subseteq B(\mathcal{K})$ an operator space and $\phi: X_1\to X_2$ a completely bounded map. We have $(\phi \otimes \id)(X_1 \ovot Y)\subseteq X_2 \ovot Y$ in the following two cases:
    \begin{enumerate}
        \item $Y$ is $\sigma$-weakly closed.
        \item  $X_1$ and $X_2$ are $\sigma$-weakly closed and $\phi: X_1\to X_2$ is $\sigma$-weakly continuous
    \end{enumerate} A similar result holds for $\id \otimes \phi$.
\end{Lem}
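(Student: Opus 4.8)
The plan is to verify directly that $(\phi \otimes \id)(z) \in X_2 \ovot Y$ for each $z \in X_1 \ovot Y$, by checking the two defining slice conditions of the Fubini product. Fix $z \in X_1 \ovot Y \subseteq X_1 \ovot B(\mathcal{K})$. Since $\phi \otimes \id$ maps $X_1 \ovot B(\mathcal{K})$ into $X_2 \ovot B(\mathcal{K})$ by the previous proposition, the condition $(\id \ovot \nu)((\phi \otimes \id)(z)) \in X_2$ holds automatically for every $\nu \in B(\mathcal{K})_*$. Hence the only thing left to prove is that $(\mu \ovot \id)((\phi \otimes \id)(z)) \in Y$ for every normal functional $\mu \in B(\mathcal{H}_2)_*$.

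First I would reduce this to a statement about a single slice map. Testing the element $w := (\mu \ovot \id)((\phi \otimes \id)(z)) \in B(\mathcal{K})$ against an arbitrary $\nu \in B(\mathcal{K})_*$ and using property $(3)$ of the previous proposition gives
$$\nu(w) = (\mu \ovot \nu)((\phi \otimes \id)(z)) = \mu\big((\id \ovot \nu)((\phi \otimes \id)(z))\big) = \mu\big(\phi((\id \ovot \nu)(z))\big) = (\mu \circ \phi)\big((\id \ovot \nu)(z)\big),$$
where $(\id \ovot \nu)(z) \in X_1$ because $z \in X_1 \ovot Y$. In other words, $w = ((\mu \circ \phi) \ovot \id)(z)$ is the slice of $z$ by the functional $\psi := \mu \circ \phi \in X_1^*$, and the whole problem becomes showing $(\psi \ovot \id)(z) \in Y$. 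The subtlety is that, although $\mu$ is normal, $\psi = \mu \circ \phi$ need not be $\sigma$-weakly continuous on $X_1$, so the defining property of $X_1 \ovot Y$ (which only controls slices by \emph{normal} functionals) does not apply verbatim. This is exactly where the two hypotheses enter, and it is the essential difficulty.

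In Case $(2)$ I would argue that $\psi$ is in fact $\sigma$-weakly continuous: $\mu|_{X_2}$ is $\sigma$-weakly continuous and $\phi$ is assumed $\sigma$-weakly continuous, so $\psi = \mu|_{X_2} \circ \phi$ is $\sigma$-weakly continuous on the $\sigma$-weakly closed space $X_1$. Hence $\psi$ is the restriction of some normal functional $\mu' \in B(\mathcal{H}_1)_*$, and since the slice $(\psi \ovot \id)(z)$ depends only on $\psi|_{X_1}$ we obtain $(\psi \ovot \id)(z) = (\mu' \ovot \id)(z) \in Y$ directly from $z \in X_1 \ovot Y$.

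In Case $(1)$, where $Y$ is $\sigma$-weakly closed but $\phi$ may be discontinuous, the main obstacle---controlling the possibly non-normal functional $\psi$---is overcome by an approximation argument. I would extend $\psi$ to $B(\mathcal{H}_1)^*$ by Hahn--Banach and invoke Goldstine's theorem to produce a bounded net $(\mu_i') \subseteq B(\mathcal{H}_1)_*$ of normal functionals with $\mu_i'(a) \to \psi(a)$ for all $a \in X_1$. Then for every $\nu \in B(\mathcal{K})_*$,
$$\nu\big((\mu_i' \ovot \id)(z)\big) = \mu_i'\big((\id \ovot \nu)(z)\big) \to \psi\big((\id \ovot \nu)(z)\big) = \nu\big((\psi \ovot \id)(z)\big),$$
so $(\mu_i' \ovot \id)(z) \to (\psi \ovot \id)(z)$ $\sigma$-weakly in $B(\mathcal{K})$. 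Each $(\mu_i' \ovot \id)(z)$ lies in $Y$ because $\mu_i'$ is normal, and since $Y$ is $\sigma$-weakly closed the limit $(\psi \ovot \id)(z) = w$ lies in $Y$ as well. Finally, the statement for $\id \otimes \phi$ follows by the symmetry of the Fubini tensor product. In both cases the crux is the passage from normal to general functionals, which is precisely what the two separate hypotheses are designed to handle.
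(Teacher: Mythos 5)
Your proof is correct. Note that the paper offers no proof of this lemma at all: it is quoted with a citation to \cite[I, Lemma 3.8]{Ham82}, so there is no internal argument to compare yours against. On its own merits, your proposal is sound and complete: you correctly reduce the problem to showing that the slice of $z$ by the (generally non-normal) functional $\psi = \mu\circ\phi \in X_1^*$ lands in $Y$, which is indeed the only real issue; in case (2) the composite $\psi$ is $\sigma$-weakly continuous and hence, by Hahn--Banach applied in the weak-$*$ topology, is the restriction of a normal functional on $B(\mathcal{H}_1)$, so the defining property of $X_1 \ovot Y$ applies directly; and in case (1) the Goldstine approximation by a bounded net of normal functionals, together with the $\sigma$-weak closedness of $Y$, closes the gap. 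This is the standard argument for such slice-map statements, so nothing further is needed.
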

\begin{Prop}\cite[I, Lemma 3.9]{Ham82}
   Let $X_j\subseteq B(\mathcal{H}_j)$ and $Y_j\subseteq B(\mathcal{K}_j)$, $j=1,2$ be operator spaces. Let $\phi: X_1\to X_2$ and $\psi: Y_1\to Y_2$ be completely bounded maps. If $Y_1$ and $Y_2$ are $\sigma$-weakly closed and $\psi: Y_1\to Y_2$ is $\sigma$-weakly continuous, then for $z\in X_1\ovot Y_1$, the elements
$$(\id_{B(\mathcal{H}_2)}\otimes\psi)(\phi \otimes\id_{B(\mathcal{K}_1)})(z), \quad (\phi \otimes \id_{B(\mathcal{K}_2)}) (\id_{B(\mathcal{H}_1)}\otimes \psi)(z)$$
agree and belong to $X_2\ovot Y_2$.
In this way, we obtain a canonical map $\phi\otimes \psi: X_1\ovot Y_1\to X_2\ovot Y_2$ with $\|\phi\otimes \psi\|_{\operatorname{cb}}= \|\phi\|_{\operatorname{cb}}\|\psi\|_{\operatorname{cb}}$. If $\phi, \psi$ are complete isometries, then so is $\phi\otimes \psi: X_1\ovot Y_1\to X_2\ovot Y_2$.
\end{Prop}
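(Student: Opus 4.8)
The plan is to realize $\phi \otimes \psi$ as the composition of the two one-legged maps from the preceding Proposition and to verify every claim by slicing. First I would check that both composites make sense and land in $X_2 \ovot Y_2$. Writing $\phi \otimes \id$ and $\id \otimes \psi$ for the maps of the preceding Proposition (and its $\id \otimes (-)$ analogue), I apply Lemma \ref{Fubinitensor}. Since $Y_1$ is $\sigma$-weakly closed, $(\phi \otimes \id_{B(\mathcal{K}_1)})(X_1 \ovot Y_1) \subseteq X_2 \ovot Y_1$ by case (1); then, since $Y_1, Y_2$ are $\sigma$-weakly closed and $\psi$ is $\sigma$-weakly continuous, $(\id_{B(\mathcal{H}_2)} \otimes \psi)(X_2 \ovot Y_1) \subseteq X_2 \ovot Y_2$ by case (2). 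Thus the first composite lands in $X_2 \ovot Y_2$; the second is handled symmetrically, applying case (2) and then case (1).

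Next I would prove that the two composites agree. Both lie in $B(\mathcal{H}_2 \otimes \mathcal{K}_2)$, and two elements there coincide as soon as all product functionals $\mu \otimes \nu$ with $\mu \in B(\mathcal{H}_2)_*$, $\nu \in B(\mathcal{K}_2)_*$ agree on them, since $B(\mathcal{H}_2 \otimes \mathcal{K}_2)_* = B(\mathcal{H}_2)_* \hat\otimes B(\mathcal{K}_2)_*$ makes such functionals total. I first record the slice identity $(f \otimes \id)(\phi \otimes \id)(z) = ((f \circ \phi) \otimes \id)(z)$ for $f \in X_2^*$: pairing both sides with an arbitrary $\nu \in B(\mathcal{K}_1)_*$ reduces it to property (3) of the preceding Proposition. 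The $\sigma$-weakly continuous analogue $(\id \otimes g)(\id \otimes \psi)(z) = (\id \otimes (g \circ \psi))(z)$ for $g \in (Y_2)_*$ is proved the same way from the $\id \otimes (-)$ version of property (3). Using these two identities together with property (3) for both legs, a direct computation shows that both composites, paired with $\mu \otimes \nu$, equal the common double slice $((\mu \circ \phi) \otimes (\nu \circ \psi))(z)$. Hence they agree, and I define $\phi \otimes \psi$ to be this common value; by construction $\phi \otimes \psi = (\id \otimes \psi) \circ (\phi \otimes \id)$ as a map $X_1 \ovot Y_1 \to X_2 \ovot Y_2$.

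The bound $\|\phi \otimes \psi\|_{\cb} \le \|\phi\|_{\cb}\|\psi\|_{\cb}$ is immediate from the composition, using $\|\phi \otimes \id\|_{\cb} = \|\phi\|_{\cb}$ and $\|\id \otimes \psi\|_{\cb} = \|\psi\|_{\cb}$ from the preceding Proposition. For the reverse inequality I would restrict to the minimal tensor product: $X_1 \otimes Y_1$ (the closure of $X_1 \odot Y_1$) sits completely isometrically inside $X_1 \ovot Y_1$, and $\phi \otimes \psi$ restricts there to the minimal-tensor-product map $X_1 \otimes Y_1 \to X_2 \otimes Y_2$, whose completely bounded norm is exactly $\|\phi\|_{\cb}\|\psi\|_{\cb}$ by injectivity of the minimal operator space tensor product. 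This gives equality.

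For the final claim I would show each leg is a complete isometry and then compose. The corestriction $\phi \colon X_1 \to \phi(X_1)$ is a completely isometric isomorphism, so its inverse is completely contractive; applying the preceding Proposition to $\phi^{-1}$ yields a completely contractive $\phi^{-1} \otimes \id$ that inverts $\phi \otimes \id$ on $\phi(X_1) \ovot B(\mathcal{K}_1)$ (where $\phi \otimes \id$ lands by Lemma \ref{Fubinitensor}), so $\phi \otimes \id$ is a complete isometry, with no normality of $\phi$ needed. The analogous assertion for $\id \otimes \psi$ is the delicate point and is where the hypotheses on $\psi$ are essential: I need that a $\sigma$-weakly continuous complete isometry $\psi \colon Y_1 \to Y_2$ between $\sigma$-weakly closed operator spaces has $\sigma$-weakly closed range $\psi(Y_1)$ and a $\sigma$-weakly continuous (hence completely contractive) inverse $\psi^{-1} \colon \psi(Y_1) \to Y_1$. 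This follows from operator space duality: $\psi = (\psi_*)^*$ for a complete quotient map $\psi_*$ on preduals, so that $\psi$ is a weak$^*$-homeomorphism onto its weak$^*$-closed range by the Krein--Smulian theorem. Granting this, $\id \otimes \psi^{-1}$ is defined and completely contractive and inverts $\id \otimes \psi$, so $\id \otimes \psi$ is a complete isometry; composing with the completely isometric inclusion $\phi(X_1) \ovot \psi(Y_1) \hookrightarrow X_2 \ovot Y_2$ shows $\phi \otimes \psi$ is a complete isometry. I expect this normality-of-the-inverse fact, rather than the (routine) slice bookkeeping, to be the \textbf{main obstacle}.
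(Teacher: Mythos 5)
Your proof is correct, but there is nothing in the paper itself to compare it against: this Proposition is quoted verbatim from \cite[I, Lemma 3.9]{Ham82}, and the preliminaries section explicitly defers the proofs of all these Fubini-tensor facts to Hamana (modulo the routine adaptation from operator systems to operator spaces). Judged on its own merits, your argument is complete and uses exactly the toolkit the paper sets up: factoring through the one-legged maps, with Lemma \ref{Fubinitensor} applied in the order (1)-then-(2) for one composite and (2)-then-(1) for the other; agreement of the two composites by pairing against product normal functionals (which are total since $B(\mathcal{H}_2\otimes\mathcal{K}_2)_* = B(\mathcal{H}_2)_*\hat{\otimes}B(\mathcal{K}_2)_*$) and reducing everything to property (3) of the preceding Proposition; the lower cb-norm bound by restricting to the minimal tensor product sitting completely isometrically inside the Fubini tensor product; and the complete-isometry claim by inverting each leg separately. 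You also correctly isolated the one genuinely delicate point: to invert $\id\otimes\psi$ you need $\psi(Y_1)$ to be $\sigma$-weakly closed and $\psi^{-1}$ to be $\sigma$-weakly continuous, so that case (2) of Lemma \ref{Fubinitensor} applies to $\id\otimes\psi^{-1}$, and your Krein--Smulian argument (weak$^*$-compactness of balls forces $\psi$ to be a weak$^*$-homeomorphism onto a weak$^*$-closed range) is the standard and correct way to obtain this. One small bookkeeping remark: the containment $(\phi\otimes\id)(X_1\ovot B(\mathcal{K}_1))\subseteq \phi(X_1)\ovot B(\mathcal{K}_1)$ is cleanest to justify by applying the preceding Proposition (or Lemma \ref{Fubinitensor}(1)) to the corestriction $\phi\colon X_1\to\phi(X_1)$ and invoking the uniqueness clause to identify the resulting map with $\phi\otimes\id$; alternatively it is immediate from property (3), which shows that every slice $(\id\otimes\omega)$ of the image lies in $\phi(X_1)$. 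This is a presentational point, not a gap.
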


Given a von Neumann algebra $M$ and an operator space $X$, we can use the preceding result to make sense of the abstract operator space $X\ovot M$ without specifying concrete realisations as bounded operators on certain Hilbert spaces.

Let us also mention the following result:
\begin{Prop}\cite[I. Proposition 3.10]{Ham82} If $X\subseteq B(\mathcal{H})$ is an operator space and $Y\subseteq B(\mathcal{K})$ is a $\sigma$-weakly closed operator space, then $X \ovot Y$ is injective if and only if both $X$ and $Y$ are injective.
\end{Prop}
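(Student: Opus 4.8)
The plan is to use two standard facts about injectivity and reduce everything to producing completely contractive projections. Recall that $B(\mathcal{L})$ is injective for every Hilbert space $\mathcal{L}$, that an operator space $V\subseteq B(\mathcal{L})$ is injective precisely when there is a completely contractive projection $B(\mathcal{L})\to V$, and that injectivity passes to retracts: if $V$ is injective and there are a complete isometry $\iota\colon W\to V$ and a complete contraction $r\colon V\to W$ with $r\iota=\id_W$, then $W$ is injective (extend $\iota\phi$ using injectivity of $V$ and postcompose with $r$). I would record this retract principle first and then prove the two implications separately, throughout identifying $B(\mathcal{H}\otimes\mathcal{K})=B(\mathcal{H})\ovot B(\mathcal{K})$.

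Suppose $X$ and $Y$ are injective, so there are completely contractive projections $P\colon B(\mathcal{H})\to X$ and $Q\colon B(\mathcal{K})\to Y$. I would assemble a projection onto $X\ovot Y$ as a composite of two partial maps, arranged so that \emph{no normality of $P$ or $Q$ is used}. First, $\id_{B(\mathcal{H})}\otimes Q\colon B(\mathcal{H})\ovot B(\mathcal{K})\to B(\mathcal{H})\ovot Y$ is the map supplied by the first Proposition above, with the full algebra $B(\mathcal{H})$ as the fixed leg; it exists for arbitrary completely bounded $Q$. Second, $P\otimes\id_Y\colon B(\mathcal{H})\ovot Y\to X\ovot Y$ is well defined and has range in $X\ovot Y$ by Lemma~\ref{Fubinitensor}(1), exactly because $Y$ is $\sigma$-weakly closed. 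The composite $\Phi:=(P\otimes\id_Y)\circ(\id\otimes Q)$ is then a complete contraction into $X\ovot Y$, and I would check it restricts to the identity on $X\ovot Y$ by slicing: for $z\in X\ovot Y$ and $\mu\in B(\mathcal{H})_*$, the analogue of property~(3) gives $(\mu\otimes\id)(\id\otimes Q)(z)=Q((\mu\otimes\id)(z))=(\mu\otimes\id)(z)$ since $(\mu\otimes\id)(z)\in Y$, so $(\id\otimes Q)(z)=z$ because the left slice maps separate points; the symmetric computation with right slices gives $(P\otimes\id_Y)(z)=z$. Hence $\Phi$ is a completely contractive idempotent with range $X\ovot Y$, and $X\ovot Y$ is injective.

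Conversely, suppose $X\ovot Y$ is injective; assume $X,Y\neq 0$ (automatic for operator systems, and necessary for the statement). For $Y$, fix $x_0\in X$ of norm one and a norming $f\in X^*$; then $\iota_Y\colon y\mapsto x_0\otimes y$ is a complete isometry and $r_Y:=(f\otimes\id)|_{X\ovot Y}$ is a complete contraction with $r_Y\iota_Y=\id_Y$. The only nontrivial point is that $r_Y$ lands in $Y$: for $\nu\in B(\mathcal{K})_*$ vanishing on $Y$ and $z\in X\ovot Y$ one has $(\id\otimes\nu)(z)=0$ (test against left slices, using that the right legs of $z$ lie in $Y$), hence $\nu((f\otimes\id)(z))=f((\id\otimes\nu)(z))=0$, so $(f\otimes\id)(z)$ annihilates the predual annihilator of $Y$ and therefore lies in $Y$ by bipolarity --- here $\sigma$-weak closedness of $Y$ is essential. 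The retract principle then gives injectivity of $Y$. For $X$ I would instead use a \emph{normal} functional: choose $\nu\in B(\mathcal{K})_*$ with $\|\nu|_Y\|=1$, and note that since $Y$ is $\sigma$-weakly closed its unit ball is $\sigma$-weakly compact, so $\nu|_Y$ attains its norm at some $y_0\in Y$ with $\|y_0\|=1$ and $\nu(y_0)=1$. Then $\iota_X\colon x\mapsto x\otimes y_0$ is a complete isometry, $r_X:=(\id\otimes\nu)|_{X\ovot Y}$ maps into $X$ directly by the definition of the Fubini product (as $\nu$ is normal), and $r_X\iota_X=\id_X$, so $X$ is injective.

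The main difficulty is not computational but structural: one must route each partial slice through a leg that is either a full algebra $B(\cdot)$ or $\sigma$-weakly closed, because an injective $\sigma$-weakly closed operator space need \emph{not} carry a normal completely contractive projection (for instance the standard representation of the hyperfinite $\mathrm{II}_1$ factor). This dictates the order in the forward direction --- contract the $B(\mathcal{K})$-leg first, then apply $P$ across the closed leg $Y$ --- and explains the asymmetry in the converse, where a general norming functional suffices on the closed factor $Y$ but a genuinely normal, norm-attaining functional is needed to retract onto the possibly non-closed factor $X$.
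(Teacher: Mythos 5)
The paper itself offers no proof of this Proposition --- it is quoted verbatim from Hamana \cite{Ham82}*{I, Proposition 3.10} --- so your argument has to be judged on its own terms, and on those terms its structure is sound. The forward direction is complete and correctly routed: $\id\otimes Q$ lands in $B(\mathcal{H})\ovot Y$ by the left-slice computation, $P\otimes\id_Y$ lands in $X\ovot Y$ by Lemma~\ref{Fubinitensor}(1) precisely because $Y$ is $\sigma$-weakly closed, and the slice-separation argument showing that the composite restricts to the identity on $X\ovot Y$ is correct, so $X\ovot Y$ is the range of a completely contractive projection on the injective space $B(\mathcal{H}\otimes\mathcal{K})$. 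In the converse, the retraction onto $Y$ is also correct, including the bi-annihilator step $(f\otimes\id)(z)\in\bigl(Y_\perp\bigr)^\perp=Y$, where $Y_\perp=\{\nu\in B(\mathcal{K})_*:\nu|_Y=0\}$, which is exactly where $\sigma$-weak closedness of $Y$ enters; and your remark that one must assume $X,Y\neq 0$ is a legitimate point that the bare statement glosses over (the zero space is injective, so the ``only if'' direction fails literally if one factor is $0$).

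There is one genuine soft spot, in the retraction onto $X$: the retract principle needs $r_X=(\id\otimes\nu)|_{X\ovot Y}$ to be a \emph{complete contraction}, but you only normalised $\|\nu|_Y\|=1$, and the norm of $\nu$ on all of $B(\mathcal{K})$ may be strictly larger --- moreover this cannot be avoided by a better choice of $\nu$. For instance, if $Y=\mathbb{C}y_0$ with $y_0=\operatorname{diag}(1/2,2/3,3/4,\dots)$ on $\ell^2$, then $\|y_0\|=1$ but $|\nu(y_0)|<\|\nu\|$ for every nonzero normal $\nu$, so every normal $\nu$ with $\nu(y_0)=1$ has $\|\nu\|>1$, and $\id\otimes\nu$ is then \emph{not} contractive on $X\ovot B(\mathcal{K})$. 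Fortunately the gap closes with the very slicing identity you already use: for $z\in M_n(X\ovot Y)$, viewed in $M_n(X)\ovot Y$ (the Fubini product commutes with the finite-dimensional factor $M_n$), and for $\mu\in B(\mathcal{H}^n)_*$ of norm at most one, one has $\mu\bigl((\id\otimes\nu)^{(n)}(z)\bigr)=\nu\bigl((\mu\otimes\id)(z)\bigr)$ with $(\mu\otimes\id)(z)\in Y$ of norm at most $\|z\|$, whence $\|(\id\otimes\nu)^{(n)}(z)\|\le\|\nu|_Y\|\,\|z\|=\|z\|$. In other words, the restriction of $\id\otimes\nu$ to $X\ovot Y$ only sees $\nu|_Y$, and its cb-norm is bounded by $\|\nu|_Y\|$. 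With that one observation added, your proof is complete.
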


If one encounters a Fubini tensor product (possibly with more than two factors) in this paper, it will always be the case that all factors, except possibly the first one, are von Neumann algebras and if tensor product maps between Fubini tensor products are considered, then there will always be $\sigma$-weakly continuous maps acting on the factors of the tensor product that are von Neumann algebras. Hence, we do not have to worry too much about technicalities that come with the Fubini tensor product.

    If $X$ is an operator space and $M$ is a von Neumann algebra, then the map
    $$\Phi:X\ovot M\to CB(M_*, X), \quad \Phi(z)(\omega)= (\id \otimes \omega)(z), \quad z \in X \ovot M, \omega \in M_*$$
    is a completely isometric isomorphism \cite{Daw22}. To see this, note that if $X \subseteq B(\mathcal{H})$ for a Hilbert space $\mathcal{H}$, we have
    $$B(\mathcal{H})\ovot M\cong (B(\mathcal{H})_*\hat{\otimes} M_*)^* \cong CB(M_*, (B(\mathcal{H})_*)^*)\cong CB(M_*, B(\mathcal{H}))$$
    where we used \cite[Corollary 7.1.5, Theorem 7.2.4]{ER00}. It is then straightforward to verify that the isomorphism $B(\mathcal{H})\ovot M\cong CB(M_*, B(\mathcal{H}))$ restricts to the desired isomorphism $X\ovot M \cong CB(M_*, X)$. 

    We also need some language from the theory of operator modules \cite{Cr17a}. A \emph{completely contractive Banach algebra} $A$ is an algebra which also has an operator space structure for which the multiplication extends to a complete contraction $A\hat{\otimes} A \to A$. In this paper, we will mostly be interested in the case $A= L^1(\G)$ for a locally compact quantum group $\G$, with the algebra structure given by convolution of normal functionals (see the next subsection). Given a completely contractive algebra $A$, also the unitisation $A_1:= A \oplus_1 \mathbb{C}$ with its usual algebra structure is a completely contractive Banach algebra. Elements $(a, \lambda)$ in $A_1$ will be written as $a+ \lambda e$ where $e$ is to be thought of as the unit of the algebra. A left \emph{operator $A$-module} is a pair $(X, \rhd)$ where $X$ is an operator space and $\rhd: A\hat{\otimes} X\to X$ is a complete contraction such that $X$ becomes an algebraic left $A$-module for $\rhd$. If $X$ is an operator space, then the operator spaces $CB(A, X)$ and $CB(A_1, X)$ are left operator $A$-modules in the obvious way. If $X$ is a (left) operator $A$-module, there are canonical maps
    \begin{align*}
        &j_X: X \to CB(A, X), \ \quad j_X(x)(a) = a\rhd x,\\
        &j_X^1: X\to CB(A_1, X), \quad j_X^1(x)(a+ \lambda e) = a\rhd x + \lambda x
    \end{align*}
and then $j_X$ is an $A$-linear complete contraction and $j_X^1$ is an $A$-linear complete isometry. An operator $A$-module $X$ is is called \emph{$A$-injective} if for every two operator $A$-modules $Y,Z$, every $A$-linear complete isometry $\iota: Y \to Z$ and every $A$-linear complete contraction $\phi: Y \to X$, there is an $A$-linear complete contraction $\Phi: Z \to X$ such that $\Phi\circ \iota = \phi$. 
\subsection{Locally compact quantum groups.}

We recall some basic results from the theory of locally compact quantum groups \cite{KV00,KV03,VV03}. We will follow the same conventions and notations as e.g.\ \cite{DD24}. Note that the conventions for quantum group duality differ from our previous work \cite{DH24}.

A \emph{Hopf-von Neumann algebra} is a pair $(M, \Delta)$ where $M$ is a von Neumann algebra and $\Delta: M \to M \ovot M$ a unital, normal, isometric $*$-homomorphism such that $(\Delta \otimes \id)\circ \Delta = (\id \otimes \Delta)\circ \Delta$.

A \emph{locally compact quantum group} $\G$ is a Hopf-von Neumann algebra $(L^\infty(\G), \Delta)$ for which there exist normal, semifinite, faithful weights $\Phi, \Psi: L^\infty(\G)_+\to [0, \infty]$ such that $(\id\otimes \Phi)\Delta(x)= \Phi(x)1$ for all $x\in \mathscr{M}_\Phi^+$ and $(\Psi\otimes \id)\Delta(x)= \Psi(x)1$ for all $x\in \mathscr{M}_\Psi^+$. The von Neumann algebra predual $L^1(\G):= L^\infty(\G)_*$ is a completely contractive Banach algebra for the multiplication 
$$\mu\star \nu:= (\mu\otimes \nu)\circ \Delta, \quad \mu, \nu \in L^1(\G).$$

Without any loss of generality, we may (and will) assume that $L^\infty(\G)$ is faithfully represented in standard form on the Hilbert space $L^2(\G)$. With respect to the weights $\Phi, \Psi$, we then have GNS-maps
$$\Lambda_\Phi: \mathscr{N}_\Phi\to L^2(\G), \quad \Lambda_\Psi: \mathscr{N}_\Psi\to L^2(\G).$$

Fundamental to the theory of locally compact quantum groups are the unitaries 
\[
V,W \in B(L^2(\G)\otimes L^2(\G))
\]
called respectively \emph{right} and \emph{left} regular unitary representation. They are uniquely characterised by the identities 
\[
(\id\otimes \omega)(V) \Lambda_{\Psi}(x) = \Lambda_{\Psi}((\id\otimes \omega)\Delta(x)),
\qquad \omega \in L^1(\G),x\in \msN_{\Psi},
\]
\[
 (\omega \otimes \id)(W^*)\Lambda_{\Phi}(x) = \Lambda_{\Phi}((\omega\otimes \id)\Delta(x)),\qquad \omega \in L^1(\G),x\in \msN_{\Phi}.
\]
They are \emph{multiplicative unitaries} \cite{BS93} meaning that
\[
V_{12}V_{13}V_{23} = V_{23}V_{12},\qquad W_{12}W_{13}W_{23}= W_{23}W_{12}
\]
and they implement the coproduct of $L^\infty(\G)$ in the sense that
$$\label{EqComultImpl}
W^*(1\otimes x)W = \Delta(x) = V(x\otimes 1)V^*,\qquad x\in L^\infty(\G).$$
Moreover, we have
$$C_0(\G):=[(\omega\otimes \id)(V) \mid \omega \in B(L^2(\G))_*] = [(\id\otimes \omega)(W) \mid \omega \in B(L^2(\G))_*]$$
which is a $\sigma$-weakly dense C$^*$-subalgebra of $L^\infty(\G)$. Then $\Delta(C_0(\G))\subseteq M(C_0(\G)\otimes C_0(\G))$. 

We can also define the von Neumann algebra
$L^\infty(\hat{\G}) = [(\omega\otimes \id)(W) \mid \omega \in L^1(\G)]^{\sigma\textrm{-weak}}$
with coproduct
$\hat{\Delta}(x) = \Sigma W(x\otimes 1)W^*\Sigma$ where $x\in L^\infty(\hat{\G})$. 
Then the pair $(L^\infty(\hat{\G}), \hat{\Delta})$ defines the locally compact quantum group $\hat{\G}$, which is called the dual of $\G$. Similarly, we can also define the von Neumann algebra $L^\infty(\check{\G}) = [(\id\otimes \omega)(V) \mid \omega \in L^1(\G)]^{\sigma\textrm{-weak}}$
with coproduct 
$\check{\Delta}(x) = V^*(1\otimes x)V$ for $x\in L^\infty(\check{\G})$.
Then the pair $(L^\infty(\check{\G}), \check{\Delta})$ defines a locally compact quantum group $\check{\G}$. We have $L^\infty(\hat{\G})= L^\infty(\check{\G})'$. If $G$ is a locally compact group, we have
$L^\infty(\hat{G}) = \mathscr{L}(G)$ and $L^\infty(\check{G}) = \mathscr{R}(G)$, the left resp. right group von Neumann algebra associated with $G$. Therefore, $\hat{\G}$ and $\check{\G}$ should be thought of as a left and a right version of the dual locally compact quantum group.

The left multiplicative unitary of $\check{\G}$ will be denoted by $\check{W}$ and the right multiplicative unitary of $\check{\G}$ will be denoted by $\check{V}$. We have $\check{W}= V$ and 
$$V \in L^\infty(\check{\G})\ovot L^\infty(\G), \quad W \in L^\infty(\G)\ovot L^\infty(\hat{\G}), \quad \check{V}\in L^\infty(\G)'\ovot L^\infty(\check{\G}).$$
In fact, we have e.g.\ $V\in M(C_0(\check{\G})\otimes C_0(\G))$ and similarly for $W$ and  $\check{V}$. For future use, we will also need the following notations:
\begin{align*}
    &\Delta_l: B(L^2(\G)) \to  L^\infty(\G)\ovot B(L^2(\G)): x \mapsto W^*(1\otimes x)W,\\
    &\Delta_r: B(L^2(\G))\to B(L^2(\G))\ovot L^\infty(\G): x \mapsto V(x\otimes 1)V^*,\\
    &\check{\Delta}_r: B(L^2(\G))\to B(L^2(\G)) \ovot L^\infty(\check{\G}): x \mapsto \check{V}(x\otimes 1)\check{V}^*.
\end{align*}
A locally compact quantum group $\G$ is called \emph{compact} if $C_0(\G)$ is unital, and we then write $C_0(\G)= C(\G)$. In that case the left Haar weight $\Phi$ is a normal state and we say that $\G$ is of \emph{Kac type} if the Haar state $\Phi: L^\infty(\G)\to \mathbb{C}$ is tracial. A locally compact quantum group $\G$ is called \emph{discrete} if $\check{\G}$ is compact and we then write $L^\infty(\G)= \ell^\infty(\G)$ and $C_0(\G)= c_0(\G)$. If $\G$ is a discrete quantum group, there is a unique normal state $\epsilon \in \ell^1(\G):= L^1(\G)$, called \emph{counit}, such that 
$(\epsilon \otimes \id)\Delta = \id = (\id \otimes \epsilon)\Delta$. We call a discrete quantum group $\G$ \emph{unimodular} if $\check{\G}$ is of Kac type. A locally compact quantum group $\G$ is called \emph{co-amenable} \cite{BT03} if there exists a state $\epsilon \in C_0(\G)^*$ such that $(\id \otimes \epsilon) \Delta = \id_{C_0(\G)} = (\epsilon\otimes \id)\Delta$, where we note that this makes sense since we have a canonical extension $\id \otimes \epsilon: M(C_0(\G)\otimes C_0(\G)) \to M(C_0(\G))$. Thus, it is clear that discrete quantum groups are always co-amenable. On the other hand, a locally compact quantum group $\G$ is called \emph{amenable} if there exists a state $m: L^\infty(\G)\to \mathbb{C}$ such that 
$$(m\otimes \id)\Delta(x)= m(x)1, \quad x \in L^\infty(\G).$$
If $\G$ is co-amenable, then $\check{\G}$ is amenable but the converse is still open for general locally compact quantum groups. We say that a locally compact quantum group $\G$ is \emph{inner amenable} \cite{Cr19} if there exists a state $n: L^\infty(\check{\G})\to \mathbb{C}$ such that 
$$(n\otimes \id)(\Delta_r(x)) = n(x)1, \quad x \in L^\infty(\check{\G}).$$ If $G$ is a locally compact group, this is equivalent with the existence of a conjugation-invariant mean on $L^\infty(G)$ \cite[Proposition 3.2]{CZ17}. 

We will need the following elementary lemma several times:

\begin{Lem}\label{well-defined} Let $\G$ be a locally compact quantum group and $\mathcal{H}$ a Hilbert space.
    If $U\in B(\mathcal{H})\ovot L^\infty(\G)$ is a unitary satisfying $(\id \otimes \Delta)(U)= U_{12}U_{13}$, then
    $$U^*(1\otimes L^\infty(\check{\G}))U\subseteq B(\mathcal{H})\ovot L^\infty(\check{\G}).$$
\end{Lem}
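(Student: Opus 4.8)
The plan is to reduce the statement to the generators $(\id\otimes\omega)(V)$ of $L^\infty(\check{\G})$ and then to a single leg-calculation. The set $\{y\in B(L^2(\G)) : U^*(1\otimes y)U\in B(\mathcal{H})\ovot L^\infty(\check{\G})\}$ is a $\sigma$-weakly closed subspace of $B(L^2(\G))$: it is the preimage of the $\sigma$-weakly closed space $B(\mathcal{H})\ovot L^\infty(\check{\G})$ under the normal map $y\mapsto U^*(1\otimes y)U$ (conjugation by a fixed unitary). Since $L^\infty(\check{\G})=[(\id\otimes\omega)(V)\mid \omega\in L^1(\G)]^{\sigma\textrm{-weak}}$, it therefore suffices to verify the inclusion for the generating elements $y=(\id\otimes\omega)(V)$ with $\omega\in L^1(\G)$.

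First I would put the corepresentation identity in a usable form. Working on $\mathcal{H}\otimes L^2(\G)\otimes L^2(\G)$ with legs $1,2,3$ and recalling that $\Delta(x)=V(x\otimes 1)V^*$, the hypothesis $(\id\otimes\Delta)(U)=U_{12}U_{13}$ reads $V_{23}U_{12}V_{23}^*=U_{12}U_{13}$, which rearranges to
$$U_{12}^*V_{23}U_{12}=U_{13}V_{23}.$$
Next, using $y=(\id\otimes\omega)(V)$ I would write $1\otimes y=(\id\otimes\id\otimes\omega)(V_{23})$; since $U_{12}$ acts trivially on the third leg it commutes with this slice, so
$$U^*(1\otimes y)U=(\id\otimes\id\otimes\omega)(U_{12}^*V_{23}U_{12})=(\id\otimes\id\otimes\omega)(U_{13}V_{23}).$$

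It then remains to identify the right-hand side. Here $U_{13}\in B(\mathcal{H})\ovot\C\ovot L^\infty(\G)$ and $V_{23}\in\C\ovot L^\infty(\check{\G})\ovot L^\infty(\G)$, so their product lies in the von Neumann algebra tensor product $B(\mathcal{H})\ovot L^\infty(\check{\G})\ovot L^\infty(\G)$, which is closed under multiplication. Slicing the last leg against $\omega\in L^1(\G)=L^\infty(\G)_*$ maps this space into $B(\mathcal{H})\ovot L^\infty(\check{\G})$ by the Fubini slice-map properties recalled in Section 2, so $U^*(1\otimes y)U\in B(\mathcal{H})\ovot L^\infty(\check{\G})$. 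Combined with the reduction in the first paragraph, this proves the lemma.

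The main thing to get right is the bookkeeping: one must apply $\Delta$ on the correct (second) leg of $U$ and confirm that it is implemented there by $V_{23}$ in the form $V_{23}(\cdot)_2 V_{23}^*$, matching the convention $\Delta(x)=V(x\otimes 1)V^*$; this is where a leg/adjoint error would most easily creep in, and getting $U_{12}^*V_{23}U_{12}=U_{13}V_{23}$ (rather than its inverse) is what makes the final membership work. The only other point needing (routine) care is the justification that the third-leg slice of an element of $B(\mathcal{H})\ovot L^\infty(\check{\G})\ovot L^\infty(\G)$ lands in $B(\mathcal{H})\ovot L^\infty(\check{\G})$, which is immediate from the definition of the Fubini tensor product.
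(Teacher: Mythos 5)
Your proof is correct and follows essentially the same route as the paper: you derive the identity $U_{12}^*V_{23}U_{12}=U_{13}V_{23}$ from the corepresentation property, slice the third leg to conclude membership of $U^*(1\otimes(\id\otimes\omega)(V))U$ in $B(\mathcal{H})\ovot L^\infty(\check{\G})$, and finish by $\sigma$-weak density of the generators. The only difference is that you spell out the reduction step (the set of good $y$ being $\sigma$-weakly closed), which the paper leaves implicit; this is a welcome clarification, not a deviation.
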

\begin{proof}
    By the assumption, we see that $V_{23}U_{12}V_{23}^* = U_{12}U_{13}$ or equivalently
    $$U_{12}^*V_{23} U_{12}= U_{13}V_{23}\in B(\mathcal{H})\ovot L^\infty(\check{\G})\ovot B(L^2(\G)).$$
    Therefore, if $\omega \in B(L^2(\G))_*$, we find
    \begin{align*}
        U^*(1\otimes (\id \otimes \omega)(V))U = (\id \otimes \id \otimes \omega)(U_{12}^* V_{23} U_{12}) = (\id \otimes \id \otimes \omega)(U_{13}V_{23})\in B(\mathcal{H})\ovot L^\infty(\check{\G}).
    \end{align*}
    Recalling that $L^\infty(\check{\G})=[(\id \otimes \omega)(V): \omega \in B(L^2(\G))_*]^{\sigma\textrm{-weak}}$, the lemma follows.
\end{proof}

\subsection{Actions of locally compact quantum groups and crossed products.}

A (right) \emph{$\G$-dynamical von Neumann algebra} is a pair $(M, \alpha)$ such that $M$ is a von Neumann algebra and $\alpha: M \to M\ovot L^\infty(\G)$ is an injective, unital, normal $*$-homomorphism satisfying the coaction property $(\alpha\otimes \id)\circ \alpha = (\id \otimes \Delta)\circ \alpha$. We sometimes denote this with $\alpha: M\curvearrowleft\G$. We write
$M^{\alpha} = \{x\in M: \alpha(x)= x \otimes 1\}$
for the von Neumann subalgebra of fixed points of $(M, \alpha)$ and the trivial $\G$-action on a von Neumann algebra $M$ will always be denoted by $\tau$, i.e. $\tau(x)= x \otimes 1$ for $x\in M$. Given a $\G$-dynamical von Neumann algebra $(M, \alpha)$,  we define the \emph{crossed product von Neumann algebra}
$M\rtimes_\alpha \G= [\alpha(M)(1\otimes L^\infty(\check{\G}))]''.$
We have the alternative description
$$M\rtimes_\alpha \G = \{z \in M \ovot B(L^2(\G)): (\alpha\otimes \id)(z) =(\id \otimes \Delta_l)(z)\}.$$
It is easy to verify that we have maps
$$\id \otimes \Delta_r: M \rtimes_\alpha \G \to (M\rtimes_\alpha \G)\ovot L^\infty(\G), \quad \id \otimes \check{\Delta}_r: M\rtimes_\alpha \G \to (M\rtimes_\alpha \G)\ovot L^\infty(\check{\G})$$
so that $M\rtimes_\alpha \G$ becomes a $\G$-resp.~ $\check{\G}$-dynamical von Neumann algebra. We have that $(M\rtimes_\alpha \G)^{\id \otimes \check{\Delta}_r}= \alpha(M)$. Given any von Neumann algebra $M$, we have $M\rtimes_\tau \G = M\ovot L^\infty(\check{\G})$. Given two $\G$-dynamical von Neumann algebras $(M, \alpha)$ and $(N, \beta)$, a completely bounded map $\phi: M\to N$ is called $\G$-equivariant if $(\phi\otimes \id)\circ \alpha= \beta \circ \phi.$

There is a unique $*$-isomorphism $\kappa: L^\infty(\G)\rtimes_\Delta \G \to B(L^2(\G))$ such that $\kappa(\Delta(x)) = x$ and $\kappa(1\otimes y)= y$ for all $x\in L^\infty(\G)$ and all $y\in L^\infty(\check{\G})$. In fact, $\kappa^{-1}= \Delta_l: B(L^2(\G))\to L^\infty(\G)\rtimes_\Delta \G.$ It defines a $\G$-equivariant resp.~ $\check{\G}$-equivariant isomorphism 
        $$(L^\infty(\G)\rtimes_\Delta \G, \id \otimes \Delta_r)\cong (B(L^2(\G)), \Delta_r), \qquad  (L^\infty(\G)\rtimes_\Delta \G, \id \otimes \check{\Delta}_r)\cong (B(L^2(\G)), \check{\Delta}_r).$$

If $(M, \alpha)$ is a $\G$-dynamical von Neumann algebra where $M$ is standardly represented on a Hilbert space $\mathcal{H}$, there exists a canonical unitary $U_\alpha \in B(\mathcal{H})\ovot L^\infty(\G)$ such that $\alpha(m)= U_\alpha(m\otimes 1)U_\alpha^*$ for all $m\in M$ \cite{Va01}. We call $U_\alpha$ the \emph{unitary implementation} of $\alpha$, and it satisfies $(\id \otimes \Delta)(U_\alpha)= U_{\alpha,12}U_{\alpha,13}.$

Following the terminology introduced in \cite{DD24}, we say that $(M, \alpha)$ is \emph{$\G$-amenable} if there exists a $\G$-equivariant unital completely positive conditional expectation $(M \ovot L^\infty(\G), \id \otimes \Delta)\to (\alpha(M), \id \otimes \Delta)$ and we call $(M, \alpha)$ \emph{inner $\G$-amenable} if there exists a $\G$-equivariant unital completely positive conditional expectation $(M\rtimes_\alpha \G, \id \otimes \Delta_r)\to (\alpha(M), \id \otimes \Delta)$. It is not so hard to see that the notion of an amenable action of a locally compact group on a von Neumann algebra, as introduced in \cite{AD79}, is equivalent to the quantum definition (in \cite{AD79}, the author uses left shifts, whereas we must use right shifts in our conventions). We also mention the important fact that $(M, \alpha)$ is $\G$-amenable if and only if $(M\rtimes_\alpha \G, \id \otimes \check{\Delta}_r)$ is inner $\check{\G}$-amenable and that $(M, \alpha)$ is inner $\G$-amenable if and only if $(M\rtimes_\alpha \G, \id \otimes \check{\Delta}_r)$ is $\check{\G}$-amenable \cite[Theorem 6.12]{DD24}. 

Let us also mention the following well-known principle:

\begin{Prop}\label{general result}
    Let $(M, \alpha)$ be a $\G$-dynamical von Neumann algebra and $N$ a von Neumann subalgebra of $M$ with $\alpha(N)\subseteq N \ovot L^\infty(\G)$, so that also $(N, \alpha)$ is a $\G$-dynamical von Neumann algebra. The following statements are equivalent:
    \begin{enumerate}
        \item There exists a $\G$-equivariant unital completely positive conditional expectation $\phi: M \to N$.
        \item There exists a $\check{\G}$-equivariant unital completely positive conditional expectation $\psi: M\rtimes_\alpha \G \to N \rtimes_\alpha \G$.
    \end{enumerate}
\end{Prop}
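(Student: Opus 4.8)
The plan is to prove the two implications separately, using throughout the description $N\rtimes_\alpha \G = (M\rtimes_\alpha \G)\cap (N\ovot B(L^2(\G)))$ coming from the defining relation $(\alpha\otimes \id)(z)= (\id \otimes \Delta_l)(z)$, together with the identifications $(M\rtimes_\alpha \G)^{\id \otimes \check{\Delta}_r}= \alpha(M)$, the fact that $\alpha$ is a normal $*$-isomorphism onto its image, and the inclusion $1\otimes L^\infty(\check{\G})\subseteq N\rtimes_\alpha \G$. For $(1)\Rightarrow(2)$ I would set $\psi:= (\phi\otimes \id_{B(L^2(\G))})|_{M\rtimes_\alpha \G}$. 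Since $\phi$ takes values in $N$, the image lands in $N\ovot B(L^2(\G))$; and the equivariance $(\phi\otimes \id)\alpha = \alpha\phi$ together with $z\in M\rtimes_\alpha \G$ shows $(\phi\otimes \id)(z)$ again satisfies $(\alpha\otimes \id)(\cdot)=(\id\otimes \Delta_l)(\cdot)$, where one uses that $\Delta_l$ differentiates only the $B(L^2(\G))$-leg, disjoint from the leg on which $\phi$ acts. Thus $\psi$ maps $M\rtimes_\alpha \G$ into $N\rtimes_\alpha \G$; it is unital completely positive (tensoring a ucp map with $\id$ preserves this), a conditional expectation (because $\phi|_N=\id_N$), and $\check{\G}$-equivariant (because $\id\otimes \check{\Delta}_r$ acts on the $B(L^2(\G))$-leg, which commutes with $\phi$ on the $M$-leg by functoriality of the Fubini tensor product).

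The substance is in $(2)\Rightarrow(1)$. Given $\psi$, the first and crucial step is to observe that $\psi$ is \emph{automatically} $\G$-equivariant for the adjoint action $\id\otimes \Delta_r$, even though only $\check{\G}$-equivariance is assumed. The key point is that this $\G$-action is inner: writing $\Delta_r(x)=V(x\otimes 1)V^*$ with $V\in L^\infty(\check{\G})\ovot L^\infty(\G)$, the unitary $1_M\otimes V$ implements $\id\otimes \Delta_r$ on $M\rtimes_\alpha \G$, and its nontrivial leg lies in $1\otimes L^\infty(\check{\G})\subseteq N\rtimes_\alpha \G$; hence $1_M\otimes V\in (N\rtimes_\alpha \G)\ovot L^\infty(\G)$. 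Now $\psi\otimes \id_{L^\infty(\G)}$ is a unital completely positive conditional expectation of $(M\rtimes_\alpha \G)\ovot L^\infty(\G)$ onto $(N\rtimes_\alpha \G)\ovot L^\infty(\G)$, so by Tomiyama's theorem it is a bimodule map over the latter algebra and therefore commutes with conjugation by $1_M\otimes V$. Applying this to $z\otimes 1$ yields $(\psi\otimes \id)(\id\otimes \Delta_r)(z)=(\id\otimes \Delta_r)(\psi(z))$, i.e. the desired $\G$-equivariance of $\psi$.

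With $\G$-equivariance of $\psi$ in hand I would finish as follows. By $\check{\G}$-equivariance, $\psi$ sends the $\check{\G}$-fixed points $(M\rtimes_\alpha \G)^{\id\otimes \check{\Delta}_r}=\alpha(M)$ into $(N\rtimes_\alpha \G)^{\id\otimes \check{\Delta}_r}=\alpha(N)$, so I define $\phi:= \alpha^{-1}\circ (\psi|_{\alpha(M)})\circ \alpha\colon M\to N$. This $\phi$ is unital completely positive as a composition of such maps, and it is a conditional expectation because $\alpha(N)\subseteq N\rtimes_\alpha \G$ is fixed by $\psi$, whence $\phi|_N=\id_N$. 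Finally, the restriction of $\id\otimes \Delta_r$ to $\alpha(M)$ equals $\id\otimes \Delta=(\alpha\otimes \id)\circ\alpha\circ\alpha^{-1}$ (since $\Delta_r|_{L^\infty(\G)}=\Delta$ and $(\alpha\otimes \id)\alpha=(\id\otimes \Delta)\alpha$), so the $\G$-equivariance of $\psi$ translates, after transport by the isomorphism $\alpha$, precisely into $(\phi\otimes \id)\alpha = \alpha\phi$.

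I expect the only real obstacle to be the first step of $(2)\Rightarrow(1)$: recognizing that the adjoint $\G$-action on the crossed product is implemented by a unitary whose nontrivial leg already lies inside $N\rtimes_\alpha \G$, so that \emph{any} $\check{\G}$-equivariant conditional expectation onto $N\rtimes_\alpha \G$ is forced to be $\G$-equivariant as well. Once this is isolated, the remaining verifications (complete positivity, the conditional-expectation property, and the bookkeeping of tensor legs in the Fubini product) are routine given the facts recalled in Section~2.
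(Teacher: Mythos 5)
Your proof is correct and follows essentially the same route as the paper: $(1)\Rightarrow(2)$ via $\psi=\phi\otimes\id$, and $(2)\Rightarrow(1)$ by restricting $\psi$ to the $\check{\G}$-fixed points $\alpha(M)\to\alpha(N)$ to define $\phi=\alpha^{-1}\circ\psi\circ\alpha$, with the $\G$-equivariance of $\psi$ extracted from the fact that $\psi$ fixes $1\otimes L^\infty(\check{\G})$. The paper phrases your Tomiyama bimodule step as a multiplicative domain argument --- since $\psi\otimes\id$ fixes the unitary $V_{23}\in M(1\otimes C_0(\check{\G})\otimes C_0(\G))$, conjugation by $V_{23}$ commutes with $\psi\otimes\id$ --- which is the same mechanism in different clothing.
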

\begin{proof}
    $(1)\implies (2)$ We can take $\psi= \phi\otimes \id$.

    $(2)\implies (1)$ Since $\psi$ is $\check{\G}$-equivariant, it restricts to a map $\psi: \alpha(M)\to \alpha(N)$ between the fixed points. Then $\phi:= \alpha^{-1}\circ \psi\circ \alpha: M \to N$ is a unital completely positive conditional expectation. We have $\psi(1\otimes y)= 1 \otimes y$ for all $y\in L^\infty(\check{\G})$. From this, it follows that $(\psi\otimes \id)(z) = z$ for all $z\in M(1\otimes C_0(\check{\G})\otimes C_0(\G))$. In particular, $V_{23}$ belongs the multiplicative domain of $\psi\otimes \id$. Therefore, if $z\in M\rtimes_\alpha \G$, we have
    \begin{align*}
        (\psi\otimes \id)(\id \otimes \Delta_r)(z) = (\psi\otimes \id)(V_{23}(z\otimes 1)V_{23}^*)= V_{23}(\psi(z)\otimes 1)V_{23}= (\id \otimes \Delta_r)\psi(z).
    \end{align*}
From this, it follows that the map $\phi$ is $\G$-equivariant.
\end{proof}

\section{Equivariant operator spaces, injectivity and amenability}

We fix a locally compact quantum group $\G$.

\begin{Def}\label{Equivariant operator space} A (right) $\G$-operator space is a pair $(X, \alpha)$ where $X$ is an operator space and $\alpha: X \to X \ovot L^\infty(\G)$ is a complete isometry such that the diagram
$$
\begin{tikzcd}
X \arrow[d, "\alpha"'] \arrow[rr, "\alpha"]            &  & X \ovot L^\infty(\G) \arrow[d, "\id \otimes \Delta"] \\
X \ovot L^\infty(\G) \arrow[rr, "\alpha \otimes \id"'] &  & X \ovot L^\infty(\G)\ovot L^\infty(\G)              
\end{tikzcd}$$
commutes. Whenever convenient, we will write $X\stackrel{\alpha}\curvearrowleft\G$. We also write $X^\alpha = \{x\in X: \alpha(x)= x\otimes 1\}$ for the space of $\G$-fixed points of $(X, \alpha)$. A (right) $\G$-operator system is a $\G$-operator space $(X, \alpha)$ where $X$ is an operator system and $\alpha$ is unital.
\end{Def}

We give some remarks:
\begin{itemize}
\item In the terminology of \cite[Definition 2.1]{Ham91}, a $\G$-operator space is simply a comodule over the Hopf-von Neumann algebra $(L^\infty(\G), \Delta).$
\item We emphasize that $X$ in the above definition need not be a dual operator space, and even if it were, the action $\alpha$ need not be $\sigma$-weakly continuous.
\item A $\G$-dynamical von Neumann algebra is a $\G$-operator system $(M, \alpha)$ such that $M$ is a von Neumann algebra and $\alpha$ is a normal $*$-homomorphism. 
    \item In \cite[Definition 3.8, Definition 3.22]{DH24}, the notion of a $\G$-operator system was already introduced where $\G$ is a compact quantum group or a discrete quantum group (in fact, in \cite{DH24}, we used the terminology $\G$-$W^*$-operator system to make the distinction clear between the notion of $\G$-$C^*$-operator system and $\G$-$W^*$-operator system; in this paper, no confusion can arise so we omit the prefix $W^*$ from the terminology). Definition \ref{Equivariant operator space} is the obvious generalisation to the more general context of operator spaces and locally compact quantum groups. 
    \item  Let $(X, \alpha)$ be a right $\G$-operator space. Under the canonical completely isometric identification
$$CB(X, X \ovot L^\infty(\G))\cong CB(X, CB(L^1(\G),X))\cong CB(L^1(\G) \hat{\otimes} X, X)$$
the coaction $\alpha: X \to X \ovot L^\infty(\G)$ induces a left operator $L^1(\G)$-module structure $\rhd$ on $X$ given by
$$\omega\rhd x = (\id \otimes \omega)\alpha(x), \quad x \in X, \omega \in L^1(\G).$$Since $\alpha$ is a complete isometry, we see that the canonical map
$$j_X: X \to CB(L^1(\G), X), \quad j_X(x)(\omega) = \omega\rhd x, \quad x \in X, \omega \in L^1(\G)$$
is completely isometric. It follows that a right $\G$-operator space is the same thing as a left operator $L^1(\G)$-module such that the canonical map $j_X: X \to CB(L^1(\G),X)$ is completely isometric. 
    \item We can similarly define the notion of left $\G$-operator space/system. However, we will only deal with right $\G$-operator spaces in this paper, so from now on all $\G$-operator spaces are assumed to be right $\G$-operator spaces. 
\end{itemize}

We now show that $\G$-dynamical von Neumann algebras lead to important examples of $\G$-operator systems.
\begin{Exa} Let $M\stackrel{\alpha}\curvearrowleft\G$ be a $\G$-dynamical von Neumann algebra. Define the operator system
$$X:= [(\omega \otimes \id)\alpha(m): m \in M, \omega \in M_*]\subseteq L^\infty(\G).$$
Then clearly for $m\in M,  \omega \in M_*$ and $\eta \in B(L^2(\G))_*$,
\begin{align*}
    (\id \otimes \eta)\Delta((\omega \otimes \id)\alpha(m))&= (\id \otimes \eta)(\omega \otimes \id \otimes \id)(\id \otimes \Delta)\alpha(m)\\
    &= (\omega \otimes \id \otimes \eta)(\alpha \otimes \id)\alpha(m)\\
    &= (\omega \otimes \id)\alpha((\id \otimes \eta)\alpha(m)) \in X
\end{align*}
so that $\Delta((\omega \otimes \id)\alpha(m))\in X \bar{\otimes}L^\infty(\G)$. Thus, the map $\Delta$ restricts to a completely isometric, unital coaction
$$\Delta: X \to X \bar{\otimes} L^\infty(\G).$$
It is not clear if $X$ is a $C^*$-subalgebra of $L^\infty(\G)$ if $\G$ fails to be semiregular. To illustrate this, note that if $L^\infty(\G)\stackrel{\Delta}\curvearrowleft\G$, we find back the space of left uniformly continuous elements $X=LUC(\G)$ \cite{Ru09}. In \cite{HNR11}, it was shown that if $\G$ is semi-regular, then $LUC(\G)$ is a $C^*$-subalgebra of $M(C_0(\G))\subseteq L^\infty(\G)$. But in the general case, it is not clear that $X$ is a $C^*$-subalgebra. Note that the dynamics of $LUC(\G)$ was used in \cite[Proposition 5.8]{Cr21} to characterise the co-amenability of $\G$.

On the other hand, we can also consider
$$Y := [(\id \otimes \omega)\alpha(m): m \in M, \omega \in L^1(\G)]\subseteq M.$$

If $m\in M$ and $\omega, \eta \in L^1(\G)$, we have
\begin{align*}
    (\id \otimes \eta)\alpha((\id \otimes \omega)\alpha(m))&= (\id \otimes \eta)(\id \otimes\id \otimes \omega)(\alpha \otimes \id)\alpha(m)\\
    &= (\id \otimes \eta \otimes \omega)(\id \otimes \Delta)\alpha(m)\\
    &= (\id \otimes (\eta\star \omega))\alpha(m)\in Y
\end{align*}
so we conclude that the map $\alpha$ restricts to a completely isometric, unital coaction
$$\alpha: Y \to Y \bar{\otimes} L^\infty(\G).$$ Again, it is not clear that $Y$ carries a $C^*$-algebra structure outside the semiregular case. Note that the operator system $Y$ plays an important role in \cite{DD24}, although its dynamics was not explicitly considered.
\end{Exa}

\begin{Def}
    Given two $\G$-operator spaces $(X, \alpha)$ and $(Y, \beta)$, a completely bounded map $\phi: X \to Y$ is said to be $\G$-equivariant if the following diagram commutes:
    $$
\begin{tikzcd}
X \arrow[d, "\alpha"'] \arrow[rr, "\phi"]          &  & Y \arrow[d, "\beta"] \\
X\ovot L^\infty(\G) \arrow[rr, "\phi\otimes \id"'] &  & Y \ovot L^\infty(\G)
\end{tikzcd}$$
\end{Def}
\begin{Rem}
    In terms of the induced $L^1(\G)$-module structures, it is easy to see that $\phi: X \to Y$ is $\G$-equivariant if and only if $\phi(\omega \rhd x) = \omega \rhd \phi(x)$ for all $\omega \in L^1(\G)$ and all $x \in X$, i.e. $\phi$ is an $L^1(\G)$-module morphism.
\end{Rem}

\begin{Def}
    Let $(X, \alpha)$ be a $\G$-operator space (resp.~ system). \begin{itemize}
        \item $(X, \alpha)$ is said to be $\G$-injective  as an operator space (resp.~ system) if for all $\G$-operator spaces (resp.~ systems) $Y$ and $Z$ together with a $\G$-equivariant (resp.~ unital) complete contraction  $\phi: Y\to X$ and a $\G$-equivariant (resp.~ unital) complete isometry $\iota: Y\to Z$, there exists a $\G$-equivariant (resp.~ unital) complete contraction $\Phi: Z\to X$ such that $\Phi\circ \iota = \phi$. 
        \item $(X, \alpha)$ is said to be $\G$-amenable if there exists a $\G$-equivariant completely contractive conditional expectation $E: (X \ovot L^\infty(\G), \id \otimes \Delta)\to (\alpha(X), \id \otimes \Delta)$. 
    \end{itemize}
\end{Def}

We now show the existence of $\G$-injective envelopes. First, we give the following standard definitions:

\begin{Def}
    Let $X$ be a $\G$-operator space (resp.~system).
    \begin{itemize}
        \item A $\G$-extension of $X$ is a pair $(Y, \iota)$ where $Y= (Y, \beta)$ is a $\G$-operator space (resp.~system) and $\iota: X \to Y$ a $\G$-equivariant (resp.~~unital) complete isometry. We write $(X, \alpha)\subseteq_\G (Y, \beta)$ if $X\subseteq Y$ and $\beta\vert_X = \alpha$.
        \item A $\G$-extension $(Y, \iota)$ of $X$ is called $\G$-rigid if for every $\G$-equivariant (resp.~ unital) completely contractive map $\phi: Y \to Y$ with $\phi\iota = \iota$, we have $\phi = \id_Y$.
        \item A $\G$-extension $(Y, \iota)$ of $X$ is called $\G$-essential if every $\G$-equivariant (resp.~ unital) completely contractive map $\phi: Y \to Z$ is a complete isometry whenever $\phi\circ \iota: X \to Z$ is a complete isometry.
        \item A $\G$-injective extension $(Y, \iota)$ of $X$ is called $\G$-injective envelope if the situation $\iota(X)\subseteq_\G \widetilde{X}\subseteq_\G Y$ with $\widetilde{X}$ $\G$-injective implies that $Y = \widetilde{X}$.
    \end{itemize}
\end{Def}

Note that if $(X, \alpha)$ is a $\G$-operator space and $X\subseteq B(\mathcal{H})$, then the map
$\alpha: (X, \alpha) \to  (B(\mathcal{H})\ovot L^\infty(\G), \id \otimes \Delta)$ is a $\G$-equivariant complete isometry. In particular, there exists a $\G$-dynamical von Neumann algebra $(Y, \beta)$ such that $X\subseteq_\G Y$. We will frequently exploit this fact to reduce problems to the situation of $\G$-dynamical von Neumann algebras, where things are usually much easier to handle.

 \begin{Prop}\label{injective envelopes}
    If $(X, \alpha)$ is a $\G$-operator space (resp.~ system), there exists a $\G$-extension $(Y, \kappa: X \to Y)$ that is a $\G$-injective envelope of $X$. It is unique in the sense that if $(\widetilde{Y}, \widetilde{\kappa}: X \to \widetilde{Y})$ is another $\G$-injective envelope for $X$, then there is a unique $\G$-equivariant (resp.~ unital) completely isometric isomorphism $\Phi: Y \to \widetilde{Y}$ such that the diagram
    $$
\begin{tikzcd}
                              & X \arrow[ld, "\kappa"', hook] \arrow[rd, "\widetilde{\kappa}", hook'] &               \\
Y \arrow[rr, "\Phi"', dashed] &                                                                       & \widetilde{Y}
\end{tikzcd}$$
commutes. Moreover, a $\G$-extension $(Y, \kappa: X \to Y)$ is a $\G$-injective envelope if and only if the extension is $\G$-injective and $\G$-rigid, and in this case the extension is also $\G$-essential.
\end{Prop}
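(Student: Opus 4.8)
The plan is to imitate Hamana's construction of injective envelopes, carried out $\G$-equivariantly, and to deduce the characterisation and uniqueness by formal arguments. The first step is to produce a $\G$-injective extension $(Z,\gamma)$ of $X$ that is moreover a \emph{dual} operator space, the latter being needed for a weak$^*$-compactness argument. Writing $X\subseteq B(\mathcal{H})$, the coaction exhibits $X$ as a $\G$-operator subspace of such a $Z$: one embeds the underlying operator $L^1(\G)$-module of $X$ completely isometrically and $L^1(\G)$-linearly into a cofree — hence $L^1(\G)$-injective — dual module (cf.\ \cite{Cr17a} and the comodule injectivity of \cite{Ham91}), using that $\G$-equivariant maps are exactly $L^1(\G)$-module maps. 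In the operator system case all maps below are taken unital. The goal is then to realise the envelope as a suitable retract of $Z$.

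Next I would run the minimal-projection argument. Consider the set $\mathcal{S}$ of $\G$-equivariant (unital) complete contractions $\phi\colon Z\to Z$ with $\phi|_X=\id_X$; it is convex, contains $\id_Z$, is closed under composition, and is compact in the point-$\sigma$-weak topology since the unit ball of the dual space $Z$ is weak$^*$-compact. Ordering $\mathcal{S}$ by the pointwise seminorms $z\mapsto\|\phi(z)\|$, weak$^*$-lower-semicontinuity of the norm lets a chain have a lower bound (a weak$^*$-cluster point), so Zorn gives a seminorm-minimal element, with minimal seminorm $p_0$. The set $\mathcal{M}=\{\phi\in\mathcal{S}:p_\phi=p_0\}$ is then compact, convex, and a subsemigroup under composition (because $\|\psi\phi(z)\|\le\|\phi(z)\|$ forces $p_{\psi\phi}=p_0$); as right multiplication is point-weak$^*$-continuous, $\mathcal{M}$ is a compact right-topological semigroup and hence contains an idempotent $\phi$ by Ellis's theorem. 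Setting $Y:=\phi(Z)$ with the coaction inherited via $\gamma(\phi(z))=(\phi\otimes\id)\gamma(z)\in Y\ovot L^\infty(\G)$, one gets $X\subseteq_{\G}Y$.

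Now $Y$ is $\G$-injective, being a $\G$-equivariant retract of the $\G$-injective $Z$ (extend a map into $Z$ by injectivity, then compress by $\phi$). For $\G$-rigidity, any $\G$-equivariant (unital) complete contraction $\psi\colon Y\to Y$ fixing $X$ yields $\psi\phi\in\mathcal{M}$, and the minimality of $p_0$ forces $\psi\phi=\phi$, whence $\psi=\id_Y$. The remaining implications are formal. If $(Y,\iota)$ is $\G$-injective and $\G$-rigid, it is $\G$-essential: given a $\G$-map $\psi\colon Y\to W$ with $\psi\iota$ completely isometric, $\G$-injectivity of $Y$ extends $\iota\circ(\psi\iota)^{-1}\colon \psi\iota(X)\to Y$ to $\theta\colon W\to Y$ with $\theta\psi\iota=\iota$, so $\theta\psi=\id_Y$ by rigidity and $\psi$ is a complete isometry. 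It is also a $\G$-injective envelope: if $\iota(X)\subseteq_{\G}\widetilde{X}\subseteq_{\G}Y$ with $\widetilde{X}$ $\G$-injective, compress $Y$ onto $\widetilde{X}$ to get $q\colon Y\to\widetilde{X}$ fixing $\widetilde{X}$, so the composite $Y\to\widetilde{X}\hookrightarrow Y$ fixes $X$ and equals $\id_Y$ by rigidity, giving $\widetilde{X}=Y$. Thus the constructed $Y$ is a $\G$-injective envelope.

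Finally, uniqueness: given another $\G$-injective envelope $(\widetilde{Y},\widetilde{\kappa})$, injectivity produces $\G$-maps $\Phi\colon Y\to\widetilde{Y}$ and $\Psi\colon\widetilde{Y}\to Y$ with $\Phi\kappa=\widetilde{\kappa}$, $\Psi\widetilde{\kappa}=\kappa$; rigidity of $Y$ gives $\Psi\Phi=\id_Y$, so $\Phi\Psi$ is an idempotent $\G$-map of $\widetilde{Y}$ onto the $\G$-injective subspace $\Phi(Y)\supseteq\widetilde{\kappa}(X)$, and minimality of $\widetilde{Y}$ forces $\Phi(Y)=\widetilde{Y}$, making $\Phi$ a $\G$-equivariant completely isometric isomorphism with $\Phi\kappa=\widetilde{\kappa}$, unique by rigidity of $Y$. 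Transporting rigidity across $\Phi$ shows every $\G$-injective envelope is $\G$-rigid, which is the converse direction of the stated equivalence. The step I expect to be the main obstacle is the \emph{rigidity} of the minimal projection, i.e.\ upgrading the equality of seminorms $p_{\psi\phi}=p_0$ to the equality of maps $\psi\phi=\phi$: lacking positivity in the operator space setting, this requires the matricial refinement of the minimal-seminorm argument, or passing to the associated Paulsen $\G$-operator system to invoke a Schwarz-type inequality, exactly as in \cite{Ham85,Ham91}. A secondary technical point is ensuring that the ambient $Z$ is genuinely $\G$-injective and that all limiting and compositional operations stay within the $\G$-equivariant category.
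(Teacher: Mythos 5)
Your formal deductions are fine: the arguments that ($\G$-injective $+$ $\G$-rigid) $\Rightarrow$ $\G$-essential $\Rightarrow$ envelope, and the uniqueness argument via $\Psi\Phi=\id_Y$ and minimality, are correct and standard, and the overall skeleton (absorbing/minimal idempotent on an ambient injective object) is also the skeleton of the paper's proof. The genuine gap is your very first step. You need an ambient object $Z$ that is simultaneously (i) a dual operator space, (ii) $\G$-injective (equivalently, injective as an operator $L^1(\G)$-module), and (iii) an honest $\G$-operator space, i.e.\ carrying a completely isometric coaction $\gamma$ --- you use $\gamma$ explicitly when you endow $Y=\phi(Z)$ with the coaction ``$\gamma(\phi(z))=(\phi\otimes\id)\gamma(z)$''. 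No such $Z$ is available in general, and the two natural candidates each fail one requirement. The cofree injective module is $CB(L^1(\G)_1,B(\mathcal{H}))\cong (B(\mathcal{H})\ovot L^\infty(\G))\oplus_\infty B(\mathcal{H})$ --- the unitization is unavoidable, since $L^1(\G)$ has no bounded approximate identity unless $\G$ is co-amenable --- and its module structure annihilates the second summand ($\omega\rhd(0,x)=(0,0)$), so it cannot be induced by any completely isometric coaction: it is injective as a module but is \emph{not} a $\G$-operator space. Conversely, the coaction-carrying dual extensions of $X$, namely $(B(\mathcal{H})\ovot L^\infty(\G),\id\otimes\Delta)$ (which is the non-unitized cofree module $CB(L^1(\G),B(\mathcal{H}))$) and $(B(\mathcal{H})\ovot B(L^2(\G)),\id\otimes\Delta_r)$, are not $\G$-injective in general: by Proposition \ref{injective}, $\G$-injectivity of the former would force $L^\infty(\G)$ to be an injective von Neumann algebra, and by Lemma \ref{amplification} together with Theorem \ref{amenable}, the latter is $\G$-injective if and only if $\G$ is amenable. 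So either $\gamma$ does not exist on your $Z$, or $Z$ is not injective; in both cases the retract $Y=\phi(Z)$ cannot be certified as a $\G$-injective $\G$-operator space, and the construction collapses.

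This is exactly where the paper's proof takes a different, and necessary, route: it runs the idempotent argument entirely inside the \emph{larger} category of operator $L^1(\G)$-modules, where the wrapper $N=CB(L^1(\G)_1,B(\mathcal{H}))$ is injective (a modification of \cite[Proposition 2.3]{Cr17a}), the absorbing idempotent being supplied by \cite[Proposition 2.1]{HHN22} in the operator system case and by the module injective envelope of \cite[Theorem 2.1]{BC21} in the operator space case; and then, as a separate and essential step, it transfers back to the $\G$-category by using $L^1(\G)$-essentiality of the envelope to prove that the canonical map $j_Y:Y\to CB(L^1(\G),Y)$ is completely isometric, i.e.\ that the module structure of the envelope \emph{is} induced by a completely isometric coaction. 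Your proposal has no analogue of this transfer step, which is the actual mathematical content beyond Hamana. Relatedly, deferring the rigidity step to ``exactly as in \cite{Ham85,Ham91}'' is not available here: Hamana's equivariant theorem \cite[Theorem 2.7]{Ham91} carries a standing hypothesis that $M_*$ has an approximate identity with norms tending to $1$ (co-amenability), and, as the remark following Proposition \ref{injective envelopes3} points out, the purpose of the present proof is precisely to remove that hypothesis; the appeals to \cite{BC21} and \cite{HHN22} are what replace it.
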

\begin{proof} First, we treat the case where $X$ is a $\G$-operator space. Under the completely isometric isomorphism $$CB(X, X \ovot L^\infty(\G))\cong CB(X, CB(L^1(\G),X))\cong CB(L^1(\G)\hat{\otimes} X, X)$$
the coaction $\alpha: X \to X \ovot L^\infty(\G)$ induces a complete contraction $\rhd: L^1(\G)\hat{\otimes}X \to X$ that turns $X$ into a (left) operator module over the completely contractive Banach algebra $L^1(\G)$. Let now $(Y, \iota: X \to Y)$ be the injective envelope in the category of operator $L^1(\G)$-modules (with completely contractive $L^1(\G)$-linear maps as morphisms) \cite[Theorem 2.1]{BC21} and consider its associated module map $\RHD: L^1(\G)\hat{\otimes} Y \to Y$ and the canonical map
$$j_Y: Y \to CB(L^1(\G), Y), \quad j_Y(y)(\omega)= \omega \RHD y.$$
Then clearly the diagram 
$$
\begin{tikzcd}
X \arrow[rr, "\iota"] \arrow[d, "j_X"']                             &  & Y \arrow[d, "j_Y"] \\
{CB(L^1(\G),X)} \arrow[rr, "\hat{\iota}: f \mapsto \iota \circ f"'] &  & {CB(L^1(\G),Y)}   
\end{tikzcd}$$
commutes. Since $j_X: X \to CB(L^1(\G),X)$ and $\hat{\iota}: CB(L^1(\G),X)\to CB(L^1(\G),Y)$ are complete isometries, the $L^1(\G)$-essentiality of $Y$ \cite[Theorem 2.1]{BC21} implies that also $j_Y$ is a complete isometry. But under the completely isometric isomorphism
$$CB(Y, Y\ovot L^\infty(\G))\cong CB(Y, CB(L^1(\G), Y))\cong CB(L^1(\G)\hat{\otimes}Y, Y)$$
the module map $\RHD$ therefore induces a completely isometric coaction $\beta: Y \to Y \ovot L^\infty(\G)$. Thus, $Y$ is a $\G$-operator space for the action $\beta: Y \curvearrowleft \G$. It is then easy to see $(Y, \iota)$ is a $\G$-injective envelope for $X$.

Next, we treat the case where $X$ is a $\G$-operator system. The argument is similar, but we can no longer use the injective envelope in the category of operator $L^1(\G)$-modules. Therefore, we will construct the injective envelope in an appropriate subcategory of operator $L^1(\G)$-modules. We say that an operator $L^1(\G)$-module $(Z, \rhd)$ is an $L^1(\G)$-module operator system if $Z$ is an operator system and $\omega \rhd 1 = \omega(1)1$ for all $\omega \in L^1(\G)$ (on the level of coactions, this module condition means unitality of the coaction). We embed $X\subseteq B(\mathcal{H})$ and we note that there is a canonical completely isometric identification
$$N:=(B(\mathcal{H})\ovot L^\infty(\G))\oplus_\infty B(\mathcal{H}) \cong CB(L^1(\G)_1, B(\mathcal{H})):(z,x)\mapsto [\omega + \lambda e \mapsto (\id \otimes \omega)(z)+ \lambda x].$$
Thus, it is clear that $CB(L^1(\G)_1, B(\mathcal{H}))$ admits the structure of an operator system. The unit $\mathbb{I}\in CB(L^1(\G)_1, B(\mathcal{H}))$ is given by
$$\mathbb{I}(\omega + \lambda e)= (\omega(1)+ \lambda)1, \quad \omega + \lambda e \in L^1(\G)_1$$
and corresponds to the unit of the von Neumann algebra $N$. For the canonical $L^1(\G)$-module action on $CB(L^1(\G)_1, B(\mathcal{H}))$, the space $CB(L^1(\G)_1, B(\mathcal{H}))$ then becomes an $L^1(\G)$-module operator system and an obvious modification of the argument in the proof of \cite[Proposition 2.3]{Cr17a} shows that $CB(L^1(\G)_1, B(\mathcal{H}))$ is injective in the category of $L^1(\G)$-module operator systems and $L^1(\G)$-linear unital complete contractions (= unital completely positive maps). Therefore, if we view $N$ with the $L^1(\G)$-module structure
$$\omega \rhd (z,x) = ((\id \otimes \id \otimes \omega)(\id \otimes \Delta)(z), (\id \otimes \omega)(z)), \quad \omega \in L^1(\G), (z,x)\in N$$
we see that $N\cong CB(L^1(\G)_1, B(\mathcal{H}))$ as $L^1(\G)$-module operator systems. Thus, $N$ is injective in the category of $L^1(\G)$-module operator systems.

Consider the $L^1(\G)$-linear unital complete isometry
$$\kappa: X \to N: x \mapsto (\alpha(x), x)$$
and the set $\mathcal{G}$ of all unital completely positive $L^1(\G)$-linear maps $\phi: N \to N$ that satisfy $\phi\circ \kappa = \kappa$. Then $\mathcal{G}$ is closed in the topology of pointwise $\sigma$-weak convergence: if $\{\phi_i\}_{i\in I}\subseteq \mathcal{G}$ and if $\phi: N \to N$ is a unital completely positive map such that $\phi_i\to \phi$ in the point $\sigma$-weak topology, then we have for $\omega \in L^1(\G)$ and $n\in N$, and with $\pi_1: N \to B(\mathcal{H}) \ovot L^\infty(\G)$ the projection map, that 
\begin{align*}
    \phi(\omega \rhd n) &= \sigma\text{-}\lim_{i\in I} \phi_i(\omega \rhd n)\\
    &= \sigma\text{-}\lim_{i\in I} \omega \rhd \phi_i(n)\\
    &= \sigma\text{-}\lim_{i\in I} ((\id \otimes \id \otimes \omega)(\id \otimes \Delta)\pi_1(\phi_i(n)), (\id \otimes \omega)\pi_1(\phi_i(n)))\\
    &= ((\id \otimes \id \otimes \omega)(\id \otimes \Delta)\pi_1(\phi(n)), (\id \otimes \omega)\pi_1(\phi(n)))= \omega \rhd \phi(n)
\end{align*}
and thus $\phi$ is $L^1(\G)$-linear. Clearly also $\phi\circ \kappa = \kappa$, so $\phi\in \mathcal{G}$. 

By \cite[Proposition 2.1]{HHN22}, we find an idempotent $\varphi\in \mathcal{G}$ such that $\varphi = \varphi \phi  \varphi$ for all $\phi\in \mathcal{G}$. Since $\varphi \kappa = \kappa$, we see that $\kappa(X)\subseteq \varphi(N)$. By a standard argument (see e.g.\ the proof of \cite[Proposition 5.3]{DH24}), the pair
$(\varphi(N), \kappa: X \to \varphi(N))$
is an injective envelope of $X$ in the category of $L^1(\G)$-module operator systems. By the same argument as in the first part of the proof (using $L^1(\G)$-essentiality), the pair $(\varphi(N), \kappa)$ is also an injective envelope of $X$ in the category of $\G$-operator systems.
\end{proof}

If $X$ is a $\G$-operator space, its $\G$-injective envelope (as a $\G$-operator space) is denoted by $I_\G(X)$. The above proof shows that $I_\G(X)= I_{L^1(\G)}(X)$ (with the notation on the right denoting the injective envelope in the category of 
operator $L^1(\G)$-modules and completely contractive $L^1(\G)$-linear maps). If $X$ is a $\G$-operator system, its injective envelope (as a $\G$-operator system) is denoted by $I_\G^1(X).$  

The attentive reader may have noticed that the proof of Proposition \ref{injective envelopes} did not make use of the invariant weights on the Hopf-von Neumann algebra $(L^\infty(\G), \Delta)$. In other words, if $(M, \Delta)$ is a Hopf-von Neumann algebra and the notions of $M$-operator space/system, $M$-injectivity, etc. are defined in the obvious way, we find the following general result:

\begin{Prop}\label{injective envelopes3} Let $(M, \Delta)$ be a Hopf-von Neumann algebra. If $(X, \alpha)$ is an $M$-operator space (resp.~~system), there exists an $M$-extension $(Y, \kappa: X \to Y)$ that is an $M$-injective envelope of $X$. It is unique in the sense that if $(\widetilde{Y}, \widetilde{\kappa}: X \to \widetilde{Y})$ is another $M$-injective envelope for $X$, then there is a unique $M$-equivariant (resp.~ unital) completely isometric isomorphism $\Phi: Y \to \widetilde{Y}$ such that the diagram
    $$
\begin{tikzcd}
                              & X \arrow[ld, "\kappa"', hook] \arrow[rd, "\widetilde{\kappa}", hook'] &               \\
Y \arrow[rr, "\Phi"', dashed] &                                                                       & \widetilde{Y}
\end{tikzcd}$$
commutes. Moreover, an $M$-extension $(Y, \kappa: X \to Y)$ is an $M$-injective envelope if and only if the extension is $M$-injective and $M$-rigid, and in this case the extension is also $M$-essential.
\end{Prop}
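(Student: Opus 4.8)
The plan is to observe that the proof of Proposition~\ref{injective envelopes} nowhere used the invariant weights $\Phi,\Psi$, and to rerun that argument verbatim with $L^\infty(\G)$ replaced by $M$ and $L^1(\G)$ replaced by the predual $M_*$. The first step is to confirm that $M_*$ is a completely contractive Banach algebra under the convolution $\mu\star\nu:=(\mu\otimes\nu)\circ\Delta$. This is simply the pre-adjoint of $\Delta$: since $(M\ovot M)_*\cong M_*\hat{\otimes} M_*$ and $\Delta:M\to M\ovot M$ is a normal complete contraction, dualising yields a complete contraction $M_*\hat{\otimes} M_*\to M_*$, while coassociativity of $\Delta$ is exactly associativity of $\star$. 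No Haar weight enters here.

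Next I would record the translation between coactions and modules. For any von Neumann algebra $M$ and operator space $X$ the completely isometric identifications
$$CB(X,X\ovot M)\cong CB(X,CB(M_*,X))\cong CB(M_*\hat{\otimes} X,X)$$
hold, using only the isomorphism $X\ovot M\cong CB(M_*,X)$ recalled in the Preliminaries, which is valid for an arbitrary von Neumann algebra. Under these, an $M$-operator space $(X,\alpha)$ is precisely an operator $M_*$-module for which the canonical map $j_X:X\to CB(M_*,X)$ is completely isometric, and $M$-equivariant maps are exactly $M_*$-module morphisms. This is the verbatim analogue of the remark following Definition~\ref{Equivariant operator space}.

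With these dictionaries in hand, both halves of the proof of Proposition~\ref{injective envelopes} transfer unchanged. In the operator space case I would take $(Y,\iota)$ to be the injective envelope in the category of operator $M_*$-modules, furnished by \cite[Theorem~2.1]{BC21} for any completely contractive Banach algebra; $M_*$-essentiality forces the canonical map $j_Y$ to be completely isometric, and transporting the resulting module map back through the displayed identifications produces a (unital, in the system case) completely isometric coaction $\beta:Y\to Y\ovot M$, exhibiting $(Y,\iota)$ as an $M$-injective envelope. In the operator system case I would embed $X\subseteq B(\mathcal{H})$, form $N:=(B(\mathcal{H})\ovot M)\oplus_\infty B(\mathcal{H})\cong CB((M_*)_1,B(\mathcal{H}))$, check that it is injective among $M_*$-module operator systems by the obvious modification of \cite[Proposition~2.3]{Cr17a}, and extract the envelope as the image of a minimal $\mathcal{G}$-idempotent via \cite[Proposition~2.1]{HHN22}. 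The uniqueness, rigidity, essentiality, and the ``$M$-injective $+$ $M$-rigid $\iff$ $M$-injective envelope'' equivalence then follow formally, word for word as in Proposition~\ref{injective envelopes}.

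The only genuine point to verify --- and thus the main (and rather mild) obstacle --- is that the cited module-theoretic inputs, namely \cite[Theorem~2.1]{BC21}, \cite[Proposition~2.3]{Cr17a} and \cite[Proposition~2.1]{HHN22}, are stated for an arbitrary completely contractive Banach algebra and not secretly for $A=L^1(\G)$ alone. Inspecting their hypotheses, this is indeed the case, so no quantum-group-specific structure is smuggled in and the entire argument goes through at the level of a general Hopf-von Neumann algebra.
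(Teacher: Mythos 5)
Your proposal is correct and is essentially identical to the paper's own argument: the paper proves Proposition \ref{injective envelopes3} precisely by remarking that the proof of Proposition \ref{injective envelopes} nowhere uses the invariant weights, so it runs verbatim with $L^\infty(\G)$ replaced by $M$ and $L^1(\G)$ by $M_*$. The points you single out for verification --- that $M_*$ is a completely contractive Banach algebra via the pre-adjoint of $\Delta$, that the coaction/module dictionary only needs $X\ovot M\cong CB(M_*,X)$ for an arbitrary von Neumann algebra, and that the inputs \cite[Theorem 2.1]{BC21}, \cite[Proposition 2.3]{Cr17a} and \cite[Proposition 2.1]{HHN22} are not specific to $L^1(\G)$ --- are exactly the checks implicit in the paper's remark.
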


\begin{Rem}
   The $M$-operator space version of Proposition \ref{injective envelopes} is also proven in \cite[Theorem 2.7]{Ham91}, under the additional assumption that $M_*$ admits an approximate unit $\{u_i\}_{i\in I}$ such that $\lim_{i\in I}\|u_i\| = 1$. In other words, our proof shows that \cite[Theorem 2.7]{Ham91} is still true when we omit the `co-amenability' condition on $(M, \Delta)$.
\end{Rem}

In the following result, we clarify the relation between different notions of $\G$-equivariant injectivity. 

\begin{Prop}\label{injective}
    Let $(X, \alpha)$ be a $\G$-operator space. The following statements are equivalent:
    \begin{enumerate}
    \item $X$ is injective as an $L^1(\G)$-module.
        \item $(X, \alpha)$ is $\G$-injective as a $\G$-operator space.
        \item $X$ is injective (as an operator space) and $(X, \alpha)$ is $\G$-amenable.
    \end{enumerate}
    If moreover $(X, \alpha)$ is a $\G$-operator system, then these conditions are also equivalent with:
    \begin{enumerate}
        \item[(4)] $(X, \alpha)$ is $\G$-injective as a $\G$-operator system.
    \end{enumerate}
\end{Prop}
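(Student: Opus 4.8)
The plan is to prove the three (resp.\ four) statements equivalent by pivoting everything on $(2)$. For $(1)\Leftrightarrow(2)$ the quickest route runs through injective envelopes. The implication $(1)\Rightarrow(2)$ is immediate, since a $\G$-operator space is in particular an operator $L^1(\G)$-module and a linear map between $\G$-operator spaces is $\G$-equivariant precisely when it is $L^1(\G)$-linear; thus any $L^1(\G)$-module extension provided by $(1)$ is automatically the desired $\G$-equivariant extension. For the converse I would invoke the identification $I_\G(X)=I_{L^1(\G)}(X)$ obtained in the proof of Proposition \ref{injective envelopes}, together with the standard categorical fact that, in a category with rigid injective envelopes, an object is injective if and only if its embedding into its injective envelope is an isomorphism: if $X$ is $\G$-injective, then $\id_X$ extends along $X\subseteq I_\G(X)$ to a retraction whose composite with the inclusion is, by rigidity, the identity of $I_\G(X)$, forcing $X=I_\G(X)=I_{L^1(\G)}(X)$, whence $X$ is $L^1(\G)$-injective.

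For $(2)\Rightarrow(3)$ I would extract two things from $\G$-injectivity. Applying the definition to the $\G$-equivariant data $\id_X\colon(X,\alpha)\to(X,\alpha)$ and the $\G$-equivariant complete isometry $\alpha\colon(X,\alpha)\to(X\ovot L^\infty(\G),\id\otimes\Delta)$ yields a $\G$-equivariant complete contraction $m\colon(X\ovot L^\infty(\G),\id\otimes\Delta)\to(X,\alpha)$ with $m\circ\alpha=\id_X$; equivalently $\alpha\circ m$ is a $\G$-equivariant completely contractive conditional expectation onto $\alpha(X)$, so $(X,\alpha)$ is $\G$-amenable. For operator-space injectivity the subtle point is the choice of an injective ambient: the obvious candidate $(B(\mathcal H)\ovot L^\infty(\G),\id\otimes\Delta)$ need not be injective, because $L^\infty(\G)$ need not be. Instead I embed $X$ $\G$-equivariantly into a $\G$-dynamical von Neumann algebra $(Y,\beta)$ (as in the discussion preceding Proposition \ref{injective envelopes}) and then, via the unitary implementation $U_\beta$, into $(B(\mathcal K),\Ad(U_\beta))$, whose underlying operator space is injective. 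Feeding $\id_X$ and this embedding into $\G$-injectivity produces a completely contractive projection of $B(\mathcal K)$ onto $X$, exhibiting $X$ as a completely contractively complemented subspace of an injective operator space, hence injective.

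The implication $(3)\Rightarrow(2)$ is the computational core and is an averaging argument. Given $\G$-operator spaces $(Y,\beta_Y)$, $(Z,\beta_Z)$, a $\G$-equivariant complete isometry $\iota\colon Y\to Z$ and a $\G$-equivariant complete contraction $\phi\colon Y\to X$, I first use injectivity of $X$ as an operator space to extend $\phi$ (ignoring equivariance) to a complete contraction $\psi\colon Z\to X$ with $\psi\circ\iota=\phi$. I then set $\Phi=m\circ(\psi\otimes\id)\circ\beta_Z$, where $m$ is the averaging map provided by $\G$-amenability. A short diagram chase shows $(\psi\otimes\id)\circ\beta_Z$ is $\G$-equivariant from $(Z,\beta_Z)$ to $(X\ovot L^\infty(\G),\id\otimes\Delta)$, so $\Phi$ is a $\G$-equivariant complete contraction; and using equivariance of $\iota$ and $\phi$ one computes $\Phi\circ\iota=m\circ(\phi\otimes\id)\circ\beta_Y=m\circ\alpha\circ\phi=\phi$.

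Finally, when $(X,\alpha)$ is a $\G$-operator system I would show $(2)\Leftrightarrow(4)$. For $(2)\Rightarrow(4)$, an extension $\Phi$ produced by $(2)$ from unital data is automatically unital, since $\iota$ unital gives $\Phi(1)=\Phi(\iota(1))=\phi(1)=1$, and a unital complete contraction between operator systems is completely positive; hence $\Phi$ is an admissible morphism of $\G$-operator systems. For $(4)\Rightarrow(2)$ I would rerun the two preceding paragraphs in the unital, completely positive setting: $(4)$ yields a $\G$-equivariant unital completely positive $m$ with $m\circ\alpha=\id_X$ and a unital completely positive projection onto $X$ inside the injective system $B(\mathcal K)$, which is exactly $(3)$, and then $(3)\Rightarrow(2)$ closes the argument. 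I expect the main obstacle to be the operator-space injectivity step in $(2)\Rightarrow(3)$: one must realise $X$ as complemented inside a genuinely injective $\G$-operator space, which forces the use of $(B(\mathcal K),\Ad(U_\beta))$ rather than the non-injective $B(\mathcal H)\ovot L^\infty(\G)$. The other delicate issue, the mismatch between the category of operator $L^1(\G)$-modules and that of $\G$-operator spaces, is exactly what the envelope identification $I_\G(X)=I_{L^1(\G)}(X)$ disposes of.
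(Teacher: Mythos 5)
Your proposal is correct, but it takes a genuinely different route from the paper's at two points, and the comparison is instructive. First, the paper never needs your envelope argument for $(2)\Rightarrow(1)$: it proves $(3)\Rightarrow(1)$ directly, by running exactly your averaging construction but with $Y,Z$ arbitrary operator $L^1(\G)$-modules rather than $\G$-operator spaces. The point the paper exploits is that the map $\gamma\colon Z\to Z\ovot L^\infty(\G)$ induced by a general module structure need not be a complete isometry, but the formula $\Phi=E\circ(\theta\otimes\id)\circ\gamma$ and the verification $\Phi\circ\iota=\phi$ never use isometry, so one obtains injectivity in the full module category at no extra cost. Your detour through $I_\G(X)=I_{L^1(\G)}(X)$ together with $\G$-rigidity of the envelope is a valid substitute (both facts are established in Proposition \ref{injective envelopes}, which precedes this result, so there is no circularity), but it is strictly heavier machinery for the same conclusion; what the paper's version buys is that $(3)\Rightarrow(1)$ and $(3)\Rightarrow(2)$, $(3)\Rightarrow(4)$ are literally one argument. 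Second, in $(2)\Rightarrow(3)$ you correctly identify that $(B(\mathcal{H})\ovot L^\infty(\G),\id\otimes\Delta)$ is not a usable ambient, but the paper's fix is lighter than yours: it composes $\alpha$ with the inclusion $X\ovot L^\infty(\G)\subseteq B(\mathcal{H})\ovot B(L^2(\G))$ and equips the target with the action $\id\otimes\Delta_r$, which is legitimate because $\Delta_r=V(\cdot\otimes 1)V^*$ restricts to $\Delta$ on $L^\infty(\G)$, so the composite is still $\G$-equivariant and the ambient $B(\mathcal{H})\ovot B(L^2(\G))=B(\mathcal{H}\otimes L^2(\G))$ is injective. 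This avoids standard forms and Vaes' unitary implementation theorem \cite{Va01} entirely; your route through $(B(\mathcal{K}),\Ad(U_\beta))$ works (and is morally the same, since $\id\otimes\Delta_r=\Ad(1\otimes V)$ is itself an implemented action), but the passage to a standard-form copy of $B(\mathcal{H})\ovot L^\infty(\G)$ is an avoidable complication. Your treatment of the operator system case — unitality of the extension from $\iota(1)=1$, plus the fact that unital complete contractions between operator systems are completely positive, and closing the loop via $(4)\Rightarrow(3)\Rightarrow(2)\Rightarrow(4)$ — matches the paper's in substance.
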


\begin{proof} $(1)\implies (2)$ Trivial. 

$(2)\implies (3)$ We may assume that $X\subseteq B(\mathcal{H})$. Then the map $\alpha: (X, \alpha)\to (B(\mathcal{H})\bar{\otimes} B(L^2(\G)), \id \otimes \Delta_r)$ is $\G$-equivariant. By $\G$-injectivity of $X$, there is a $\G$-equivariant completely contractive map $E: B(\mathcal{H})\bar{\otimes} B(L^2(\G))\to X$ such that the diagram
$$
\begin{tikzcd}
X \arrow[rr, "\alpha"] \arrow[d, "\id_X"] &  & X\bar{\otimes}L^\infty(\G) \arrow[rr, "\subseteq"] &  & B(\mathcal{H})\bar{\otimes} B(L^2(\G)) \arrow[lllld, "E", dashed] \\
X                                         &  &                                                    &  &                                                                  
\end{tikzcd}$$
commutes, i.e. $E(\alpha(x)) = x$ for all $x\in X$. Since $\alpha$ is completely isometric and $B(\mathcal{H})\ovot B(L^2(\G)) = B(\mathcal{H}\otimes L^2(\G))$ is injective as an operator space, we conclude that $\alpha(X)\cong X$ is injective as an operator space as well. The fact that $(X, \alpha)$ is $\G$-amenable is immediate from the $\G$-injectivity of $(X, \alpha)$, since the map $\alpha: (X, \alpha)\to (X\ovot L^\infty(\G), \id \otimes \Delta)$ is $\G$-equivariant.

$(3)\implies (1)$ Since $(X, \alpha)$ is $\G$-amenable, there exists a $\G$-equivariant complete contraction $$E: (X \ovot L^\infty(\G), \id \otimes \Delta)\to (X, \alpha)$$ such that $E\circ \alpha = \id_X$. Assume that $Y,Z$ are operator $L^1(\G)$-modules, $\iota: Y \to Z$ an $L^1(\G)$-linear complete isometry and $\phi: Y \to X$ an $L^1(\G)$-linear complete contraction. Since $X$ is injective as an operator space, there exists a completely contractive map $\theta: Z \to X$ such that $\theta \circ \iota = \phi$. We then use amenability of the action $\alpha$ to make $\theta$ $\G$-equivariant. Concretely, let $\gamma: Z \to Z \ovot L^\infty(\G)$ be the image of the module map on $Z$ under the completely isometric isomorphism
$$CB(L^1(\G)\hat{\otimes}Z, Z)\cong CB(Z, Z \ovot L^\infty(\G))$$
(it is possible that $\gamma$ is not a complete isometry, but this is not important for the argument) and define the complete contraction
$$\Phi: = E\circ(\theta \otimes \id) \circ \gamma: Z\to X.$$
Then it is easily verified that $\Phi$ is an $L^1(\G)$-linear complete contraction such that $\Phi\circ \iota = \phi$.

Next, let $(X, \alpha)$ be a $\G$-operator system. 

$(4)\implies (3)$ The same argument as in $(2)\implies (3)$ works.

$(3)\implies (4)$ The same argument as in $(3)\implies (1)$ works.
\end{proof}

\begin{Rem}
    It is not clear that if $(M, \Delta)$ is a general Hopf-von Neumann algebra and $(X, \alpha)$ an $M$-injective operator space, that $X$ is also injective as an operator space.
\end{Rem}
Since $\mathbb{C}$ is injective and $\G$ is amenable if and only if the trivial action $\tau:\mathbb{C}\curvearrowleft\G$ is amenable, we immediately find that:

\begin{Cor}\label{amenab}
$\G$ is amenable if and only if $(\mathbb{C}, \tau)$ is $\G$-injective.    
\end{Cor}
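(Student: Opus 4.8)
The plan is to apply Proposition~\ref{injective} to the one-dimensional $\G$-operator space $(\mathbb{C}, \tau)$ and then to identify the resulting amenability condition with amenability of $\G$ itself. Since $\mathbb{C} = B(\mathbb{C})$ is injective as an operator space, the operator-space injectivity hypothesis appearing in condition~$(3)$ of Proposition~\ref{injective} is automatically satisfied. Hence the equivalence $(2)\iff(3)$ collapses, for $X=\mathbb{C}$, to the single statement that $(\mathbb{C}, \tau)$ is $\G$-injective if and only if $(\mathbb{C}, \tau)$ is $\G$-amenable.

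It then remains to unwind what $\G$-amenability of the trivial action says. Under the canonical identification $\mathbb{C}\ovot L^\infty(\G)\cong L^\infty(\G)$ (valid because $\mathbb{C}$ is finite-dimensional), the coaction $\id\otimes\Delta$ on $\mathbb{C}\ovot L^\infty(\G)$ becomes $\Delta$, while $\tau(\mathbb{C})=\mathbb{C}1$ carries the trivial restricted action. A $\G$-equivariant completely contractive conditional expectation $E\colon(\mathbb{C}\ovot L^\infty(\G), \id\otimes\Delta)\to(\tau(\mathbb{C}), \id\otimes\Delta)$ is therefore exactly the datum of a unital complete contraction $m\colon L^\infty(\G)\to\mathbb{C}$, that is, a state, and the equivariance square unwinds precisely to $(m\otimes\id)\Delta(x)=m(x)1$ for all $x\in L^\infty(\G)$. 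This is verbatim the defining condition for $\G$ to be amenable recalled in Section~2.

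Combining these two observations gives the chain ``$\G$ amenable $\iff$ $(\mathbb{C},\tau)$ is $\G$-amenable $\iff$ $(\mathbb{C},\tau)$ is $\G$-injective'', which is the claim. There is no genuine obstacle here, as the statement is really a corollary read off from Proposition~\ref{injective}; the only point requiring a moment's care is verifying that the conditional-expectation formulation of $\G$-amenability for $(\mathbb{C},\tau)$ coincides with the invariant-mean definition of amenability of $\G$, and this is immediate once the identification $\mathbb{C}\ovot L^\infty(\G)\cong L^\infty(\G)$ is made explicit.
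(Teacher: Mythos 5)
Your proof is correct and follows exactly the paper's route: the paper also derives the corollary from Proposition~\ref{injective} by noting that $\mathbb{C}$ is injective as an operator space and that $\G$-amenability of the trivial action $\tau\colon\mathbb{C}\curvearrowleft\G$ unwinds to the invariant-mean definition of amenability of $\G$. Your additional explicit verification of that last identification is a correct elaboration of what the paper treats as immediate.
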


A $\G$-dynamical von Neumann algebra $(M, \alpha)$ is called $\G$-$W^*$-amenable \cite[Definition 6.6]{DD24} if there exists a $\G$-equivariant unital completely positive map $$E: (B(L^2(M))\ovot L^\infty(\G), \gamma)\to (M, \alpha)$$ such that $E(\pi_M(m) \otimes 1) = m$ for all $m\in M$, where $\pi_M: M \to B(L^2(M))$ is the standard representation of $M$ and where the coaction $\gamma: B(L^2(M))\ovot L^\infty(\G)\to B(L^2(M))\ovot L^\infty(\G) \ovot L^\infty(\G)$ is given by $$\gamma(z)= U_{\alpha, 13}(\id \otimes \Delta)(z)U_{\alpha,13}^*=U_{\alpha, 13}V_{23}z_{12}V_{23}^*U_{\alpha,13}^*.$$

\begin{Prop}\label{GWamenable} $(M, \alpha)$ is $\G$-$W^*$-amenable if and only if $(M, \alpha)$ is $\G$-injective.
\end{Prop}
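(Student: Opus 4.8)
The plan is to exhibit a single cocycle–conjugation isomorphism that identifies the data defining $\G$-$W^*$-amenability with the data defining $\G$-injectivity, and then to invoke Proposition \ref{injective}. The starting point is the observation that, because the unitary implementation satisfies the cocycle identity $(\id\otimes\Delta)(U_\alpha)=U_{\alpha,12}U_{\alpha,13}$, the $*$-automorphism $\Theta:=\Ad(U_\alpha^*)$ of $B(L^2(\G))\ovot L^\infty(\G)$ intertwines the two coactions, i.e.
\[
\gamma\circ\Theta=(\Theta\otimes\id)\circ(\id\otimes\Delta).
\]
Hence $\Theta$ is a $\G$-equivariant normal $*$-isomorphism $(B(L^2(M))\ovot L^\infty(\G),\id\otimes\Delta)\xrightarrow{\sim}(B(L^2(M))\ovot L^\infty(\G),\gamma)$, and a direct computation gives $\Theta(\alpha(m))=\pi_M(m)\otimes1$ for all $m\in M$. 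Establishing this intertwining identity (by $*$-homomorphism property of $\id\otimes\Delta$, the cocycle identity, and leg-numbering bookkeeping) is the main, though routine, computational step; everything else becomes formal once it is in place.

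For the implication `$\G$-injective $\Rightarrow$ $\G$-$W^*$-amenable' I would note that $\alpha$ realises $(M,\alpha)$ as a $\G$-operator subsystem of $(B(L^2(M))\ovot L^\infty(\G),\id\otimes\Delta)$: the coaction identity $(\alpha\otimes\id)\alpha=(\id\otimes\Delta)\alpha$ says precisely that $\alpha\colon(M,\alpha)\to(M\ovot L^\infty(\G),\id\otimes\Delta)\hookrightarrow(B(L^2(M))\ovot L^\infty(\G),\id\otimes\Delta)$ is a $\G$-equivariant unital complete isometry. By $\G$-injectivity of $(M,\alpha)$ the identity map $\id_M$ extends along $\alpha$ to a $\G$-equivariant unital completely positive map $F\colon(B(L^2(M))\ovot L^\infty(\G),\id\otimes\Delta)\to(M,\alpha)$ with $F\circ\alpha=\id_M$. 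Transporting through $\Theta$, the map $E:=F\circ\Theta^{-1}$ is then a $\G$-equivariant unital completely positive map $(B(L^2(M))\ovot L^\infty(\G),\gamma)\to(M,\alpha)$ satisfying $E(\pi_M(m)\otimes1)=F(\alpha(m))=m$, which is exactly the required witness of $\G$-$W^*$-amenability.

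For the converse `$\G$-$W^*$-amenable $\Rightarrow$ $\G$-injective' I would extract the two hypotheses of Proposition \ref{injective}(3) from the given map $E$. First, $M$ is injective: the composition $\Psi\colon B(L^2(M))\to M$, $\Psi(T)=E(T\otimes1)$, is unital completely positive and satisfies $\Psi(\pi_M(m))=E(\pi_M(m)\otimes1)=m$, so it is a conditional expectation of $B(L^2(M))$ onto $M$, whence $M$ is injective as an operator space. Second, $(M,\alpha)$ is $\G$-amenable: setting $F:=E\circ\Theta$ and restricting to $M\ovot L^\infty(\G)$ yields a $\G$-equivariant unital completely positive map $F_0\colon(M\ovot L^\infty(\G),\id\otimes\Delta)\to(M,\alpha)$ with $F_0\circ\alpha=\id_M$, so that $\alpha\circ F_0$ is a $\G$-equivariant completely contractive conditional expectation of $(M\ovot L^\infty(\G),\id\otimes\Delta)$ onto $(\alpha(M),\id\otimes\Delta)$. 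Proposition \ref{injective} (the implication $(3)\Rightarrow(4)$) then gives that $(M,\alpha)$ is $\G$-injective.

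The only genuine obstacle is the equivariance identity for $\Theta$; once $\gamma\circ\Theta=(\Theta\otimes\id)(\id\otimes\Delta)$ and $\Theta(\alpha(M))=\pi_M(M)\otimes1$ are in hand, both implications reduce to transporting an extension (resp.\ averaging) map across the isomorphism $\Theta$ and reading off the defining relations, with the injectivity of $M$ falling out of the single slice $T\mapsto E(T\otimes1)$. I would therefore carry out the $\Theta$-computation carefully first and treat the two equivalences as essentially immediate corollaries of it together with Proposition \ref{injective}.
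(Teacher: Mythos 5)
Your proof is correct. The intertwining identity $\gamma\circ\Theta=(\Theta\otimes\id)\circ(\id\otimes\Delta)$ for $\Theta=\Ad(U_\alpha^*)$ does follow exactly as you say from the cocycle relation $(\id\otimes\Delta)(U_\alpha)=U_{\alpha,12}U_{\alpha,13}$ (both sides reduce to $U_{\alpha,12}^*(\id\otimes\Delta)(z)U_{\alpha,12}$), and $\Theta(\alpha(m))=\pi_M(m)\otimes 1$ is immediate; the rest of your argument transports correctly through $\Theta$. Your forward direction is essentially the paper's: the paper extends $\id_M$ along the equivariant embedding $m\mapsto\pi_M(m)\otimes 1$ into $(B(L^2(M))\ovot L^\infty(\G),\gamma)$, which is precisely your $\Theta\circ\alpha$, so the two arguments coincide up to the factorisation. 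The converse is where you genuinely diverge: the paper invokes \cite[Theorem 6.9]{DD24}, which asserts that $\G$-$W^*$-amenability is equivalent to $\G$-amenability together with the existence of a $\G$-equivariant unital completely positive conditional expectation $B(L^2(M))\to\pi_M(M)$, and then applies Proposition \ref{injective}; you instead extract both hypotheses of Proposition \ref{injective}(3) directly from the witness $E$ — injectivity of $M$ from the slice $T\mapsto E(T\otimes 1)$, and $\G$-amenability from the conjugated map $E\circ\Theta$ restricted to $M\ovot L^\infty(\G)$. Your route is more self-contained (it needs only the cocycle identity for the unitary implementation and Proposition \ref{injective}, not the cited characterisation from \cite{DD24}), at the cost of redoing by hand the conjugation bookkeeping that the cited theorem packages; the paper's route is shorter on the page but leans on that external result.
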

\begin{proof} It is straightforward to see that the map $$(M, \alpha)\to (B(L^2(M))\ovot L^\infty(\G), \gamma): m\mapsto \pi_M(m)\otimes 1$$ is $\G$-equivariant. Therefore, it immediately follows that if $(M, \alpha)$ is $\G$-injective, then also $(M, \alpha)$ is $\G$-$W^*$-amenable.

Conversely, by \cite[Theorem 6.9]{DD24}, we know that $(M, \alpha)$ is $\G$-$W^*$-amenable if and only if $(M, \alpha)$ is $\G$-amenable and there exists a $\G$-equivariant unital completely positive conditional expectation $E: B(L^2(M))\to \pi_M(M)\cong M$. The existence of the latter conditional expectation entails that $M$ is injective. Thus, by Proposition \ref{injective}, we see that $(M, \alpha)$ is $\G$-injective.
\end{proof}

\section{Crossed products}

Once again, we fix a locally compact quantum group $\G$. We introduce crossed products and investigate basic properties.

\begin{Def}
    If $(X, \alpha)$ is a $\G$-operator space, we define the Fubini crossed product as the operator space
    $$X\rtimes_\alpha \G := \{z \in X \ovot B(L^2(\G)): (\id \otimes \Delta_l)(z) = (\alpha \otimes \id)(z)\}.$$
\end{Def}

Obviously, if $(X, \alpha)$ is a $\G$-operator system, then $X\rtimes_\alpha \G$ is also an operator system. If $(X, \alpha)$ is a $\G$-dynamical von Neumann algebra, then $X\rtimes_\alpha \G$ is the usual $W^*$-crossed product and the notation is unambiguous. As expected, the Fubini crossed product carries canonical $\G$ and $\check{\G}$-actions:

\begin{Prop}
    We have
    $$(\id \otimes \Delta_r)(X\rtimes_\alpha \G)\subseteq (X\rtimes_\alpha\G)\ovot L^\infty(\G), \quad (\id \otimes \check{\Delta}_r)(X\rtimes_\alpha\G)\subseteq (X\rtimes_\alpha \G)\ovot L^\infty(\check{\G}).$$
    Consequently, $(X\rtimes_\alpha \G, \id \otimes \Delta_r)$ is a $\G$-operator space and $(X\rtimes_\alpha \G, \id \otimes \check{\Delta}_r)$ is a $\check{\G}$-operator space.
\end{Prop}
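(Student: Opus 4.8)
The plan is to reduce the statement to the case of $\G$-dynamical von Neumann algebras, where the maps $\id\otimes\Delta_r$ and $\id\otimes\check{\Delta}_r$ on the $W^*$-crossed product are already available (as recalled in the preliminaries). After choosing $X\subseteq B(\mathcal{H})$, the coaction provides a $\G$-equivariant complete isometry $\iota:(X,\alpha)\to(N,\beta)$ into a $\G$-dynamical von Neumann algebra; concretely one may take $\iota=\alpha$ and $(N,\beta)=(B(\mathcal{H})\ovot L^\infty(\G),\id\otimes\Delta)$. I would then use $\iota$ to compare $X\rtimes_\alpha\G$ with $N\rtimes_\beta\G$ and transport the known invariance of $N\rtimes_\beta\G$ under $\id\otimes\Delta_r$ and $\id\otimes\check{\Delta}_r$ back to $X\rtimes_\alpha\G$.

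First I would show that $\iota\otimes\id:X\ovot B(L^2(\G))\to N\ovot B(L^2(\G))$ \emph{detects} the crossed product, i.e.\ $z\in X\rtimes_\alpha\G$ if and only if $(\iota\otimes\id)(z)\in N\rtimes_\beta\G$. Since $\alpha$ and $\Delta_l$ act on disjoint legs, and using the equivariance $\beta\iota=(\iota\otimes\id)\alpha$, one computes $(\id\otimes\Delta_l)(\iota\otimes\id)(z)=(\iota\otimes\id\otimes\id)(\id\otimes\Delta_l)(z)$ and $(\beta\otimes\id)(\iota\otimes\id)(z)=(\iota\otimes\id\otimes\id)(\alpha\otimes\id)(z)$; as $\iota\otimes\id\otimes\id$ is a complete isometry, hence injective, the defining equation of $N\rtimes_\beta\G$ for $(\iota\otimes\id)(z)$ is equivalent to that of $X\rtimes_\alpha\G$ for $z$. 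I would also record the elementary intertwining identities $(\id\otimes\Delta_r)(\iota\otimes\id)=(\iota\otimes\id\otimes\id)(\id\otimes\Delta_r)$ and $(\id\otimes\check{\Delta}_r)(\iota\otimes\id)=(\iota\otimes\id\otimes\id)(\id\otimes\check{\Delta}_r)$, which again hold because $\iota$ and $\Delta_r,\check{\Delta}_r$ operate on disjoint legs.

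With these in hand, fix $z\in X\rtimes_\alpha\G$ and set $w:=(\id\otimes\check{\Delta}_r)(z)$, which lies in $X\ovot B(L^2(\G))\ovot L^\infty(\check{\G})$ by functoriality of the Fubini tensor product (as $\check{\Delta}_r$ is a normal $*$-homomorphism). Combining the two previous steps with the von Neumann case $(\id\otimes\check{\Delta}_r)(N\rtimes_\beta\G)\subseteq(N\rtimes_\beta\G)\ovot L^\infty(\check{\G})$ shows that $(\iota\otimes\id\otimes\id)(w)\in(N\rtimes_\beta\G)\ovot L^\infty(\check{\G})$. Slicing the last leg by an arbitrary $\mu\in L^\infty(\check{\G})_*$, the element $(\iota\otimes\id)(\id\otimes\id\otimes\mu)(w)$ lies in $N\rtimes_\beta\G$, so by detection $(\id\otimes\id\otimes\mu)(w)\in X\rtimes_\alpha\G$; by the slice characterisation of the Fubini tensor product this yields $w\in(X\rtimes_\alpha\G)\ovot L^\infty(\check{\G})$, the desired containment. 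The argument for $\id\otimes\Delta_r$ is identical with $L^\infty(\check{\G})$ replaced by $L^\infty(\G)$. That the resulting maps are genuine coactions—complete isometries satisfying the coaction identity—then follows by restriction from the corresponding properties of $\Delta_r$ and $\check{\Delta}_r$ on $B(L^2(\G))$.

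The main obstacle is that the $\check{\G}$-statement cannot be obtained by the naive ``commuting coactions'' computation that settles the $\G$-statement. Writing out the requirement that $\id\otimes\Delta_r$ preserve the defining equation reduces it to the commutation of the unitaries implementing $\Delta_l$ and $\Delta_r$ on the shared $B(L^2(\G))$-leg; this holds because the active leg of $V$ lies in $L^\infty(\check{\G})=L^\infty(\hat{\G})'$, which commutes with the active leg of $W$ in $L^\infty(\hat{\G})$. Thus $\Delta_l$ and $\Delta_r$ commute and the equation is preserved verbatim, giving an alternative direct proof for $\id\otimes\Delta_r$. For $\id\otimes\check{\Delta}_r$ the corresponding computation would require the active leg of $\check{V}$, which lies in $L^\infty(\G)'$, to commute with $L^\infty(\hat{\G})$, and it does not. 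This reflects the dual action being of Takesaki type: its well-definedness genuinely relies on the generation of the von Neumann crossed product by $\beta(N)$ and $1\otimes L^\infty(\check{\G})$, which is exactly why routing the proof through $N\rtimes_\beta\G$—rather than attempting a direct slice computation on $X\rtimes_\alpha\G$—is essential.
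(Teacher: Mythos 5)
Your main argument is correct, and it takes a genuinely different route from the paper. The paper's proof is essentially one line: following the slice computations of \cite[Theorem 3.40, Proposition 3.43]{DH24}, it invokes the commutation relations $(\Delta_l\otimes\id)\circ\Delta_r=(\id\otimes\Delta_r)\circ\Delta_l$ and $(\Delta_l\otimes\id)\circ\check{\Delta}_r=(\id\otimes\check{\Delta}_r)\circ\Delta_l$ on $B(L^2(\G))$, from which one sees directly that for $z\in X\rtimes_\alpha\G$ every slice $(\id\otimes\id\otimes\mu)(\id\otimes\Delta_r)(z)$, resp.\ $(\id\otimes\id\otimes\mu)(\id\otimes\check{\Delta}_r)(z)$, again satisfies the defining equation $(\id\otimes\Delta_l)=(\alpha\otimes\id)$; this is exactly Fubini membership. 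You instead embed $(X,\alpha)$ equivariantly into $(N,\beta)=(B(\mathcal{H})\ovot L^\infty(\G),\id\otimes\Delta)$, prove the detection statement $z\in X\rtimes_\alpha\G\iff(\iota\otimes\id)(z)\in N\rtimes_\beta\G$ (correct: equivariance of $\iota=\alpha$ plus injectivity of the complete isometry $\iota\otimes\id\otimes\id$), import the known invariance of the $W^*$-crossed product under $\id\otimes\Delta_r$ and $\id\otimes\check{\Delta}_r$, and slice back down. All steps are covered by the Fubini calculus in the preliminaries; what your route buys is that no commutation relation between $\Delta_l$ and $\Delta_r,\check{\Delta}_r$ needs to be checked, at the cost of taking the von Neumann algebraic crossed product facts as input.

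However, your closing paragraph is wrong, and in a way that directly contradicts the paper's own proof. You claim that the relation $(\Delta_l\otimes\id)\circ\check{\Delta}_r=(\id\otimes\check{\Delta}_r)\circ\Delta_l$ fails because $W_{12}$ and $\check{V}_{23}$ do not commute. You are right that these unitaries do not commute (their middle legs lie in $L^\infty(\hat{\G})$ and $L^\infty(\G)'$, which commute only if $L^\infty(\hat{\G})\subseteq L^\infty(\G)$), but commutation of the implementing unitaries is sufficient, not necessary, for the two conjugation coactions to commute: the two compositions agree precisely when $W_{12}\check{V}_{23}^*W_{12}^*\check{V}_{23}$ lies in $B(L^2(\G))\ovot\mathbb{C}1\ovot B(L^2(\G))$, which is strictly weaker. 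And in fact the relation does hold. Both compositions are normal unital $*$-homomorphisms on $B(L^2(\G))$. For $x\in L^\infty(\G)$ one has $\check{\Delta}_r(x)=x\otimes1$ (the first leg of $\check{V}$ lies in $L^\infty(\G)'$) and $\Delta_l(x)=\Delta(x)$, so both sides equal $\Delta(x)\otimes1$; for $x\in L^\infty(\check{\G})$ one has $\Delta_l(x)=1\otimes x$ (the second leg of $W$ lies in $L^\infty(\hat{\G})=L^\infty(\check{\G})'$) and $\check{\Delta}_r(x)=\check{\Delta}(x)$, so both sides equal $1\otimes\check{\Delta}(x)$. Since $L^\infty(\G)$ and $L^\infty(\check{\G})$ generate $B(L^2(\G))$ as a von Neumann algebra (this is the isomorphism $\kappa:L^\infty(\G)\rtimes_\Delta\G\cong B(L^2(\G))$ recalled in the preliminaries), normality yields equality everywhere. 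So the direct slice computation settles the $\check{\G}$-statement too; your detour through $N\rtimes_\beta\G$ is a valid alternative, but not, as you assert, essential.
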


\begin{proof} Similar as in \cite[Theorem 3.40, Proposition 3.43]{DH24}, making use of the commutation relations
$(\Delta_l\otimes \id)\circ\Delta_r=(\id \otimes \Delta_r)\circ \Delta_l$ and $(\Delta_l\otimes \id)\circ \check{\Delta}_r= (\id \otimes \check{\Delta}_r)\circ \Delta_l.$
\end{proof}

If $(X,\alpha)$ and $(Y, \beta)$ are $\G$-operator spaces, and if $\phi: (X,\alpha)\to (Y, \beta)$ is $\G$-equivariant, it is easily verified that 
$(\phi\otimes \id)(X\rtimes_\alpha\G)\subseteq Y\rtimes_\beta \G.$ We denote the restricted map by $\phi\rtimes \G$ and clearly
$$\phi\rtimes \G: (X\rtimes_\alpha \G, \id \otimes \check{\Delta}_r)\to (Y\rtimes_\beta \G, \id \otimes \check{\Delta}_r)$$
is $\check{\G}$-equivariant. Therefore, we may view the Fubini crossed product as a functor from the category of $\G$-operator spaces to the category of $\check{\G}$-operator spaces. 

So far, the Fubini crossed product behaved exactly like the von Neumann crossed product. However, there are some pitfalls that one has to be careful with. Namely:

\begin{itemize}
    \item It may happen that $\alpha(X)\subsetneq (X\rtimes_\alpha \G)^{\id \otimes \check{\Delta}_r}$.
    \item The Takesaki-Takai duality $(X\rtimes_\alpha \G)\rtimes_{\id \otimes \check{\Delta}_r} \check{\G} \cong X \ovot B(L^2(\G))$ may not hold.
\end{itemize}

The notion of $\G$-completeness of a $\G$-operator space, originally introduced in \cite[Definition 5.1]{Ham91} in the context of locally compact groups, lies at the heart of these issues. Hamana's definition readily generalises to the context of locally compact quantum groups:
\begin{Def}\label{com}
    A $\G$-operator space $X$ is said to be $\G$-complete if for every $\G$-extension $Y$ of $X$ and every $y\in Y$, the condition $L^1(\G) \rhd y \subseteq X$ implies that $y\in X$.
\end{Def}

Note that if $\G$ is a discrete quantum group, then every $\G$-operator space is $\G$-complete. Indeed, with $\epsilon\in \ell^1(\G)$ the counit, we have $\epsilon\rhd y = y$ for all $y\in Y$. Not every $\G$-operator space is $\G$-complete: Indeed, Hamana observes that if $G$ is a locally compact group and if $X$ is $G$-complete, then $X$ must be necessarily translation-invariant \cite[Proposition 5.6 (vi)]{Ham91} in the sense of \cite[Definition 3.2]{Ham91}. However, Hamana also provides a way to construct explicit examples of $G$-operator spaces that are not translation-invariant.

The following proposition contains all we need to know about $\G$-completeness for our purposes. In the proof, we will make use of the following simple fact: if $(X, \alpha)\subseteq_\G (Y, \beta)$ are $\G$-operator spaces, then
$$X\rtimes_\alpha \G = (Y\rtimes_\beta \G)\cap (X \ovot B(L^2(\G))).$$
\begin{Prop}\label{completeness} Let $(X, \alpha)$ be a $\G$-operator space.\begin{enumerate}
    \item If $(X, \alpha)$ is $\G$-injective, it is $\G$-complete.
    \item $(X, \alpha)$ is $\G$-complete if and only if $(X\rtimes_\alpha \G)^{\id \otimes \check{\Delta}_r}= \alpha(X)$.
    \item If $(X, \alpha)$ is $\G$-complete, the complete isometry
    $$\Phi: X \bar{\otimes} B(L^2(\G)) \to X\bar{\otimes} B(L^2(\G))\bar{\otimes} B(L^2(\G)): z \mapsto V_{23}^*(\alpha\otimes \id)(z)V_{23}.$$
    has image $\operatorname{\Phi}(X\ovot B(L^2(\G)))= (X\rtimes_\alpha \G)\rtimes_{\id \otimes \check{\Delta}_r} \check{\G}$. In other words, the Takesaki-Takai duality holds.
\end{enumerate}
\end{Prop}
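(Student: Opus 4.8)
For part (1), I would exploit $\G$-injectivity to produce a $\G$-equivariant completely contractive projection onto $X$. Given any $\G$-extension $(Y,\beta)$, applying the injectivity of $(X,\alpha)$ to the identity $\id_X$ and the inclusion $X\hookrightarrow Y$ yields a $\G$-equivariant complete contraction $E\colon Y\to X$ with $E|_X=\id_X$. Now take $y\in Y$ with $L^1(\G)\rhd y\subseteq X$. For every $\omega\in L^1(\G)$ equivariance gives $\omega\rhd E(y)=E(\omega\rhd y)=\omega\rhd y$, where the last equality uses $E|_X=\id_X$ and $\omega\rhd y\in X$. Hence $(\id\otimes\omega)\beta(y-E(y))=0$ for all $\omega\in L^1(\G)$; since $\beta(y-E(y))\in Y\ovot L^\infty(\G)$ and its slices by all $\omega\in L^1(\G)$ vanish, we get $\beta(y-E(y))=0$, and injectivity of the complete isometry $\beta$ forces $y=E(y)\in X$. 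Thus $X$ is $\G$-complete.

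For part (2), I would first record that $\alpha(X)\subseteq(X\rtimes_\alpha\G)^{\id\otimes\check{\Delta}_r}$ always holds, because $\check{V}\in L^\infty(\G)'\ovot L^\infty(\check{\G})$ implies $\check{\Delta}_r(y)=y\otimes 1$ for $y\in L^\infty(\G)$, so elements of $X\ovot L^\infty(\G)$ are automatically $\check{\Delta}_r$-fixed; one also checks $\alpha(X)\subseteq X\rtimes_\alpha\G$ from the coaction identity. The real content is the reverse inclusion. I would embed $(X,\alpha)\subseteq_\G(Y,\beta)$ into a $\G$-dynamical von Neumann algebra and combine the intersection formula $X\rtimes_\alpha\G=(Y\rtimes_\beta\G)\cap(X\ovot B(L^2(\G)))$ with the von Neumann identity $(Y\rtimes_\beta\G)^{\id\otimes\check{\Delta}_r}=\beta(Y)$ and the Fubini identity $X\ovot L^\infty(\G)=(X\ovot B(L^2(\G)))\cap(Y\ovot L^\infty(\G))$. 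This computes $(X\rtimes_\alpha\G)^{\id\otimes\check{\Delta}_r}=\{\beta(y):y\in Y,\ L^1(\G)\rhd y\subseteq X\}$. If $X$ is $\G$-complete, each such $y$ lies in $X$, so $\beta(y)=\alpha(y)\in\alpha(X)$, giving the desired equality. Conversely, if the fixed-point equality holds, then for any $\G$-extension $(Y',\beta')$ and any $y\in Y'$ with $L^1(\G)\rhd y\subseteq X$ the element $\beta'(y)$ lands in $X\ovot L^\infty(\G)$, hence (by the coaction property and $\beta'|_X=\alpha$) in $(X\rtimes_\alpha\G)^{\id\otimes\check{\Delta}_r}=\alpha(X)$; writing $\beta'(y)=\alpha(x)=\beta'(x)$ and using injectivity of $\beta'$ gives $y=x\in X$.

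For part (3), I would again embed $(X,\alpha)\subseteq_\G(Y,\beta)$ into a von Neumann algebra and invoke the classical Takesaki--Takai duality in the precise form that the map $\Phi_Y(z)=V_{23}^*(\beta\otimes\id)(z)V_{23}$ is a completely isometric isomorphism $Y\ovot B(L^2(\G))\to(Y\rtimes_\beta\G)\rtimes_{\id\otimes\check{\Delta}_r}\check{\G}$. Since $\beta|_X=\alpha$, the map $\Phi$ of the statement is exactly $\Phi_Y|_{X\ovot B(L^2(\G))}$. Applying the intersection formula to the $\check{\G}$-extension $X\rtimes_\alpha\G\subseteq_{\check{\G}}Y\rtimes_\beta\G$ (using $L^2(\check{\G})=L^2(\G)$) gives $(X\rtimes_\alpha\G)\rtimes_{\id\otimes\check{\Delta}_r}\check{\G}=\Phi_Y(Y\ovot B(L^2(\G)))\cap\big((X\rtimes_\alpha\G)\ovot B(L^2(\G))\big)$. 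The inclusion $\Phi(X\ovot B(L^2(\G)))\subseteq(X\rtimes_\alpha\G)\rtimes_{\id\otimes\check{\Delta}_r}\check{\G}$ is then formal and uses no completeness. For the reverse inclusion, given $\zeta$ in the double crossed product, write $\zeta=\Phi_Y(z)$ with $z\in Y\ovot B(L^2(\G))$ and aim to show $z\in X\ovot B(L^2(\G))$. I would slice out $y_\mu=(\id\otimes\mu)(z)\in Y$ and verify $L^1(\G)\rhd y_\mu\subseteq X$ by rewriting $(\id\otimes\omega)\beta(y_\mu)=(\id\otimes\tilde{\omega}\otimes\mu)(V_{23}\zeta V_{23}^*)$, where $\tilde{\omega}\in B(L^2(\G))_*$ is a normal extension of $\omega$; this value lies in $X$ because $\zeta\in X\ovot B(L^2(\G))\ovot B(L^2(\G))$ and $V_{23}$ acts only on legs $2,3$. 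Then $\G$-completeness yields $y_\mu\in X$ for every $\mu$, so $z\in X\ovot B(L^2(\G))$ and $\zeta=\Phi(z)$.

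The main obstacle throughout is the bookkeeping with the Fubini tensor product: I must justify the intersection identities $A\ovot E=(B\ovot E)\cap(C\ovot E)$ for $A=B\cap C$, the compatibility of slice maps across the three legs, and the passage from $\omega\in L^1(\G)$ to a normal extension $\tilde{\omega}$ on $B(L^2(\G))$, all of which rest on the slice-map characterization of $\ovot$ recalled in the preliminaries. For part (3) the delicate point is that the von Neumann Takesaki--Takai isomorphism must be available in exactly the conjugation-by-$V_{23}$ form, so that it restricts to the stated $\Phi$; confirming that this specific map (and not merely an abstract isomorphism) implements the duality is the step I expect to require the most care.
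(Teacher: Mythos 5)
Your proposal is correct and follows essentially the same route as the paper: embed $(X,\alpha)$ into a $\G$-dynamical von Neumann algebra, use the crossed-product intersection formula together with the identity $(Y\rtimes_\beta\G)^{\id\otimes\check{\Delta}_r}=\beta(Y)$ for parts (1)--(2), and invoke the von Neumann Takesaki--Takai isomorphism $z\mapsto V_{23}^*(\beta\otimes\id)(z)V_{23}$ for part (3). The only difference is cosmetic: in part (3) the paper computes $(X\rtimes_\alpha\G)\rtimes_{\id\otimes\check{\Delta}_r}\check{\G}$ through a chain of Fubini intersection identities ending in $(X\ovot B(L^2(\G)))\cap\beta(Y)=\alpha(X)$, whereas you argue element-wise by pulling $\zeta$ back under $\Phi_Y$ and applying $\G$-completeness to the slices $(\id\otimes\mu)(z)$ --- both arguments rest on exactly the same ingredients.
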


\begin{proof} $(1)$ Assume that $X\subseteq_\G Y$. The $\G$-injectivity of $X$ yields a $\G$-equivariant complete contraction $\varphi: Y \to X$ such that $\varphi(x)=x$ for all $x\in X$. Assume that $y\in Y$ with $L^1(\G)\rhd y\subseteq X$. If $\omega \in L^1(\G)$, we have
    $$\omega \rhd \varphi(y) = \varphi(\omega \rhd y) = \omega \rhd y$$
    so that $y= \varphi(y)\in X$. 

    (2) Assume first that $(X, \alpha)$ is $\G$-complete. Choose a $\G$-dynamical von Neumann algebra $(Y, \beta)$ such that $(X, \alpha)\subseteq_\G (Y, \beta)$ and fix $z\in (X\rtimes_\alpha \G)^{\id \otimes \check{\Delta}_r}$. Clearly
     $$(X\rtimes_\alpha \G)^{\id \otimes \check{\Delta}_r}\subseteq (Y \rtimes_\beta \G)^{\id \otimes \check{\Delta}_r}= \beta(Y).$$ Thus, there is $y \in Y$ with $z=\beta(y)$ and we have $\omega \rhd y \in X$ for all $\omega\in L^1(\G)$. Therefore, the $\G$-completeness of $X$ ensures that $y\in X$. Thus, $z= \beta(y) = \alpha(y) \in \alpha(X)$.
     
Conversely, assume that $(X\rtimes_\alpha \G)^{\id \otimes \check{\Delta}_r}= \alpha(X)$. Consider any $\G$-extension $(X, \alpha)\subseteq_\G (Y, \beta)$. We observe that 
$$\alpha(X) = (X \ovot L^\infty(\G))\cap \beta(Y).$$
Indeed, if $z\in (X\ovot L^\infty(\G))\cap \beta(Y)$, then trivially $(\id \otimes \check{\Delta}_r)(z) = z\otimes 1$ and also
$$z \in (X \ovot B(L^2(\G)))\cap (Y\rtimes_\beta \G) = X\rtimes_\alpha \G.$$
Consequently, $z \in (X\rtimes_\alpha \G)^{\id \otimes \check{\Delta}_r}= \alpha(X)$.
If $y \in Y$ satisfies $L^1(\G)\rhd y \subseteq X$, then $\beta(y) \in X \ovot L^\infty(\G)$ and thus $\beta(y)\in \alpha(X)= \beta(X)$. By injectivity of $\beta$, we conclude that $y \in X$. Since $(Y, \beta)$ was arbitrary, we conclude that $(X, \alpha)$ is $\G$-complete.

     (3) The following proof is contained in \cite[Proposition 5.7]{Ham91}. We recall it for the convenience of the reader. Let $(Y, \beta)$ be any $\G$-dynamical von Neumann algebra such that $(X, \alpha)\subseteq_\G (Y, \beta)$. Then we know that the map
    $$\Phi: Y\bar{\otimes} B(L^2(\G)) \to (Y\rtimes_\beta\G)\rtimes_{\id \otimes \check{\Delta}_r} \check{\G}: z\mapsto V_{23}^*(\beta\otimes \id)(z)V_{23}$$
    is a $*$-isomorphism of von Neumann algebras (see e.g.\ \cite[Theorem 5.24]{DD24} or \cite[Theorem 2.6]{Va01}).
    Note first that
    \begin{align*}
        (X\rtimes_\alpha \G)\rtimes_{\id \otimes \check{\Delta}_r} \check{\G}&= [((X\bar{\otimes} B(L^2(\G))) \cap (Y\rtimes_\beta \G))\bar{\otimes} B(L^2(\G))]\cap [(Y\rtimes_\beta \G)\rtimes_{\id \otimes \check{\Delta}_r}\check{\G}]\\
        &= [X\bar{\otimes} B(L^2(\G))\bar{\otimes} B(L^2(\G))]\cap [(Y\rtimes_\beta \G)\bar{\otimes} B(L^2(\G))]\cap [(Y\rtimes_\beta \G)\rtimes_{\id \otimes \check{\Delta}_r}\check{\G}]\\
        &=[X\bar{\otimes} B(L^2(\G))\bar{\otimes} B(L^2(\G))]\cap [(Y\rtimes_\beta \G)\rtimes_{\id \otimes \check{\Delta}_r}\check{\G}].
    \end{align*}
    Next, observe that the $\G$-completeness of $(X, \alpha)$ ensures that 
    $$\alpha(X)= (X\bar{\otimes} B(L^2(\G)))\cap \beta(Y).$$
    Therefore, we have
    \begin{align*}
    (X\rtimes_\alpha \G) \rtimes_{\id \otimes \check{\Delta}_r}{\check{\G}}&= [X\bar{\otimes} B(L^2(\G))\bar{\otimes} B(L^2(\G))]\cap [(Y\rtimes_\beta \G)\rtimes_{\id \otimes \check{\Delta}_r}\check{\G}]\\
    &=  [X\bar{\otimes} B(L^2(\G))\bar{\otimes} B(L^2(\G))]\cap [V_{23}^* (\beta(Y)\bar{\otimes} B(L^2(\G)))V_{23}]\\
    &=V_{23}^* [X\bar{\otimes} B(L^2(\G))\bar{\otimes} B(L^2(\G))]\cap [ \beta(Y)\bar{\otimes} B(L^2(\G))]V_{23}\\
    &= V_{23}^*(\alpha(X)\bar{\otimes} B(L^2(\G)))V_{23}\\
    &= V_{23}^*(\alpha\otimes \id)(X \ovot B(L^2(\G)))V_{23}
    \end{align*}
    and the proof is finished. \end{proof}

\begin{Rem} Suppose that $(M, \alpha)$ is a $\G$-dynamical von Neumann algebra and that $X\subseteq M$ is a $\sigma$-weakly closed subspace such that $\alpha(X)\subseteq X \ovot L^\infty(\G)$. In this way, we obtain the $\G$-operator space $(X, \alpha)$. It is an interesting question if the fixed point condition $\alpha(X)= (X\rtimes_\alpha \G)^{\id \otimes \check{\Delta}_r}$ is automatically satisfied in this case. Let us prove that this is indeed the case if $\hat{\G}$ has the approximation property (AP) \cite{DKV24} (in particular, it holds when $\G$ is co-amenable). It follows from \cite[Theorem 4.4]{DKV24} that there exists a net $\{\omega_i\}_{i\in I}\subseteq L^1(\G)$ such that
$(\id \otimes \omega_i)\alpha(m) \to m$ in the $\sigma$-weak topology for every $m\in M$.
Therefore, if $\alpha(m) \in X \ovot L^\infty(\G)$, the fact that $X$ is $\sigma$-weakly closed entails that $m \in X$, proving that 
$(X \ovot L^\infty(\G))\cap \alpha(M)= \alpha(X).$
From this, one easily concludes that the desired condition $(X\rtimes_\alpha \G)^{\id \otimes \check{\Delta}_r}= \alpha(X)$ holds. 
\end{Rem}

Also the following result involving an iterated crossed product will be useful later:
\begin{Prop}\label{cond}
    If $(X, \alpha)$ is a $\G$-operator space, then the map
    $$\Phi: ((X\rtimes_\alpha \G)\ovot L^\infty(\check{\G}), \id \otimes \id \otimes \check{\Delta})\to  ((X\rtimes_\alpha\G)\rtimes_{\id \otimes \Delta_r}\G, \id \otimes \id \otimes \check{\Delta}_r): z \mapsto V_{23}zV_{23}^*$$
    is a $\check{\G}$-equivariant completely isometric isomorphism such that 
    $\Phi \circ (\id \otimes \check{\Delta}_r)=\id \otimes \Delta_l$ on $X\rtimes_\alpha \G$.
\end{Prop}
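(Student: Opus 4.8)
The plan is to realise $\Phi=\Ad(V_{23})$ as a cocycle-type isomorphism and to reduce the whole statement to the case of $\G$-dynamical von Neumann algebras, where both sides are generated von Neumann algebras so that everything can be checked on generators. First observe that, since $\Ad(V)$ is a normal complete isometry of $B(L^2(\G))\ovot B(L^2(\G))$, conjugation by $V_{23}$ on the last two legs is a well-defined complete isometry on $X\ovot B(L^2(\G))\ovot B(L^2(\G))$ by the Fubini tensor product results recalled in Section 2. Hence the completely isometric part is automatic, and only the identification of image, the $\check\G$-equivariance, and the compatibility identity require work. As $\Phi$ acts trivially on the first leg, it preserves the subspace $X\ovot B(L^2(\G))\ovot B(L^2(\G))$ and commutes with intersecting against it. Choosing a $\G$-dynamical von Neumann algebra $(Y,\beta)$ with $(X,\alpha)\subseteq_{\G}(Y,\beta)$ and writing $N:=Y\rtimes_\beta\G$, the identity $X\rtimes_\alpha\G=(Y\rtimes_\beta\G)\cap(X\ovot B(L^2(\G)))$ (recalled before Proposition \ref{completeness}), together with the analogous Fubini-intersection identities for the domain and the codomain, reduces the statement to the case $X=Y$, i.e.\ to $N$ (these intersection identities are routine via slice maps, using that $L^\infty(\check\G)$ and $B(L^2(\G))$ are von Neumann algebras).

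For the von Neumann algebra $N$ the map $\Phi=\Ad(V_{23})$ is a normal $*$-isomorphism, so it suffices to compute on generators. Set $\delta:=\id\otimes\Delta_r$, so that the codomain is $N\rtimes_\delta\G=[\delta(N)(1\otimes 1\otimes L^\infty(\check\G))]''$ and the domain is $N\ovot L^\infty(\check\G)$. One computes directly $\Phi(n\otimes 1)=\delta(n)$ and, using $\check\Delta(y)=V^*(1\otimes y)V$ for $y\in L^\infty(\check\G)$, also $\Phi^{-1}(1\otimes 1\otimes y)=1\otimes\check\Delta(y)$, equivalently $\Phi(1\otimes\check\Delta(y))=1\otimes 1\otimes y$. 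Since $1\otimes L^\infty(\check\G)\subseteq N$ and since $(L^\infty(\check\G)\otimes 1)\check\Delta(L^\infty(\check\G))$ is $\sigma$-weakly total in $L^\infty(\check\G)\ovot L^\infty(\check\G)$ (a density property of the locally compact quantum group $\check\G$), the von Neumann algebra generated by $N\otimes 1$ and $1\otimes\check\Delta(L^\infty(\check\G))$ is exactly $N\ovot L^\infty(\check\G)$. Applying $\Phi$ then yields $\Phi(N\ovot L^\infty(\check\G))=[\delta(N)(1\otimes 1\otimes L^\infty(\check\G))]''=N\rtimes_\delta\G$, which gives the desired isomorphism and sidesteps any direct computation of $V(1\otimes y)V^*$.

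The $\check\G$-equivariance and the compatibility identity are again verified on generators. For equivariance one compares $\id\otimes\id\otimes\check\Delta$ on the domain with $\id\otimes\id\otimes\check\Delta_r$ on the codomain: on $\delta(N)$ both coactions are trivial (as $\delta(N)$ is the $\check\G$-fixed-point algebra of $N\rtimes_\delta\G$), while on $1\otimes 1\otimes L^\infty(\check\G)$ the identity reduces to coassociativity of $\check\Delta$ and the pentagon equation for $V$. For $\Phi\circ(\id\otimes\check\Delta_r)=\id\otimes\Delta_l$ on $X\rtimes_\alpha\G$ I check the generators $\alpha(X)$ and $1\otimes L^\infty(\check\G)$: on $\alpha(x)$ one uses that $\check V\in L^\infty(\G)'\ovot L^\infty(\check\G)$ commutes with $L^\infty(\G)\otimes 1$ (so that $(\id\otimes\check\Delta_r)\alpha(x)=\alpha(x)\otimes 1$), the coaction property, and $\Delta_r|_{L^\infty(\G)}=\Delta=\Delta_l|_{L^\infty(\G)}$; on $1\otimes y$ one uses $\check\Delta_r(y)=\check\Delta(y)=V^*(1\otimes y)V$ and $\Delta_l(y)=1\otimes y$, both of which follow from $\kappa^{-1}=\Delta_l$. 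Back in the operator space setting, since $\Phi$ is already known to be the isomorphism, both maps land in the codomain and agree on $Y\rtimes_\beta\G$, hence on the subspace $X\rtimes_\alpha\G$.

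The main obstacle is bookkeeping: arranging the leg conventions so that the crossed product $N\rtimes_{\id\otimes\Delta_r}\G$, the Fubini product $N\ovot L^\infty(\check\G)$, and the conjugation $\Ad(V_{23})$ all live coherently inside $X\ovot B(L^2(\G))\ovot B(L^2(\G))$, and justifying the reduction to von Neumann algebras through the Fubini-intersection identities. The genuinely quantum input that makes the von Neumann computation close is the pair of relations $\check\Delta(y)=V^*(1\otimes y)V$ and $\check V(y\otimes 1)\check V^*=\check\Delta(y)$ for $y\in L^\infty(\check\G)$, together with the $\sigma$-weak totality of $(L^\infty(\check\G)\otimes 1)\check\Delta(L^\infty(\check\G))$.
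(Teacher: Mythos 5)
Your proof is correct and follows essentially the same route as the paper: embed $(X,\alpha)\subseteq_\G (Y,\beta)$ into a $\G$-dynamical von Neumann algebra, establish the statement for $N:=Y\rtimes_\beta\G$ by checking the isomorphism, the $\check{\G}$-equivariance and the identity $\Phi\circ(\id\otimes\check{\Delta}_r)=\id\otimes\Delta_l$ on generators, and then descend to $X\rtimes_\alpha\G$ via the Fubini-intersection identities
$(X\rtimes_\alpha\G)\ovot L^\infty(\check{\G})=[(Y\rtimes_\beta\G)\ovot L^\infty(\check{\G})]\cap[X\ovot B(L^2(\G))\ovot B(L^2(\G))]$ and its analogue for the iterated crossed product, exactly as in the paper. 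The only deviation is that where the paper invokes \cite[Example 1.6]{DD24} for the fact that $\Ad(V_{23})$ carries $N\ovot L^\infty(\check{\G})$ onto $N\rtimes_{\id\otimes\Delta_r}\G$, you prove this directly from $\Phi(n\otimes 1)=(\id\otimes\Delta_r)(n)$, $\Phi(1\otimes\check{\Delta}(y))=1\otimes 1\otimes y$ and the $\sigma$-weak density of the span of $(L^\infty(\check{\G})\otimes 1)\check{\Delta}(L^\infty(\check{\G}))$ in $L^\infty(\check{\G})\ovot L^\infty(\check{\G})$, which is a correct, self-contained substitute for that citation.
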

\begin{proof}
    Let $(Y,\beta)$ be a $\G$-dynamical von Neumann algebra such that $(X, \alpha)\subseteq_\G (Y, \beta)$. Then the map
       $$\Phi: ((Y\rtimes_\beta \G)\ovot L^\infty(\check{\G}), \id \ovot \id \ovot \check{\Delta})\to  ((Y\rtimes_\beta\G)\rtimes_{\id \otimes \Delta_r}\G, \id \ovot \id \ovot \check{\Delta}_r): z \mapsto V_{23}zV_{23}^*$$
       is a $\check{\G}$-equivariant $*$-isomorphism such that $\Phi\circ (\id \otimes \check{\Delta}_r) = \beta \otimes \id$ on $Y \rtimes_\beta \G$. Indeed, that $\Phi$ is well-defined and surjective follows from \cite[Example 1.6]{DD24}, thus $\Phi$ is a $*$-isomorphism. The $\check{\G}$-equivariance 
       $$(\Phi\otimes \id)\circ (\id \otimes \id \otimes \check{\Delta})= (\id \otimes \id \otimes \check{\Delta}_r)\circ \Phi$$
       is easily verified on generators $\beta(y)\otimes 1, 1 \otimes x\otimes 1, 1 \otimes 1 \otimes z$ of $(Y\rtimes_\beta \G)\ovot L^\infty(\check{\G})$, where $y\in Y$ and $x,z \in L^\infty(\check{\G}).$ Also the equality $\Phi\circ (\id \otimes \check{\Delta}_r) = \id \otimes \Delta_l$ is easily verified on generators of $Y\rtimes_\beta \G$. Note then that
       \begin{align*}
           (X\rtimes_\alpha \G)\bar{\otimes} L^\infty(\check{\G})&= [(Y\rtimes_\beta \G)\cap (X\ovot B(L^2(\G)))]\ovot L^\infty(\check{\G})\\
           &= [(Y\rtimes_\beta \G)\ovot L^\infty(\check{\G})]\cap [X\ovot B(L^2(\G))\ovot L^\infty(\check{\G})]\\
           &=  [(Y\rtimes_\beta \G)\ovot L^\infty(\check{\G})]\cap [X\ovot B(L^2(\G))\ovot B(L^2(\G)]
       \end{align*}
       so that 
       \begin{align*}V_{23} [(X\rtimes_\alpha \G)\ovot L^\infty(\check{\G})] V_{23}^*&=  V_{23}[(Y\rtimes_\beta \G)\ovot L^\infty(\check{\G})]\cap [X\ovot B(L^2(\G))\ovot B(L^2(\G)]V_{23}^*\\
       &=V_{23}[(Y\rtimes_\beta \G)\ovot L^\infty(\check{\G})]V_{23}^*\cap V_{23}[X\ovot B(L^2(\G))\ovot B(L^2(\G)]V_{23}^*\\
       &= [(Y\rtimes_\beta \G)\rtimes_{\id \otimes \Delta_r}\G]\cap [X\ovot B(L^2(\G))\ovot B(L^2(\G))]\\
       &= (X\rtimes_\alpha \G)\rtimes_{\id \otimes \Delta_r} \G.
      \end{align*}
The result then easily follows.
\end{proof}

\section{Equivariant injectivity of crossed products}

Once again, fix a locally compact quantum group $\G$. In this section, we prove our main results.

\begin{Theorem}\label{main result}
    Let $(X, \alpha)$ be a $\G$-operator system. The following statements are equivalent:
    \begin{enumerate}
        \item $(X, \alpha)$ is $\G$-injective.
        \item $(X\rtimes_\alpha\G, \id \otimes \check{\Delta}_r)$ is $\check{\G}$-injective and $(X\rtimes_\alpha \G)^{\id \otimes \check{\Delta}_r}= \alpha(X)$.
    \end{enumerate}
\end{Theorem}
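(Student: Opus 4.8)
The plan is to reduce both implications to the dictionary of Proposition \ref{injective}, which recasts ``$\G$-injective'' as ``injective $+$ $\G$-amenable'' (and likewise for the $\check\G$-injectivity of $X\rtimes_\alpha\G$), together with the completeness theory of Proposition \ref{completeness}. Indeed, the condition $(X\rtimes_\alpha\G)^{\id\otimes\check\Delta_r}=\alpha(X)$ is exactly $\G$-completeness of $(X,\alpha)$ by Proposition \ref{completeness}(2), and $\G$-injectivity forces $\G$-completeness by Proposition \ref{completeness}(1); so in the direction $(1)\Rightarrow(2)$ the fixed-point condition is immediate, while in $(2)\Rightarrow(1)$ it is part of the hypothesis and hence (Proposition \ref{completeness}(3)) the Takesaki--Takai identification $(X\rtimes_\alpha\G)\rtimes_{\id\otimes\check\Delta_r}\check\G\cong X\ovot B(L^2(\G))$ is available. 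The substantive content is therefore the interplay between injectivity/amenability of $(X,\alpha)$ and of its crossed product.

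For $(1)\Rightarrow(2)$ I would extract two amenability statements from the $\G$-injectivity of $X$. First, since $\Delta_r=\Delta$ on $L^\infty(\G)$, the coaction is itself a $\G$-equivariant unital complete isometry $\alpha\colon(X,\alpha)\hookrightarrow(X\rtimes_\alpha\G,\id\otimes\Delta_r)$; extending $\id_X$ along $\alpha$ by $\G$-injectivity produces a $\G$-equivariant u.c.p.\ retraction, i.e.\ $(X,\alpha)$ is inner $\G$-amenable. Second, Proposition \ref{injective} already gives that $X$ is injective and $(X,\alpha)$ is $\G$-amenable. The goal is then to convert these into the two ingredients of $\check\G$-injectivity of $X\rtimes_\alpha\G$: inner $\G$-amenability should yield $\check\G$-amenability of $(X\rtimes_\alpha\G,\id\otimes\check\Delta_r)$, while $\G$-amenability together with injectivity of $X$ should yield injectivity of $X\rtimes_\alpha\G$, via inner $\check\G$-amenability, which exhibits $X\rtimes_\alpha\G$ as a u.c.p.\ retract of the injective space $(X\rtimes_\alpha\G)\rtimes_{\id\otimes\check\Delta_r}\check\G\cong X\ovot B(L^2(\G))$. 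Both conversions are the operator-system incarnation of the amenability duality of \cite[Theorem 6.12]{DD24}, and I would carry them out by crossing the relevant equivariant conditional expectations with $\G$ (as in the easy direction of Proposition \ref{general result}, $\phi\mapsto\phi\otimes\id$) and transporting along the isomorphisms of Proposition \ref{cond} and Proposition \ref{completeness}(3). Proposition \ref{injective} then assembles ``injective $+$ $\check\G$-amenable'' into $\check\G$-injectivity.

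For $(2)\Rightarrow(1)$ I would first establish the forward implication for all locally compact quantum groups and then feed it back. Set $W:=I_\G^1(X)$ with coaction $\beta$, so that $X\subseteq_\G W$ with $W$ $\G$-injective; by the (already proven) direction $(1)\Rightarrow(2)$, $W\rtimes_\beta\G$ is $\check\G$-injective and $W$ is $\G$-complete, so $(W\rtimes_\beta\G)^{\id\otimes\check\Delta_r}=\beta(W)$. Crossing the inclusion gives a $\check\G$-equivariant complete isometry $X\rtimes_\alpha\G\subseteq_{\check\G}W\rtimes_\beta\G$, and the hypothesised $\check\G$-injectivity of $X\rtimes_\alpha\G$ yields a $\check\G$-equivariant u.c.p.\ retraction $\Psi\colon W\rtimes_\beta\G\to X\rtimes_\alpha\G$. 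Restricting $\Psi$ to the $\check\G$-fixed point algebras, which by the two fixed-point identities are $\beta(W)\cong W$ and $\alpha(X)\cong X$, produces a u.c.p.\ map $r\colon W\to X$ fixing $X$; that $r$ is $\G$-equivariant follows exactly as in the implication $(2)\Rightarrow(1)$ of Proposition \ref{general result}, since $\Psi$ fixes $1\otimes L^\infty(\check\G)$ and hence $V_{23}$ lies in the multiplicative domain of $\Psi\otimes\id$. Thus $X$ is a $\G$-equivariant u.c.p.\ retract of the $\G$-injective $W$, whence $X$ is $\G$-injective.

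The main obstacle is the middle step of $(1)\Rightarrow(2)$: the passage from (inner) amenability of $(X,\alpha)$ to (inner) amenability of $X\rtimes_\alpha\G$ for the dual action. While \cite[Theorem 6.12]{DD24} records precisely this duality for $\G$-dynamical von Neumann algebras, here $X$ is only an operator system, so the argument must be redone directly in terms of conditional expectations. The delicate points are bookkeeping ones---identifying the images of the various coactions ($\id\otimes\Delta_l$ versus $\id\otimes\Delta_r$, and the coaction image sitting inside the iterated crossed product) under the isomorphisms of Proposition \ref{cond} and Proposition \ref{completeness}(3), and checking equivariance of the transported maps---but they are exactly what makes the statement nontrivial and where the hypotheses on fixed points and $\G$-completeness are consumed.
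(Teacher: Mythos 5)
Your proposal is correct, and while the forward implication is essentially the paper's argument, your backward implication takes a genuinely different route. In $(1)\Rightarrow(2)$ you do exactly what the paper does for the amenability half: the retraction $E\colon (X\rtimes_\alpha\G,\id\otimes\Delta_r)\to (X,\alpha)$ obtained by extending $\id_X$ along the equivariant embedding $\alpha$ is precisely the paper's map $E$, and crossing it with $\G$ and transporting along Proposition \ref{cond} is precisely how the paper produces the map $P$ with $P\circ(\id\otimes\check{\Delta}_r)=\id$, i.e.\ $\check{\G}$-amenability; Proposition \ref{injective} and Proposition \ref{completeness} then finish, as in your plan. The one divergence is the injectivity of $X\rtimes_\alpha\G$: the paper crosses the $\G$-injectivity retraction $\phi\colon B(\mathcal{H})\ovot L^\infty(\G)\to X$ with $\G$ and uses $(B(\mathcal{H})\ovot L^\infty(\G))\rtimes_{\id\otimes\Delta}\G\cong B(\mathcal{H})\ovot B(L^2(\G))$, which is shorter than your detour through inner $\check{\G}$-amenability and Takesaki--Takai, and spares you redoing the duality of \cite[Theorem 6.12]{DD24} for operator systems; if you keep your route, the cleaner variant is to cross the $\G$-amenability witness $X\ovot L^\infty(\G)\to X$ with $\G$ and use $(X\ovot L^\infty(\G))\rtimes_{\id\otimes\Delta}\G=X\ovot(L^\infty(\G)\rtimes_\Delta\G)\cong X\ovot B(L^2(\G))$, which sidesteps the delicate identification of the coaction image inside the iterated crossed product. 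In $(2)\Rightarrow(1)$ the paper verifies the lifting property directly: for arbitrary $\iota\colon Y\to Z$ and $\phi\colon Y\to X$ it crosses everything with $\G$, extends $\phi\rtimes\G$ using $\check{\G}$-injectivity, and restricts to $\check{\G}$-fixed points. Your reduction via the envelope $W=I_\G^1(X)$ solves instead a single retraction problem for the crossed inclusion $X\rtimes_\alpha\G\subseteq_{\check{\G}}W\rtimes_\beta\G$ and then quotes $\G$-injectivity of $W$; this is conceptually tidy (only the fixed-point identity for $W$, available from Proposition \ref{completeness}, and the hypothesis are consumed), at the cost of invoking Proposition \ref{injective envelopes} and the already-proven forward direction. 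Both versions hinge on the same mechanism: fixed-point restriction of a $\check{\G}$-equivariant extension, plus a multiplicative-domain argument for $\G$-equivariance.

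One point you should make explicit in $(2)\Rightarrow(1)$: since $\Psi$ is defined on the operator system $W\rtimes_\beta\G$, the phrase ``$V_{23}$ lies in the multiplicative domain of $\Psi\otimes\id$'' is not literally meaningful---multiplicative domains require a unital completely positive map defined on a C$^*$-algebra, which is why Proposition \ref{general result} (stated for von Neumann algebras) can use it directly but you cannot. The repair is exactly the paper's device: first extend $\Psi$ to a unital completely positive map $\widetilde{\Psi}\colon B(\mathcal{H}_W\otimes L^2(\G))\to B(\mathcal{H}_X\otimes L^2(\G))$ using injectivity of $B(\mathcal{H}_X\otimes L^2(\G))$; since $\widetilde{\Psi}$ still fixes the unitaries in $1\otimes L^\infty(\check{\G})$, the element $V_{23}$ lies in the multiplicative domain of $\widetilde{\Psi}\otimes\id$, giving $(\id\otimes\Delta_r)$-equivariance of $\widetilde{\Psi}$, which restricts to the $\G$-equivariance of $r$ on the fixed points. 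This is a routine patch, not a gap in the strategy.
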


\begin{proof}
 $(1)\implies (2)$ Assume $X$ is $\G$-injective. We may assume $X\subseteq B(\mathcal{H})$.
    Since $X$ is $\G$-injective, there is a $\G$-equivariant unital completely positive map
    $$\phi: (B(\mathcal{H})\bar{\otimes}L^\infty(\G), \id \otimes \Delta)\to (X, \alpha)$$
    such that $\phi\circ \alpha = \id_X$. Then applying the crossed product functor $-\rtimes \G$, we obtain a $\check{\G}$-equivariant unital completely positive conditional expectation
    $$\psi: (B(\mathcal{H})\bar{\otimes} B(L^2(\G)), \id \otimes \check{\Delta}_r)\cong ((B(\mathcal{H})\ovot L^\infty(\G))\rtimes_{\id \otimes \Delta}\G, \id \otimes \id \otimes \check{\Delta}_r)\to (X\rtimes_\alpha \G, \id \otimes \check{\Delta}_r).$$
    Therefore, injectivity of $B(\mathcal{H})\bar{\otimes} B(L^2(\G))$ implies injectivity of $X\rtimes_\alpha \G$ as an operator system. It remains to argue that the action $X\rtimes_\alpha \G \curvearrowleft\check{\G}$ is $\check{\G}$-amenable. To see this, note that $\G$-injectivity provides us with a $\G$-equivariant unital completely positive map
    $$E: (X\rtimes_\alpha\G, \id \otimes \Delta_r)\to (X, \alpha), \quad E \circ \alpha = \id_X.$$
    Using Proposition \ref{cond}, we can consider the $\check{\G}$-equivariant unital completely positive map
    $$P: ((X\rtimes_\alpha \G)\ovot L^\infty(\check{\G}), \id \otimes \id \otimes \check{\Delta})\cong ((X\rtimes_\alpha \G)\rtimes_{\id \otimes \Delta_r} \G, \id \otimes \id \otimes \check{\Delta}_r)\stackrel{E\rtimes \G}\longrightarrow (X\rtimes_\alpha \G, \id \otimes \check{\Delta}_r)$$
    that satisfies $P \circ (\id \otimes \check{\Delta}_r) = \id_{X\rtimes_\alpha \G}$. In other words, the action $\id \otimes \check{\Delta}_r: X\rtimes_\alpha \G \curvearrowleft \check{\G}$ is $\check{\G}$-amenable. By Proposition \ref{injective}, $(X\rtimes_\alpha \G, \id \otimes \check{\Delta}_r)$ is $\check{\G}$-injective. By Proposition \ref{completeness}, the statement about the fixed points is clear.

$(2)\implies (1)$ Assume that $(Y, \beta)$ and $(Z, \gamma)$ are $\G$-operator systems, $\iota: Y\to Z$ is a $\G$-equivariant unital complete isometry and $\phi: Y \to X$ is a $\G$-equivariant unital completely positive map. We may assume that there are Hilbert spaces $\mathcal{H}_X$ and $\mathcal{H}_Z$ such that $X\subseteq B(\mathcal{H}_X)$ and $Z \subseteq B(\mathcal{H}_Z)$. Taking crossed products and using that $X\rtimes_\alpha \G$ is $\check{\G}$-injective, we obtain a $\check{\G}$-equivariant unital completely positive map $$\Theta: (Z\rtimes_\gamma \G, \id\otimes \check{\Delta}_r) \to (X\rtimes_\alpha \G, \id \otimes \check{\Delta}_r)$$ such that the diagram on the right commutes:
    $$
\begin{tikzcd}
Y \arrow[d, "\iota"'] \arrow[rr, "\phi"] &  & X &  & Y\rtimes_\beta\G \arrow[d, "\iota \rtimes \G"'] \arrow[rr, "\phi\rtimes \G"] &  & X\rtimes_\alpha \G \\
Z                                        &  &   &  & Z\rtimes_\gamma \G \arrow[rru, "\Theta"', dashed]                              &  &                   
\end{tikzcd}$$
Since $\Theta \circ (\iota \rtimes \G) = \phi\rtimes \G$, we have $\Theta(1\otimes y) = 1 \otimes y$ for all $y\in L^\infty(\check{\G})$. Let $\widetilde{\Theta}: B(\mathcal{H}_Z\otimes L^2(\G)) \to B(\mathcal{H}_X\otimes L^2(\G))$ be a unital completely positive extension of $\Theta$. Then $$V_{23}\in M(1 \otimes C_0(\check{\G})\otimes C_0(\G))\subseteq B(\mathcal{H}_X \otimes L^2(\G)\otimes L^2(\G))$$ is in the multiplicative domain of $\widetilde{\Theta} \otimes \id$ with $(\widetilde{\Theta} \otimes \id)(V_{23})= V_{23}$. Therefore, if $m \in B(\mathcal{H}_Z \otimes L^2(\G))$, 
\begin{align*}
    (\widetilde{\Theta} \otimes \id)(\id \otimes \Delta_r)(m)= (\widetilde{\Theta} \otimes \id)(V_{23}(m\otimes 1)V_{23}^*)=V_{23}(\widetilde{\Theta}(m)\otimes 1)V_{23}= (\id \otimes \Delta_r)\widetilde{\Theta}(m)
\end{align*}
so $\widetilde{\Theta}: (B(\mathcal{H}_Z\otimes L^2(\G)), \id \otimes \Delta_r) \to (B(\mathcal{H}_X\otimes L^2(\G)), \id \otimes \Delta_r)$ is $\G$-equivariant.

Consider the restriction
$$
\begin{tikzcd}
\theta:\gamma(Z) \arrow[rr, "\subseteq"] &  & (Z\rtimes_\gamma \G)^{\id \otimes \check{\Delta}_r} \arrow[rr, "\Theta"] &  & (X\rtimes_\alpha \G)^{\id \otimes \check{\Delta}_r}= \alpha(X)
\end{tikzcd}$$
which is also $\G$-equivariant (since $\widetilde{\Theta}$ is $\G$-equivariant). We define the $\G$-equivariant unital completely positive map
$$\Phi:= \alpha^{-1}\circ\theta\circ\gamma: Z \to X$$
and we note that if $y\in Y$, then
\begin{align*}
    \Phi(\iota(y)) = \alpha^{-1}\theta(\iota \otimes \id)\beta(y) = \alpha^{-1}(\phi\otimes \id)\beta(y) = \alpha^{-1}\alpha(\phi(y)) = \phi(y)
\end{align*}
so that $\Phi\circ \iota = \phi$. 
\end{proof}

\begin{Rem}
    The implication $(1)\implies (2)$ in Theorem \ref{main result} also holds for a $\G$-operator space $(X, \alpha)$. 
    \end{Rem}
Since the fixed-point property is automatically satisfied for $\G$-dynamical von Neumann algebras, we immediately find the following:
    \begin{Cor}\label{main result1}
        If $(M, \alpha)$ is a $\G$-dynamical von Neumann algebra, then $(M, \alpha)$ is $\G$-injective if and only if $(M\rtimes_\alpha \G, \id \otimes \check{\Delta}_r)$ is $\check{\G}$-injective.
    \end{Cor}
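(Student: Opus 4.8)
The plan is to deduce this directly from Theorem \ref{main result}, whose second condition combines $\check{\G}$-injectivity of the crossed product with the fixed-point identity $(M\rtimes_\alpha \G)^{\id \otimes \check{\Delta}_r}= \alpha(M)$. The whole point is that in the von Neumann algebraic setting this fixed-point identity is automatic, so that for a $\G$-dynamical von Neumann algebra the second condition collapses to $\check{\G}$-injectivity alone.

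First I would observe that a $\G$-dynamical von Neumann algebra $(M, \alpha)$ is in particular a $\G$-operator system: indeed $M$ is an operator system and the coaction $\alpha$ is a unital normal $*$-homomorphism, hence a fortiori a unital complete isometry satisfying the coaction property (cf.\ the remarks following Definition \ref{Equivariant operator space}). Thus Theorem \ref{main result} applies verbatim to $(M, \alpha)$. Next I would invoke the automatic fixed-point property $(M\rtimes_\alpha \G)^{\id \otimes \check{\Delta}_r}= \alpha(M)$ recorded in the preliminaries; equivalently, by Proposition \ref{completeness}(2), every $\G$-dynamical von Neumann algebra is $\G$-complete. Granting this, statement (2) of Theorem \ref{main result} reduces to the single requirement that $(M\rtimes_\alpha \G, \id \otimes \check{\Delta}_r)$ be $\check{\G}$-injective, and the claimed equivalence is then precisely the equivalence $(1)\iff(2)$ of that theorem.

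There is essentially no obstacle here, since all the substance is already contained in Theorem \ref{main result}. The only point deserving a moment's attention is the assertion that the fixed-point identity holds for \emph{every} $\G$-dynamical von Neumann algebra rather than merely for a restricted class; this is the standard fact $(M\rtimes_\alpha \G)^{\id \otimes \check{\Delta}_r}= \alpha(M)$ for von Neumann crossed products, which one may either cite from the preliminaries or re-derive through $\G$-completeness via Proposition \ref{completeness}(2).
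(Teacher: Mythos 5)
Your proposal is correct and is exactly the paper's argument: the paper deduces the corollary from Theorem \ref{main result} in one line, citing the standard fact from the preliminaries that $(M\rtimes_\alpha \G)^{\id \otimes \check{\Delta}_r}= \alpha(M)$ holds automatically for $\G$-dynamical von Neumann algebras, so condition (2) of the theorem collapses to $\check{\G}$-injectivity of the crossed product. Your additional observation that this automatic fixed-point identity is equivalent, via Proposition \ref{completeness}(2), to $\G$-completeness of $(M,\alpha)$ is a harmless elaboration of the same point.
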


Using Proposition \ref{GWamenable}, we then also immediately find the following corollary. This answers a question left open in \cite{DD24}.

\begin{Cor}
    Let $(M, \alpha)$ be a $\G$-dynamical von Neumann algebra. Then $(M, \alpha)$ is $\G$-$W^*$-amenable if and only if $(M\rtimes_\alpha \G, \id \otimes \check{\Delta}_r)$ is $\check{\G}$-$W^*$-amenable.
\end{Cor}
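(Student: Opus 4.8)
The plan is to chain together the two equivalences that have just been established, applying the first one twice: once for $\G$ and once for the dual $\check{\G}$. The statement to be proven reads like a formal consequence, so the task is mainly to verify that the hypotheses line up correctly when passing to the dual quantum group.

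First I would invoke Proposition \ref{GWamenable}, which tells us that $(M, \alpha)$ is $\G$-$W^*$-amenable if and only if $(M, \alpha)$ is $\G$-injective. Next I would apply Corollary \ref{main result1}, which asserts that for a $\G$-dynamical von Neumann algebra, $\G$-injectivity of $(M, \alpha)$ is equivalent to $\check{\G}$-injectivity of $(M\rtimes_\alpha \G, \id \otimes \check{\Delta}_r)$. Together these two steps give
$$
(M, \alpha) \text{ is } \G\text{-}W^*\text{-amenable} \iff (M\rtimes_\alpha \G, \id \otimes \check{\Delta}_r) \text{ is } \check{\G}\text{-injective}.
$$

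The final step is to recognise that $M\rtimes_\alpha \G$ is a genuine von Neumann algebra and that, as recorded in the preliminaries, $\id \otimes \check{\Delta}_r$ makes $(M\rtimes_\alpha \G, \id \otimes \check{\Delta}_r)$ into a $\check{\G}$-dynamical von Neumann algebra. Since Proposition \ref{GWamenable} is formulated for an arbitrary locally compact quantum group, it applies verbatim with $\check{\G}$ in place of $\G$: thus $(M\rtimes_\alpha \G, \id \otimes \check{\Delta}_r)$ is $\check{\G}$-injective if and only if it is $\check{\G}$-$W^*$-amenable. Composing this with the equivalence above yields the claim.

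The argument is essentially a three-term chain of bi-implications, so there is no serious obstacle; the only point requiring care is confirming that the crossed product together with its dual action genuinely satisfies the hypotheses of Proposition \ref{GWamenable} when the roles of $\G$ and $\check{\G}$ are interchanged, which is guaranteed by the discussion of $\check{\G}$-dynamical von Neumann algebras in Section 2. I would therefore present the proof as a short display combining the three equivalences in sequence.

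\begin{proof}
By Proposition \ref{GWamenable}, $(M, \alpha)$ is $\G$-$W^*$-amenable if and only if $(M, \alpha)$ is $\G$-injective. By Corollary \ref{main result1}, the latter holds if and only if $(M\rtimes_\alpha \G, \id \otimes \check{\Delta}_r)$ is $\check{\G}$-injective. Since $(M\rtimes_\alpha \G, \id \otimes \check{\Delta}_r)$ is a $\check{\G}$-dynamical von Neumann algebra, a further application of Proposition \ref{GWamenable}, now with $\check{\G}$ in place of $\G$, shows that this is equivalent to $(M\rtimes_\alpha \G, \id \otimes \check{\Delta}_r)$ being $\check{\G}$-$W^*$-amenable.
\end{proof}
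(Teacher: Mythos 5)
Your proof is correct and matches the paper's intended argument exactly: the paper derives this corollary "immediately" by combining Corollary \ref{main result1} with Proposition \ref{GWamenable} applied to both $(M,\alpha)$ and the $\check{\G}$-dynamical von Neumann algebra $(M\rtimes_\alpha \G, \id \otimes \check{\Delta}_r)$, which is precisely your three-step chain. Your care in checking that Proposition \ref{GWamenable} applies verbatim with $\check{\G}$ in place of $\G$ is the only point of substance, and you handled it correctly.
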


Next, we investigate when $(X\rtimes_\alpha \G, \id \otimes \Delta_r)$ is $\G$-injective. For this, we will use the following easy lemma:

\begin{Lem}[Amplification]\label{amplification}
    If $X$ is an operator system and if $(M, \alpha)$ is a $\G$-dynamical von Neumann algebra, then the following statements are equivalent:
    \begin{enumerate}
        \item $(X \ovot M, \id \otimes \alpha)$ is $\G$-injective.
        \item $X$ is injective and $(M, \alpha)$ is $\G$-injective.
    \end{enumerate}
\end{Lem}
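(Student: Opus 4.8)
The plan is to route everything through Proposition~\ref{injective}, which trades $\G$-injectivity for the conjunction of ordinary injectivity and $\G$-amenability, and then to match up the two sides leg by leg. Applying Proposition~\ref{injective} to the $\G$-operator system $(X \ovot M, \id \otimes \alpha)$, statement (1) unpacks as: $X \ovot M$ is injective \emph{and} $(X \ovot M, \id \otimes \alpha)$ is $\G$-amenable. Applying it to the $\G$-dynamical von Neumann algebra $(M,\alpha)$, the second clause of (2) unpacks as: $M$ is injective \emph{and} $(M,\alpha)$ is $\G$-amenable. Since $M$ is $\sigma$-weakly closed, \cite[I, Proposition 3.10]{Ham82} says $X \ovot M$ is injective if and only if both $X$ and $M$ are injective, so the injectivity bookkeeping agrees on the two sides. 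Hence the whole lemma reduces to the single equivalence: \emph{assuming $X$ and $M$ are injective, $(X \ovot M, \id \otimes \alpha)$ is $\G$-amenable if and only if $(M,\alpha)$ is $\G$-amenable.} I would prove the two directions of this equivalence separately.

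For the implication feeding $(2)\Rightarrow(1)$, I start from a $\G$-equivariant unital completely positive conditional expectation $E \colon (M \ovot L^\infty(\G), \id \otimes \Delta) \to (\alpha(M), \id \otimes \Delta)$ and amplify it by the $X$-leg. \textbf{This amplification is the main obstacle}: $E$ need not be $\sigma$-weakly continuous, so $\id_X \otimes E$ is not directly defined on the Fubini tensor product. I circumvent this by representing $X \subseteq B(\mathcal{H})$ and using injectivity of $X$ to fix a unital completely positive conditional expectation $\Psi \colon B(\mathcal{H}) \to X$. Now the fixed leg $B(\mathcal{H})$ is $\sigma$-weakly closed, so the $\id \otimes (-)$ form of Lemma~\ref{Fubinitensor}(1) defines $\id_{B(\mathcal{H})} \otimes E \colon B(\mathcal{H}) \ovot M \ovot L^\infty(\G) \to B(\mathcal{H}) \ovot \alpha(M)$; and since $\alpha(M)$ is a von Neumann subalgebra of $M \ovot L^\infty(\G)$, hence $\sigma$-weakly closed, Lemma~\ref{Fubinitensor}(1) also defines $\Psi \otimes \id_{\alpha(M)} \colon B(\mathcal{H}) \ovot \alpha(M) \to X \ovot \alpha(M)$. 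I then set $E' := (\Psi \otimes \id_{\alpha(M)}) \circ (\id_{B(\mathcal{H})} \otimes E)$, restricted to $X \ovot M \ovot L^\infty(\G)$. Using $\Psi|_X = \id_X$ and $E|_{\alpha(M)} = \id_{\alpha(M)}$, one checks $E'$ fixes $X \ovot \alpha(M) = (\id_X \otimes \alpha)(X \ovot M)$ pointwise, so it is a unital completely positive conditional expectation onto the range of the coaction. Since $\Psi$ touches only the $X$-leg while the coaction $\id \otimes \id \otimes \Delta$ touches only the last leg, the $\G$-equivariance of $E'$ reduces to that of $E$. Thus $(X \ovot M, \id \otimes \alpha)$ is $\G$-amenable, and with $X \ovot M$ injective, Proposition~\ref{injective} delivers (1).

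For the reverse implication, feeding $(1)\Rightarrow(2)$, I would avoid slicing a conditional expectation and instead exploit a $\G$-equivariant retraction. Since $X$ is an operator system, the maps $i \colon m \mapsto 1_X \otimes m$ and $r := f \otimes \id_M$, for any state $f$ on $X$, are $\G$-equivariant unital completely positive maps between $(M,\alpha)$ and $(X \ovot M, \id \otimes \alpha)$ with $r \circ i = \id_M$, exhibiting $(M,\alpha)$ as a $\G$-equivariant retract of $(X \ovot M, \id \otimes \alpha)$. A routine diagram chase then shows that $\G$-injectivity descends to $\G$-equivariant retracts: given any $\G$-equivariant lifting problem for $(M,\alpha)$, compose with $i$ to push it into $X \ovot M$, solve it there using $\G$-injectivity of $X \ovot M$, and pull the solution back along $r$. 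This yields that $(M,\alpha)$ is $\G$-injective. (Its injectivity as an operator system, which is part of this conclusion, is anyway already available from $X \ovot M$ injective via \cite[I, Proposition 3.10]{Ham82}.) Combining the two directions completes the proof; the only genuinely delicate point throughout is the Fubini amplification of the possibly non-normal map $E$, handled as above by interposing $B(\mathcal{H})$ and the conditional expectation $\Psi$.
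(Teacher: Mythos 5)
Your proposal is correct, and its skeleton is the same as the paper's proof: both directions are routed through Proposition \ref{injective} and \cite[I, Proposition 3.10]{Ham82}; for $(1)\Rightarrow(2)$ the paper uses exactly your retraction, namely the slice $\psi=f\otimes\id_M$ by a state $f$ on $X$, viewed as a $\G$-equivariant unital completely positive conditional expectation of $X\ovot M$ onto $1\otimes M\cong M$; and for $(2)\Rightarrow(1)$ the paper, like you, upgrades the amenability expectation $E$ of $(M,\alpha)$ to one for $(X\ovot M,\id\otimes\alpha)$ and applies Proposition \ref{injective} again. The one genuine difference is the step you single out as the main obstacle, and your caution there is warranted rather than pedantic: the paper simply writes $\id\otimes E\colon X\ovot M\ovot L^\infty(\G)\to X\ovot M$, but since $E$ need not be normal and $X$ need not be $\sigma$-weakly closed, neither case of Lemma \ref{Fubinitensor} covers such a map, and the problem is real --- for a non-injective $X$ the naive amplification can exit the Fubini tensor product. (For instance, let $X\subseteq\ell^\infty$ be the operator system of convergent sequences, let $E\colon\ell^\infty\to\C$ be any state extending the limit functional, and let $z=\sum_k E_{kk}\otimes(-1)^k\,\un_{[k,\infty)}\in X\ovot\ell^\infty$; then $(\id\otimes E)(z)=\diag((-1)^k)_k\notin X$.) Your composite $E'=(\Psi\otimes\id_{\alpha(M)})\circ(\id_{B(\mathcal{H})}\otimes E)$ circumvents this using only the two legitimate cases of Lemma \ref{Fubinitensor} plus a conditional expectation $\Psi\colon B(\mathcal{H})\to X$, which exists precisely because $X$ is injective in the direction of the proof where it is needed; the map you obtain need not be a canonical amplification of $E$, but it is a $\G$-equivariant unital completely positive conditional expectation onto $(\id\otimes\alpha)(X\ovot M)=X\ovot\alpha(M)$, which is all that $\G$-amenability asks for. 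In short, you reproduce the paper's argument and, at its one delicate point, supply a justification that the paper's own proof glosses over.
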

\begin{proof}
    $(1)\implies (2)$ By Proposition \ref{injective}, we see that $X\ovot M$ is injective, whence $X$ is injective. On the other hand, consider a state $f\in X^*$ and consider the unital completely positive map
    $\psi:= f \otimes \id: X \ovot M\to M.$
    Then we have
    \begin{align*}
        (\psi\otimes \id)\circ (\id \otimes \alpha) = (f\otimes \id \otimes \id)\circ (\id \otimes \alpha) = \alpha\circ (f\otimes \id) = \alpha\circ \psi
    \end{align*}
    so $\psi: (X\ovot M, \id\otimes \alpha)\to (M, \alpha)$ is a $\G$-equivariant unital completely positive conditional expectation onto $1\otimes M \cong M$. Therefore, $(M, \alpha)$ is $\G$-injective as well.

    $(2)\implies (1)$ By Proposition \ref{injective}, it is clear that $X\ovot M$ is injective. Since $(M, \alpha)$ is $\G$-injective, there is a $\G$-equivariant unital completely positive map
    $E: (M\ovot L^\infty(\G), \id \otimes \Delta)\to (M, \alpha)$ such that $E\circ \alpha = \id_M$.
    Then 
    $$\id \otimes E: (X \ovot M \ovot L^\infty(\G), \id \otimes \id \otimes \Delta)\to (X \ovot M, \id \otimes \alpha)$$
    is a $\G$-equivariant unital completely positive map such that $(\id \otimes E)\circ (\id \otimes \alpha)= \id_{X\ovot M}$. Therefore, we see that $(X\ovot M, \id \otimes \alpha)$ is $\G$-amenable. By Proposition \ref{injective}, we conclude that $(X\ovot M, \id \otimes \alpha)$ is $\G$-injective.
\end{proof}

\begin{Theorem}\label{mainresult2}
    Let $(X, \alpha)$ be a $\G$-operator system. The following statements are equivalent:
    \begin{enumerate}
        \item $(X\rtimes_\alpha\G, \id \otimes \Delta_r)$ is $\G$-injective.
        \item $X\rtimes_\alpha\G$ is injective as an operator system and $\G$ is amenable. 
    \end{enumerate}
\end{Theorem}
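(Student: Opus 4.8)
The plan is to derive both implications simultaneously by applying the already-established Theorem \ref{main result} to the $\G$-operator system $(W,\gamma):=(X\rtimes_\alpha\G,\id\otimes\Delta_r)$ itself, and then to unwind its crossed product by $\G$ using Proposition \ref{cond} and Lemma \ref{amplification}. Thus I would first observe that Theorem \ref{main result}, applied to $(W,\gamma)$, says that $(W,\gamma)$ is $\G$-injective if and only if $(W\rtimes_\gamma\G,\id\otimes\check{\Delta}_r)$ is $\check{\G}$-injective \emph{and} $(W\rtimes_\gamma\G)^{\id\otimes\check{\Delta}_r}=\gamma(W)$.

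The key preliminary step is to show that this fixed-point condition is automatic here. By Proposition \ref{cond}, the map $\Phi(z)=V_{23}zV_{23}^*$ is a $\check{\G}$-equivariant completely isometric isomorphism of $(W\ovot L^\infty(\check{\G}),\id\otimes\check{\Delta})$ onto $(W\rtimes_\gamma\G,\id\otimes\check{\Delta}_r)$. Since $\Delta_r$ is implemented by conjugation with $V$, a direct leg computation gives $\Phi(w\otimes 1)=V_{23}w_{12}V_{23}^*=(\id\otimes\Delta_r)(w)=\gamma(w)$, so $\Phi(W\otimes 1)=\gamma(W)$. Moreover the fixed-point space of the amplified action $\id\otimes\check{\Delta}$ on $W\ovot L^\infty(\check{\G})$ is exactly $W\otimes 1$: slicing on the left by functionals in $W^*$ reduces a fixed element to $L^\infty(\check{\G})^{\check{\Delta}}=\mathbb{C}1$, and applying a fixed normal state of $L^\infty(\check{\G})$ on the right recovers the corresponding element of $W$, forcing $z=w\otimes 1$. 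Because $\Phi$ is equivariant it carries this fixed-point space onto $(W\rtimes_\gamma\G)^{\id\otimes\check{\Delta}_r}$, which therefore equals $\gamma(W)$. Consequently $(W,\gamma)$ is $\G$-injective if and only if $(W\rtimes_\gamma\G,\id\otimes\check{\Delta}_r)$ is $\check{\G}$-injective, equivalently, transporting along $\Phi$, if and only if $(W\ovot L^\infty(\check{\G}),\id\otimes\check{\Delta})$ is $\check{\G}$-injective.

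Next I would apply Lemma \ref{amplification} over the quantum group $\check{\G}$, with operator system $W=X\rtimes_\alpha\G$ and the $\check{\G}$-dynamical von Neumann algebra $(L^\infty(\check{\G}),\check{\Delta})$: the amplified system is $\check{\G}$-injective if and only if $W$ is injective as an operator system and $(L^\infty(\check{\G}),\check{\Delta})$ is $\check{\G}$-injective. It then remains to identify the second clause with amenability of $\G$. For this I would apply Theorem \ref{main result} to the trivial system $(\mathbb{C},\tau)$: here $\mathbb{C}\rtimes_\tau\G=L^\infty(\check{\G})$ with dual action $\id\otimes\check{\Delta}_r=\check{\Delta}$, and the fixed-point condition is automatic for von Neumann dynamical systems, so the theorem gives that $(\mathbb{C},\tau)$ is $\G$-injective if and only if $(L^\infty(\check{\G}),\check{\Delta})$ is $\check{\G}$-injective; combined with Corollary \ref{amenab} this yields that $(L^\infty(\check{\G}),\check{\Delta})$ is $\check{\G}$-injective if and only if $\G$ is amenable (recovering \cite[Theorem 5.2]{Cr17a}). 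Chaining all equivalences, $(X\rtimes_\alpha\G,\id\otimes\Delta_r)$ is $\G$-injective if and only if $X\rtimes_\alpha\G$ is injective and $\G$ is amenable, which is the claim.

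I expect the main obstacle to be the bookkeeping around the fixed-point identity $(W\rtimes_\gamma\G)^{\id\otimes\check{\Delta}_r}=\gamma(W)$ together with the leg-numbering in the computation $\Phi(w\otimes 1)=\gamma(w)$: one must check that the explicit conjugation by $V_{23}$ furnished by Proposition \ref{cond} matches the definition of $\Delta_r$ on the correct leg, and that the amplified fixed-point computation is valid for $W$ merely an operator system rather than a von Neumann algebra, which is why I would argue it via left slices by functionals in $W^*$ plus a single normal state of $L^\infty(\check{\G})$. Once these identifications are in place, the rest is a formal chaining of Theorem \ref{main result}, Proposition \ref{cond}, Lemma \ref{amplification} and Corollary \ref{amenab}.
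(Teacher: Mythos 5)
Your proposal is correct and follows essentially the same route as the paper: apply Theorem \ref{main result} to $(X\rtimes_\alpha\G,\id\otimes\Delta_r)$, use Proposition \ref{cond} to identify the iterated crossed product with the amplification $(X\rtimes_\alpha\G)\ovot L^\infty(\check{\G})$ and to verify that the fixed-point condition holds automatically (the paper asserts $((X\rtimes_\alpha\G)\ovot L^\infty(\check{\G}))^{\id\otimes\id\otimes\check{\Delta}}=(X\rtimes_\alpha\G)\ovot\mathbb{C}1$ where you supply the slicing argument), then conclude via Lemma \ref{amplification} together with Corollary \ref{amenab} and the case $(\mathbb{C},\tau)$ of Theorem \ref{main result}, which is exactly the paper's Corollary \ref{main result1}.
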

\begin{proof} We start by noting that 
$$((X\rtimes_\alpha \G)\ovot L^\infty(\check{\G}))^{\id \otimes \id \otimes \check{\Delta}}= (X\rtimes_\alpha \G)\ovot \mathbb{C}1$$
so by Proposition \ref{cond} we find
$$((X\rtimes_\alpha \G)\rtimes_{\id \otimes \Delta_r}\G)^{\id \otimes \id \otimes \check{\Delta}_r}= V_{23}((X\rtimes_\alpha \G)\ovot \mathbb{C}1)V_{23}^*= (\id \otimes \Delta_r)(X\rtimes_\alpha \G).$$

Therefore, from Theorem \ref{main result} we know that $(X\rtimes_\alpha \G, \id \otimes \Delta_r)$ is $\G$-injective if and only if
$$((X\rtimes_\alpha \G)\rtimes_{\id \otimes \Delta_r}\G, \id \otimes \id \otimes \check{\Delta}_r)\cong ((X\rtimes_\alpha \G)\ovot L^\infty(\check{\G}), \id \otimes \id \otimes \check{\Delta})$$
is $\check{\G}$-injective. By Lemma \ref{amplification}, this is equivalent with the statement that $X\rtimes_\alpha \G$ is injective and $(L^\infty(\check{\G}), \check{\Delta})$ is $\check{\G}$-injective. But by combining Corollary \ref{main result1} and Corollary \ref{amenab}, we know that $\G$ is amenable if and only if $(L^\infty(\check{\G}), \check{\Delta})$ is $\check{\G}$-injective.
\end{proof}

\section{Amenability and inner amenability of actions}

Fix a locally compact quantum group $\G$. As we have seen, the notion of equivariant injectivity is closely related to the notion of amenability of an action. Hence, in this section, we investigate the notion of $\G$-amenability, and the related notion of inner $\G$-amenability, in some more detail. 

\subsection{Inner actions} We start with a study of inner actions. Let $(M, \alpha)$ be a $\G$-dynamical von Neumann algebra. The action $\alpha: M \curvearrowleft \G$ is called \emph{inner} if there is a unitary $U \in M \ovot L^\infty(\G)$ satisfying $(\id \otimes \Delta)(U) = U_{12}U_{13}$ such that $\alpha(m) = U(m\otimes 1)U^*$ for all $m\in M$.

The following lemma shows that the crossed product with respect to an inner action is very easy to understand:

\begin{Lem}\label{wellwell}
    Assume that $M$ is a von Neumann algebra, and $U\in M \ovot L^\infty(\G)$ a unitary such that $(\id \otimes \Delta)(U) = U_{12}U_{13}$. Consider the $\G$-action
    $$\alpha: M \to M \ovot L^\infty(\G): m \mapsto U(m\otimes 1)U^*.$$
    Then 
    $M\rtimes_\alpha \G= U(M \ovot L^\infty(\check{\G}))U^*.$ Consequently, the map $$\psi: (M\rtimes_\alpha \G, \id \otimes \check{\Delta}_r) \to (M \ovot L^\infty(\check{\G}), \id \otimes \check{\Delta}): z \mapsto U^*z U$$ is a $\check{\G}$-equivariant isomorphism.
\end{Lem}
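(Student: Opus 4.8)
The plan is to work with the alternative description of the crossed product recalled in the preliminaries, namely
$$M\rtimes_\alpha \G = \{z \in M \ovot B(L^2(\G)): (\alpha \otimes \id)(z) = (\id \otimes \Delta_l)(z)\},$$
and to prove the asserted equality $M\rtimes_\alpha\G = U(M\ovot L^\infty(\check{\G}))U^*$ by establishing, for every $z\in M\ovot B(L^2(\G))$, the equivalence $z\in M\rtimes_\alpha\G$ if and only if $U^*zU \in M\ovot L^\infty(\check{\G})$. This yields both inclusions at once and simultaneously identifies $\psi=\Ad(U^*)$ as the desired bijection, so the second assertion follows essentially for free.

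First I would record two bookkeeping formulas valid for all $z\in M\ovot B(L^2(\G))$: since $\alpha=\Ad(U)(\,\cdot\otimes 1)$ we have $(\alpha \otimes \id)(z)=U_{12}z_{13}U_{12}^*$, and since $\Delta_l(x)=W^*(1\otimes x)W$ we have $(\id \otimes \Delta_l)(z)=W_{23}^*z_{13}W_{23}$; both are checked on elementary tensors $m\otimes x$ and extended by normality. The crucial input is a commutation relation for $U$ itself: because the left regular representation implements the coproduct, $\Delta_l$ restricts to $\Delta$ on $L^\infty(\G)$, so applying the second formula to $U\in M\ovot L^\infty(\G)$ and invoking the cocycle identity $(\id \otimes \Delta)(U)=U_{12}U_{13}$ gives
$$W_{23}^*U_{13}W_{23}=(\id \otimes \Delta_l)(U)=(\id \otimes \Delta)(U)=U_{12}U_{13}.$$

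Next I would substitute $z=UwU^*$, so that $z_{13}=U_{13}w_{13}U_{13}^*$, into the defining condition $U_{12}z_{13}U_{12}^*=W_{23}^*z_{13}W_{23}$. The left-hand side becomes $(U_{12}U_{13})w_{13}(U_{12}U_{13})^*$, while on the right-hand side I would conjugate $U_{13}w_{13}U_{13}^*$ by $W_{23}$ and insert the boxed relation, turning it into $(U_{12}U_{13})(W_{23}^*w_{13}W_{23})(U_{12}U_{13})^*$. Cancelling the unitary $U_{12}U_{13}$ from both sides collapses the membership $z\in M\rtimes_\alpha\G$ to the clean condition $(\id \otimes \Delta_l)(w)=w_{13}$, that is, the second leg of $w$ is $\Delta_l$-fixed. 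Finally I would identify this fixed-point space: $\Delta_l(y)=1\otimes y$ says exactly that $1\otimes y$ commutes with $W$, and slicing the first leg against $L^1(\G)$ shows this is equivalent to $y\in L^\infty(\hat{\G})'=L^\infty(\check{\G})$. Slicing $(\id \otimes \Delta_l)(w)=w_{13}$ against $M_*$ in the first leg then gives $w\in M\ovot L^\infty(\check{\G})$, and the converse is immediate, completing the equivalence.

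For the consequence, $\psi(z)=U^*zU$ is a normal unital $*$-isomorphism of $M\rtimes_\alpha\G$ onto $M\ovot L^\infty(\check{\G})$ by the above. To see it is $\check{\G}$-equivariant I would use that $U_{12}$ and $\check{V}_{23}$ commute, their shared middle leg lying in $L^\infty(\G)$ and $L^\infty(\G)'$ respectively; hence $(\psi\otimes\id)(\id \otimes \check{\Delta}_r)(z)=\check{V}_{23}\psi(z)_{12}\check{V}_{23}^*=(\id \otimes \check{\Delta}_r)(\psi(z))$, and since $\psi(z)\in M\ovot L^\infty(\check{\G})$ and $\check{\Delta}_r$ restricts to $\check{\Delta}$ there, this equals $(\id \otimes \check{\Delta})(\psi(z))$. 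I expect the main obstacle to be isolating and correctly exploiting the commutation relation $W_{23}^*U_{13}W_{23}=U_{12}U_{13}$; once it is in hand the reduction to a $\Delta_l$-invariance condition is mechanical, and the only remaining care is the standard leg-numbering bookkeeping together with the Fubini-slice argument identifying the $\Delta_l$-fixed points with $L^\infty(\check{\G})$.
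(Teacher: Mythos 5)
Your proof is correct, and it is organized along a genuinely different route than the paper's. The paper splits the equality into two inclusions handled by different tools: for $U(M \ovot L^\infty(\check{\G}))U^* \subseteq M\rtimes_\alpha \G$ it verifies the intertwining condition $(\alpha\otimes\id)=(\id\otimes\Delta_l)$ directly on the generators $U(1\otimes x)U^*$, $x\in L^\infty(\check{\G})$, and then uses that both sides are von Neumann algebras; for the reverse inclusion it invokes Lemma \ref{well-defined} (whose proof rests on the relation $V_{23}U_{12}V_{23}^*=U_{12}U_{13}$ for the \emph{right} regular representation, plus a slice argument) together with the generator picture $M\rtimes_\alpha\G=[\alpha(M)(1\otimes L^\infty(\check{\G}))]''$. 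You instead prove a single equivalence, $z\in M\rtimes_\alpha\G$ if and only if $U^*zU\in M\ovot L^\infty(\check{\G})$, by conjugating the membership condition through the $W$-analogue $W_{23}^*U_{13}W_{23}=(\id\otimes\Delta_l)(U)=U_{12}U_{13}$ of that relation, reducing it to $(\id\otimes\Delta_l)(w)=w_{13}$, and then identifying $\{y\in B(L^2(\G)):\Delta_l(y)=1\otimes y\}=L^\infty(\hat{\G})'=L^\infty(\check{\G})$ by commutation with $W$ and slicing. This yields both inclusions at once, requires neither Lemma \ref{well-defined} nor the bicommutant description of the crossed product, and makes structurally transparent why $\Ad(U^*)$ trivializes the crossed product; the price is that you must justify the leg-numbering identities $(\alpha\otimes\id)(z)=U_{12}z_{13}U_{12}^*$ and $(\id\otimes\Delta_l)(z)=W_{23}^*z_{13}W_{23}$ by normality and carry out the fixed-point identification yourself, steps the paper gets for free by recycling its earlier lemma. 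Your equivariance argument (commutation of $U_{12}$ with $\check{V}_{23}$, then restriction of $\check{\Delta}_r$ to $\check{\Delta}$ on $L^\infty(\check{\G})$) is exactly the paper's.
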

\begin{proof} If $x\in L^\infty(\check{\G})$, we have
\begin{align*}
    (\alpha \otimes \id)(U(1\otimes x)U^*) &= U_{12}U_{13}U_{12}^* (1\otimes 1 \otimes x) U_{12}U_{13}^* U_{12}^*\\
    &= (\id \otimes \Delta)(U)(1\otimes 1 \otimes x) (\id \otimes \Delta)(U)^*\\
    &= (\id \otimes \Delta)(U) (\id \otimes \Delta_l)(1\otimes x) (\id \otimes \Delta)(U^*)\\
    &= (\id \otimes \Delta_l)(U(1\otimes x)U^*)
\end{align*}
so that $U(1\otimes x)U^*\in M\rtimes_\alpha \G$. We conclude that
$$U(M \ovot L^\infty(\check{\G}))U^*\subseteq M\rtimes_\alpha \G.$$
The other inclusion follows immediately from Lemma \ref{well-defined}.   The $\check{\G}$-equivariance of $\psi$ is immediate from the fact that $\check{V}_{23}$ and 
$U_{12}$ commute.
\end{proof}

\begin{Prop}
    Let $(M, \alpha)$ be a $\G$-dynamical von Neumann algebra where $\alpha$ is an inner action. Then:
    \begin{enumerate}
        \item $(M, \alpha)$ is $\G$-amenable if and only if $\G$ is amenable.
        \item $(M, \alpha)$ is inner $\G$-amenable if and only if $\G$ is inner amenable.
        \item $(M, \alpha)$ is $\G$-injective if and only if $\G$ is amenable and $M$ is injective.
    \end{enumerate}
\end{Prop}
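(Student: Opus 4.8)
The plan is to reduce all three equivalences to the case of the \emph{trivial} action $\tau$ on $M$, which is transparent. The key observation is that Lemma~\ref{wellwell} provides a $\check{\G}$-equivariant isomorphism
$$\psi\colon (M\rtimes_\alpha\G,\id\otimes\check{\Delta}_r)\to(M\ovot L^\infty(\check{\G}),\id\otimes\check{\Delta}),\qquad z\mapsto U^*zU,$$
while for the trivial action one has $M\rtimes_\tau\G=M\ovot L^\infty(\check{\G})$ with dual action $\id\otimes\check{\Delta}_r=\id\otimes\check{\Delta}$ (and $\tau(M)=M\otimes 1$). Hence $\psi$ exhibits a $\check{\G}$-equivariant isomorphism $(M\rtimes_\alpha\G,\id\otimes\check{\Delta}_r)\cong(M\rtimes_\tau\G,\id\otimes\check{\Delta}_r)$: the crossed product of an inner action agrees, as a $\check{\G}$-dynamical von Neumann algebra, with that of the trivial action.

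Now each of the three properties of a $\G$-dynamical von Neumann algebra $(N,\gamma)$ is equivalent to a property of its crossed product viewed as a $\check{\G}$-dynamical von Neumann algebra, and such properties are preserved under $\check{\G}$-equivariant isomorphism. Indeed, by \cite[Theorem~6.12]{DD24}, $(N,\gamma)$ is $\G$-amenable iff $(N\rtimes_\gamma\G,\id\otimes\check{\Delta}_r)$ is inner $\check{\G}$-amenable, and $(N,\gamma)$ is inner $\G$-amenable iff $(N\rtimes_\gamma\G,\id\otimes\check{\Delta}_r)$ is $\check{\G}$-amenable; by Corollary~\ref{main result1}, $(N,\gamma)$ is $\G$-injective iff $(N\rtimes_\gamma\G,\id\otimes\check{\Delta}_r)$ is $\check{\G}$-injective. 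Applying this to $\gamma=\alpha$ and to $\gamma=\tau$ and invoking the isomorphism above, I conclude that $(M,\alpha)$ is $\G$-amenable (resp.\ inner $\G$-amenable, resp.\ $\G$-injective) if and only if $(M,\tau)$ is. It therefore suffices to prove the proposition for the trivial action $\tau$.

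For $\tau$ the verifications are direct. If $\G$ is amenable with invariant mean $m$, then $E:=\id\otimes m\colon M\ovot L^\infty(\G)\to M$ is a $\G$-equivariant unital completely positive map with $E\circ\tau=\id_M$, the $\G$-equivariance being exactly the invariance $(m\otimes\id)\Delta=m(\cdot)1$; conversely, slicing any such $E$ by a normal state $f\in M_*$ produces the mean $x\mapsto f(E(1\otimes x))$ on $L^\infty(\G)$, whose invariance drops out of the $\G$-equivariance of $E$. This gives $(1)$. For $(2)$ one argues identically, working inside $M\rtimes_\tau\G=M\ovot L^\infty(\check{\G})$ with $\tau(M)=M\otimes 1$ and replacing $L^\infty(\G)$ by $L^\infty(\check{\G})$ and the coproduct $\Delta$ by the action $\Delta_r$: a $\Delta_r$-invariant state $n$ on $L^\infty(\check{\G})$ (i.e.\ inner amenability of $\G$) gives the $\G$-equivariant conditional expectation $\id\otimes n$ onto $M\otimes 1$, and slicing recovers such a state, so $(M,\tau)$ is inner $\G$-amenable iff $\G$ is inner amenable. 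Finally, $(3)$ follows by combining $(1)$ with Proposition~\ref{injective}: $(M,\tau)$ is $\G$-injective iff $M$ is injective and $(M,\tau)$ is $\G$-amenable, i.e.\ iff $M$ is injective and $\G$ is amenable.

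The only substantive input is the reduction in the first two paragraphs; once the crossed product of the inner action is identified, via $\psi$, with that of the trivial action, all three statements collapse to the well-understood trivial-action case. Accordingly, the main point requiring care is bookkeeping of conventions---the distinction between the coproduct $\Delta$ and the action $\Delta_r$, and the correct placement of the hats in the duality of \cite[Theorem~6.12]{DD24}---rather than the slicing computations themselves, which are routine and I will not belabour.
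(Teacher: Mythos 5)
Your proposal is correct and takes essentially the same route as the paper's proof: both rest on Lemma \ref{wellwell} identifying $(M\rtimes_\alpha\G,\id\otimes\check{\Delta}_r)$ $\check{\G}$-equivariantly with the crossed product of the trivial action, followed by two applications of \cite[Theorem 6.12]{DD24}, with $(3)$ deduced from $(1)$ and Proposition \ref{injective}. The only difference is one of bookkeeping: where the paper passes without comment from inner $\check{\G}$-amenability of $(M\ovot L^\infty(\check{\G}),\id\otimes\check{\Delta})$ to that of $(L^\infty(\check{\G}),\check{\Delta})$, you reduce instead to the trivial action on $M$ and verify its equivalence with (inner) amenability of $\G$ by an explicit slicing argument --- the same content, made explicit.
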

\begin{proof} 
$(1)$ By \cite[Theorem 6.12]{DD24} and Lemma \ref{wellwell}, we see that $(M, \alpha)$ is $\G$-amenable if and only if $(M \ovot L^\infty(\check{\G}), \id \otimes \check{\Delta})$ is $\check{\G}$-inner amenable. This happens if and only if $(L^\infty(\check{\G}),\check{\Delta})\cong_{\check{\G}} (\mathbb{C}\rtimes_\tau \G, \id \otimes \check{\Delta}_r)$ is inner-$\check{\G}$-amenable, which is equivalent with amenability of $\G$ by another application of \cite[Theorem 6.12]{DD24}.

$(2)$ Similar to $(1)$.

    $(3)$ This follows immediately by combining $(1)$ and Proposition \ref{injective}.
\end{proof}

\begin{Cor}\label{innercor}
    Let $(M, \alpha)$ be a $\G$-dynamical von Neumann algebra. Then:
    \begin{enumerate}
        \item $(M\rtimes_\alpha \G, \id \otimes \Delta_r)$ is $\G$-amenable if and only if $\G$ is amenable.
        \item $(M\rtimes_\alpha \G, \id \otimes \Delta_r)$ is inner $\G$-amenable if and only if $\G$ is inner amenable.
    \end{enumerate}
\end{Cor}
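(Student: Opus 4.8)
The plan is to observe that the dual action $\id \otimes \Delta_r$ on $N := M\rtimes_\alpha \G$ is itself an inner action, after which both equivalences follow immediately from the preceding proposition on inner actions. Recall that $N \subseteq M \ovot B(L^2(\G))$ and that $1 \otimes L^\infty(\check{\G}) \subseteq N$ by the definition of the crossed product, while the right regular unitary satisfies $V \in L^\infty(\check{\G}) \ovot L^\infty(\G)$ together with the corepresentation identity $(\id \otimes \Delta)(V) = V_{12} V_{13}$.

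First I would set $U := 1_M \otimes V$, regarded as an element of $N \ovot L^\infty(\G)$: since the $L^\infty(\check{\G})$-leg of $V$ lands inside the copy $1 \otimes L^\infty(\check{\G}) \subseteq N$, we indeed have $U \in (1 \otimes L^\infty(\check{\G})) \ovot L^\infty(\G) \subseteq N \ovot L^\infty(\G)$. The unitary $U$ inherits the cocycle property $(\id \otimes \Delta)(U) = U_{12} U_{13}$ directly from that of $V$. Moreover, for $z \in N$ the very definition of $\Delta_r$ gives
$$(\id \otimes \Delta_r)(z) = V_{23}(z \otimes 1)V_{23}^* = U(z \otimes 1)U^*,$$
so that $\id \otimes \Delta_r$ is an inner action of $\G$ on $N$ in the sense introduced above.

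Granting this, the corollary is immediate: applying the preceding proposition on inner actions to the $\G$-dynamical von Neumann algebra $(N, \id \otimes \Delta_r)$, part $(1)$ yields that $(N, \id \otimes \Delta_r)$ is $\G$-amenable if and only if $\G$ is amenable, and part $(2)$ yields that it is inner $\G$-amenable if and only if $\G$ is inner amenable. I do not anticipate any genuine obstacle; the only point demanding care is the bookkeeping of tensor legs needed to check that $U$ lies in $N \ovot L^\infty(\G)$ and implements $\id \otimes \Delta_r$, together with confirming the cocycle identity for $V$ in the conventions fixed in the preliminaries.
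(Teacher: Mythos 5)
Your proof is correct and is exactly the argument the paper intends: the corollary is stated without a written proof immediately after the proposition on inner actions, the implicit point being precisely your observation that $\id \otimes \Delta_r$ on $M\rtimes_\alpha \G$ is an inner action, implemented by the unitary $1\otimes V \in (M\rtimes_\alpha \G)\ovot L^\infty(\G)$, whose cocycle identity follows from $(\id \otimes \Delta)(V) = V_{12}V_{13}$. All the supporting facts you invoke ($1\otimes L^\infty(\check{\G})\subseteq M\rtimes_\alpha \G$, $V\in L^\infty(\check{\G})\ovot L^\infty(\G)$, and the corepresentation identity, which the paper itself uses in the proof of Proposition \ref{counitslice}) are established in the preliminaries, so your reduction to the inner-action proposition is complete and matches the paper's route.
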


Note that this complements \cite[Theorem 6.12]{DD24}. 

\subsection{Dynamical characterisation of (inner) amenability of a locally compact quantum group}

We are now in a position to give an easy and conceptual proof of the following dynamical characterisation of amenability and inner amenability of locally compact quantum groups:

\begin{Theorem}\label{amenable}   The following statements are equivalent:
\begin{itemize}
    \item[(a)] $\G$ is inner amenable, i.e. the trivial action $\tau: \mathbb{C}\curvearrowleft\G$ is inner amenable.
    \item[(b)] The action $\check{\Delta}: L^\infty(\check{\G})\curvearrowleft \check{\G}$ is amenable.
    \item[(c)] The action $\Delta_r: L^\infty(\check{\G})\curvearrowleft \G$ is inner amenable.
    \item[(d)] The action $\Delta_r: B(L^2(\G))\curvearrowleft \G$ is inner amenable.
\end{itemize} Moreover, all the following statements are equivalent:
    \begin{enumerate}
        \item $\G$ is amenable, i.e. the trivial action $\mathbb{C}\curvearrowleft \mathbb{G}$ is amenable.
          \item The action $\Delta_r: L^\infty(\check{\G})\curvearrowleft \G$ is amenable.
        \item The action $\Delta_r: B(L^2(\G))\curvearrowleft \G$ is amenable.
        \item The action $\check{\Delta}: L^\infty(\check{\G})\curvearrowleft \check{\G}$ is inner amenable.
        \item $(\mathbb{C}, \tau)$ is $\G$-injective.
         \item $(L^\infty(\check{\G}), \check{\Delta})$ is $\check{\G}$-injective.
         \item $(B(L^2(\G)), \Delta_r)$ is $\G$-injective.
        \item $L^\infty(\check{\G})$ is injective and $\G$ is inner amenable.
        \item There is a $\check{\G}$-equivariant unital completely positive conditional expectation
        $$Q: (B(L^2(\G)), \check{\Delta}_r)\to (L^\infty(\check{\G}),\check{\Delta}).$$
    \end{enumerate}
\end{Theorem}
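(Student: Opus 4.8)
The plan is to treat (a) as the hub of the first group of equivalences and (1) as the hub of the second, and to reduce every remaining item to its hub by a single application of the machinery developed earlier. Two structural identifications make this work. First, the trivial-action crossed product satisfies $(\mathbb{C}\rtimes_\tau\G,\id\otimes\check\Delta_r)\cong(L^\infty(\check\G),\check\Delta)$, since $\mathbb{C}\rtimes_\tau\G=\mathbb{C}\ovot L^\infty(\check\G)=L^\infty(\check\G)$ and $\check\Delta_r$ restricts to the comultiplication $\check\Delta$ on $L^\infty(\check\G)$ (exactly as $\Delta_r$ restricts to $\Delta$ on $L^\infty(\G)$). Second, both adjoint actions $(L^\infty(\check\G),\Delta_r)$ and $(B(L^2(\G)),\Delta_r)$ are \emph{inner}: they are implemented by the right regular unitary $V\in L^\infty(\check\G)\ovot L^\infty(\G)\subseteq B(L^2(\G))\ovot L^\infty(\G)$, which satisfies $(\id\otimes\Delta)(V)=V_{12}V_{13}$. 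Once this dictionary is in place, each equivalence becomes a one-line citation.

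For the first group I would argue (a)$\iff$(b) by applying \cite[Theorem 6.12]{DD24} to the trivial action: $(\mathbb{C},\tau)$ is inner $\G$-amenable iff $(\mathbb{C}\rtimes_\tau\G,\id\otimes\check\Delta_r)\cong(L^\infty(\check\G),\check\Delta)$ is $\check\G$-amenable, which is precisely (b). For (a)$\iff$(c) and (a)$\iff$(d) I would invoke the proposition on inner actions established above (the one following Lemma~\ref{wellwell}): since $(L^\infty(\check\G),\Delta_r)$ and $(B(L^2(\G)),\Delta_r)$ are inner, each is inner $\G$-amenable exactly when $\G$ is inner amenable. (Alternatively, (d) follows from Corollary~\ref{innercor} via $(B(L^2(\G)),\Delta_r)\cong(L^\infty(\G)\rtimes_\Delta\G,\id\otimes\Delta_r)$.)

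For the second group I would proceed as follows. The equivalence (1)$\iff$(5) is Corollary~\ref{amenab}, and (5)$\iff$(6) is Corollary~\ref{main result1} applied to $(\mathbb{C},\tau)$ together with the identification above. The equivalences (1)$\iff$(2) and (1)$\iff$(3) come from the $\G$-amenability part of the inner-action proposition applied to $(L^\infty(\check\G),\Delta_r)$ and $(B(L^2(\G)),\Delta_r)$, while (1)$\iff$(7) comes from its $\G$-injectivity part, using that $B(L^2(\G))$ is injective. For (1)$\iff$(4) I would apply \cite[Theorem 6.12]{DD24} in the other direction: $(\mathbb{C},\tau)$ is $\G$-amenable (equivalently $\G$ is amenable) iff $(L^\infty(\check\G),\check\Delta)$ is inner $\check\G$-amenable. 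For (6)$\iff$(8) I would feed (6) into Proposition~\ref{injective}: $(L^\infty(\check\G),\check\Delta)$ is $\check\G$-injective iff $L^\infty(\check\G)$ is injective and $(L^\infty(\check\G),\check\Delta)$ is $\check\G$-amenable, and then I would rewrite the amenability clause using the already-established (a)$\iff$(b), turning it into ``$\G$ is inner amenable'', which is exactly (8). Finally, (1)$\iff$(9) I would obtain from Proposition~\ref{general result} with $M=L^\infty(\G)$, $\alpha=\Delta$ and $N=\mathbb{C}1$: a $\G$-equivariant unital completely positive conditional expectation $(L^\infty(\G),\Delta)\to(\mathbb{C},\tau)$ is exactly an invariant mean, hence amenability of $\G$, while the induced conditional expectation between the crossed products is precisely the map $Q\colon(B(L^2(\G)),\check\Delta_r)\to(L^\infty(\check\G),\check\Delta)$ appearing in (9).

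I expect the main obstacle to be bookkeeping rather than any single hard estimate: the delicate points are verifying the canonical identifications of the various actions — that $\check\Delta_r$ restricts to $\check\Delta$ on $L^\infty(\check\G)$, that $\Delta_r$ restricts to an inner coaction on $L^\infty(\check\G)$ with target $L^\infty(\check\G)\ovot L^\infty(\G)$, and that the trivial crossed product carries the action $\check\Delta$ — since all subsequent equivalences collapse to citations once these are settled. The one place requiring genuine care is the derivation of (9), where one must check that the conditional expectation produced by Proposition~\ref{general result} is the stated map $Q$ and is $\check\G$-equivariant for $\check\Delta_r$.
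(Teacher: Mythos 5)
Your proposal is correct and takes essentially the same approach as the paper: the same canonical identifications of $(\mathbb{C}\rtimes_\tau\G,\,\id\otimes\check{\Delta}_r)\cong(L^\infty(\check{\G}),\check{\Delta})$ and $(L^\infty(\G)\rtimes_\Delta\G,\,\id\otimes\check{\Delta}_r)\cong(B(L^2(\G)),\check{\Delta}_r)$, the same hubs (a) and (1), and the same citations (\cite[Theorem 6.12]{DD24}, Corollaries \ref{amenab} and \ref{main result1}, Propositions \ref{injective} and \ref{general result}, the latter with $M=L^\infty(\G)$, $N=\mathbb{C}$ for item (9)). Your only deviation --- applying the inner-actions proposition directly to $(L^\infty(\check{\G}),\Delta_r)$ and $(B(L^2(\G)),\Delta_r)$ (both inner via $V$) rather than routing through Corollary \ref{innercor} and the crossed-product identifications --- is cosmetic, since that corollary is itself an instance of the same proposition.
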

\begin{proof}  We recall that there are equivariant isomorphisms
\begin{align}
    \label{1}(\mathbb{C}\rtimes_\tau \G, \id \otimes \Delta_r)&\cong_\G (L^\infty(\check{\G}),\Delta_r)\\
\label{2}(\mathbb{C}\rtimes_\tau \G, \id \otimes \check{\Delta}_r)&\cong_{\check{\G}}(L^\infty(\check{\G}), \check{\Delta}) \\
   \label{3}(L^\infty(\G)\rtimes_\Delta \G, \id \otimes \Delta_r) &\cong_{\G}  (B(L^2(\G)), \Delta_r)\\
   \label{4}(L^\infty(\G)\rtimes_\Delta \G, \id \otimes \check{\Delta}_r) &\cong_{\check{\G}} (B(L^2(\G)), \check{\Delta}_r)
\end{align}

The equivalence $(a)\iff (b)$ follows from \cite[Theorem 6.12]{DD24} and \eqref{2}. The equivalence $(a)\iff (c)$ follows from \eqref{1} and Corollary \ref{innercor}. The equivalence $(a)\iff (d)$ follows from \eqref{3} and Corollary \ref{innercor}.

The equivalence $(1)\iff (2)$ follows from Corollary \ref{innercor} and \eqref{1}. The equivalence $(1)\iff (3)$ follows from Corollary \ref{innercor} and \eqref{3}. The equivalence $(1)\iff (4)$ follows from \cite[Theorem 6.12]{DD24} and \eqref{2}. The equivalence $(1)\iff (5)$ is Corollary \ref{amenab}. The equivalence $(5)\iff (6)$ follows from \eqref{2} and Corollary \ref{main result1}. The equivalence $(3)\iff (7)$ follows from Proposition \ref{injective}. The equivalence $(6)\iff (8)$ follows from Proposition \ref{injective} and the equivalence $(a)\iff (b)$. The equivalence $(1)\iff (9)$ follows from Proposition \ref{general result}, \eqref{2} and \eqref{4}.
\end{proof}
\begin{Rem}
    Some of the results in Theorem \ref{amenable} are already known. For example:
    \begin{itemize}
        \item The equivalence $(1) \iff (6)$ is \cite[Theorem 5.2]{Cr17a}.
        \item The equivalence $(1) \iff (9)$ was already established in \cite{CN16}.
        \item The equivalence $(1) \iff (8)$ is \cite[Corollary 3.9]{Cr19}.
        \item Note also the result \cite[Theorem 5.5]{CN16}, which tells us that $\G$ is amenable if and only if $B(L^2(\G))$ is injective as a (left) operator $B(L^2(\G))_*$-module where the module action is given by
        $$\omega \RHD x = (\id \otimes \omega)\Delta_r(x), \quad x \in B(L^2(\G)), \omega \in B(L^2(\G))_*.$$ 
    The equivalence $(1) \iff (7)$ and Proposition \ref{injective} tell us that $\G$ is amenable if and only if $B(L^2(\G))$ is injective as an operator $L^1(\G)$-module. 
    In fact, if $(X, \rhd)$ is a left $L^1(\G)$-module, then it is also a left $B(L^2(\G))_*$-module for 
    the action $$\omega \RHD x:= \omega\vert_{L^\infty(\G)}\rhd x, \quad \omega \in B(L^2(\G))_*, x \in X.$$
    It is then easy to see that if $(X, \RHD)$ is injective as a $B(L^2(\G))_*$-module, then $(X, \rhd)$ is also injective as an $L^1(\G)$-module. However, the converse is not clear in general. In the case of $B(L^2(\G))$, we thus see that both notions of module-injectivity agree.
    \end{itemize}
\end{Rem}

\begin{Rem} The equivalence $(a)\iff (b)$ in Theorem \ref{amenable} can be used to construct examples of amenable locally compact quantum groups that have non-amenable actions. For example, let $G$ be a locally compact group that is not inner amenable (see e.g. \cite[Remark 2.6 (ii)]{FSW17} together with \cite[Proposition 3.2]{CZ17}). Then $\check{G}$ is an amenable locally compact quantum group with function algebra the right group von Neumann algebra $\mathscr{R}(G)$ and coproduct uniquely determined by $\check{\Delta}(\rho_g) = \rho_g\otimes \rho_g$ for $g\in G$. The action $\check{\Delta}: \mathscr{R}(G)\curvearrowleft\check{G}$ is not amenable.
\end{Rem} 

\subsection{Application to the non-commutative Poisson boundary}

We give an application of the above results to the non-commutative Poisson boundary \cite{Izu02}. We briefly recall the details of the construction. Let $(M, \alpha)$ be a $\G$-dynamical von Neumann algebra and consider a normal $\G$-equivariant unital completely positive map $P: (M, \alpha) \to (M, \alpha)$. We then define the operator system of \emph{$P$-harmonic elements}
$$\mathcal{H}^\infty(M,P) = \{x\in M: P(x)= x\}.$$
It is then easily checked that $\alpha(\mathcal{H}^\infty(M,P))\subseteq \mathcal{H}^\infty(M,P)\ovot L^\infty(\G)$. Then the restriction $\alpha_P: \mathcal{H}^\infty(M,P)\to \mathcal{H}^\infty(M,P)\ovot L^\infty(\G)$ turns $\mathcal{H}^\infty(M,P)$ into a $\G$-operator system. Taking a cluster point of the sequence
$$\left\{\frac{1}{n}\sum_{k=1}^n P^k\right\}_{n=1}^\infty$$
of unital completely positive maps $M\to M$ in the point $\sigma$-weak topology, we find a $\G$-equivariant unital completely positive conditional expectation
$$E: M\to  \mathcal{H}^\infty(M,P).$$
Of particular importance is the case where $(M, \alpha)= (L^\infty(\G), \Delta)$. In that case, to any state $\mu \in C_u(\G)^*$ \cite{Kus01}, we can associate a \emph{Markov operator} $P_\mu: (L^\infty(\G), \Delta)\to (L^\infty(\G), \Delta)$ \cite[Section 2]{KNR13}, which is a $\G$-equivariant, normal, unital completely positive map. In that case, we write $\mathcal{H}_\mu := \mathcal{H}^\infty(L^\infty(\G), P_\mu)$ and $\Delta_\mu:=\Delta_{P_\mu}$. Note that with $\mu = \epsilon \in C_u(\G)^*$ the counit, we have $\mathcal{H}_\mu = L^\infty(\G)$ and $\Delta_\mu = \Delta$.

From Theorem \ref{amenable} and the existence of the map $E$, the following result is then clear:

\begin{Prop}\label{Poisson}
  Let $(M,\alpha)$ be a $\G$-dynamical von Neumann algebra. 
  \begin{enumerate}
      \item If $\alpha: M\curvearrowleft \G$ is an amenable action, then so is $\alpha_P: \mathcal{H}^\infty(M,P)\curvearrowleft \G$.
      \item If $\alpha: M\curvearrowleft \G$ is an inner amenable action, then so is $\alpha_P: \mathcal{H}^\infty(M,P)\curvearrowleft \G$.
      \item If $\alpha: M \curvearrowleft \G$ is $\G$-injective, then so is $\alpha_P: \mathcal{H}^\infty(M,P)\curvearrowleft \G$. Consequently, $\mathcal{H}^\infty(M,P)\rtimes_{\alpha_P} \G$ is $\check{\G}$-injective.
  \end{enumerate}
  In particular, the following statements are equivalent:
    \begin{enumerate}
        \item[(a)] $\check{\G}$ is inner amenable if and only if $\Delta_\mu: \mathcal{H}_\mu\curvearrowleft\G$ is amenable for all $\mu \in C_u(\G)^*$.
        \item[(b)] $\check{\G}$ is amenable if and only if $\Delta_\mu: \mathcal{H}_\mu\curvearrowleft\G$ is $\G$-injective for all $\mu \in C_u(\G)^*$.
    \end{enumerate}
    In particular, if $\check{\G}$ is amenable, the crossed product $\mathcal{H}_\mu \rtimes_{\Delta_\mu} \G$ is $\check{\G}$-injective for all $\mu \in C_u(\G)^*.$
\end{Prop}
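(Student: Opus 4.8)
The common engine behind parts (1)--(3) is that $(\mathcal{H}^\infty(M,P),\alpha_P)$ is a $\G$-equivariant completely contractive \emph{retract} of $(M,\alpha)$. Writing $\iota:\mathcal{H}^\infty(M,P)\hookrightarrow M$ for the inclusion and $E:M\to\mathcal{H}^\infty(M,P)$ for the $\G$-equivariant unital completely positive conditional expectation produced just before the statement (as a point $\sigma$-weak cluster point of the Cesàro means of $P$), we have $E\circ\iota=\id$, and both $\iota$ and $E$ are $\G$-equivariant for $\alpha$ and $\alpha_P=\alpha|_{\mathcal{H}^\infty(M,P)}$. My plan is to show that each of the three properties passes to such a retract. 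For part (1), unfold the definition of $\G$-amenability of $(M,\alpha)$ into a $\G$-equivariant complete contraction $E_0:(M\ovot L^\infty(\G),\id\otimes\Delta)\to(M,\alpha)$ with $E_0\circ\alpha=\id_M$ (compose the averaging conditional expectation with $\alpha^{-1}$), and set $E':=E\circ E_0\circ(\iota\otimes\id)$. Since $\iota\otimes\id$ intertwines the two $\id\otimes\Delta$ coactions and $E_0,E$ are $\G$-equivariant, $E'$ is $\G$-equivariant; and for $x\in\mathcal{H}^\infty(M,P)$ one has $(\iota\otimes\id)\alpha_P(x)=\alpha(x)$, so $E'(\alpha_P(x))=E(E_0(\alpha(x)))=E(x)=x$, which witnesses $\G$-amenability of $\alpha_P$.

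For part (2) I would run the same argument one level up. Inner $\G$-amenability of $(M,\alpha)$ gives a $\G$-equivariant unital completely positive map $F:(M\rtimes_\alpha\G,\id\otimes\Delta_r)\to(M,\alpha)$ with $F\circ\alpha=\id_M$, and applying the crossed product functor to $\iota$ yields $\iota\rtimes\G:\mathcal{H}^\infty(M,P)\rtimes_{\alpha_P}\G\to M\rtimes_\alpha\G$. The map $F':=E\circ F\circ(\iota\rtimes\G)$ then satisfies $F'\circ\alpha_P=\id$, so $\alpha_P\circ F'$ is the averaging conditional expectation witnessing inner $\G$-amenability of $\alpha_P$. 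For part (3) I would argue directly that $\G$-injectivity passes to $\G$-equivariant retracts: given a $\G$-equivariant unital complete isometry $j:Y\to Z$ and a $\G$-equivariant unital completely positive $\phi:Y\to\mathcal{H}^\infty(M,P)$, push the problem into $M$ via $\iota\circ\phi$, extend it to $\Psi:Z\to M$ using $\G$-injectivity of $(M,\alpha)$, and pull back along $E$; then $E\circ\Psi$ is the required $\G$-equivariant extension since $(E\circ\Psi)\circ j=E\circ\iota\circ\phi=\phi$. The ``Consequently'' clause is then immediate from Theorem \ref{main result}, $(1)\Rightarrow(2)$.

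For the equivalences (a) and (b), the key reduction is to identify the dynamical properties of the base case $(M,\alpha)=(L^\infty(\G),\Delta)$ with properties of the dual. Applying Theorem \ref{amenable} to the locally compact quantum group $\check{\G}$ (and using $\check{\check{\G}}=\G$, $\check{\check{\Delta}}=\Delta$), the equivalence $(a)\iff(b)$ there reads ``$(L^\infty(\G),\Delta)$ is $\G$-amenable $\iff\check{\G}$ is inner amenable'', while $(1)\iff(6)$ reads ``$(L^\infty(\G),\Delta)$ is $\G$-injective $\iff\check{\G}$ is amenable''. With these in hand: if $\check{\G}$ is inner amenable then $(L^\infty(\G),\Delta)$ is $\G$-amenable, so part (1) with $P=P_\mu$ shows $\Delta_\mu$ is amenable for every state $\mu$; conversely taking $\mu=\epsilon$ the counit gives $\mathcal{H}_\epsilon=L^\infty(\G)$ and $\Delta_\epsilon=\Delta$, so amenability of all $\Delta_\mu$ forces $(L^\infty(\G),\Delta)$ to be $\G$-amenable, hence $\check{\G}$ inner amenable; this is (a). Replacing ``amenable'' by ``$\G$-injective'' throughout and invoking part (3) gives (b) verbatim, and the final sentence is just (b) combined with the ``Consequently'' clause of part (3).

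I expect the only genuine care to be in part (2): I must verify that the crossed product functor map $\iota\rtimes\G$ is equivariant for the $\id\otimes\Delta_r$ (i.e.\ $\G$-) actions, and not merely for the $\id\otimes\check{\Delta}_r$ (i.e.\ $\check{\G}$-) actions for which functoriality was recorded, and that the composite $F'$ truly restricts to the identity after $\alpha_P$. Everything else reduces to bookkeeping and to the already-established results, so the proof is short.
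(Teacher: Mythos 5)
Your proposal is correct and is essentially the paper's own argument: the paper's proof consists precisely of the observation that the Ces\`aro-limit conditional expectation $E:M\to\mathcal{H}^\infty(M,P)$ makes $(\mathcal{H}^\infty(M,P),\alpha_P)$ a $\G$-equivariant retract of $(M,\alpha)$ (so that amenability, inner amenability and $\G$-injectivity pass over, with Theorem \ref{main result} giving the ``consequently'' clause), combined with Theorem \ref{amenable} applied to $\check{\G}$ and the case $\mu=\epsilon$, $\mathcal{H}_\epsilon=L^\infty(\G)$, $\Delta_\epsilon=\Delta$ for the converse directions of (a) and (b). Your extra care in part (2) is warranted but resolves exactly as you suggest, since $\iota\rtimes\G$ is a restriction of $\iota\otimes\id$, which commutes with $\id\otimes\Delta_r$ because the two maps act on different tensor legs.
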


\begin{Rem}
    If $\G$ is a discrete quantum group, Proposition \ref{Poisson} was already known, and follows e.g.\ from the results in \cite{Moa18} or \cite{DH24}. 
\end{Rem}

\subsection{Automatic (inner) amenability of actions}

In this subsection, we investigate some situations where (inner) amenability of an action comes for free.

\begin{Prop}\label{counitslice}
    Let $(M, \alpha)$ be a $\G$-dynamical von Neumann algebra.
    \begin{enumerate}
        \item If $\G$ is co-amenable and $M\rtimes_\alpha\G$ is injective, then so is $M$.
        \item If $\G$ is co-amenable and if $(M, \alpha)$ is $\G$-amenable, then it is also inner $\G$-amenable.
        \item If $\check{\G}$ is co-amenable and if $(M, \alpha)$ is inner $\G$-amenable, then it is also $\G$-amenable.
    \end{enumerate}
\end{Prop}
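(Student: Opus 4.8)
The three statements share one device, a \emph{counit slice}. Co-amenability of $\G$ furnishes a contractive approximate identity $\{u_i\}_{i\in I}$ of states in $L^1(\G)$, and for this approximate identity one has $u_i\rhd m\to m$ $\sigma$-weakly for the module underlying any $\G$-dynamical von Neumann algebra, as well as $(u_i\otimes\id)\Delta(y)\to y$ $\sigma$-weakly for every $y\in L^\infty(\G)$ (test against $\omega\in L^1(\G)$ and use $\|u_i\star\omega-\omega\|\to 0$). Throughout I view $M\rtimes_\alpha\G\subseteq M\ovot B(L^2(\G))$ and extend each $u_i$ to a normal state $\tilde u_i$ on $B(L^2(\G))$.

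For (1) I would set $P_i:=\alpha\circ(\id\otimes\tilde u_i)\colon M\rtimes_\alpha\G\to\alpha(M)$. These maps are unital and completely positive, and since $(\id\otimes\tilde u_i)\alpha(m)=u_i\rhd m\to m$, we get $P_i(\alpha(m))\to\alpha(m)$ $\sigma$-weakly. A point-$\sigma$-weak cluster point $P$ (as in the proof of Proposition \ref{injective envelopes}) is then a unital completely positive conditional expectation $M\rtimes_\alpha\G\to\alpha(M)$. As $M\rtimes_\alpha\G$ is injective and $\alpha\colon M\to\alpha(M)$ is a normal $*$-isomorphism, the existence of $P$ forces $M$ to be injective.

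For (2), let $E\colon(M\ovot L^\infty(\G),\id\otimes\Delta)\to(\alpha(M),\id\otimes\Delta)$ be the conditional expectation witnessing $\G$-amenability. The plan is to manufacture a $\G$-equivariant completely positive map $\Theta\colon(M\rtimes_\alpha\G,\id\otimes\Delta_r)\to(M\ovot L^\infty(\G),\id\otimes\Delta)$ fixing $\alpha(M)$; then $\Phi:=E\circ\Theta$ is a $\G$-equivariant conditional expectation onto $\alpha(M)$, which is exactly inner $\G$-amenability. I would obtain $\Theta$ as a point-$\sigma$-weak cluster point of $\Theta_i:=(\id\otimes\tilde u_i\otimes\id)\circ(\id\otimes\Delta_r)$, where $\tilde u_i$ slices the $B(L^2(\G))$-leg of the crossed product. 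The decisive observation is that each $\Theta_i$ is \emph{already} $\G$-equivariant, for purely formal reasons: using only the coaction identity $(\Delta_r\otimes\id)\Delta_r=(\id\otimes\Delta)\Delta_r$, both $(\id\otimes\Delta)\circ\Theta_i$ and $(\Theta_i\otimes\id)\circ(\id\otimes\Delta_r)$ collapse to $(\id\otimes\tilde u_i\otimes\id\otimes\id)\circ(\id\otimes(\Delta_r\otimes\id)\Delta_r)$. Moreover, on $\alpha(M)$ the sliced leg lies in $L^\infty(\G)$, where $\Delta_r$ restricts to $\Delta$, so $\Theta_i(\alpha(m))=(\id\otimes[(u_i\otimes\id)\Delta])\alpha(m)\to\alpha(m)$ by the counit slice; hence the limit $\Theta$ fixes $\alpha(M)$.

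For (3) I would avoid repeating the construction and instead dualise. By \cite[Theorem 6.12]{DD24}, $(M,\alpha)$ is inner $\G$-amenable if and only if $(M\rtimes_\alpha\G,\id\otimes\check{\Delta}_r)$ is $\check{\G}$-amenable, and $(M,\alpha)$ is $\G$-amenable if and only if $(M\rtimes_\alpha\G,\id\otimes\check{\Delta}_r)$ is inner $\check{\G}$-amenable. Since $\check{\G}$ is co-amenable, applying part (2) to the $\check{\G}$-dynamical von Neumann algebra $(M\rtimes_\alpha\G,\id\otimes\check{\Delta}_r)$ upgrades its $\check{\G}$-amenability to inner $\check{\G}$-amenability, which is precisely the $\G$-amenability of $(M,\alpha)$. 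The main obstacle is entirely in (2): one must slice the correct leg so that coassociativity of $\Delta_r$ delivers $\G$-equivariance for free, and then check that co-amenability makes the slices behave as an approximate identity on the $L^\infty(\G)$-leg, so that $\alpha(M)$ is fixed in the limit. Everything else is then formal.
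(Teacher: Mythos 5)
Your overall architecture is sound, and for parts (2) and (3) it is essentially the paper's proof with a different implementation of the same device. Where the paper extends the counit $\epsilon$ of $C_0(\G)$ to a single (generally non-normal) state $\tilde{\epsilon}$ on $B(L^2(\G))$ and uses a multiplicative domain argument (via $(\id\otimes\tilde{\epsilon})(V)=1$) to obtain $(\id\otimes\tilde{\epsilon})\circ\Delta=\id$ on all of $L^\infty(\G)$, hence a single map $E$ with $E\circ\alpha=\id_M$ and no limiting procedure, you use a bounded approximate identity of normal states plus point-$\sigma$-weak cluster points. Your part (2) is correct as written: the formal equivariance of $\Theta_i$ does follow from $(\Delta_r\otimes\id)\Delta_r=(\id\otimes\Delta)\Delta_r$, it passes to the cluster point by a routine boundedness argument, and --- crucially --- the only convergence you need there is on $\alpha(M)$, where the sliced leg carries $\Delta$ and the elementary test (pair with $\omega\otimes\nu$ and use $\|u_i\star\nu-\nu\|\to 0$) genuinely works. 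Your part (3) is exactly the paper's argument: dualise through \cite[Theorem 6.12]{DD24} and apply (2) to $(M\rtimes_\alpha\G,\id\otimes\check{\Delta}_r)$. The paper's counit trick buys a limit-free one-line map; your approach stays inside normal functionals at the cost of compactness arguments.

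The gap is in part (1). There you need $u_i\rhd m\to m$ $\sigma$-weakly for a \emph{general} $\G$-dynamical von Neumann algebra $(M,\alpha)$, and your parenthetical justification only covers $(L^\infty(\G),\Delta)$: testing against $\omega\in M_*$ produces $(\omega\otimes u_i)\alpha(m)$, and here there is no convolution identity to invoke --- the approximate identity property concerns $\nu\star u_i$ for $\nu\in L^1(\G)$, not the functionals $(\omega\otimes u_i)\circ\alpha$ with $\omega\in M_*$. The claim is in fact true, but it needs a separate argument, for instance the following: make $M_*$ a right Banach $L^1(\G)$-module via $\omega\cdot u:=(\omega\otimes u)\circ\alpha$; since $\alpha$ is injective and the functionals $\omega\otimes u$ separate the points of $M\ovot L^\infty(\G)$, the annihilator of $M_*\cdot L^1(\G)$ in $M=(M_*)^*$ is trivial, so $M_*\cdot L^1(\G)$ is norm-dense in $M_*$; the standard lemma on bounded approximate identities acting on essential modules then gives $\|\omega\cdot u_i-\omega\|\to 0$ for every $\omega\in M_*$, which is precisely the asserted $\sigma$-weak convergence. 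With this lemma inserted, your (1) closes; without it, the one step where co-amenability genuinely acts on an arbitrary action --- the heart of statement (1) --- is unproved. Alternatively, you could replace the cluster-point construction in (1) by the paper's single map $\id\otimes\tilde{\epsilon}$, whose property $E\circ\alpha=\id_M$ follows from coassociativity and injectivity of $\alpha$ with no convergence claim at all.
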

\begin{proof} Let $\epsilon: C_0(\G)\to \mathbb{C}$ be the counit and $\tilde{\epsilon}\in B(L^2(\G))^*$ a state extending $\epsilon$. Since $(\id \otimes \Delta)(V)= V_{12}V_{13}$, we have $(\id \otimes \tilde{\epsilon})(V)= 1$. If $y\in L^\infty(\G)$, we have
    $$(\id \otimes \tilde{\epsilon})\Delta(y) = (\id \otimes \tilde{\epsilon})(V(y\otimes 1)V^*) = y$$
    by a standard multiplicative domain argument.
    Consider the map
    $$E: = \id \otimes \tilde{\epsilon}: M \rtimes_\alpha \G \to M.$$
    Then we have for $x\in M$,
    \begin{align*}
        \alpha (E(\alpha(x))) = (\id \otimes \id \otimes \tilde{\epsilon})(\alpha\otimes \id)\alpha(x)= (\id \otimes \id \otimes \tilde{\epsilon})(\id \otimes \Delta)\alpha(x)= \alpha(x)
    \end{align*}
   so that by injectivity of $\alpha$ we find $E\circ \alpha = \id_M$.
   
   $(1)$ The injectivity of $M\cong \alpha(M)$ then follows immediately from the injectivity of $M\rtimes_\alpha \G$.

   $(2)$ Let $F: M \ovot L^\infty(\G)\to M$ be a $\G$-equivariant map satisfying $F\circ \alpha = \id_M$. Define then
$$\widetilde{E}:=F \circ (E\otimes \id)\circ (\id \otimes \Delta_r): M\rtimes_\alpha \G \to M$$
which is $\G$-equivariant. Moreover, it satisfies
\begin{align*}
    \widetilde{E}(\alpha(x)) =  F(E\otimes \id)(\alpha\otimes \id)\alpha(x) = F(\alpha(x)) = x
\end{align*}
for all $x\in M$, so $\alpha: M\curvearrowleft \G$ is inner $\G$-amenable.

$(3)$ This follows immediately from $(2)$ and \cite[Theorem 6.12]{DD24}.
\end{proof}

\begin{Rem}
    \begin{itemize}
        \item $B(L^2(\G))= L^\infty(\G)\rtimes \G$ is always injective, yet $L^\infty(\G)$ may not be injective when $\G$ is not co-amenable. Thus, $(1)$ is not true in general.
        \item Let $\G$ be a locally compact quantum group that is inner amenable, but not amenable. Then the action $\check{\Delta}: L^\infty(\check{\G})\curvearrowleft \check{\G}$ is $\check{\G}$-amenable, but not inner $\check{\G}$-amenable. Thus, also $(2)$ is not true in general.
        \item If $\G$ is a locally compact group or a discrete quantum group, every $\G$-dynamical von Neumann algebra $(M, \alpha)$ that is $\G$-amenable is automatically also inner $\G$-amenable. In particular, $(2)$ in Proposition \ref{counitslice} removes the injectivity assumption in \cite[Proposition 4.4]{MP22} for classical locally compact groups.
    \end{itemize}
\end{Rem}

\begin{Prop}
    Let $\G$ be a unimodular discrete quantum group. Then every $\G$-dynamical von Neumann algebra is inner $\G$-amenable. If moreover $\G$ is amenable, then every $\G$-dynamical von Neumann algebra is also $\G$-amenable.
\end{Prop}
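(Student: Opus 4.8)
The plan is to handle the two statements separately, in each case pushing the problem onto the compact dual $\check{\G}$ via the duality \cite[Theorem 6.12]{DD24}, and to exploit that $\check{\G}$ is of Kac type precisely because $\G$ is unimodular. For the first statement, recall from \cite[Theorem 6.12]{DD24} that a $\G$-dynamical von Neumann algebra $(M,\alpha)$ is inner $\G$-amenable if and only if its dual crossed product $(M\rtimes_\alpha\G,\ \id\otimes\check{\Delta}_r)$ is $\check{\G}$-amenable. Since $\G$ is discrete, $\check{\G}$ is a compact quantum group, and since $\G$ is unimodular, $\check{\G}$ is of Kac type, so its Haar state $\Phi$ is a normal faithful \emph{trace}. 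Thus the first statement reduces entirely to the following general claim: \emph{every action of a compact quantum group of Kac type is amenable}.

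To prove this claim, let $(N,\gamma)$ be a $\check{\G}$-dynamical von Neumann algebra with $\check{\G}$ compact of Kac type, and let $R$ denote its (bounded, normal) unitary antipode. I would construct the required $\check{\G}$-equivariant unital completely positive conditional expectation $F\colon (N\ovot L^\infty(\check{\G}),\ \id\otimes\check{\Delta})\to(\gamma(N),\ \id\otimes\check{\Delta})$ as the quantum analogue of the classical averaging $F(f)(k)=\gamma_k\bigl(\int\gamma_s^{-1}(f(s))\,ds\bigr)$ available for an action of a compact group. Concretely, one first builds the left-averaging map $\Theta\colon N\ovot L^\infty(\check{\G})\to N$, which in Sweedler notation sends $x\otimes y\mapsto x_{(0)}\,\Phi\bigl(R(x_{(1)})\,y\bigr)$; using the antipode and counit axioms together with the coaction identity $(\gamma\otimes\id)\gamma=(\id\otimes\check{\Delta})\gamma$ one checks that $\Theta\circ\gamma=\id_N$, while the traciality of $\Phi$ and the normality of $R$ ensure that $\Theta$ is a well-defined normal completely positive map. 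Setting $F:=\gamma\circ\Theta$ then produces an idempotent onto $\gamma(N)$ with $F\circ\gamma=\gamma$, and the invariance of $\Phi$ under $\check{\Delta}$ yields the $\check{\G}$-equivariance. By \cite[Theorem 6.12]{DD24} this shows that every $(M,\alpha)$ is inner $\G$-amenable.

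For the second statement I would combine the first with Proposition \ref{counitslice}. When $\G$ is moreover amenable, its compact dual $\check{\G}$ is co-amenable: this is the compact–discrete instance of the converse to the implication recorded in Section~2 before Lemma \ref{well-defined} (in general open, but classical here, amenability of the discrete $\G$ being equivalent to co-amenability of the compact $\check{\G}$). Given an arbitrary $\G$-dynamical von Neumann algebra $(M,\alpha)$, the first statement provides inner $\G$-amenability, and Proposition \ref{counitslice}(3) — which upgrades inner $\G$-amenability to $\G$-amenability in the presence of a co-amenable $\check{\G}$ — then yields that $(M,\alpha)$ is $\G$-amenable.

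The main obstacle is the key claim of the second paragraph: verifying that the averaging map $\Theta$ (equivalently $F$) is genuinely normal, completely positive, and $\check{\G}$-equivariant. This is exactly the point where unimodularity is indispensable. The closed formula for $\Theta$ relies on the antipode being a bounded normal map and on the Haar state being tracial, so that $y\mapsto\Phi(R(\,\cdot\,)y)$ behaves like an honest average of the automorphisms ``$\gamma_s^{-1}$'' and the result is completely positive rather than merely completely co-positive. For non-unimodular $\G$ the dual Haar state is not tracial and the antipode is unbounded, so this construction — and, as one expects, the conclusion itself — breaks down.
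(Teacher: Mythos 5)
Your proof of the second statement is exactly the paper's: for discrete $\G$, amenability of $\G$ gives co-amenability of $\check{\G}$, and Proposition \ref{counitslice}(3) then upgrades inner $\G$-amenability to $\G$-amenability. Your reduction of the first statement through \cite[Theorem 6.12]{DD24} is also legitimate, but note that it reverses the paper's logic: the claim you reduce to --- \emph{every} action of a compact Kac quantum group is amenable --- is precisely Corollary \ref{coro} of the paper, which is there deduced \emph{from} the proposition you are proving. So your argument stands or falls with your independent, direct proof of that claim, and that is where there is a genuine gap.

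The gap is the assertion that $\Theta(x\otimes y)=x_{(0)}\,\Phi\bigl(R(x_{(1)})y\bigr)$ is ``a well-defined normal completely positive map'' on $N\ovot L^\infty(\check{\G})$. Traciality of $\Phi$ and normality of $R$ give much less than this: each value $\Theta(x\otimes y)=(\id\otimes\Phi(R(\cdot)\,y))\gamma(x)$ makes sense, and on algebraic tensors one can indeed check complete positivity (via the factorisation $\Theta(z^*z)=(\id\otimes\Phi)(\Psi(z)^*\Psi(z))$ with $\Psi(x\otimes y)=(1\otimes y)\,(\id\otimes R)\gamma(x)$, where traciality is used to move $R(x_{(1)})$ past $y$; note this already forces $x$ into the algebraic core, since $(\id\otimes R)$ is a partial anti-automorphism and cannot be applied to a general $\gamma(x)$). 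But the proposition requires a ucp conditional expectation defined on \emph{all} of $N\ovot L^\infty(\check{\G})$, and a cp map given only on a $\sigma$-weakly dense subspace need not extend normally --- indeed your $\Theta$ is built from a partial application of $R$ followed by multiplication of tensor legs, neither of which is normal or completely bounded on the whole tensor product. The classical argument conceals this because there $\Theta$ is an integral of the normal $*$-automorphisms $f\mapsto\bigl(s\mapsto\gamma_s^{-1}(f(s))\bigr)$ over a compact group, a decomposition with no quantum analogue; this extension problem is essentially the entire content of the statement. (A smaller inaccuracy: your equivariance check needs the \emph{strong} invariance identity $\Phi(R(a)y_{(1)})\,y_{(2)}=a_{(1)}\,\Phi(R(a_{(2)})y)$, not merely invariance of $\Phi$ under $\check{\Delta}$.) The paper sidesteps all of this by working on the other side of the duality: for discrete unimodular $\G$ it takes $E=(\id\otimes\omega)\colon M\rtimes_\alpha\G\to M$ with $\omega=\omega_{\Lambda_{\check{\Phi}}(1)}$ a vector state, which is manifestly normal and ucp, and unimodularity enters only through the identity $(\omega\otimes\id)\circ\Delta_r=(\id\otimes\omega)\circ\Delta_l$ on $B(\ell^2(\G))$, which together with the defining relation $(\id\otimes\Delta_l)(z)=(\alpha\otimes\id)(z)$ of the crossed product yields equivariance in three lines. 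If you wish to keep your dual formulation, the clean repair is to construct the witness on the crossed product side as the paper does and transport it through \cite[Theorem 6.12]{DD24}, rather than build the average by hand.
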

\begin{proof} Let $\check{\Phi}$ be the Haar state of $\check{\G}$ and define $\omega := \omega_{\Lambda_{\check{\Phi}}(1)} \in B(\ell^2(\G))_*$. 
Since $\G$ is unimodular, it is well-known that
$$(\omega \otimes \id)\circ \Delta_r = (\id \otimes \omega)\circ \Delta_l$$
on $B(\ell^2(\G))$. Let $(M, \alpha)$ be a $\G$-dynamical von Neumann algebra. Define 
    $$E: M\rtimes_\alpha \G \to M: z \mapsto (\id \otimes \omega)(z)$$
    and note that for $z\in M\rtimes_\alpha \G$, we have
    \begin{align*}
        (E\otimes \id)( \id \otimes \Delta_r)(z) &= (\id \otimes \omega \otimes \id)(\id \otimes \Delta_r)(z) \\
        &= (\id \otimes \id \otimes \omega)(\id \otimes \Delta_l)(z)\\
        &= (\id \otimes \id \otimes \omega)(\alpha \otimes \id)(z)= \alpha(E(z))
    \end{align*}
    so the map $E$ is $\G$-equivariant. Note also that $E\circ \alpha = \id_M$ since $\epsilon = \omega\vert_{\ell^\infty(\G)}$ is the counit of $\ell^\infty(\G)$. This shows that $(M, \alpha)$ is inner $\G$-amenable. 

    If moreover $\G$ is amenable, then $\check{\G}$ is co-amenable, and the result follows from Proposition \ref{counitslice}.
\end{proof}
Using \cite[Theorem 6.12]{DD24}, we then immediately find:
\begin{Cor}\label{coro}
   Let $\G$ be a compact quantum group of Kac type. Then every $\G$-dynamical von Neumann algebra is $\G$-amenable.
\end{Cor}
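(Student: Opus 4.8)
The plan is to reduce the statement to the Proposition immediately preceding it by passing to the dual quantum group and invoking the duality between amenability and inner amenability recorded in \cite[Theorem 6.12]{DD24}. The first and only genuinely structural point is to identify the right dual: if $\G$ is compact of Kac type, then its dual $\check{\G}$ is a \emph{unimodular} discrete quantum group. Indeed, since $\G$ is compact, $\check{\G}$ is discrete by definition; and a discrete quantum group $\check{\G}$ is unimodular precisely when $\check{\check{\G}}= \G$ is of Kac type, which holds by hypothesis.

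Next I would fix a $\G$-dynamical von Neumann algebra $(M, \alpha)$ and consider the crossed product $M\rtimes_\alpha \G$ equipped with its dual action $\id \otimes \check{\Delta}_r$, which makes $(M\rtimes_\alpha \G, \id \otimes \check{\Delta}_r)$ a $\check{\G}$-dynamical von Neumann algebra. Since $\check{\G}$ is unimodular discrete by the previous paragraph, the preceding Proposition applies with $\G$ replaced by $\check{\G}$ and yields directly that $(M\rtimes_\alpha \G, \id \otimes \check{\Delta}_r)$ is inner $\check{\G}$-amenable.

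Finally I would appeal to \cite[Theorem 6.12]{DD24}, which states that $(M, \alpha)$ is $\G$-amenable if and only if $(M\rtimes_\alpha \G, \id \otimes \check{\Delta}_r)$ is inner $\check{\G}$-amenable. Combining this equivalence with the inner $\check{\G}$-amenability just established gives the desired $\G$-amenability of $(M, \alpha)$. There is no substantive obstacle in this argument; the only step requiring care is the bookkeeping of the duality, namely verifying that $\check{\G}$ is unimodular discrete so that the preceding Proposition is legitimately applicable to it, after which the result is immediate.
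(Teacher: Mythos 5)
Your proof is correct and is essentially identical to the paper's (implicit) argument: the paper deduces Corollary \ref{coro} from the preceding proposition on unimodular discrete quantum groups together with \cite[Theorem 6.12]{DD24}, exactly via the duality that $(M,\alpha)$ is $\G$-amenable if and only if $(M\rtimes_\alpha \G, \id \otimes \check{\Delta}_r)$ is inner $\check{\G}$-amenable. Your bookkeeping step, that $\check{\G}$ is a unimodular discrete quantum group when $\G$ is compact of Kac type, is also the correct and necessary observation, and matches the paper's definitions.
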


\begin{Rem}\label{MoaRem}
    In \cite[Theorem 4.7]{Moa18}, it is claimed that if $\G$ is an amenable discrete quantum group, that every $\G$-dynamical von Neumann algebra is automatically $\G$-amenable. However, the proof of \cite[Theorem 4.7]{Moa18} contains a mistake, as it implicitly uses unimodularity of $\G$, as observed in \cite{AK24}. Perhaps surprisingly, it turns out that there exist amenable discrete quantum groups acting non-amenably on a von Neumann algebra. Examples of this will be given in a forthcoming joint work.
\end{Rem}

The following proposition is an equivariant version of \cite[Corollaire 4.3]{AD79}.
\begin{Prop}\label{ADAPPL}
    Let $G$ be a locally compact group and $(M, \alpha)$ a $G$-dynamical von Neumann algebra where $M$ is a factor. The following statements are equivalent:
    \begin{enumerate}
        \item $(M, \alpha)$ is $G$-injective.
        \item $M$ is injective and $G$ is amenable.
    \end{enumerate}
\end{Prop}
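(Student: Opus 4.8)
My plan is to deduce everything from Proposition \ref{injective}, which applied with $\G = G$ says that $(M,\alpha)$ is $G$-injective if and only if $M$ is injective and $(M,\alpha)$ is $G$-amenable. Since statement $(2)$ already contains the clause ``$M$ is injective'' and statement $(1)$ forces it (again via Proposition \ref{injective}), the whole proposition reduces to the single equivalence: \emph{for an injective $M$, the action $\alpha$ is $G$-amenable if and only if $G$ is amenable.} I would prove the two implications separately, the forward one needing no factoriality and the backward one relying on it crucially.

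For ``$G$ amenable $\Rightarrow$ $(M,\alpha)$ is $G$-amenable'' I would build the averaging conditional expectation by hand from an invariant mean $m$ on $L^\infty(G)$ (so $(m\otimes\id)\Delta(f) = m(f)1$). Writing $U_\alpha\in B(\mathcal{H})\ovot L^\infty(G)$ for the unitary implementation of $\alpha$, the map $\theta(z) = U_\alpha^* z U_\alpha$ is a normal $*$-isomorphism of $M\ovot L^\infty(G)$ that trivialises the cocycle, sending $\alpha(x)\mapsto x\otimes 1$ while fixing $1\otimes L^\infty(G)$ (in the classical picture $\theta$ is just $z(s)\mapsto \alpha_s^{-1}(z(s))$). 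I then set
$$E := \alpha\circ(\id\otimes m)\circ\theta : M\ovot L^\infty(G)\to \alpha(M).$$
This is a unital completely positive idempotent with $E\circ\alpha=\alpha$, hence a conditional expectation onto $\alpha(M)$ by Tomiyama's theorem. The only nontrivial point is its $G$-equivariance $(E\otimes\id)(\id\otimes\Delta)=(\id\otimes\Delta)E$; I expect this to be \emph{the main computation}, and it comes down precisely to the invariance of $m$ (after a change of variables inside the mean, using $(\id\otimes\Delta)(U_\alpha)=U_{\alpha,12}U_{\alpha,13}$). Given $E$ and the injectivity of $M$, Proposition \ref{injective} then yields $G$-injectivity, proving $(2)\Rightarrow(1)$.

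For the converse $(1)\Rightarrow(2)$, Proposition \ref{injective} first gives that $M$ is injective and that there is a $G$-equivariant completely contractive conditional expectation $E\colon(M\ovot L^\infty(G),\id\otimes\Delta)\to(\alpha(M),\id\otimes\Delta)$. Here I exploit factoriality. Since $L^\infty(G)$ is abelian, $1\otimes L^\infty(G)$ commutes with $\alpha(M)$, so the $\alpha(M)$-bimodule property of $E$ gives, for all $f\in L^\infty(G)$ and $x\in M$,
$$\alpha(x)E(1\otimes f)=E(\alpha(x)(1\otimes f))=E((1\otimes f)\alpha(x))=E(1\otimes f)\alpha(x).$$
Thus $E(1\otimes f)\in\alpha(M)\cap\alpha(M)'=Z(\alpha(M))=\alpha(Z(M))=\C 1$ because $M$ is a factor; write $E(1\otimes f)=m(f)1$. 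Then $m$ is a state on $L^\infty(G)$, and substituting $1\otimes f$ into the equivariance identity $(E\otimes\id)(\id\otimes\Delta)=(\id\otimes\Delta)E$ and comparing both sides yields exactly $(m\otimes\id)\Delta(f)=m(f)1$, i.e.\ $m$ is an invariant mean and $G$ is amenable.

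The difficulty is therefore twofold: producing an \emph{equivariant} averaging map in the forward direction (where invariance of the mean is the entire content of the verification), and, in the converse, recognising that factoriality is exactly what collapses the a priori $Z(M)$-valued ``mean'' $f\mapsto E(1\otimes f)$ into a genuine scalar invariant mean. Without the factor hypothesis one only obtains a module-valued invariance, which is precisely why $(1)\Rightarrow(2)$ fails for non-factorial $M$.
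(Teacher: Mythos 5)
Your proof is correct, and it is genuinely more self-contained than the paper's, whose entire proof is a one-line citation of Proposition \ref{injective}, Theorem \ref{main result} and \cite[Remarques 3.5(c)]{AD79}. You share with the paper the opening reduction via Proposition \ref{injective}, after which everything rests on the equivalence, for $M$ a factor, between $G$-amenability of $\alpha$ and amenability of $G$; but where the paper outsources this equivalence to Anantharaman-Delaroche (routed through its crossed-product machinery), you prove both halves by hand. Your $(1)\Rightarrow(2)$ argument (Tomiyama bimodularity plus commutativity of $L^\infty(G)$ force $E(1\otimes f)\in\alpha(M)\cap\alpha(M)'=\C 1$, and equivariance evaluated at $1\otimes f$ gives exactly $(m\otimes\id)\Delta(f)=m(f)1$) is in substance the content of the cited remark of \cite{AD79}; your $(2)\Rightarrow(1)$ construction $E=\alpha\circ(\id\otimes m)\circ\Ad(U_\alpha^*)$ is the classical fact that an amenable locally compact group acts amenably on every von Neumann algebra. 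What your route buys is transparency: it shows that factoriality enters only in $(1)\Rightarrow(2)$, that commutativity of $L^\infty(G)$ enters in both directions (which is why your forward construction has no quantum analogue, consistent with Remark \ref{MoaRem}), and that Theorem \ref{main result} is not needed at all; what the paper's route buys is brevity and uniformity with its general quantum framework. For a complete writeup only two technical points need flagging: the slices $\id\otimes m$ and $F\otimes\id$ for the non-normal maps $m$ and $F:=(\id\otimes m)\circ\theta$ exist by Hamana's slice-map results recalled in Section 2 (Lemma \ref{Fubinitensor}); and the ``change of variables inside the mean'' becomes rigorous by writing, for $\varphi\in M_*$ and fixed $t$, the function $s\mapsto\varphi\bigl(\alpha_s^{-1}(z(st))\bigr)$ as $s\mapsto h(st)$ where $h=(\varphi\circ\alpha_t\otimes\id)(\theta(z))\in L^\infty(G)$, so that the invariance $(m\otimes\id)\Delta(h)=m(h)1$ applies verbatim and yields $(F\otimes\id)(\id\otimes\Delta)(z)=\alpha(F(z))$.
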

\begin{proof} This follows immediately from Proposition \ref{injective}, Theorem \ref{main result} and \cite[Remarques 3.5(c)]{AD79}.
\end{proof}
\section{Equivariant injective envelopes of crossed products}

In this section, we examine the question if the injective envelope construction is compatible with the crossed product functor. More specifically, in this section we prove that given a $\G$-operator system $(X, \alpha)$, we have the identification
$$I_{\check{\G}}^1(X\rtimes_\alpha \G)= I_\G^1(X)\rtimes \G$$
of $\check{\G}$-operator systems, where $\G$ is either a compact or a discrete quantum group. We were not able to prove versions of this result for other classes of locally compact quantum groups. Even for arbitrary classical locally compact groups, this remains open.

\begin{Rem}
    The $\check{\G}$-operator systems $I_{\check{\G}}^1(X\rtimes_\alpha \G)$ and $I_\G^1(X)\rtimes \G$ are both injective as operator systems, so they become unital $C^*$-algebras via the Choi-Effros product. Then the identification  $I_{\check{\G}}^1(X\rtimes_\alpha \G)= I_\G^1(X)\rtimes \G$ becomes an identification of unital $C^*$-algebras.
\end{Rem}

\subsection{Discrete quantum groups} In this subsection, let $\G$ be a discrete quantum group.

\begin{Lem}\label{rigidlemma} Let $(X, \alpha)$ a $\G$-operator system. If $T: X\rtimes_\alpha\G \to X\rtimes_\alpha \G$ is a $\check{\G}$-equivariant unital completely positive map such that 
$T(\alpha(x)(1\otimes y)) = \alpha(x)(1\otimes y)$
for all $x\in X$ and $y\in L^\infty(\check{\G})$, then $T= \id_{X\rtimes_\alpha \G}$.
\end{Lem}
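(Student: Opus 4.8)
The plan is to exploit that, since $\G$ is discrete, $\check{\G}$ is a \emph{compact} quantum group, so the dual action $\beta:=\id\otimes\check{\Delta}_r$ on $N:=X\rtimes_\alpha\G$ admits a full Peter--Weyl/spectral analysis. Write $\check{\Phi}$ for the (faithful, normal) Haar state of $\check{\G}$ and let $\Pol(\check{\G})\subseteq L^\infty(\check{\G})$ be the $\sigma$-weakly dense Hopf $*$-algebra spanned by the matrix coefficients $u^\pi_{ij}$ of the irreducible unitary representations $\pi\in\Irr(\check{\G})$. I will use throughout that $1\otimes y\in N$ for every $y\in L^\infty(\check{\G})$ and that $\alpha(x)\in N$ (so the elements $\alpha(x)(1\otimes y)$ genuinely lie in $N$). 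Set $D:=\operatorname{span}\{\alpha(x)(1\otimes y):x\in X,\ y\in L^\infty(\check{\G})\}$; then the hypothesis says precisely that $T|_D=\id$, so the whole task is to upgrade this to $T=\id_N$.

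The key structural input is that $\G$ discrete is $\G$-complete, so Proposition \ref{completeness}(2) yields $N^\beta=(X\rtimes_\alpha\G)^{\id\otimes\check{\Delta}_r}=\alpha(X)$. I would then prove that every spectral subspace
$$N_\pi:=\{z\in N:\beta(z)\in N\odot\operatorname{span}\{u^\pi_{ij}:i,j\}\}$$
is contained in $D$. This is a standard compact-quantum-group reconstruction: for $z\in N_\pi$ write $\beta(z)=\sum_{k,l}z_{(kl)}\otimes u^\pi_{kl}$, put $b_{il}:=\sum_{k}z_{(kl)}(1\otimes(u^\pi_{ik})^{*})\in N$, and use the coaction identity $(\beta\otimes\id)\beta=(\id\otimes\check{\Delta})\beta$ together with the unitarity relations $\sum_k u^\pi_{mk}(u^\pi_{pk})^{*}=\delta_{mp}$ and $\sum_i(u^\pi_{ik})^{*}u^\pi_{im}=\delta_{km}$ to check that each $b_{il}\in N^\beta=\alpha(X)$ and that $z=\sum_{i,k}b_{ik}(1\otimes u^\pi_{ik})$. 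Hence $N_\pi\subseteq\operatorname{span}\{\alpha(x)(1\otimes u^\pi_{ik})\}\subseteq D$, and therefore $T$ restricts to the identity on each $N_\pi$.

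Next I would slice by normal functionals. For $c\in\Pol(\check{\G})$ put $\phi:=\check{\Phi}(\,\cdot\,c)\in L^1(\check{\G})$. By Peter--Weyl orthogonality, applying $\id\otimes\phi$ to the $\sigma$-weakly convergent spectral decomposition of $\beta(z)$ annihilates all but finitely many isotypic terms, so $(\id\otimes\phi)\beta(z)$ is a finite sum of elements of spectral subspaces, hence lies in $D$; thus $T\big((\id\otimes\phi)\beta(z)\big)=(\id\otimes\phi)\beta(z)$. On the other hand, the $\check{\G}$-equivariance of $T$ reads $(T\otimes\id)\beta=\beta\circ T$, and slicing with the normal functional $\phi$ gives $T\big((\id\otimes\phi)\beta(z)\big)=(\id\otimes\phi)\beta(T(z))$. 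Combining,
$$(\id\otimes\phi)\beta(z)=(\id\otimes\phi)\beta(T(z))\qquad\text{for all }c\in\Pol(\check{\G}),\ \phi=\check{\Phi}(\,\cdot\,c).$$

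To conclude I would run a separation argument. Since $\check{\Phi}$ is faithful and $\Pol(\check{\G})$ is $\sigma$-weakly dense, the functionals $\{\check{\Phi}(\,\cdot\,c):c\in\Pol(\check{\G})\}$ have trivial annihilator in $L^\infty(\check{\G})$ and so are weak$^*$-dense in $L^1(\check{\G})$. Writing $w:=z-T(z)$ and testing the displayed identity against an arbitrary normal functional $\mu$ on the ambient $B(\mathcal{H}\otimes L^2(\G))$, the map $\phi\mapsto(\mu\otimes\phi)\beta(w)=\phi\big((\mu\otimes\id)\beta(w)\big)$ is weak$^*$-continuous and vanishes on a weak$^*$-dense set, hence vanishes identically; thus $(\mu\otimes\id)\beta(w)=0$ for all $\mu$, so $\beta(w)=0$, and injectivity of the coaction $\beta$ (a complete isometry) forces $w=0$, i.e. $T(z)=z$. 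The main obstacle is the spectral-containment claim $N_\pi\subseteq D$: it is exactly here that discreteness of $\G$ (compactness of $\check{\G}$) and the fixed-point identification $N^\beta=\alpha(X)$ from Proposition \ref{completeness} enter, and one must check that the module-theoretic reconstruction above is valid for the operator-system Fubini crossed product and not only for von Neumann algebraic crossed products. Everything afterwards---the slicing, the use of equivariance, and the faithful-Haar-state separation---is formal.
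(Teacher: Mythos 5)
Your argument is correct, but it takes a genuinely different route from the paper's. The paper transports $T$ through the Takesaki--Takai isomorphism $X\ovot B(L^2(\G))\cong (X\rtimes_\alpha\G)\rtimes_{\id\otimes\check{\Delta}_r}\check{\G}$ of Proposition \ref{completeness}(3) to a unital completely positive map $S$ on $X\ovot B(L^2(\G))$ fixing $\alpha(X)$ and $1\otimes B(L^2(\G))$, and then uses discreteness through the fact that the counit is a normal vector state $\epsilon=\omega_{e_k,e_k}$: the identity $(1\otimes E_{i,k})\alpha(x)(1\otimes E_{k,j})=x\otimes E_{i,j}$ and multiplicative domain arguments force $S=\id$, hence $T=\id$. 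You instead stay on $N=X\rtimes_\alpha\G$ and run Peter--Weyl theory for the dual action $\beta$ of the \emph{compact} quantum group $\check{\G}$. Your key containment $N_\pi\subseteq D$ is valid: the elements $b_{il}$ are $\beta$-fixed (multiplicativity of $\beta$, which is the restriction of a unitary conjugation, plus unitarity of $u^\pi$), hence lie in $\alpha(X)$ by Proposition \ref{completeness}(2) --- $\G$-completeness being automatic for discrete $\G$ --- and $z=\sum_{i,k}b_{ik}(1\otimes u^\pi_{ik})$ because both sides have the same image under the injective complete isometry $\beta$. Crucially, your slicing-plus-separation step is exactly what is needed to bypass the fact that $T$ is not assumed $\sigma$-weakly continuous, so one cannot argue directly from density of the algebraic core. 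What your route buys: it avoids Takesaki--Takai duality altogether, it never actually uses positivity or unitality of $T$ (equivariance, complete boundedness and $T|_D=\id$ suffice, so you prove a marginally more general statement), and it makes the discrete case structurally parallel to the paper's own treatment of the compact case in Lemma \ref{rigidlemma2}, where the analogous isotypic decomposition (there via the projections $p_\pi$) is the engine. What the paper's route buys: it stays entirely within standard unital-completely-positive multiplicative-domain technology, with the counit-as-vector-state trick doing all the representation-theoretic work.

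Two points of glue you should add in a final write-up; neither is a gap in the approach. First, there is in general no ``$\sigma$-weakly convergent spectral decomposition of $\beta(z)$''; the correct justification that $(\id\otimes\phi)\beta(z)$ lies in finitely many spectral subspaces for $\phi=\check{\Phi}(\cdot\,u^\sigma_{ab})$ is the coaction identity $\beta\bigl((\id\otimes\phi)\beta(z)\bigr)=\bigl(\id\otimes[(\id\otimes\phi)\circ\check{\Delta}]\bigr)\beta(z)$, combined with the observation that $(\id\otimes\phi)\circ\check{\Delta}$ is a normal map on $L^\infty(\check{\G})$ sending the $\sigma$-weakly dense subspace $\Pol(\check{\G})$ into the finite-dimensional span of the matrix coefficients of $\bar{\sigma}$ (orthogonality relations), hence sending all of $L^\infty(\check{\G})$ into that span. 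Second, make explicit that products such as $z_{(kl)}(1\otimes(u^\pi_{ik})^*)$ remain in $N$: this holds because $\Delta_l(y)=1\otimes y$ for $y\in L^\infty(\check{\G})$ (since $W\in L^\infty(\G)\ovot L^\infty(\hat{\G})$ and $L^\infty(\hat{\G})=L^\infty(\check{\G})'$), so $N(1\otimes L^\infty(\check{\G}))\subseteq N$, and $\beta$ respects these products.
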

\begin{proof}
    Consider the completely isometric isomorphism $\Phi: X \ovot B(L^2(\G))\to (X\rtimes_\alpha \G)\rtimes_{\id \otimes \check{\Delta}_r} \check{\G}$ from Proposition \ref{completeness} (3). Define 
    $$S: X \bar{\otimes} B(L^2(\G)) \stackrel{\Phi}\cong (X\rtimes_\alpha\G)\rtimes_{\id \otimes \check{\Delta}_r} \check{\G}\stackrel{T\rtimes \check{\G}}\longrightarrow (X\rtimes_\alpha \G)\rtimes_{\id \otimes \check{\Delta}_r} \check{\G}\stackrel{\Phi^{-1}}\cong X \bar{\otimes} B(L^2(\G)).$$
    Then we claim that 
    $S(\alpha(x)) = \alpha(x)$ and $S(1\otimes y) = 1\otimes y$
    for all $x\in X$ and all $y\in B(L^2(\G))$. Indeed, fix $y\in B(L^2(\G))$. Then $$\Phi(1\otimes y)= V_{23}^*(\alpha \otimes \id)(1\otimes y)V_{23}= 1 \otimes V^*(1\otimes y)V.$$
     Since $V \in M(C(\check{\G})\otimes c_0(\G))$, a standard multiplicative domain argument shows that
    $$(T\rtimes \check{\G})(1\otimes V^*(1\otimes y)V)   = 1\otimes V^*(1\otimes y)V$$
    so it is clear that $S(1\otimes y) = 1 \otimes y$. Similarly, if $x\in X$, we have $$\Phi(\alpha(x))= V_{23}^*(\alpha\otimes \id)\alpha(x)V_{23}=V_{23}^*(\id \otimes \Delta)\alpha(x)V_{23}= \alpha(x)\otimes 1$$
    so that 
    $$S(\alpha(x)) = \Phi^{-1}((T\rtimes \check{\G})(\alpha(x)\otimes 1))=\Phi^{-1}(\alpha(x)\otimes 1)= \alpha(x).$$

    Let $\{e_i\}_{i\in I}$ be an orthonormal basis for $L^2(\G)$ and $\{E_{s,t}\}_{s,t\in I}\subseteq B(L^2(\G))$ be the corresponding matrix units. Since the counit $\epsilon \in \ell^1(\G)$ is a vector state, we may assume that there is $k\in I$ such that $\epsilon = \omega_{e_{k}, e_{k}}$.
    
    Then we see that if $x\in X$ and $i,j\in I$ that
    $$(1\otimes E_{i,k})\alpha(x)(1\otimes E_{k,j}) = (\id \otimes \omega_{e_k,e_k})(\alpha(x))\otimes E_{i,j} = (\id \otimes \epsilon)\alpha(x)\otimes E_{i,j}= x\otimes E_{i,j}.$$
    Therefore, by a multiplicative domain argument, for $x\in X$ and $i,j\in I$, we find
    $$S(x\otimes E_{i,j}) = S((1\otimes E_{i,k})\alpha(x)(1\otimes E_{k,j})) = (1\otimes E_{i,k})\alpha(x)(1\otimes E_{k,j}) = x\otimes E_{i,j}.$$

A standard multiplicative domain argument implies that $S= \id_{X \bar{\otimes} B(L^2(\G))}$. It follows that also $T$ is the identity map.
\end{proof}

\begin{Prop}\label{rigid}
    The following statements are equivalent for $\G$-operator systems $(X, \alpha)$ and $(Y, \beta)$:
    \begin{enumerate}
        \item $(Y, \iota: X \to Y)$ is a $\G$-rigid extension of $X$.
        \item $(Y\rtimes_\beta \G, \iota \rtimes \G: X\rtimes_\alpha \G \to Y\rtimes_\beta \G)$ is a $\check{\G}$-rigid extension of $X\rtimes_\alpha \G$.
    \end{enumerate}
\end{Prop}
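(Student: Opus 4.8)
The plan is to prove both implications by moving maps between $Y$ and its crossed product through the $\check{\G}$-fixed-point space, exploiting that $\G$ is discrete: then every $\G$-operator space is $\G$-complete, so by Proposition \ref{completeness}(2) we have $(Y\rtimes_\beta \G)^{\id \otimes \check{\Delta}_r}=\beta(Y)$, and likewise for $X$. Recall also that $1\otimes w \in X\rtimes_\alpha\G$ for every $w\in L^\infty(\check{\G})$, since $\Delta_l(w)=1\otimes w$ there.

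First I would dispatch the easy direction $(2)\implies(1)$. Given a $\G$-equivariant unital completely positive $\phi\colon Y\to Y$ with $\phi\circ\iota=\iota$, apply the crossed product functor to get the $\check{\G}$-equivariant unital completely positive map $\phi\rtimes\G\colon Y\rtimes_\beta\G\to Y\rtimes_\beta\G$. Functoriality gives $(\phi\rtimes\G)\circ(\iota\rtimes\G)=(\phi\circ\iota)\rtimes\G=\iota\rtimes\G$, so $\check{\G}$-rigidity forces $\phi\rtimes\G=\id$. Restricting to $\beta(Y)$ and using $(\phi\rtimes\G)(\beta(y))=(\phi\otimes\id)\beta(y)=\beta(\phi(y))$ together with injectivity of $\beta$ yields $\phi=\id_Y$.

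The substance lies in $(1)\implies(2)$. Suppose $(Y,\iota)$ is $\G$-rigid and let $T\colon Y\rtimes_\beta\G\to Y\rtimes_\beta\G$ be $\check{\G}$-equivariant unital completely positive with $T\circ(\iota\rtimes\G)=\iota\rtimes\G$. Since $\iota$ is unital, $1\otimes w=(\iota\rtimes\G)(1\otimes w)$ lies in the image of $\iota\rtimes\G$ for every $w\in L^\infty(\check{\G})$, so $T$ fixes $1\otimes L^\infty(\check{\G})$. Embedding $Y\subseteq B(\mathcal{H}_Y)$ and choosing a unital completely positive extension $\widetilde{T}$ of $T$ to $B(\mathcal{H}_Y\otimes L^2(\G))$, the multiplicative-domain argument from the proof of Theorem \ref{main result} applies verbatim: $V_{23}\in M(1\otimes C_0(\check{\G})\otimes C_0(\G))$ lies in the multiplicative domain of $\widetilde{T}\otimes\id$ with $(\widetilde{T}\otimes\id)(V_{23})=V_{23}$, whence $\widetilde{T}$ is $\G$-equivariant for $\id\otimes\Delta_r$. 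As $T$ is $\check{\G}$-equivariant it preserves the fixed-point space $\beta(Y)$, and the induced map $\phi:=\beta^{-1}\circ(T|_{\beta(Y)})\circ\beta\colon Y\to Y$ is then a $\G$-equivariant unital completely positive map, exactly as in the proof of Theorem \ref{main result}.

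It remains to invoke rigidity twice. For $x\in X$ one computes $\beta(\iota(x))=(\iota\otimes\id)\alpha(x)=(\iota\rtimes\G)(\alpha(x))$, which is fixed by $T$; hence $\phi\circ\iota=\iota$, and $\G$-rigidity of $Y$ gives $\phi=\id_Y$, i.e.\ $T$ restricts to the identity on $\beta(Y)$. Finally, since $T$ fixes the unitaries $1\otimes w$ generating $1\otimes L^\infty(\check{\G})$, a Choi multiplicative-domain argument places $1\otimes L^\infty(\check{\G})$ in the multiplicative domain of $T$, so $T(\beta(y)(1\otimes w))=T(\beta(y))(1\otimes w)=\beta(y)(1\otimes w)$ for all $y\in Y$ and $w\in L^\infty(\check{\G})$. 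Lemma \ref{rigidlemma} applied to $(Y,\beta)$ then yields $T=\id_{Y\rtimes_\beta\G}$. I expect the main obstacle to be the $\G$-equivariance of the induced map $\phi$: it is not a conditional expectation onto a subspace, so one cannot cite Proposition \ref{general result} directly and must instead extend $T$ and argue through the multiplicative domain of $V_{23}$ as above.
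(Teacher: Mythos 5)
Your proof is correct and follows essentially the same route as the paper's: the easy direction via functoriality of $-\rtimes\G$ and injectivity of $\beta$, and the hard direction by noting $T$ fixes $1\otimes L^\infty(\check{\G})$, restricting to the fixed-point space $\beta(Y)$ (using $\G$-completeness from discreteness), invoking $\G$-rigidity to get $T|_{\beta(Y)}=\id$, and concluding with a multiplicative-domain argument plus Lemma \ref{rigidlemma}. The only difference is that you spell out explicitly (via the extension $\widetilde{T}$ and $V_{23}$) the $\G$-equivariance of the induced map $\phi$, which the paper leaves as an implicit consequence of $\psi(1\otimes y)=1\otimes y$.
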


\begin{proof}
    $(1)\implies (2)$ Let $\psi: Y \rtimes_\beta \G \to Y\rtimes_\beta \G$ be a unital completely positive $\check{\G}$-equivariant map such that $\psi\circ (\iota \rtimes \G) = \iota\rtimes \G$. In particular, we know that 
    $\psi(1\otimes y) = 1 \otimes y$ for all $y\in L^\infty(\check{\G})$,
    from which it follows that the restriction
    $$\phi: Y\cong \beta(Y)\to \beta(Y)\cong Y$$
    is $\G$-equivariant and satisfies $\phi\circ \iota = \iota$. The $\G$-rigidity then implies that $\phi = \id_Y$. Therefore, it is clear that
    $$\psi(\beta(y)(1\otimes z)) = \beta(y)(1\otimes z), \quad y \in Y, z \in L^\infty(\check{\G})$$
    so $\psi= \id$ by Lemma \ref{rigidlemma}.

    $(2)\implies (1)$ Suppose that $\phi: Y \to Y$ is a unital completely positive $\G$-equivariant map such that $\phi\circ \iota = \iota$. Then by functoriality also 
    $(\phi\rtimes \G)(\iota \rtimes \G)= \iota \rtimes \G$
    so that by $\check{\G}$-rigidity we have $\phi\rtimes \G = \id.$ Therefore,
    $\beta(y) = (\phi\rtimes \G)\beta(y) = \beta(\phi(y))$ for $y\in Y$, whence $y = \phi(y)$. 
\end{proof}

\begin{Prop}\label{injective envelope crossed product}
     The following statements are equivalent for $\G$-operator systems $(X, \alpha)$ and $(Y, \beta)$:
    \begin{enumerate}
        \item $(Y, \iota: X \to Y)$ is a $\G$-injective envelope of $X$.
        \item $(Y\rtimes_\beta \G, \iota\rtimes \G: X\rtimes_\alpha \G \to Y \rtimes_\beta \G)$ is a $\check{\G}$-injective envelope of $X\rtimes_\alpha \G$.
    \end{enumerate}
    In particular,
    $I_{\check{\G}}^1(X\rtimes_\alpha \G)= I_\G^1(X)\rtimes \G$
    as $\check{\G}$-operator systems.
\end{Prop}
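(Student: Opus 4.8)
The plan is to prove the equivalence $(1)\iff(2)$ by decoupling it into two independent transfer statements — one for injectivity and one for rigidity — and then to invoke the characterisation of injective envelopes as precisely those extensions that are simultaneously injective and rigid (Proposition \ref{injective envelopes}), applied once on the $\G$-side and once on the $\check{\G}$-side. Concretely, Proposition \ref{injective envelopes} tells us that $(Y,\iota)$ is a $\G$-injective envelope of $X$ if and only if $Y$ is $\G$-injective and $(Y,\iota)$ is $\G$-rigid, and likewise that $(Y\rtimes_\beta\G,\iota\rtimes\G)$ is a $\check{\G}$-injective envelope of $X\rtimes_\alpha\G$ if and only if $Y\rtimes_\beta\G$ is $\check{\G}$-injective and $(Y\rtimes_\beta\G,\iota\rtimes\G)$ is $\check{\G}$-rigid. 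So it suffices to match the two injectivity conditions and the two rigidity conditions.

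The rigidity half is already available: Proposition \ref{rigid} gives that $(Y,\iota)$ is a $\G$-rigid extension of $X$ if and only if $(Y\rtimes_\beta\G,\iota\rtimes\G)$ is a $\check{\G}$-rigid extension of $X\rtimes_\alpha\G$. For the injectivity half I would invoke Theorem \ref{main result} (with $(X,\alpha)$ there replaced by $(Y,\beta)$): $Y$ is $\G$-injective if and only if $Y\rtimes_\beta\G$ is $\check{\G}$-injective and $(Y\rtimes_\beta\G)^{\id \otimes \check{\Delta}_r}=\beta(Y)$. The role of discreteness is exactly to make this last fixed-point condition automatic: since $\G$ is discrete, every $\G$-operator space is $\G$-complete (apply the counit $\epsilon\in\ell^1(\G)$, for which $\epsilon\rhd y = y$), and by Proposition \ref{completeness}(2) $\G$-completeness is equivalent to $(Y\rtimes_\beta\G)^{\id \otimes \check{\Delta}_r}=\beta(Y)$. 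Hence in the discrete case Theorem \ref{main result} reads cleanly as the bi-implication ``$Y$ is $\G$-injective $\iff$ $Y\rtimes_\beta\G$ is $\check{\G}$-injective'', with no side condition to track.

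It then remains to assemble. For $(1)\Rightarrow(2)$: if $(Y,\iota)$ is a $\G$-injective envelope, then $Y$ is $\G$-injective and $(Y,\iota)$ is $\G$-rigid, so $Y\rtimes_\beta\G$ is $\check{\G}$-injective and $(Y\rtimes_\beta\G,\iota\rtimes\G)$ is $\check{\G}$-rigid, whence the latter is a $\check{\G}$-injective envelope. The converse is the same chain run backwards, using that $\iota\rtimes\G$ is a $\check{\G}$-equivariant unital complete isometry so that $(Y\rtimes_\beta\G,\iota\rtimes\G)$ genuinely is a $\check{\G}$-extension. To deduce $I_{\check{\G}}^1(X\rtimes_\alpha\G)=I_\G^1(X)\rtimes\G$, I would apply $(1)\Rightarrow(2)$ to the canonical $\G$-injective envelope $Y=I_\G^1(X)$: the resulting $\check{\G}$-injective envelope $I_\G^1(X)\rtimes\G$ of $X\rtimes_\alpha\G$ must, by the uniqueness clause of Proposition \ref{injective envelopes} applied to $\check{\G}$, coincide with $I_{\check{\G}}^1(X\rtimes_\alpha\G)$ as $\check{\G}$-operator systems.

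The main obstacle is not in this assembly, which is essentially formal once Proposition \ref{rigid} and Theorem \ref{main result} are in hand, but in securing the fixed-point side condition of Theorem \ref{main result}. This is precisely where discreteness is used and cannot be dropped: in the reverse implication one needs $(Y\rtimes_\beta\G)^{\id \otimes \check{\Delta}_r}=\beta(Y)$ to pass from $\check{\G}$-injectivity of the crossed product back to $\G$-injectivity of $Y$, and for a general locally compact quantum group there is no reason this equality holds for an arbitrary $\G$-operator system $Y$. The parallel compact case will require a separate argument to guarantee the same fixed-point equality.
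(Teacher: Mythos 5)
Your proposal is correct and follows essentially the same route as the paper: the paper's proof is exactly the combination of Proposition \ref{injective envelopes} (envelope $=$ injective $+$ rigid), Theorem \ref{main result} for the injectivity transfer, Proposition \ref{rigid} for the rigidity transfer, and the observation that discreteness (via the counit and Proposition \ref{completeness}) makes the fixed-point condition $(Y\rtimes_\beta \G)^{\id \otimes \check{\Delta}_r}=\beta(Y)$ automatic. Your identification of the fixed-point condition as the precise place where discreteness enters, and your use of the uniqueness clause to deduce $I_{\check{\G}}^1(X\rtimes_\alpha \G)= I_\G^1(X)\rtimes \G$, match the paper's reasoning.
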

\begin{proof}
    This follows immediately by combining Proposition \ref{injective envelopes}, Theorem \ref{main result}, Proposition \ref{rigid} and the fact that if $\G$ is a discrete quantum group, then automatically $(Y\rtimes_\beta \G)^{\id \otimes \check{\Delta}_r}= \beta(Y)$ (see the remark following Definition \ref{com} and Proposition \ref{completeness}).
\end{proof}

\begin{Rem}
    Using the terminology of \cite{DH24}, Proposition \ref{injective envelope crossed product} can be seen as the analoguous result of \cite[Theorem 5.5]{DH24} but for $\G$-$W^*$-operator systems instead of $\G$-$C^*$-operator systems.
\end{Rem}

\begin{Cor}
    $I_{\check{\G}}^1(L^\infty(\check{\G}))= I_\G^1(\mathbb{C})\rtimes \G = C(\partial_F \G)\rtimes \G$ where $\partial_F \G$ is the non-commutative Furstenberg boundary \cite{KKSV22}.
\end{Cor}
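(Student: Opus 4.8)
The plan is to apply Proposition \ref{injective envelope crossed product} to the trivial $\G$-operator system $(X,\alpha) = (\mathbb{C},\tau)$, for which the crossed product degenerates to the dual von Neumann algebra. First I would recall the general fact that $\mathbb{C}\rtimes_\tau\G = \mathbb{C}\ovot L^\infty(\check{\G}) = L^\infty(\check{\G})$, and that under this identification the dual action $\id\otimes\check{\Delta}_r$ is intertwined with $\check{\Delta}$. This is precisely the $\check{\G}$-equivariant isomorphism \eqref{2}, namely $(\mathbb{C}\rtimes_\tau\G, \id\otimes\check{\Delta}_r)\cong_{\check{\G}}(L^\infty(\check{\G}),\check{\Delta})$, recorded in the proof of Theorem \ref{amenable}.

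Next I would feed $X=\mathbb{C}$ into Proposition \ref{injective envelope crossed product}, which for a discrete quantum group $\G$ gives the identification $I_{\check{\G}}^1(\mathbb{C}\rtimes_\tau\G) = I_\G^1(\mathbb{C})\rtimes\G$ of $\check{\G}$-operator systems. Substituting the equivariant identification from the first step rewrites the left-hand side as $I_{\check{\G}}^1(L^\infty(\check{\G}))$, which yields the first equality of the statement. The second equality $I_\G^1(\mathbb{C}) = C(\partial_F\G)$ is then simply the definition of the non-commutative Furstenberg boundary of $\G$ from \cite{KKSV22}, as the $\G$-injective envelope of the trivial $\G$-operator system $\mathbb{C}$.

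The argument carries essentially no difficulty of its own, since all of the work is contained in Proposition \ref{injective envelope crossed product}. The only point deserving a moment's attention is that the identification $\mathbb{C}\rtimes_\tau\G\cong L^\infty(\check{\G})$ must be read as an isomorphism of $\check{\G}$-operator systems, so that the two equivariant injective envelopes are compared with respect to the correct dual action; but this is exactly what \eqref{2} supplies.
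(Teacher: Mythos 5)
Your proposal is correct and is precisely the argument the paper intends: the corollary is stated without proof because it follows immediately from Proposition \ref{injective envelope crossed product} applied to $(\mathbb{C},\tau)$, combined with the $\check{\G}$-equivariant identification $(\mathbb{C}\rtimes_\tau\G,\id\otimes\check{\Delta}_r)\cong_{\check{\G}}(L^\infty(\check{\G}),\check{\Delta})$ and the definition $I_\G^1(\mathbb{C})=C(\partial_F\G)$ from \cite{KKSV22}. Your remark that the identification must be read equivariantly, so that the injective envelope is taken with respect to the correct dual action, is exactly the right point of care.
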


\subsection{Compact quantum groups} In this subsection, let $\G$ be a compact quantum group with Haar state $\Phi$ and associated GNS-map $\Lambda_\Phi: L^\infty(\G)\to L^2(\G)$. A unitary finite-dimensional representation $\pi$ of $\G$ consists of a pair $(\mathcal{H}_\pi, U_\pi)$ where $\mathcal{H}_\pi$ is a finite-dimensional Hilbert space and $U_\pi \in B(\mathcal{H}_\pi)\otimes C(\G)$ a unitary such that $(\id \otimes \Delta)(U_\pi) = U_{\pi, 12}U_{\pi, 13}$ in $B(\mathcal{H}_\pi)\otimes C(\G)\otimes C(\G)$. Given $\xi, \eta \in \mathcal{H}_\pi$, we define the matrix coefficient
$U_\pi(\xi, \eta):= (\omega_{\xi, \eta}\otimes \id)(U_\pi)$ and we will write $n_\pi:= \dim(\mathcal{H}_\pi)$. If $\{e_i^\pi\}_{i=1}^{n_\pi}$ is an orthonormal basis for $\mathcal{H}_\pi$, then 
$$\Delta(U_\pi(\xi, \eta)) = \sum_{i=1}^{n_\pi} U_\pi(\xi, e_i^\pi)\otimes U_\pi(e_i^\pi, \eta).$$ 
If $\pi, \chi$ are unitary finite-dimensional representations, then a linear map $T: \mathcal{H}_\pi \to \mathcal{H}_\chi$ is called intertwiner if $(T\otimes 1)U_\pi = U_\chi(T\otimes 1)$ and we write $\Mor(\pi, \chi)$ for the set of such intertwiners. The representations $\pi, \chi$ are called (unitarily) equivalent if there exists a unitary operator in $\Mor(\pi, \chi)$. A unitary finite-dimensional representation $\pi$ of $\G$ is called irreducible if $\Mor(\pi, \pi)$ is one-dimensional. We will fix a maximal family $\Irr(\G)$ of pairwise non-equivalent irreducible representations.
The space linearly spanned by all matrix coefficients is denoted by $\mathcal{O}(\G)$. In fact, the elements 
$\{U_\pi(e_i^\pi, e_j^\pi): \pi\in \Irr(\G), 1\le i,j\le n_\pi\}$ form a Hamel basis for $\mathcal{O}(\G)$. The space $\mathcal{O}(\G)$ is norm-dense in $C(\G)$ and $(\mathcal{O}(\G), \Delta)$ forms a Hopf $^*$-algebra. If $\pi\in \Irr(\G)$, there exists a canonical positive invertible operator $Q_\pi\in B(\mathcal{H}_\pi)$ such that $\Tr(Q_\pi^{1/2}) = \Tr(Q_\pi^{-1/2})$. We write $\dim_q(\pi)$ for the latter value and the following orthogonality relations hold:
\begin{align*}
    &\Phi(U_\pi(\xi, \eta)U_\chi(\xi', \eta')^*) = \delta_{\pi, \chi}\frac{\langle \xi, \xi'\rangle \langle\eta', Q_\pi^{-1/2}\eta\rangle}{\dim_q(\pi)},\\
    &\Phi(U_\pi(\xi, \eta)^*U_\chi(\xi', \eta')) = \delta_{\pi, \chi}\frac{\langle \xi', Q_\pi^{1/2}\xi\rangle \langle\eta, \eta'\rangle}{\dim_q(\pi)}.
\end{align*}

If $\pi\in \Irr(\G)$, define $\chi_\pi:= \dim_q(\pi) \sum_{i=1}^{n_\pi} U_\pi(e_i^\pi, Q_\pi^{1/2}e_i^\pi)\in \mathcal{O}(\G)$. We write 
$$c_c(\tilde{\G}) = \{\Phi(-a): a \in \mathcal{O}(\G)\}\subseteq \operatorname{Lin}_{\mathbb{C}}(\mathcal{O}(\G), \mathbb{C})$$
which becomes a $^*$-algebra for
$$(\omega \chi)(x) := (\omega \odot \chi)\Delta(x), \quad \omega^*(x) = \overline{\omega(S(x)^*)}$$
where $\omega, \chi\in c_c(\tilde{\G})$ and $x\in \mathcal{O}(\G)$. We have a faithful $*$-representation
$$\rho_{\tilde{\G}}: c_c(\tilde{\G}) \to \ell^\infty(\check{\G})\subseteq B(L^2(\G)), \quad \rho_{\tilde{\G}}(\omega) \Lambda_\Phi(x) = \Lambda_\Phi((\id \odot \omega)\Delta(x)) = \Lambda_\Phi(x_{(1)}) \omega(x_{(2)}).$$
We use this $*$-representation to view $c_c(\tilde{\G})\subseteq \ell^\infty(\check{\G})$. 
We define the projections $p_\pi:= \rho_{\tilde{\G}}(\Phi(-\chi_\pi^*))$ and we note that $\sum_{\pi\in \operatorname{Irr}(\G)} p_\pi = 1$ with the sum converging strongly in $B(L^2(\G)).$

Let $(X, \alpha)$ be a $\G$-operator system. Given $\pi \in \operatorname{Irr}(\G)$, let $X_\pi$ be the associated spectral subspace and let $\mathcal{X}:= \bigoplus_{\pi\in \operatorname{Irr}(\G)}^{\operatorname{alg}} X_\pi$ be the associated algebraic core \cite[Section 3]{DH24}. We define the vector space
$$\mathcal{X}\rtimes_{\alpha, \operatorname{alg}}\G := \operatorname{span}\{\alpha(x)(1\otimes y): x \in \mathcal{X}, y \in c_c(\tilde{\G})\} \subseteq X  \odot B(L^2(\G)).$$

The following lemma is known, see e.g.\ the argument in \cite[Theorem 5.31]{DC17} in the $C^*$-context. We sketch the proof for the convenience of the reader.
\begin{Lem}
   Let $(M, \alpha)$ be a $\G$-dynamical von Neumann algebra with algebraic core $\mathcal{M}$. The vector space $(1\otimes p_\pi)(\mathcal{M}\rtimes_{\alpha,\operatorname{alg}} \G)(1\otimes p_\pi')$ is $\sigma$-weakly closed in $M\rtimes_\alpha \G$ for all $\pi, \pi'\in \Irr(\G)$.
\end{Lem}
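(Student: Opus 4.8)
The plan is to compare the algebraic corner $W := (1\otimes p_\pi)(\mathcal{M}\rtimes_{\alpha,\operatorname{alg}}\G)(1\otimes p_{\pi'})$ with the von Neumann corner $N := (1\otimes p_\pi)(M\rtimes_\alpha\G)(1\otimes p_{\pi'})$ and to show that they coincide. First I would record the crucial consequence of compactness: by the orthogonality relations and a short computation with the matrix coefficients one checks that $p_\pi = \rho_{\tilde{\G}}(\Phi(-\chi_\pi^*))$ acts on $L^2(\G)$ as the orthogonal projection onto the finite-dimensional isotypic component $H_\pi := \overline{\Lambda_\Phi(\operatorname{span}\{U_\pi(e_i^\pi,e_j^\pi)\})}$, of dimension $n_\pi^2$. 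Hence $p_\pi B(L^2(\G))p_{\pi'}=B(H_{\pi'},H_\pi)$ is finite-dimensional, and by the defining slice property of the Fubini tensor product one gets $(1\otimes p_\pi)(M\ovot B(L^2(\G)))(1\otimes p_{\pi'})=M\odot B(H_{\pi'},H_\pi)$, a finite direct sum of copies of $M$, which is $\sigma$-weakly closed. In particular $N$ is $\sigma$-weakly closed, being the compression of the von Neumann algebra $M\rtimes_\alpha\G$ by the projections $1\otimes p_\pi,1\otimes p_{\pi'}\in 1\otimes\ell^\infty(\check{\G})\subseteq M\rtimes_\alpha\G$; moreover $W\subseteq N$ and, since $\mathcal{M}\rtimes_{\alpha,\operatorname{alg}}\G$ is $\sigma$-weakly dense in $M\rtimes_\alpha\G$, $W$ is $\sigma$-weakly dense in $N$. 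Everything therefore reduces to proving the reverse inclusion $N\subseteq W$.

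Next I would grade both corners by the fusion structure. Using the Peter--Weyl decomposition $L^2(\G)=\bigoplus_\sigma H_\sigma$, the fact that each $y\in c_c(\tilde{\G})$ lies in a single block of $\ell^\infty(\check{\G})$, and that left multiplication by a matrix coefficient of $\rho$ sends $H_{\pi'}$ into $\bigoplus_{\tau\subseteq\rho\otimes\pi'}H_\tau$, one sees that $(1\otimes p_\pi)\alpha(x)(1\otimes y)(1\otimes p_{\pi'})=0$ for $x\in M_\rho$ unless $\pi\subseteq\rho\otimes\pi'$, i.e.\ (by Frobenius reciprocity) unless $\rho\subseteq\pi\otimes\overline{\pi'}$; there are only finitely many such $\rho$, forming a set $S$. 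Writing $\alpha(x)=\sum_{ij}x_{ij}\otimes U_\rho(e_i,e_j)$ for $x\in M_\rho$, the coaction property forces $x_{ij}\in M_\rho$, so the first-leg coordinates of every element contributed by $M_\rho$ again lie in the spectral subspace $M_\rho$. Consequently $W$ splits as a finite internal direct sum $W=\bigoplus_{\rho\in S}W_\rho$, the summands being separated by the $\sigma$-weakly continuous spectral projections $E_\rho\otimes\id$; the same grading applies to $N$, and I set $N_\rho:=(E_\rho\otimes\id)(N)$, which is $\sigma$-weakly closed as the image of $N$ under a $\sigma$-weakly continuous idempotent.

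Finally, the heart of the argument is to show that each graded corner satisfies $N_\rho\subseteq W_\rho$. For this I would take a generic $z\in N_\rho\subseteq M_\rho\odot B(H_{\pi'},H_\pi)$, use the crossed-product relation $(\alpha\otimes\id)(z)=(\id\otimes\Delta_l)(z)$ together with the explicit form of $1\otimes p_\pi$ and $1\otimes p_{\pi'}$ to pin down its first-leg coordinates inside $M_\rho$, and then reconstruct $z$ as a finite sum $\sum(1\otimes p_\pi)\alpha(x)(1\otimes y)(1\otimes p_{\pi'})$ with $x\in M_\rho$ and $y\in c_c(\tilde{\G})$, exactly along the lines of the $C^*$-algebraic argument of \cite[Theorem 5.31]{DC17}. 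Granting this, $W_\rho=N_\rho$ is $\sigma$-weakly closed, and since $S$ is finite and the summands are separated by continuous spectral projections, $W=\bigoplus_{\rho\in S}W_\rho$ is $\sigma$-weakly closed, which is the assertion. The main obstacle is precisely this reconstruction step: one must verify that the finitely many spectral blocks cut out by $1\otimes p_\pi$ and $1\otimes p_{\pi'}$ are already exhausted by the algebraic crossed product. It is here that the finite-rankness of the projections $p_\pi$—equivalently, the compactness of $\G$—is indispensable, whereas for a general locally compact $\G$ the corner need not be $\sigma$-weakly closed.
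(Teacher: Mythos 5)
Your preparatory steps are all correct, and your grading by fusion rules is essentially the first step of the paper's own proof: the observation that only the finitely many $\rho\subseteq\pi\otimes\overline{\pi'}$ contribute is exactly the paper's finite set $F\subseteq\Irr(\G)$ with
$$(1\otimes p_\pi)(\mathcal{M}\rtimes_{\alpha,\operatorname{alg}}\G)(1\otimes p_{\pi'})\subseteq\bigoplus_{\rho\in F}M_\rho\odot\bigl(p_\pi B(L^2(\G))p_{\pi'}\bigr),$$
a $\sigma$-weakly closed subspace. However, your proof has a genuine gap, and you identify it yourself: the inclusion $N_\rho\subseteq W_\rho$ (equivalently $N\subseteq W$, i.e.\ that every element of the von Neumann corner is a \emph{finite} sum $\sum_i(1\otimes p_\pi)\alpha(x_i)(1\otimes y_i)(1\otimes p_{\pi'})$ with $x_i\in\mathcal{M}$, $y_i\in c_c(\tilde{\G})$) is precisely equivalent to the lemma, given your density observation, yet it is only asserted, deferred to ``the lines of'' \cite[Theorem 5.31]{DC17}, and flagged by you as ``the main obstacle''. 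Merely invoking the relation $(\alpha\otimes\id)(z)=(\id\otimes\Delta_l)(z)$ does not pin down anything beyond the fact that $z\in M\rtimes_\alpha\G$, which you already know; the reconstruction is the entire content of the statement, so the proposal reduces the lemma to its hardest step without proving it.

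The paper closes this gap by a different mechanism, which avoids reconstructing elements of $N$ altogether. It quotes \cite[Lemma 5.30]{DC17}: there is a \emph{normal} $*$-homomorphism $\beta\colon M\ovot B(L^2(\G))\to M\ovot B(L^2(\G))\ovot B(L^2(\G))$, $\beta(s)=U_{23}^*\Sigma_{23}(\alpha\otimes\id)(s)\Sigma_{23}U_{23}$ for an explicit unitary $U$ built from $\Delta^{\op}$, such that
$$(1\otimes p_\pi)(\mathcal{M}\rtimes_{\alpha,\operatorname{alg}}\G)(1\otimes p_{\pi'})=\{s\in\mathcal{M}\odot p_\pi B(L^2(\G))p_{\pi'}:\ \beta(s)=s\otimes 1\}.$$
Given a net $z_i$ in the algebraic corner converging $\sigma$-weakly to $z$, the grading step puts $z$ in $\bigoplus_{\rho\in F}M_\rho\odot p_\pi B(L^2(\G))p_{\pi'}\subseteq\mathcal{M}\odot p_\pi B(L^2(\G))p_{\pi'}$, and normality of $\beta$ gives $\beta(z)=z\otimes 1$; the displayed characterisation then yields $z$ in the algebraic corner, proving closedness directly. (Note also that the lemma itself only asserts closedness of $W$; the equality $W=N$ that you take as target is deduced in the paper only in the \emph{next} lemma, from closedness plus density.) If you want to complete your own route, the content you must supply is exactly this fixed-point characterisation of the algebraic corner, or an honest coordinatewise reconstruction replacing it.
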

\begin{proof} As in the proof of \cite[Theorem 5.31]{DC17}, there is a finite subset $F\subseteq \Irr(\G)$ such that 
$$(1\otimes p_\pi)(\mathcal{M}\rtimes_{\alpha,\operatorname{alg}} \G)(1\otimes p_{\pi'}) \subseteq \bigoplus_{\pi'' \in F} M_{\pi''} \odot (p_\pi B(L^2(\G)) p_{\pi'}).$$
Since $\bigoplus_{\pi'' \in F} M_{\pi''}$ is $\sigma$-weakly closed in $M$, we deduce that 
$$\bigoplus_{\pi''\in F} M_{\pi''}\odot (p_\pi B(L^2(\G)) p_{\pi'})$$
is $\sigma$-weakly closed in $M\bar{\otimes} p_\pi B(L^2(\G)) p_{\pi'}$. So, if $z_i \in (1\otimes p_\pi)(\mathcal{M}\rtimes_{\alpha,\operatorname{alg}}\G)(1\otimes p_{\pi'})$ is a net that converges $\sigma$-weakly to $z \in M \ovot B(L^2(\G))$, then we deduce that 
$$z \in \bigoplus_{\pi'' \in F} M_{\pi''}\odot (p_\pi B(L^2(\G)) p_{\pi'})\subseteq \mathcal{M}\odot p_\pi B(L^2(\G)) p_{\pi'}.$$

Next, define a unitary $U \in B(L^2(\G)\otimes L^2(\G))$ by
$$U(\Lambda_\Phi(a)\otimes \Lambda_\Phi(b)) = (\Lambda_\Phi \odot \Lambda_\Phi)(\Delta^{\op}(a)(1\otimes b)) = \Lambda_\Phi(a_{(2)})\otimes \Lambda_\Phi(a_{(1)}b), \quad a,b \in \mathcal{O}(\G)$$
and define the normal $*$-homomorphism
$$\beta: M \ovot B(L^2(\G)) \to M \ovot B(L^2(\G))\ovot B(L^2(\G)): s \mapsto U_{23}^*\Sigma_{23}(\alpha \otimes \id)(s)\Sigma_{23}U_{23}.$$
By \cite[Lemma 5.30]{DC17}, we have
$$\{s \in \mathcal{M}\odot p_\pi B(L^2(\G)) p_{\pi'}: \beta(s) = s\otimes 1\} = (1\otimes p_\pi)(\mathcal{M}\rtimes_{\alpha,\operatorname{alg}} \G)(1\otimes p_{\pi'}).$$
Therefore, we find that 
$z \in (1\otimes p_\pi)(\mathcal{M}\rtimes_{\alpha,\operatorname{alg}} \G)(1\otimes p_{\pi'})$.
\end{proof}

\begin{Lem}
    Let $(X, \alpha)$ be a $\G$-operator system and $z \in X \rtimes_\alpha \G$. Then for $\pi, \pi'\in \Irr(\G)$, we have
    $$(1\otimes p_\pi)z(1\otimes p_{\pi'}) \in \mathcal{X}\rtimes_{\alpha,\operatorname{alg}}\G.$$
\end{Lem}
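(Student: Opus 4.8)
The plan is to reduce to the case of a $\G$-dynamical von Neumann algebra, where the preceding lemma applies, and then to push the resulting finite description back into $X$ by a slicing argument. Concretely, I would embed $X\subseteq B(\mathcal{H})$ and choose a $\G$-dynamical von Neumann algebra $(Y,\beta)$ with $(X,\alpha)\subseteq_\G(Y,\beta)$, writing $\mathcal{Y}=\bigoplus_{\sigma}^{\operatorname{alg}}Y_\sigma$ for its algebraic core. Set $u:=(1\otimes p_\pi)z(1\otimes p_{\pi'})$. Since $1\otimes L^\infty(\check{\G})\subseteq Y\rtimes_\beta\G$ and $z\in X\rtimes_\alpha\G\subseteq Y\rtimes_\beta\G$ (recall $X\rtimes_\alpha\G=(Y\rtimes_\beta\G)\cap(X\ovot B(L^2(\G)))$), the element $u$ lies in $Y\rtimes_\beta\G$; it also lies in $X\ovot B(L^2(\G))$ because $z$ does. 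As $\mathcal{Y}\rtimes_{\beta,\operatorname{alg}}\G$ is $\sigma$-weakly dense in $Y\rtimes_\beta\G$, I would pick a net $z_i\in\mathcal{Y}\rtimes_{\beta,\operatorname{alg}}\G$ converging $\sigma$-weakly to $u$; then the compressions $(1\otimes p_\pi)z_i(1\otimes p_{\pi'})$ converge $\sigma$-weakly to $u$ (the projections $p_\pi,p_{\pi'}$ fix $u$) and lie in $(1\otimes p_\pi)(\mathcal{Y}\rtimes_{\beta,\operatorname{alg}}\G)(1\otimes p_{\pi'})$. By the previous lemma this corner is $\sigma$-weakly closed, so $u$ belongs to it, and in particular $u\in\bigoplus_{\pi''\in F}Y_{\pi''}\odot(p_\pi B(L^2(\G))p_{\pi'})$ for a finite set $F\subseteq\Irr(\G)$.

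The decisive structural point is that $W:=p_\pi B(L^2(\G))p_{\pi'}$ is finite-dimensional: by Peter--Weyl, $p_\pi$ is the finite-rank projection onto the isotypical block of $\pi$ in $L^2(\G)$. Fixing a basis $\{f_l\}$ of $W$ together with functionals $\nu_l\in B(L^2(\G))_*$ satisfying $\nu_l(f_m)=\delta_{lm}$, I would write $u=\sum_l c_l\otimes f_l$ with $c_l\in\bigoplus_{\pi''\in F}Y_{\pi''}$, so that $c_l=(\id\otimes\nu_l)(u)$. Since $u\in X\ovot B(L^2(\G))$, the defining property of the Fubini tensor product forces $c_l\in X$. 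Because $\beta|_X=\alpha$, the spectral subspaces satisfy $X_{\pi''}=X\cap Y_{\pi''}$, whence $c_l\in X\cap\bigoplus_{\pi''\in F}Y_{\pi''}=\bigoplus_{\pi''\in F}X_{\pi''}\subseteq\mathcal{X}$. Consequently $u\in\mathcal{X}\odot W$.

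It remains to recognise $u$ as an element of $\mathcal{X}\rtimes_{\alpha,\operatorname{alg}}\G$ rather than merely of $\mathcal{X}\odot B(L^2(\G))$. For this I would invoke the algebraic identity underlying the previous lemma, namely \cite[Lemma 5.30]{DC17}, now applied to the core $\mathcal{X}$ with its coaction $\alpha$; this is structurally identical to the $\mathcal{Y}$-case since $\alpha=\beta|_{\mathcal{X}}$ and the statement concerns only the coaction on the algebraic core, not any product on $X$. With $\beta'$ the normal $*$-homomorphism from the previous proof, this reads
$$\{s\in\mathcal{X}\odot W:\beta'(s)=s\otimes 1\}=(1\otimes p_\pi)(\mathcal{X}\rtimes_{\alpha,\operatorname{alg}}\G)(1\otimes p_{\pi'}).$$
Since $u\in Y\rtimes_\beta\G$ is exactly the condition $\beta'(u)=u\otimes 1$, and $u\in\mathcal{X}\odot W$ by the previous paragraph, $u$ lies in the right-hand side, hence in $\mathcal{X}\rtimes_{\alpha,\operatorname{alg}}\G$, as required. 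I expect the main obstacle to be precisely this last transition: verifying that the purely algebraic characterisation of \cite[Lemma 5.30]{DC17} remains valid verbatim when the von Neumann algebra is replaced by the operator system $X$ (equivalently, that the coefficients extracted by slicing genuinely reassemble into spanning elements $\alpha(x)(1\otimes y)$ with $x\in\mathcal{X}$ and $y\in c_c(\tilde{\G})$), along with the bookkeeping that $\beta'$ preserves $\mathcal{X}\odot W$ and that $X_\pi=X\cap Y_\pi$.
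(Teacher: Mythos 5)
Up to and including your conclusion that $u\in\mathcal{X}\odot W$, your argument is correct and runs parallel to the paper's proof: the embedding $(X,\alpha)\subseteq_\G (Y,\beta)$, the identification of $(1\otimes p_\pi)(Y\rtimes_\beta\G)(1\otimes p_{\pi'})$ with the corresponding corner of $\mathcal{Y}\rtimes_{\beta,\operatorname{alg}}\G$ via density and the $\sigma$-weak closedness from the preceding lemma, and the extraction of coefficients lying in $X$ (hence in $\mathcal{X}$, via $X_{\pi''}=X\cap Y_{\pi''}$) by slicing with normal functionals and the defining property of $\ovot$. Your unproven assertion that, for $z\in Y\ovot B(L^2(\G))$, the condition $\beta'(z)=z\otimes 1$ characterises membership in $Y\rtimes_\beta\G$ is also true in the paper's conventions (one checks $U=\Sigma W^*\Sigma$), though it plays no essential role below.

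The genuine gap is your last step, and it is the crux of the lemma. You must pass from ``$u\in\mathcal{X}\odot W$ and $\beta'(u)=u\otimes 1$'' to ``$u\in\mathcal{X}\rtimes_{\alpha,\operatorname{alg}}\G$'', and you do so by invoking \cite[Lemma 5.30]{DC17} ``applied to the core $\mathcal{X}$''. That lemma is proved, and is used in the paper's preceding lemma, for actions on von Neumann algebras (resp.\ $C^*$-algebras); its validity for an operator-system core is not a citable fact but a claim requiring proof, and it is essentially equivalent to the statement you are trying to establish, so the appeal is circular --- as your own closing caveat concedes. The underlying difficulty is that slicing against a basis of $W$ extracts the coefficients of the tensor decomposition of $u$ in $\mathcal{X}\odot W$, which is \emph{not} the crossed-product decomposition, so nothing yet reassembles $u$ into elements $\alpha(x)(1\otimes y)$. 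The paper closes exactly this hole: since $u\in\mathcal{Y}\rtimes_{\beta,\operatorname{alg}}\G$, write $u=\sum_{i=1}^n\beta(m_i)(1\otimes y_i)$ with $m_i\in\mathcal{Y}$ and $y_1,\dots,y_n\in c_c(\tilde{\G})$ linearly independent, expand $\beta(m_i)=\sum_{\pi''\in F}\sum_{j,k}(m_i)_{jk}^{\pi''}\otimes U_{\pi''}(e_j^{\pi''},e_k^{\pi''})$, and use the injectivity of the multiplication map $\mathcal{O}(\G)\odot c_c(\tilde{\G})\to B(L^2(\G))$: the products $U_{\pi''}(e_j^{\pi''},e_k^{\pi''})y_i$ are then linearly independent, so each $(m_i)_{jk}^{\pi''}$ is itself a normal slice of $u$ and hence lies in $X$; therefore $m_i=(\id\odot\epsilon)\beta(m_i)\in X$, so $m_i\in\mathcal{X}$ and $u=\sum_i\alpha(m_i)(1\otimes y_i)\in\mathcal{X}\rtimes_{\alpha,\operatorname{alg}}\G$. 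This injectivity ingredient, which ties the sliced coefficients to the crossed-product form, is entirely absent from your proposal. (Alternatively, your intermediate result can be completed without \cite{DC17} by cosemisimplicity of $\mathcal{O}(\G)$: choose a comodule complement $\mathcal{Z}$ of $\mathcal{X}$ in $\mathcal{Y}$; then $\mathcal{Y}\rtimes_{\beta,\operatorname{alg}}\G$ is the direct sum of $\mathcal{X}\rtimes_{\alpha,\operatorname{alg}}\G\subseteq\mathcal{X}\odot B(L^2(\G))$ and $\operatorname{span}\{\beta(c)(1\otimes y):c\in\mathcal{Z},\,y\in c_c(\tilde{\G})\}\subseteq\mathcal{Z}\odot B(L^2(\G))$, and $u\in\mathcal{X}\odot B(L^2(\G))$ forces its $\mathcal{Z}$-component to vanish. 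But some such argument must actually be supplied.)
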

\begin{proof} 
Let $(M, \beta)$ be a $\G$-dynamical von Neumann algebra such that $(X, \alpha)\subseteq_\G (M, \beta).$ Since $\mathcal{M}\rtimes_{\beta, \operatorname{alg}}\G$ is $\sigma$-weakly dense in $M\rtimes_\beta \G$,  we see that $(1\otimes p_\pi)(\mathcal{M}\rtimes_{\beta,\operatorname{alg}} \G)(1\otimes p_{\pi'})$ is $\sigma$-weakly dense in $(1\otimes p_\pi)(M\rtimes_\beta\G)(1\otimes p_{\pi'})$. By the preceding lemma, we find
$$ (1\otimes p_\pi)(M\rtimes_\beta \G)(1\otimes p_{\pi'})=(1\otimes p_\pi)(\mathcal{M}\rtimes_{\beta,\operatorname{alg}} \G)(1\otimes p_{\pi'})\subseteq \mathcal{M}\rtimes_{\beta,\operatorname{alg}} \G.$$

Thus, if $z\in X\rtimes_\alpha \G$, we see that 
$$v:=(1\otimes p_\pi)z(1\otimes p_{\pi'}) \in (\mathcal{M}\rtimes_{\beta,\operatorname{alg}} \G)\cap (X \ovot B(L^2(\G))).$$
We can therefore write 
$$v= \sum_{i=1}^n \beta(m_i)(1\otimes y_i)$$
with $m_i\in \mathcal{M}$ and $y_i\in c_c(\tilde{\G})$ where $\{y_i\}_{i=1}^n$ are linearly independent. We can further write
$$\beta(m_i) = \sum_{\pi\in F}\sum_{j,k=1}^{n_\pi} (m_i)_{jk}^\pi\otimes U_\pi(e_j^\pi, e_k^\pi)$$
for a finite subset $F\subseteq \operatorname{Irr}(\G)$ (not depending on $i$)
and then we have
$$v= \sum_{i=1}^n \sum_{\pi\in F}\sum_{j,k=1}^{n_\pi} (m_i)_{jk}^\pi\otimes U_\pi(e_j^\pi, e_k^\pi) y_i.$$
It is well-known that the multiplication map
$$\mathcal{O}(\G) \odot c_c(\tilde{\G}) \to B(L^2(\G))$$
is injective, so we conclude that $(m_i)_{jk}^\pi \in X$ for all $i=1, \dots, n$, $\pi\in F$ and all $j,k=1, \dots, n_\pi$ (by the Hahn-Banach theorem and the definition of the Fubini tensor product). Therefore,
$m_i = (\id \odot \epsilon)\beta(m_i) \in X$, so that $v \in \mathcal{X}\rtimes_{\alpha, \operatorname{alg}}\G.$
\end{proof}

\begin{Lem}\label{rigidlemma2} Let $(X, \alpha)$ a $\G$-operator system. If $T: X\rtimes_\alpha\G \to X\rtimes_\alpha \G$ is a $\check{\G}$-equivariant unital completely positive map such that 
$T(\alpha(x)(1\otimes y)) = \alpha(x)(1\otimes y)$
for all $x\in X$ and $y\in L^\infty(\check{\G})$, then $T= \id_{X\rtimes_\alpha \G}$.
\end{Lem}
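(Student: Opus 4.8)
The plan is to run the same multiplicative-domain strategy as for \ref{rigidlemma}, but now anchored on the spectral projections $p_\pi$ and the two lemmas immediately preceding, which are precisely the compact-group substitute for the matrix-unit bookkeeping used in the discrete case. First I would record the consequence of the hypothesis obtained by taking $x=1$: since $\alpha$ is unital we have $\alpha(1)(1\otimes y)=1\otimes y$, so $T(1\otimes y)=1\otimes y$ for every $y\in L^\infty(\check\G)$. Thus $T$ restricts to the identity map — in particular to a $*$-homomorphism — on the copy $1\otimes L^\infty(\check\G)$ sitting inside $X\rtimes_\alpha\G$. Recall also that, since $\G$ is compact, $\check\G$ is discrete, so $L^\infty(\check\G)=\ell^\infty(\check\G)$ and $c_c(\tilde\G)\subseteq L^\infty(\check\G)$, and in particular each $p_\pi$ is a projection in $1\otimes L^\infty(\check\G)$ fixed by $T$.

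Next I would promote this to a bimodule identity. Embedding $X\subseteq B(\mathcal H)$ and viewing $X\rtimes_\alpha\G\subseteq B(\mathcal H\otimes L^2(\G))$, I extend $T$ by Arveson's theorem to a unital completely positive map $\tilde T\colon B(\mathcal H\otimes L^2(\G))\to B(\mathcal H\otimes L^2(\G))$ agreeing with $T$ on $X\rtimes_\alpha\G$. As $\tilde T$ fixes each projection $1\otimes p_\pi$, a standard multiplicative-domain argument puts $1\otimes L^\infty(\check\G)$ inside the multiplicative domain of $\tilde T$, so that
$$\tilde T\big((1\otimes p_\pi)\,w\,(1\otimes p_{\pi'})\big)=(1\otimes p_\pi)\,\tilde T(w)\,(1\otimes p_{\pi'}),\qquad w\in B(\mathcal H\otimes L^2(\G)),\ \pi,\pi'\in\Irr(\G).$$

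Now fix $z\in X\rtimes_\alpha\G$ and $\pi,\pi'\in\Irr(\G)$. By the preceding lemma, $(1\otimes p_\pi)z(1\otimes p_{\pi'})\in\mathcal X\rtimes_{\alpha,\operatorname{alg}}\G$, which by definition is spanned by elements $\alpha(x)(1\otimes y)$ with $x\in\mathcal X\subseteq X$ and $y\in c_c(\tilde\G)\subseteq L^\infty(\check\G)$; the hypothesis fixes each such generator, hence $T$ fixes $(1\otimes p_\pi)z(1\otimes p_{\pi'})$. Since these triple products lie back in $X\rtimes_\alpha\G$, we may replace $\tilde T$ by $T$ in the displayed identity and conclude
$$(1\otimes p_\pi)\,z\,(1\otimes p_{\pi'})=(1\otimes p_\pi)\,T(z)\,(1\otimes p_{\pi'}),$$
i.e. $(1\otimes p_\pi)(z-T(z))(1\otimes p_{\pi'})=0$ for all $\pi,\pi'$.

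Finally I would collapse the spectral decomposition. Because $\sum_{\pi\in\Irr(\G)}p_\pi=1$ strongly, the projections $\{1\otimes p_\pi\}$ are mutually orthogonal and sum strongly to the identity of $B(\mathcal H\otimes L^2(\G))$; writing arbitrary vectors as $\xi=\sum_{\pi'}(1\otimes p_{\pi'})\xi$ and $\eta=\sum_\pi(1\otimes p_\pi)\eta$ and using boundedness of $z-T(z)$ to interchange the sums gives $\langle (z-T(z))\xi,\eta\rangle=0$ for all $\xi,\eta$, whence $T(z)=z$. The only mildly delicate point is the multiplicative-domain step, where one must know the triple products return to $X\rtimes_\alpha\G$ so that $\tilde T$ can be replaced by $T$; this is exactly what the preceding lemma supplies, so no genuine obstacle arises. (Note that, in contrast with \ref{rigidlemma}, this argument does not actually invoke the $\check\G$-equivariance of $T$, the fixing hypothesis being strong enough on its own.)
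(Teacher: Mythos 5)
Your proof is correct and follows essentially the same route as the paper's: compress $z$ by the projections $1\otimes p_\pi$, use the preceding lemma to land in $\mathcal{X}\rtimes_{\alpha,\operatorname{alg}}\G$ where the hypothesis applies, invoke the multiplicative domain to pull the compressions outside $T$, and conclude via the strong convergence $\sum_\pi p_\pi = 1$. The only difference is that you spell out the Arveson extension step (which the paper leaves implicit in the phrase ``multiplicative domain argument''), and your parenthetical observation that $\check{\G}$-equivariance is never used is equally true of the paper's own proof.
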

\begin{proof}
    Let $z\in X\rtimes_\alpha \G$. If $\pi, \pi' \in \Irr(\G)$, the element $(1\otimes p_\pi)z(1\otimes p_{\pi'}) \in \mathcal{X}\rtimes_{\alg} \G$, so we have by the assumption and a multiplicative domain argument that
    $$(1\otimes p_\pi)z(1\otimes p_{\pi'})=T((1\otimes p_\pi)z(1\otimes p_{\pi'}))=(1\otimes p_\pi) T(z) (1\otimes p_{\pi'}).$$
    Since the partial sums $\sum_{\pi\in F} p_\pi$ where $F$ ranges over the finite subsets of $\operatorname{Irr}(\G)$ converge strongly to the identity on $B(L^2(\G))$, we conclude that $T(z) = z$.
\end{proof}

Exactly as in the case for discrete quantum groups, we then obtain the following results also for compact quantum groups:

\begin{Prop}\label{rigid2}
    The following statements are equivalent for $\G$-operator systems $(X, \alpha)$ and $(Y, \beta)$:
    \begin{enumerate}
        \item $(Y, \iota: X \to Y)$ is a $\G$-rigid extension of $X$.
        \item $(Y\rtimes_\beta \G, \iota \rtimes \G: X\rtimes_\alpha \G \to Y\rtimes_\beta \G)$ is a $\check{\G}$-rigid extension of $X\rtimes_\alpha \G$.
    \end{enumerate}
\end{Prop}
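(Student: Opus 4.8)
The plan is to reproduce the proof of Proposition \ref{rigid} essentially verbatim, the only modification being that the compact rigidity lemma, Lemma \ref{rigidlemma2}, now takes over the role played by Lemma \ref{rigidlemma} in the discrete case. Both implications rest on two facts valid for arbitrary $\G$: the functoriality of the crossed product $-\rtimes\G$ (sending a $\G$-equivariant unital completely positive map to a $\check{\G}$-equivariant one, and conversely allowing a $\check{\G}$-equivariant map to be restricted to the $\check{\Delta}_r$-fixed points), together with the fact that the generators $1\otimes y$ (for $y\in L^\infty(\check{\G})$) and $\beta(\iota(x))=(\iota\rtimes\G)(\alpha(x))$ (for $x\in X$) lie in the range of $\iota\rtimes\G$.

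For $(1)\implies(2)$ I would take a $\check{\G}$-equivariant unital completely positive map $\psi\colon Y\rtimes_\beta\G\to Y\rtimes_\beta\G$ with $\psi\circ(\iota\rtimes\G)=\iota\rtimes\G$. Since $1\otimes y\in X\rtimes_\alpha\G$ and $\alpha(x)\in X\rtimes_\alpha\G$, the hypothesis immediately gives $\psi(1\otimes y)=1\otimes y$ for all $y\in L^\infty(\check{\G})$ and $\psi(\beta(\iota(x)))=\beta(\iota(x))$ for all $x\in X$. As $\psi$ is $\check{\G}$-equivariant it preserves the $\check{\Delta}_r$-fixed point space $\beta(Y)$, so it restricts to a $\G$-equivariant unital completely positive map $\phi\colon Y\cong\beta(Y)\to\beta(Y)\cong Y$ with $\phi\circ\iota=\iota$. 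The $\G$-rigidity of $(Y,\iota)$ forces $\phi=\id_Y$, whence $\psi(\beta(y))=\beta(y)$ for every $y\in Y$; a multiplicative-domain argument (using $\psi(1\otimes z)=1\otimes z$) then upgrades this to $\psi(\beta(y)(1\otimes z))=\beta(y)(1\otimes z)$ for all $y\in Y$ and $z\in L^\infty(\check{\G})$, and Lemma \ref{rigidlemma2} concludes $\psi=\id_{Y\rtimes_\beta\G}$.

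For $(2)\implies(1)$ the argument is purely formal and does not use Lemma \ref{rigidlemma2}: starting from a $\G$-equivariant unital completely positive $\phi\colon Y\to Y$ with $\phi\circ\iota=\iota$, functoriality yields $(\phi\rtimes\G)\circ(\iota\rtimes\G)=\iota\rtimes\G$, so the assumed $\check{\G}$-rigidity of $(Y\rtimes_\beta\G,\iota\rtimes\G)$ gives $\phi\rtimes\G=\id$; evaluating on $\beta(Y)$ via $(\phi\rtimes\G)(\beta(y))=\beta(\phi(y))$ and invoking injectivity of $\beta$ returns $\phi=\id_Y$.

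The genuinely compact-specific step, and the one I expect to be the main (though mild) obstacle, is the identification $(Y\rtimes_\beta\G)^{\id\otimes\check{\Delta}_r}=\beta(Y)$ used to produce the restriction $\phi$ in the first implication; in the discrete case this was automatic from the counit, whereas here it amounts to the $\G$-completeness of $(Y,\beta)$ (Proposition \ref{completeness}(2)). I would settle it using the spectral-subspace machinery developed in this subsection: embedding $(Y,\beta)\subseteq_\G(M,\gamma)$ into a $\G$-dynamical von Neumann algebra and using $(M\rtimes_\gamma\G)^{\id\otimes\check{\Delta}_r}=\gamma(M)$, a $\check{\Delta}_r$-fixed element of $Y\rtimes_\beta\G$ has the form $\gamma(m)$ with $m\in M$ and $L^1(\G)\rhd m\subseteq Y$; the lemma preceding Lemma \ref{rigidlemma2} shows its spectral blocks $(1\otimes p_\pi)\gamma(m)(1\otimes p_{\pi'})$ lie in $\mathcal{Y}\rtimes_{\beta,\operatorname{alg}}\G$ with coefficients in $Y$. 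This reduction is essentially the one already carried out to prove Lemma \ref{rigidlemma2}; the one point that requires care is the reassembly of $m$ from its spectral blocks to conclude $m\in Y$ and hence $\gamma(m)\in\beta(Y)$, where the compact orthogonality relations enter.
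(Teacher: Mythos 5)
Your outline does follow the route the paper intends: direction $(2)\Rightarrow(1)$ is the same purely formal functoriality argument, and direction $(1)\Rightarrow(2)$ is the proof of Proposition \ref{rigid} with Lemma \ref{rigidlemma2} substituted for Lemma \ref{rigidlemma}. You have also correctly isolated the one step that does not transfer from the discrete case: the existence of the restriction $\phi\colon Y\cong\beta(Y)\to\beta(Y)\cong Y$ requires $(Y\rtimes_\beta\G)^{\id\otimes\check{\Delta}_r}=\beta(Y)$, i.e.\ the $\G$-completeness of $(Y,\beta)$. The gap lies in how you propose to settle this: the identification you want to prove is \emph{false} for a general compact quantum group and a general $\G$-operator system, so no amount of care in the ``reassembly'' step can complete your argument. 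Concretely, take $\G=\mathbb{T}$ and $Y=C(\mathbb{T})$ with the coaction $\Delta$. For any $m\in L^\infty(\mathbb{T})$, every slice $(\id\otimes\omega)\Delta(m)$ with $\omega\in B(L^2(\mathbb{T}))_*$ is the convolution of an $L^\infty$-function with an $L^1$-function, hence continuous; therefore $\Delta(m)\in C(\mathbb{T})\ovot L^\infty(\mathbb{T})$, and $\Delta(m)$ is a $\check{\Delta}_r$-fixed element of $C(\mathbb{T})\rtimes_\Delta\mathbb{T}$ for \emph{every} $m\in L^\infty(\mathbb{T})$. Thus the fixed-point space is $\Delta(L^\infty(\mathbb{T}))\supsetneq\Delta(C(\mathbb{T}))$, so $(C(\mathbb{T}),\Delta)$ is not $\mathbb{T}$-complete. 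This example shows exactly why the step you flagged cannot be carried out: the spectral blocks of such a $\gamma(m)$ do lie in $\mathcal{Y}\rtimes_{\beta,\mathrm{alg}}\G$ (here the spectral components of $m$ are the trigonometric monomials $\hat{m}(n)z^n$, all in $C(\mathbb{T})$), but $m$ is recovered from its spectral components only as a $\sigma$-weak limit, and $Y$ is merely norm closed; the orthogonality relations give no norm-convergent summation, and indeed $m\notin Y$ in general.

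Consequently, your proof of $(1)\Rightarrow(2)$ is incomplete at precisely this point: without the identification, $\psi$ need not map $\beta(Y)$ into $\beta(Y)$, so the $\G$-rigidity of $(Y,\iota)$ cannot be invoked on a self-map of $Y$ (the natural map one does get lands in the larger system $\{m\in M: L^1(\G)\rhd m\subseteq Y\}$, where rigidity says nothing). The rest of your argument --- the multiplicative-domain upgrade, the appeal to Lemma \ref{rigidlemma2}, and all of $(2)\Rightarrow(1)$ --- is correct and needs no completeness. The repair consistent with how the proposition is actually used in the paper is to invoke the fixed-point identification only where it is available, namely when $(Y,\beta)$ is $\G$-complete; by Proposition \ref{completeness}(1) this holds whenever $Y$ is $\G$-injective, which covers the case $Y=I^1_\G(X)$ needed for Proposition \ref{injective envelope crossed product2}. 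For an arbitrary extension $Y$ as in the statement, a transcription of the discrete proof (yours, or a verbatim one) leaves a genuine gap here, and closing it would require a justification of $\psi(\beta(Y))\subseteq\beta(Y)$ that does not pass through $\G$-completeness.
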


\begin{Prop}\label{injective envelope crossed product2} Let $(X, \alpha)$ be a $\G$-operator system. Then 
    $I_{\check{\G}}^1(X\rtimes_\alpha \G)= I_\G^1(X)\rtimes \G$
    as $\check{\G}$-operator systems.
\end{Prop}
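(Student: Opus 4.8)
The plan is to mirror the discrete-quantum-group argument of Proposition~\ref{injective envelope crossed product}, now feeding in the rigidity transfer of Proposition~\ref{rigid2} where the discrete proof used Proposition~\ref{rigid}. Set $Y := I_\G^1(X)$, with its canonical $\G$-equivariant unital complete isometry $\iota : X \to Y$ and coaction $\beta$. By Proposition~\ref{injective envelopes}, the extension $(Y,\iota)$ is simultaneously $\G$-injective and $\G$-rigid, and the whole content of the statement is that the crossed product functor turns this into a $\check{\G}$-injective envelope of $X\rtimes_\alpha \G$.

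First I would record that $(Y\rtimes_\beta \G,\ \iota\rtimes\G)$ is a $\check{\G}$-extension of $X\rtimes_\alpha \G$: functoriality makes $\iota\rtimes\G$ a $\check{\G}$-equivariant unital map, and the intersection formula $X\rtimes_\alpha \G = (Y\rtimes_\beta \G)\cap (X\ovot B(L^2(\G)))$ exhibits it as the (completely isometric) inclusion. Next, since $Y$ is $\G$-injective, the implication $(1)\implies(2)$ of Theorem~\ref{main result} yields that $(Y\rtimes_\beta \G, \id\otimes\check{\Delta}_r)$ is $\check{\G}$-injective; note that the accompanying fixed-point identity $(Y\rtimes_\beta\G)^{\id\otimes\check{\Delta}_r}=\beta(Y)$ is delivered by the same theorem, ultimately because a $\G$-injective system is $\G$-complete by Proposition~\ref{completeness}(1). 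Finally, since $(Y,\iota)$ is $\G$-rigid, Proposition~\ref{rigid2} gives that $(Y\rtimes_\beta \G,\ \iota\rtimes\G)$ is a $\check{\G}$-rigid extension.

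Having produced a $\check{\G}$-extension that is at once $\check{\G}$-injective and $\check{\G}$-rigid, I would invoke Proposition~\ref{injective envelopes} applied to the quantum group $\check{\G}$ to conclude that $(Y\rtimes_\beta\G,\ \iota\rtimes\G)$ is a $\check{\G}$-injective envelope of $X\rtimes_\alpha\G$. Uniqueness of injective envelopes then gives the identification $I_{\check{\G}}^1(X\rtimes_\alpha\G)=Y\rtimes_\beta\G=I_\G^1(X)\rtimes\G$ as $\check{\G}$-operator systems.

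The genuinely hard part is not this assembly but its input Proposition~\ref{rigid2}, hence Lemma~\ref{rigidlemma2}, whose proof rests on the spectral-subspace analysis of the preceding three lemmas (that $(1\otimes p_\pi)z(1\otimes p_{\pi'})$ lands in the algebraic crossed product $\mathcal{X}\rtimes_{\alpha,\operatorname{alg}}\G$). The one point requiring care in the compact setting is that $\G$-completeness is no longer automatic as it is for discrete $\G$; here this causes no difficulty, because the fixed-point identity is invoked only for the $\G$-injective system $Y=I_\G^1(X)$, for which it is guaranteed by Proposition~\ref{completeness}(1).
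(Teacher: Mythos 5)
Your proposal is correct and follows essentially the same route as the paper, which obtains Proposition~\ref{injective envelope crossed product2} exactly as in the discrete case by combining Proposition~\ref{injective envelopes}, Theorem~\ref{main result} and Proposition~\ref{rigid2} (with Lemma~\ref{rigidlemma2} doing the real work). Your explicit substitution of the discrete case's automatic fixed-point identity by the $\G$-completeness of the $\G$-injective system $I_\G^1(X)$ (Proposition~\ref{completeness}) is precisely the correct adjustment needed in the compact setting.
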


\section{Equivariant injectivity and inner amenability pass to closed quantum subgroups}

Given a locally compact quantum group $\G$, we have $L^2(\G)= L^2(\check{\G})$ and thus Tomita-Takesaki theory gives us a canonical anti-unitary modular operator 
$$\check{J}= \check{J}_\G: L^2(\G)\to L^2(\G).$$

Assume now that $\mathbb{H}$ is a \emph{Vaes-closed quantum subgroup} of $\G$ \cite{DKSS12}, i.e. $\mathbb{H}$ is a locally compact quantum group and there exists a unital, normal, faithful $*$-homomorphism $\check{\gamma}: L^\infty(\check{\mathbb{H}})\to L^\infty(\check{\G})$ such that $(\check{\gamma}\otimes \check{\gamma})\circ \check{\Delta}^{\mathbb{H}}= \check{\Delta}^{\mathbb{G}}\circ \check{\gamma}.$ We also define the faithful normal $*$-homomorphism
$$\hat{\gamma}: L^\infty(\hat{\mathbb{H}})\to L^\infty(\hat{\G}): x \mapsto \check{J}_\G\check{\gamma}(\check{J}_{\mathbb{H}}x^* \check{J}_{\mathbb{H}})^*\check{J}_\G$$
satisfying $(\hat{\gamma}\otimes \hat{\gamma})\circ \hat{\Delta}^{\mathbb{H}} = \hat{\Delta}^\G \circ \hat{\gamma}.$ Define the unitaries
$$V_{\G, \mathbb{H}}:= (\check{\gamma} \otimes \id)(V_\mathbb{H})\in L^\infty(\check{\G})\bar{\otimes} L^\infty(\mathbb{H}), \quad W_{\mathbb{H}, \G}:=(\id \otimes \hat{\gamma})(W_{\mathbb{H}}) \in L^\infty(\mathbb{H})\ovot L^\infty(\hat{\G})$$
and the normal $*$-homomorphisms
\begin{align*}
    &\Delta_r^{\G, \mathbb{H}}: B(L^2(\G))\to B(L^2(\G))\bar{\otimes}L^\infty(\mathbb{H}): x \mapsto V_{\G, \mathbb{H}}(x\otimes 1)V_{\G, \mathbb{H}}^*\\
    &\Delta_l^{\mathbb{H}, \G}: B(L^2(\G))\to L^\infty(\mathbb{H})\ovot B(L^2(\G)): x \mapsto W_{\mathbb{H}, \G}^*(1\otimes x)W_{\mathbb{H}, \G}.
\end{align*}

 The map $\Delta_r^{\G, \mathbb{H}}$ defines an action $B(L^2(\G))\curvearrowleft \mathbb{H}$ and the map $\Delta_l^{\mathbb{H}, \G}$ defines an action $\mathbb{H}\curvearrowright B(L^2(\G))$. These maps restrict to coactions
 $$\Delta^{\G, \mathbb{H}}: L^\infty(\G)\to L^\infty(\G)\ovot L^\infty(\mathbb{H}), \quad \Delta^{\mathbb{H}, \G}: L^\infty(\G)\to L^\infty(\mathbb{H}) \ovot L^\infty(\G).$$

 In what follows, we will make use of the following list of well-known identities:
 \begin{align}
    \label{a} (\Delta_r^\G \otimes \id)\circ \Delta_r^{\G, \mathbb{H}}&= (\id \otimes \Delta_r^{\G, \mathbb{H}})\circ \Delta_r^\G,\\
   \label{b} (\id \otimes \Delta_l^\G)\circ \Delta_l^{\mathbb{H}, \G} &=(\Delta_l^{\mathbb{H}, \G} \otimes \id)\circ \Delta_l^\G,\\
   \label{c} (\id \otimes \Delta^{\mathbb{H}, \G})\circ \Delta^\G &= (\Delta^{\G, \mathbb{H}}\otimes \id)\circ \Delta^\G.
\end{align}
 
 If $(M, \alpha)$ is a $\G$-dynamical von Neumann algebra, there exists a unique coaction $\alpha_{\mathbb{H}}: M\to M\ovot L^\infty(\mathbb{H})$ such that 
   \begin{equation}\label{d}
       (\id \otimes \Delta^{\G, \mathbb{H}})\circ \alpha = (\alpha\otimes \id)\circ \alpha_{\mathbb{H}}.
   \end{equation}
   In this way, we obtain the $\mathbb{H}$-dynamical von Neumann algebra $(M, \alpha_{\mathbb{H}})$. We call $\alpha_{\mathbb{H}}$ the \emph{restriction of the action} $\alpha$ to the quantum subgroup $\mathbb{H}$. We also have the identity 
   \begin{equation}\label{e}
       (\id \otimes \Delta^{\mathbb{H}, \G})\circ \alpha= (\alpha_{\mathbb{H}}\otimes \id) \circ \alpha
   \end{equation}
   since we have (making use of \eqref{d} and \eqref{c})
   \begin{align*}
       (\alpha \otimes \id \otimes \id)(\alpha_{\mathbb{H}}\otimes \id)\alpha &= (\id \otimes \Delta^{\G, \mathbb{H}}\otimes \id) (\alpha \otimes \id)\alpha\\
       &= (\id \otimes \Delta^{\G, \mathbb{H}}\otimes \id) (\id \otimes \Delta^\G)\alpha\\
       &= (\id \otimes \id \otimes \Delta^{\mathbb{H}, \G})(\id \otimes \Delta^\G)\alpha\\
       &= (\id \otimes \id \otimes \Delta^{\mathbb{H}, \G})(\alpha \otimes \id)\alpha= (\alpha \otimes \id \otimes \id)(\id \otimes \Delta^{\mathbb{H}, \G})\alpha.
   \end{align*}

   We will need the following non-trivial result:
\begin{Prop}\label{crossed}
    There is a unique isometric normal $*$-homomorphism
    $\iota: M\rtimes_{\alpha_{\mathbb{H}}}\mathbb{H} \to M\rtimes_\alpha \G$
    such that 
    $$\iota(1\otimes y) = 1\otimes \check{\gamma}(y), \quad \iota(\alpha_{\mathbb{H}}(m)) = \alpha(m), \qquad m \in M, y \in L^\infty(\check{\mathbb{H}}).$$
\begin{itemize}
    \item Consider the $\check{\G}$-action 
    $$\beta:= (\id \otimes \id \otimes \check{\gamma}) \circ (\id \otimes \check{\Delta}_r^{\mathbb{H}}): M\rtimes_{\alpha_{\mathbb{H}}} \mathbb{H}\to (M\rtimes_{\alpha_{\mathbb{H}}} \mathbb{H})\ovot L^\infty(\check{\G}).$$
    Then
    $\iota: (M\rtimes_{\alpha_{\mathbb{H}}} \mathbb{H}, \beta)\to (M\rtimes_\alpha \G, \id \otimes \check{\Delta}_r^{\G})$
    is $\check{\G}$-equivariant.
    \item Consider the $\mathbb{H}$-action 
    $$\id \otimes \Delta_r^{\G, \mathbb{H}}: M\rtimes_\alpha \G\to (M\rtimes_\alpha \G)\ovot L^\infty(\mathbb{H}).$$
    Then $\iota: (M\rtimes_{\alpha_{\mathbb{H}}} \mathbb{H}, \id \otimes \Delta_r^{\mathbb{H}})\to (M\rtimes_\alpha \G, \id \otimes \Delta_r^{\G, \mathbb{H}})$ is $\mathbb{H}$-equivariant.
\end{itemize}
\end{Prop}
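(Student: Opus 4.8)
The plan is to define $\iota$ on the two generating families of the crossed product and to verify the whole statement there, since $M\rtimes_{\alpha_{\mathbb{H}}}\mathbb{H}=[\alpha_{\mathbb{H}}(M)(1\otimes L^\infty(\check{\mathbb{H}}))]''$. \emph{Uniqueness} is then immediate: a normal $*$-homomorphism is determined by its prescribed values $\iota(\alpha_{\mathbb{H}}(m))=\alpha(m)$ and $\iota(1\otimes y)=1\otimes\check{\gamma}(y)$ on these generators. To set up, I realise $M$ in standard form on $\mathcal{H}$, so that $M\rtimes_{\alpha_{\mathbb{H}}}\mathbb{H}\subseteq M\ovot B(L^2(\mathbb{H}))$ and $M\rtimes_\alpha\G\subseteq M\ovot B(L^2(\G))$, and I bring in the unitary implementations $U_\alpha\in B(\mathcal{H})\ovot L^\infty(\G)$ and $U_{\alpha_{\mathbb{H}}}\in B(\mathcal{H})\ovot L^\infty(\mathbb{H})$.

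\emph{Existence and injectivity} of $\iota$ is the heart of the matter and the step I expect to be hardest. Writing $\alpha=U_\alpha(\,\cdot\,\otimes 1)U_\alpha^*$ and unravelling \eqref{d} on $\mathcal{H}\otimes L^2(\G)\otimes L^2(\mathbb{H})$ yields, for all $m\in M$, the identity
\begin{equation*}
V_{\G,\mathbb{H},23}\,U_{\alpha,12}(m\otimes 1\otimes 1)U_{\alpha,12}^*\,V_{\G,\mathbb{H},23}^* = U_{\alpha,12}\,U_{\alpha_{\mathbb{H}},13}(m\otimes 1\otimes 1)U_{\alpha_{\mathbb{H}},13}^*\,U_{\alpha,12}^*,
\end{equation*}
where $V_{\G,\mathbb{H}}=(\check{\gamma}\otimes\id)(V_{\mathbb{H}})$; this pins down $U_{\alpha_{\mathbb{H}}}$ in terms of $U_\alpha$ and $V_{\G,\mathbb{H}}$ modulo $M'$. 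Combining this with Lemma \ref{well-defined} applied to the cocycle $U_\alpha$, I would check that inside $M\rtimes_\alpha\G$ the families $\alpha(M)$ and $1\otimes\check{\gamma}(L^\infty(\check{\mathbb{H}}))$ satisfy exactly the relations that $\alpha_{\mathbb{H}}(M)$ and $1\otimes L^\infty(\check{\mathbb{H}})$ obey in $M\ovot B(L^2(\mathbb{H}))$. This identifies $[\alpha(M)(1\otimes\check{\gamma}(L^\infty(\check{\mathbb{H}})))]''$ with $M\rtimes_{\alpha_{\mathbb{H}}}\mathbb{H}$ and produces $\iota$; the cleanest way to make the identification rigorous is to invoke the structure theory of Vaes-closed quantum subgroups \cite{DKSS12} together with the uniqueness of the unitary implementation. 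Injectivity then follows because $\iota$ is faithful on the generators ($\alpha$ is injective and $\check{\gamma}$ is faithful) and intertwines the canonical dual operator-valued weights of the two crossed products, which are faithful.

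With $\iota$ in hand, both equivariance claims reduce to computations on generators, legitimate since all maps involved are normal. For the \emph{$\check{\G}$-equivariance} I would evaluate $(\iota\otimes\id)\circ\beta$ and $(\id\otimes\check{\Delta}_r^{\G})\circ\iota$ on the two generators. On $\alpha_{\mathbb{H}}(m)$ both sides give $\alpha(m)\otimes 1$, since $\alpha_{\mathbb{H}}(m)$ and $\alpha(m)$ are fixed by $\id\otimes\check{\Delta}_r^{\mathbb{H}}$ and $\id\otimes\check{\Delta}_r^{\G}$ respectively. On $1\otimes y$, using $\check{\Delta}_r^{\mathbb{H}}|_{L^\infty(\check{\mathbb{H}})}=\check{\Delta}^{\mathbb{H}}$ and $\iota(1\otimes\,\cdot\,)=1\otimes\check{\gamma}(\,\cdot\,)$, the left-hand side becomes $1\otimes(\check{\gamma}\otimes\check{\gamma})\check{\Delta}^{\mathbb{H}}(y)$ while the right-hand side becomes $1\otimes\check{\Delta}^{\G}(\check{\gamma}(y))$; these agree precisely by the defining identity $(\check{\gamma}\otimes\check{\gamma})\circ\check{\Delta}^{\mathbb{H}}=\check{\Delta}^{\G}\circ\check{\gamma}$ of a Vaes-closed quantum subgroup.

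For the \emph{$\mathbb{H}$-equivariance} I would compare $(\iota\otimes\id)\circ(\id\otimes\Delta_r^{\mathbb{H}})$ with $(\id\otimes\Delta_r^{\G,\mathbb{H}})\circ\iota$. On $\alpha_{\mathbb{H}}(m)$, using $\Delta_r^{\mathbb{H}}|_{L^\infty(\mathbb{H})}=\Delta^{\mathbb{H}}$, coassociativity of $\alpha_{\mathbb{H}}$, $\iota\circ\alpha_{\mathbb{H}}=\alpha$ and $\Delta_r^{\G,\mathbb{H}}|_{L^\infty(\G)}=\Delta^{\G,\mathbb{H}}$, both sides reduce to $(\alpha\otimes\id)\alpha_{\mathbb{H}}(m)$, the two evaluations being matched exactly by \eqref{d}. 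On $1\otimes y$ I would use $V_{\G,\mathbb{H}}=(\check{\gamma}\otimes\id)(V_{\mathbb{H}})$ to compute $\Delta_r^{\G,\mathbb{H}}(\check{\gamma}(y))=(\check{\gamma}\otimes\id)(\Delta_r^{\mathbb{H}}(y))$, together with the observation that $\Delta_r^{\mathbb{H}}$ maps $L^\infty(\check{\mathbb{H}})$ into $L^\infty(\check{\mathbb{H}})\ovot L^\infty(\mathbb{H})$ (since it is conjugation by $V_{\mathbb{H}}\in L^\infty(\check{\mathbb{H}})\ovot L^\infty(\mathbb{H})$); slicing the last leg and using normality then gives $(\iota\otimes\id)(1\otimes\Delta_r^{\mathbb{H}}(y))=1\otimes(\check{\gamma}\otimes\id)(\Delta_r^{\mathbb{H}}(y))$, which matches the other side. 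Normality of both composites finally upgrades these generator identities to the full equivariance statements.
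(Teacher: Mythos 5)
There is a genuine gap at the step you yourself flag as the heart of the matter: you never actually construct $\iota$. Observing that $\alpha(M)$ and $1\otimes\check{\gamma}(L^\infty(\check{\mathbb{H}}))$ ``satisfy exactly the relations'' that $\alpha_{\mathbb{H}}(M)$ and $1\otimes L^\infty(\check{\mathbb{H}})$ satisfy does not produce a normal $*$-homomorphism between the von Neumann algebras they generate: von Neumann algebras are not presented by generators and relations, and a map prescribed on a generating family need not extend to the generated von Neumann algebra, let alone normally --- this is exactly the difficulty the proposition has to overcome, and invoking ``the structure theory of Vaes-closed quantum subgroups together with uniqueness of the unitary implementation'' is a gesture, not an argument. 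Your injectivity argument has the same problem twice over: a $*$-homomorphism that is injective on a generating set need not be injective on the von Neumann algebra it generates, and the claim that $\iota$ ``intertwines the canonical dual operator-valued weights'' presupposes the very map whose existence is in question. Finally, the identity you extract from \eqref{d} only determines $U_{\alpha_{\mathbb{H}}}$ modulo $M'$, which is too weak to run the needed computations; what is required (and what the paper quotes from \cite[Section 6.5]{DC09}) is the exact unitary identity $(\id\otimes\Delta^{\G,\mathbb{H}})(U_\alpha)=U_{\alpha,12}U_{\alpha_{\mathbb{H}},13}$.

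The paper resolves existence with an explicit global formula. By Lemma \ref{well-defined}, one has $U_{\alpha_{\mathbb{H}}}^*(M\rtimes_{\alpha_{\mathbb{H}}}\mathbb{H})U_{\alpha_{\mathbb{H}}}\subseteq B(\mathcal{H})\ovot L^\infty(\check{\mathbb{H}})$, so it makes sense to define
$$\iota(z)=U_\alpha\,(\id\otimes\check{\gamma})\bigl(U_{\alpha_{\mathbb{H}}}^*\,z\,U_{\alpha_{\mathbb{H}}}\bigr)\,U_\alpha^*,$$
which is manifestly a normal, isometric $*$-homomorphism, being a composition of unitary conjugations with the normal faithful map $\id\otimes\check{\gamma}$; normality and injectivity are free, not something to be checked afterwards. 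The exact identity $(\id\otimes\Delta^{\G,\mathbb{H}})(U_\alpha)=U_{\alpha,12}U_{\alpha_{\mathbb{H}},13}$ is then used to compute that this map sends $1\otimes x$ to $1\otimes\check{\gamma}(x)$ and $\alpha_{\mathbb{H}}(m)$ to $\alpha(m)$, which also shows the image lies in $M\rtimes_\alpha\G$; the case where $M$ is not standardly represented is handled by transporting along a $*$-isomorphism onto a standard form. Your uniqueness observation and both equivariance computations on generators are fine (the paper treats the latter as routine), but without an actual construction of $\iota$ the proposition is not proved.
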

\begin{proof}
Assume first that $M$ is represented on a Hilbert space $\mathcal{H}$ in standard position. Let $U_\alpha \in B(\mathcal{H})\ovot L^\infty(\G)$ be the unitary implementation of $\alpha$ and let $U_{\alpha_{\mathbb{H}}}$ be the unitary implementation of $\alpha_{\mathbb{H}}$. In \cite[Section 6.5]{DC09} (see in particular equation $(6.2)$ in \cite{DC09}; note however that we need to make the switch from left to right coactions), it is shown that 
$$V_{\G, \mathbb{H},23}U_{\alpha,12}V_{\G, \mathbb{H},23}^*=(\id \otimes \Delta_{\G, \mathbb{H}})(U_\alpha)= U_{\alpha,12}U_{\alpha_{\mathbb{H}},13}$$
or equivalently
$$U_{\alpha_{\mathbb{H}},13}V_{\G, \mathbb{H},23} = U_{\alpha,12}^* V_{\G,\mathbb{H},23} U_{\alpha,12}.$$
Note that by Lemma \ref{well-defined}, we know that
$$U_{\alpha_{\mathbb{H}}}^*(M\rtimes_{\alpha_{\mathbb{H}}} \mathbb{H})U_{\alpha_{\mathbb{H}}} \subseteq B(\mathcal{H})\ovot L^\infty(\check{\mathbb{H}})$$
so it makes sense to define
$$\iota: M \rtimes_{\alpha_{\mathbb{H}}} \mathbb{H}\to B(\mathcal{H}\otimes L^2(\G)): z \mapsto U_\alpha(\id \otimes \check{\gamma})(U_{\alpha_{\mathbb{H}}}^* z U_{\alpha_{\mathbb{H}}})U_\alpha^*.$$

Note that 
$$V_{\mathbb{H},23} U_{\alpha_{\mathbb{H}},12} V_{\mathbb{H},23}^*=(\id \otimes \Delta_{\mathbb{H}})(U_{\alpha_{\mathbb{H}}}) = U_{\alpha_{\mathbb{H}},12} U_{\alpha_{\mathbb{H}},13}$$
so that 
$$U_{\alpha_{\mathbb{H}},12}^* V_{\mathbb{H},23} U_{\alpha_{\mathbb{H}},12}= U_{\alpha_{\mathbb{H}},13} V_{\mathbb{H}, 23}.$$

Therefore, if $\omega \in B(L^2(\mathbb{H}))_*$, we have
\begin{align*} (\id \otimes \check{\gamma})(U_{\alpha_{\mathbb{H}}}^*(1\otimes(\id \otimes \omega)(V_{\mathbb{H}}))U_{\alpha_{\mathbb{H}}})&= (\id \otimes \check{\gamma})(\id \otimes \id \otimes \omega)(U_{\alpha_{\mathbb{H}},12}^* V_{\mathbb{H}, 23} U_{\alpha_{\mathbb{H}},12})\\
&= (\id \otimes \check{\gamma})(\id \otimes \id \otimes \omega)(U_{\alpha_{\mathbb{H}},13}V_{\mathbb{H},23})\\
&=(\id \otimes \id \otimes \omega)(U_{\alpha_{\mathbb{H}},13}V_{\G,\mathbb{H},23})\\
&= (\id \otimes \id \otimes \omega)(U_{\alpha,12}^* V_{\G,\mathbb{H},23} U_{\alpha,12})\\
&= U_{\alpha}^*(1\otimes (\id \otimes \omega)(V_{\mathbb{\G, \mathbb{H}}}))U_\alpha\\
&= U_\alpha^*(1\otimes \check{\gamma}((\id \otimes \omega)(V_{\mathbb{H}})))U_\alpha
\end{align*}
so that 
$$(\id \otimes \check{\gamma})(U_{\alpha_{\mathbb{H}}}^*(1\otimes x))U_{\alpha_{\mathbb{H}}}) = U_\alpha^*(1\otimes \check{\gamma}(x))U_\alpha, \quad x \in L^\infty(\check{\mathbb{H}}).$$
Hence, we find for $x \in L^\infty(\check{\mathbb{H}})$ that \begin{align*}
    \iota(1\otimes x) =  U_\alpha(\id \otimes \check{\gamma})(U_{\alpha_{\mathbb{H}}}^* (1\otimes x)U_{\alpha_{\mathbb{H}}})U_\alpha^* = U_\alpha U_\alpha^*(1\otimes \check{\gamma}(x))U_\alpha U_\alpha^* = 1\otimes \check{\gamma}(x)
\end{align*}
and if $x\in M$, we have
$\iota(\alpha_{\mathbb{H}}(x)) = U_\alpha(x\otimes 1)U_\alpha^*= \alpha(x)$
and thus a map $\iota: M \rtimes_{\alpha_{\mathbb{H}}} \mathbb{H}\to M\rtimes_\alpha \G$ as in the proposition exists.

In the general case, choose a $*$-isomorphism $\pi: M \to N$ where $N$ is standardly represented on a Hilbert space $\mathcal{H}$. Define the coaction
$$\beta:= (\pi\otimes \id)\circ\alpha\circ\pi^{-1}: N \to N \ovot L^\infty(\G).$$
Then note that also
$$\beta_{\mathbb{H}}= (\pi\otimes \id)\circ \alpha_{\mathbb{H}}\circ \pi^{-1}: N \to N\ovot L^\infty(\mathbb{H})$$
since we have
\begin{align*}(\beta\otimes \id)\circ \beta_{\mathbb{H}}=(\id \otimes \Delta^{\G, \mathbb{H}})\circ \beta &= (\id \otimes \Delta^{\G, \mathbb{H}})\circ (\pi\otimes \id)\circ \alpha \circ \pi^{-1}\\
&=(\pi\otimes \id \otimes \id)\circ (\id \otimes \Delta^{\G, \mathbb{H}}) \circ \alpha \circ \pi^{-1}\\
&=(\pi\otimes \id \otimes \id)\circ (\alpha\otimes \id)\circ \alpha_{\mathbb{H}}\circ \pi^{-1}\\
&= (\beta \otimes \id) \circ (\pi\otimes \id) \circ \alpha_{\mathbb{H}}\circ \pi^{-1}.\end{align*}

Therefore, the composition
$$M\rtimes_{\alpha_{\mathbb{H}}} \mathbb{H} \cong N \rtimes_{\beta_{\mathbb{H}}} \mathbb{H}\hookrightarrow N\rtimes_\beta \G \cong M \rtimes_\alpha \G$$
is the desired map. Checking the $\check{\G}$ and $\mathbb{H}$-equivariance of $\iota$ is straightforwardly verified on the canonical generators of the crossed product.
\end{proof}

 Given a $\mathbb{H}$-dynamical von Neumann algebra $(N, \beta)$, we define the cotensor product 
   $$N   \overset{\mathbb{H}}{\square} L^\infty(\G):= \{z \in N \ovot L^\infty(\G): (\beta \otimes \id)(z) = (\id \otimes \Delta^{\mathbb{H}, \G})(z)\}.$$

By a routine argument involving slice maps and making use of equation \eqref{b}, we see that $$(\id \otimes \Delta^\G)(N \overset{\mathbb{H}}{\square} L^\infty(\G))\subseteq (N \overset{\mathbb{H}}{\square} L^\infty(\G))\ovot L^\infty(\G)$$ so that $N \overset{\mathbb{H}}{\square} L^\infty(\G)$ becomes a $\G$-dynamical von Neumann algebra. In particular, starting from a $\G$-dynamical von Neumann algebra $(M, \alpha)$, we have the associated $\mathbb{H}$-dynamical von Neumann algebra $(M, \alpha_{\mathbb{H}})$ and we can form the $\G$-dynamical von Neumann algebra $$M \overset{\mathbb{H}}{\square} L^\infty(\G)= \{z \in M \ovot L^\infty(\G): (\alpha_{\mathbb{H}}\otimes \id)(z) = (\id \otimes \Delta^{\mathbb{H}, \G})(z)\}.$$

If $m\in M$, equation \eqref{e} shows that $\alpha(m) \in M \overset{\mathbb{H}}{\square} L^\infty(\G)$ and therefore the map
 $\alpha: M \to M \overset{\mathbb{H}}{\square} L^\infty(\G)$ is a $\G$-equivariant isometric $*$-homomorphism. 
 
 Note that 
$\mathbb{C} \overset{\mathbb{H}}{\square} L^\infty(\G)\cong \{x \in L^\infty(\G): \Delta^{\mathbb{H}, \G}(x) = 1 \otimes x\} = L^\infty(\mathbb{H}\backslash \mathbb{G})$.
In view of this, the following result and its proof are a generalisation of \cite[Theorem 3.2]{Cr17b}.

\begin{Theorem}\label{main application}
   Let $(M, \alpha)$ be a $\G$-dynamial von Neumann algebra. The following statements are equivalent:
   \begin{enumerate}
       \item $(M, \alpha)$ is $\G$-injective.
       \item $(M, \alpha_{\mathbb{H}})$ is $\mathbb{H}$-injective and there exists a $\G$-equivariant unital completely positive map $P: M \overset{\mathbb{H}}{\square} L^\infty(\G) \to M$ such that $P\circ \alpha = \id_M$.
   \end{enumerate}
\end{Theorem}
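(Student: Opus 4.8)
The plan is to reduce the whole statement to the amenability criterion of Proposition \ref{injective}: a $\G$-dynamical von Neumann algebra is $\G$-injective exactly when its underlying algebra is injective and the action is $\G$-amenable, and similarly over $\mathbb{H}$. Since both $\G$-injectivity of $(M,\alpha)$ and $\mathbb{H}$-injectivity of $(M,\alpha_{\mathbb{H}})$ force $M$ to be injective, the real content is the equivalence between $\G$-amenability of $(M,\alpha)$ and the conjunction of $\mathbb{H}$-amenability of $(M,\alpha_{\mathbb{H}})$ with the existence of the splitting $P$. Two structural facts will be used repeatedly: by \eqref{e}, the map $\alpha\colon M\to M\overset{\mathbb{H}}{\square}L^\infty(\G)$ is a $\G$-equivariant unital complete isometry, and by \eqref{d} the same map $\alpha$ is $\mathbb{H}$-equivariant from $(M,\alpha_{\mathbb{H}})$ to $(M\ovot L^\infty(\G),\id\otimes\Delta^{\G,\mathbb{H}})$.

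For $(2)\Rightarrow(1)$ I would build the averaging map by hand. Given an $\mathbb{H}$-equivariant complete contraction $E_{\mathbb{H}}\colon(M\ovot L^\infty(\mathbb{H}),\id\otimes\Delta^{\mathbb{H}})\to(M,\alpha_{\mathbb{H}})$ with $E_{\mathbb{H}}\circ\alpha_{\mathbb{H}}=\id_M$, set
$$\Theta:=(E_{\mathbb{H}}\otimes\id)\circ(\id_M\otimes\Delta^{\mathbb{H},\G})\colon M\ovot L^\infty(\G)\longrightarrow M\ovot L^\infty(\G).$$
Using the coaction property $(\Delta^{\mathbb{H}}\otimes\id)\Delta^{\mathbb{H},\G}=(\id\otimes\Delta^{\mathbb{H},\G})\Delta^{\mathbb{H},\G}$ of the left $\mathbb{H}$-action together with the $\mathbb{H}$-equivariance of $E_{\mathbb{H}}$ one checks that $\Theta$ lands in $M\overset{\mathbb{H}}{\square}L^\infty(\G)$; using the commutation of $\Delta^{\mathbb{H},\G}$ with the right coaction $\Delta^\G$ (of the same nature as \eqref{a}--\eqref{c}) one checks that $\Theta$ is $\G$-equivariant; and by \eqref{e} together with $E_{\mathbb{H}}\alpha_{\mathbb{H}}=\id_M$ one gets $\Theta\circ\alpha=\alpha$. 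Then $E_\G:=P\circ\Theta$ is a $\G$-equivariant unital completely positive map with $E_\G\circ\alpha=P\circ\alpha=\id_M$, so $(M,\alpha)$ is $\G$-amenable, and Proposition \ref{injective} closes the implication.

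For $(1)\Rightarrow(2)$ the splitting $P$ is immediate: applying $\G$-injectivity of $(M,\alpha)$ to the $\G$-equivariant complete isometry $\alpha\colon M\to M\overset{\mathbb{H}}{\square}L^\infty(\G)$ and to $\id_M$ produces a $\G$-equivariant unital completely positive $P$ with $P\circ\alpha=\id_M$. The remaining point is that $(M,\alpha_{\mathbb{H}})$ is $\mathbb{H}$-injective. Here I would present $(M,\alpha_{\mathbb{H}})$ as an $\mathbb{H}$-equivariant retract of $(M\ovot L^\infty(\G),\id\otimes\Delta^{\G,\mathbb{H}})$: the inclusion is $\alpha$ (which is $\mathbb{H}$-equivariant by \eqref{d}), and the retraction is the $\G$-averaging map $E_\G$ coming from $\G$-amenability, which is automatically $\mathbb{H}$-equivariant --- by functoriality of restriction, using that \eqref{a} identifies the restriction of $(M\ovot L^\infty(\G),\id\otimes\Delta^\G)$ to $\mathbb{H}$ with $(M\ovot L^\infty(\G),\id\otimes\Delta^{\G,\mathbb{H}})$ --- and still satisfies $E_\G\circ\alpha=\id_M$. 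Since $\mathbb{H}$-amenability is inherited by $\mathbb{H}$-equivariant retracts and is stable under tensoring the (trivially acted) factor $M$ on the left, it then suffices to know that the right-translation action $(L^\infty(\G),\Delta^{\G,\mathbb{H}})$ is $\mathbb{H}$-amenable; combined with injectivity of $M$, Proposition \ref{injective} yields $\mathbb{H}$-injectivity of $(M,\alpha_{\mathbb{H}})$.

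The main obstacle is exactly this last lemma: the $\mathbb{H}$-amenability of the translation action $\Delta^{\G,\mathbb{H}}$. In contrast to the inner action $\Delta_r^{\G,\mathbb{H}}$ on $B(L^2(\G))$, which is $\mathbb{H}$-amenable only when $\mathbb{H}$ is amenable, the restriction to $L^\infty(\G)$ is implemented by the $\mathbb{H}$-corepresentation $V_{\G,\mathbb{H}}\in L^\infty(\check{\G})\ovot L^\infty(\mathbb{H})$ living \emph{outside} $L^\infty(\G)\ovot L^\infty(\mathbb{H})$, so it behaves like a free and proper action. I would prove its amenability by a straightening argument: conjugation by $V_{\G,\mathbb{H}}$ trivialises $\Delta^{\G,\mathbb{H}}$ on the relevant leg, because $(\id\otimes\Delta^{\mathbb{H}})(V_{\G,\mathbb{H}})=V_{\G,\mathbb{H},12}V_{\G,\mathbb{H},13}$, after which slicing by an \emph{arbitrary} state on $L^\infty(\mathbb{H})$ furnishes an $\mathbb{H}$-equivariant conditional expectation onto $\Delta^{\G,\mathbb{H}}(L^\infty(\G))$; the delicate checks are that the sliced map returns into $L^\infty(\G)$ and is genuinely equivariant. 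Alternatively, one can compute the crossed product $L^\infty(\G)\rtimes_{\Delta^{\G,\mathbb{H}}}\mathbb{H}$ through imprimitivity and feed it into the duality underlying Corollary \ref{main result1} (as in Proposition \ref{crossed}). Once this lemma is secured, both implications close.
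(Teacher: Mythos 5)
Your direction $(2)\Rightarrow(1)$ is correct and is essentially the paper's own argument: your map $\Theta$ is exactly the map $Q=(E_{\mathbb{H}}\otimes\id)\circ(\id\otimes\Delta^{\mathbb{H},\G})$ used there, and the composite $P\circ\Theta$ together with Proposition \ref{injective} closes that implication in the same way. Likewise, in $(1)\Rightarrow(2)$ the existence of the splitting $P$ is fine, for exactly the reason you give.

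The gap is in your argument for $\mathbb{H}$-injectivity of $(M,\alpha_{\mathbb{H}})$, and it is fatal: the lemma you reduce everything to --- that the translation action $\Delta^{\G,\mathbb{H}}\colon L^\infty(\G)\curvearrowleft\mathbb{H}$ is always $\mathbb{H}$-amenable --- is false. Take $\mathbb{H}=\G$ (every locally compact quantum group is a Vaes-closed quantum subgroup of itself, via $\check{\gamma}=\id$); then the lemma asserts that $(L^\infty(\G),\Delta)$ is $\G$-amenable for every $\G$. By the equivalence $(a)\iff(b)$ of Theorem \ref{amenable}, applied with $\G$ and $\check{\G}$ interchanged, this holds if and only if $\check{\G}$ is inner amenable, and the Remark following Theorem \ref{amenable} records explicit counterexamples: if $G$ is a classical locally compact group that is not inner amenable (such groups exist by \cite{FSW17} and \cite{CZ17}), then the translation action $\check{\Delta}\colon \mathscr{R}(G)\curvearrowleft\check{G}$ is \emph{not} amenable, even though $\check{G}$ is an amenable quantum group. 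So the classical straightening intuition (which does prove amenability of translation actions of classical groups via a measurable trivialisation) genuinely breaks down in the quantum setting: the ``delicate checks'' you flagged --- that slicing $V_{\G,\mathbb{H}}^{*}\,z\,V_{\G,\mathbb{H}}$ lands back in $L^\infty(\G)$ and is equivariant --- cannot be completed, and no alternative argument can repair this, since the statement itself fails. A further warning sign: your reduction (equivariant retract plus the amplification as in Lemma \ref{amplification}) would, if the lemma held, show that the restriction of \emph{any} $\G$-amenable action to $\mathbb{H}$ is $\mathbb{H}$-amenable with no injectivity hypothesis on $M$, which the paper explicitly notes (Remark after Theorem \ref{main application}) is not known. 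The paper's actual route for $(1)\Rightarrow(2)$ avoids the translation action altogether: it passes to crossed products, using Corollary \ref{main result1} ($\G$-injectivity of $(M,\alpha)$ is equivalent to $\check{\G}$-injectivity of $M\rtimes_\alpha\G$) together with the $\check{\G}$-equivariant embedding $\iota\colon M\rtimes_{\alpha_{\mathbb{H}}}\mathbb{H}\to M\rtimes_\alpha\G$ of Proposition \ref{crossed}; one extends $\iota$ to a $\check{\G}$-equivariant unital completely positive map $E\colon M\ovot B(L^2(\mathbb{H}))\to M\rtimes_\alpha\G$, and multiplicative-domain arguments show that $E$ maps $M\ovot L^\infty(\mathbb{H})$ into $\alpha(M)$, so that $F=\alpha^{-1}\circ E$ is the desired $\mathbb{H}$-equivariant expectation witnessing $\mathbb{H}$-amenability of $(M,\alpha_{\mathbb{H}})$.
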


\begin{proof}$(1)\implies (2)$ Consider the $\check{\G}$-action
$$\beta= (\id \otimes \id \otimes \check{\gamma})\circ (\id \otimes \check{\Delta}_r^{\mathbb{H}}): M \ovot B(L^2(\mathbb{H}))\to (M \ovot B(L^2(\mathbb{H})))\ovot L^\infty(\check{\G}).$$

By $\check{\G}$-injectivity of $M\rtimes_\alpha\G$ and $\check{\G}$-equivariance of the canonical map $\iota$, there is a $\check{\G}$-equivariant unital completely positive map
$$E: (M\bar{\otimes} B(L^2(\mathbb{H})), \beta)\to (M\rtimes_\alpha \G, \id \otimes \check{\Delta}_r^\G).$$
extending $\iota$. Then, we have for $z\in M\bar{\otimes} B(L^2(\mathbb{H}))$ that
\begin{align*}
    (E\otimes \id)(\id \otimes \Delta_r^{\mathbb{H}})(z)&= (E\otimes \id)(V_{\mathbb{H},23}(z\otimes 1)V_{\mathbb{H},23}^*)= V_{\mathbb{\G, \mathbb{H}},23}(E(z)\otimes 1) V_{\mathbb{\G, \mathbb{H}},23}^*=(\id \otimes \Delta_r^{\G, \mathbb{H}})E(z) 
\end{align*}
where we used that $V_{\mathbb{H},23}$ is in the multiplicative domain of $E\otimes \id$ and that $E(1\otimes y) = 1\otimes \check{\gamma}(y)$ for $y\in L^\infty(\check{\mathbb{H}})$. Further, if $z\in M \bar{\otimes}L^\infty(\mathbb{H})$, we find
\begin{align*}
    (\id \otimes \check{\Delta}_r^{\mathbb{G}})E(z)&= (E\otimes \id)\beta(z)= (E\otimes \id)(z\otimes 1) = E(z)\otimes 1
\end{align*}
where we used that $\check{V}_{\mathbb{H}} \in L^\infty(\mathbb{H})'\bar{\otimes} L^\infty(\check{\mathbb{H}})$. Therefore, $E(z)\in \alpha(M)$. Thus, it makes sense to define
$$F:= \alpha^{-1}\circ E: M\bar{\otimes}L^\infty(\mathbb{H})\to M.$$
Note that if $a\in M$, we have
$$F(\alpha_{\mathbb{H}}(a))=\alpha^{-1}(\iota(\alpha_\mathbb{H}(a))) = \alpha^{-1}(\alpha(a)) = a.$$
Further, we check that $F$ is $\mathbb{H}$-equivariant:
\begin{align*}
    (\alpha\otimes \id)(F\otimes \id)(\id \otimes \Delta_{\mathbb{H}})&= (E\otimes \id)(\id \otimes \Delta_\mathbb{H})= (\id \otimes \Delta^{\G, \mathbb{H}})\circ E= (\id \otimes \Delta^{\G, \mathbb{H}})\circ \alpha\circ F= (\alpha\otimes \id)\circ \alpha_{\mathbb{H}} \circ F
\end{align*}
so by injectivity of $\alpha\otimes \id$ we conclude that
$(F\otimes \id)\circ (\id \otimes \Delta_{\mathbb{H}}) = \alpha_{\mathbb{H}}\circ F.$
Thus, $F$ is a $\mathbb{H}$-equivariant completely positive map satisfying $F\circ \alpha_{\mathbb{H}}= \id_M$, and thus $\alpha_{\mathbb{H}}:M\curvearrowleft\mathbb{H}$ is an amenable action. We conclude that $(M, \alpha_\mathbb{H})$ is $\mathbb{H}$-injective.

$(2)\implies (1)$ Let $E_{\mathbb{H}}: (M \ovot L^\infty(\mathbb{H}), \id \otimes \Delta^{\mathbb{H}}) \to (M, \alpha_{\mathbb{H}})$ be a unital completely positive map such that $E_{\mathbb{H}}\circ \alpha_{\mathbb{H}}= \id_M$. Given $z \in M \ovot L^\infty(\G)$, we have
\begin{align*}
    (\alpha_{\mathbb{H}}\otimes \id)(E_{\mathbb{H}}\otimes \id)(\id \otimes \Delta^{\mathbb{H}, \G})(z)&= (E_{\mathbb{H}} \otimes \id \otimes \id)(\id \otimes \Delta^{\mathbb{H}}\otimes \id)(\id \otimes \Delta^{\mathbb{H}, \G})(z)\\
    &= (E_{\mathbb{H}}\otimes \id \otimes \id)(\id\otimes \id \otimes \Delta^{\mathbb{H}, \G})(\id \otimes \Delta^{\mathbb{H}, \G})(z)\\
    &=(\id \otimes \Delta^{\mathbb{H}, \G})(E_{\mathbb{H}}\otimes \id)(\id \otimes \Delta^{\mathbb{H}, \G})(z).
\end{align*}
Therefore, we can define the map
$$Q: M \ovot L^\infty(\G) \to M \overset{\mathbb{H}}{\square} L^\infty(\G): z \mapsto (E_{\mathbb{H}}\otimes \id)(\id \otimes \Delta^{\mathbb{H}, \G})(z)$$
which is $\G$-equivariant since by \eqref{b} we have
\begin{align*}
    (Q \otimes \id)(\id \otimes \Delta^\G)(z)&= (E_{\mathbb{H}}\otimes \id \otimes \id)(\id \otimes \Delta^{\mathbb{H}, \G}\otimes \id)(\id \otimes \Delta^\G)(z)\\
    &= (E_{\mathbb{H}}\otimes \id\otimes \id)(\id \otimes \id \otimes \Delta^\G)(\id \otimes \Delta^{\mathbb{H}, \G})(z)=(\id \otimes \Delta^\G) Q(z).
    \end{align*}
 We define the $\G$-equivariant map
    $$E_\G:= P\circ Q: M \ovot L^\infty(\G) \to M.$$
    If $m \in M$, we have 
    $$E_\G(\alpha(m)) = P(Q(\alpha(m)))= P(E_{\mathbb{H}}\otimes \id)(\alpha_{\mathbb{H}}\otimes \id)\alpha(m)=  P(\alpha(m)) = m$$
    so the map $E_\G$ witnesses the amenability of the action $\alpha: M \curvearrowleft \G$.
\end{proof}

  \begin{Rem}
      It is in general not clear that if $(M, \alpha)$ is $\mathbb{G}$-amenable and $\mathbb{H}$ is a closed quantum subgroup of $\G$, then $(M, \alpha_{\mathbb{H}})$ is $\mathbb{H}$-amenable. However, Theorem \ref{main application} shows that this is true if the von Neumann algebra $M$ is injective. For classical locally compact groups, it was established in \cite[Proposition 2.6]{AD82} that amenability of actions passes to closed subgroups, but under an additional countability assumption on the locally compact group. In the case that the von Neumann algebra involved is injective, we can therefore dispose of these countability assumptions. 
  \end{Rem}  

    The situation for inner amenability of actions is much clearer, as the following proposition shows.

\begin{Prop}\label{subgroups}
Let $(M, \alpha)$ be a $\G$-dynamical von Neumann algebra and $\mathbb{H}$ a Vaes-closed quantum subgroup of $\G$.  If $(M, \alpha)$ is $\G$-inner amenable, then $(M, \alpha_{\mathbb{H}})$ is inner $\mathbb{H}$-amenable.
\end{Prop}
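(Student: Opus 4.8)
The plan is to transport the averaging map witnessing inner $\G$-amenability along the canonical embedding $\iota$ of Proposition \ref{crossed}. By definition of inner $\G$-amenability (composing the conditional expectation onto $\alpha(M)$ with $\alpha^{-1}$), we are handed a $\G$-equivariant unital completely positive map $E\colon (M\rtimes_\alpha\G,\id\otimes\Delta_r^\G)\to(M,\alpha)$ with $E\circ\alpha=\id_M$. Using the normal isometric $*$-homomorphism $\iota\colon M\rtimes_{\alpha_{\mathbb{H}}}\mathbb{H}\to M\rtimes_\alpha\G$ from Proposition \ref{crossed}, I set $E_{\mathbb{H}}:=E\circ\iota$, which is again unital completely positive. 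Since $\iota(\alpha_{\mathbb{H}}(m))=\alpha(m)$, it satisfies $E_{\mathbb{H}}\circ\alpha_{\mathbb{H}}=E\circ\alpha=\id_M$, so the only remaining point is the $\mathbb{H}$-equivariance of $E_{\mathbb{H}}$ from $(M\rtimes_{\alpha_{\mathbb{H}}}\mathbb{H},\id\otimes\Delta_r^{\mathbb{H}})$ to $(M,\alpha_{\mathbb{H}})$.

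The second bullet of Proposition \ref{crossed} already records that $\iota$ is $\mathbb{H}$-equivariant, i.e. $(\iota\otimes\id)(\id\otimes\Delta_r^{\mathbb{H}})=(\id\otimes\Delta_r^{\G,\mathbb{H}})\circ\iota$. Hence it suffices to show that $E$ is itself $\mathbb{H}$-equivariant for the actions $\id\otimes\Delta_r^{\G,\mathbb{H}}$ on the source and $\alpha_{\mathbb{H}}$ on the target, namely
$$(E\otimes\id)(\id\otimes\Delta_r^{\G,\mathbb{H}})=\alpha_{\mathbb{H}}\circ E.$$
Granting this, the chain $(E_{\mathbb{H}}\otimes\id)(\id\otimes\Delta_r^{\mathbb{H}})=(E\otimes\id)(\id\otimes\Delta_r^{\G,\mathbb{H}})\iota=\alpha_{\mathbb{H}}\circ E\circ\iota=\alpha_{\mathbb{H}}\circ E_{\mathbb{H}}$ gives the equivariance of $E_{\mathbb{H}}$, and inner $\mathbb{H}$-amenability of $(M,\alpha_{\mathbb{H}})$ follows.

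The heart of the matter is thus the principle that a $\G$-equivariant map is automatically $\mathbb{H}$-equivariant for the restricted actions. First I would observe that the restriction, in the sense of \eqref{d}, of the $\G$-action $\id\otimes\Delta_r^\G$ on $M\rtimes_\alpha\G$ to $\mathbb{H}$ is exactly $\id\otimes\Delta_r^{\G,\mathbb{H}}$: by uniqueness of the restricted coaction it is enough to verify the defining identity of \eqref{d}, which after stripping the $M$-leg is precisely \eqref{a}, using that $\Delta_r^{\G,\mathbb{H}}$ restricts to $\Delta^{\G,\mathbb{H}}$ on $L^\infty(\G)$. With this identification, I apply $E\otimes\id\otimes\id$ to the restriction identity $(\id\otimes\id\otimes\Delta^{\G,\mathbb{H}})(\id\otimes\Delta_r^\G)=(\id\otimes\Delta_r^\G\otimes\id)(\id\otimes\Delta_r^{\G,\mathbb{H}})$, push $E$ through $\id\otimes\Delta^{\G,\mathbb{H}}$ (these act on disjoint Fubini legs, hence commute), and invoke the $\G$-equivariance $(E\otimes\id)(\id\otimes\Delta_r^\G)=\alpha\circ E$ on both sides to obtain
$$(\alpha\otimes\id)\,\alpha_{\mathbb{H}}\circ E=(\alpha\otimes\id)(E\otimes\id)(\id\otimes\Delta_r^{\G,\mathbb{H}}).$$
Since $\alpha$ is an injective normal $*$-homomorphism, $\alpha\otimes\id$ is injective, and cancelling it yields the desired identity. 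The main obstacle is purely bookkeeping in this last step: correctly identifying the restricted action through \eqref{a} and keeping track of the tensor legs. No normality of $E$ is needed, as all the comultiplications involved are normal, so the Fubini-leg commutations are legitimate.
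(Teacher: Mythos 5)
Your proof is correct and follows essentially the same route as the paper: both take $E_{\mathbb{H}}:=E_\G\circ\iota$ with $\iota$ from Proposition \ref{crossed}, and both establish $\mathbb{H}$-equivariance via the identities \eqref{a} and \eqref{d}, the $\G$-equivariance of $E_\G$, the $\mathbb{H}$-equivariance of $\iota$, the Fubini-leg commutation of $E_\G$ with the normal map $\Delta^{\G,\mathbb{H}}$, and cancellation of the injective map $\alpha\otimes\id$. The only difference is organizational — you isolate the intermediate claim $(E\otimes\id)(\id\otimes\Delta_r^{\G,\mathbb{H}})=\alpha_{\mathbb{H}}\circ E$ before composing with $\iota$, whereas the paper runs the same identities in a single chained computation applied to $\iota(z)$.
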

\begin{proof}
   Choose a $\G$-equivariant unital completely positive map
    $$E_\G: (M\rtimes_\alpha \G, \id \otimes \Delta_r^\G)\to (M, \alpha), \quad E_{\mathbb{G}}\circ \alpha = \id_M.$$
    Define then the unital completely positive map 
    $$E_{\mathbb{H}}:= E_\G \circ \iota: M\rtimes_{\alpha_{\mathbb{H}}}\mathbb{H}\to M.$$
If $z\in M\rtimes_{\alpha_{\mathbb{H}}} \mathbb{H}$, we find (using equations \eqref{a}, \eqref{d} and $\mathbb{H}$-equivariance of $\iota$) that
\begin{align*}
    (\alpha \otimes \id)\alpha_{\mathbb{H}}(E_{\mathbb{H}}(z))&= (\id \otimes \Delta^{\mathbb{G}, \mathbb{H}})\alpha(E_\G(\iota(z)))\\
    &= (\id \otimes \Delta^{\G, \mathbb{H}})(E_\G \otimes \id)(\id \otimes \Delta_r^\G)\iota(z)\\
    &= (E_\G \otimes \id \otimes \id)(\id \otimes \id \otimes \Delta^{\G, \mathbb{H}})(\id \otimes \Delta_r^{\G})\iota(z)\\
    &= (E_\G\otimes \id \otimes \id)(\id \otimes \Delta_r^\G\otimes \id)(\id \otimes \Delta_r^{\G, \mathbb{H}})\iota(z)\\
    &=(E_\G \otimes \id \otimes \id)(\id \otimes \Delta_r^\G \otimes \id)(\iota \otimes \id)(\id \otimes \Delta_r^{\mathbb{H}})(z)\\
    &=(\alpha \otimes \id)(E_{\mathbb{H}}\otimes \id)(\id \otimes \Delta_r^{\mathbb{H}})(z)
\end{align*}
so $E_{\mathbb{H}}: (M\rtimes_{\alpha_{\mathbb{H}}}\mathbb{H}, \id \otimes \Delta_r^{\mathbb{H}})\to (M, \alpha_{\mathbb{H}})$ is $\mathbb{H}$-equivariant. Trivially, $E_{\mathbb{H}}\circ \alpha_{\mathbb{H}}= \id_M$ so $(M, \alpha_{\mathbb{H}})$ is inner $\mathbb{H}$-amenable.\end{proof}

\begin{Rem}
    If we apply Proposition \ref{subgroups} to the trivial action $\mathbb{C}\curvearrowleft\G$, we recover the fact that inner amenability of a locally compact quantum group passes to Vaes-closed quantum subgroups \cite[Proposition 3.10]{Cr19}. 
\end{Rem}

\textbf{Acknowledgments:} The research of the author was supported by Fonds voor Wetenschappelijk Onderzoek (Flanders), via an FWO Aspirant fellowship, grant 1162522N. The author would like to thank Kenny De Commer for valuable discussions throughout this entire project, as well as Benjamin Anderson-Sackaney for pointing out to us the mistake in the proof of \cite[Theorem 4.7]{Moa18} and Jacek Krajczok for a useful discussion on the approximation property of a locally compact quantum group $\G$. The author would also like to express gratitude to Stefaan Vaes for noting that an additional co-amenability assumption in Corollary \ref{coro} could be removed, and to the anonymous referee for their useful comments.


\begin{thebibliography}{00}
\bibitem[AD79]{AD79} C. Anantharaman-Delaroche, Action moyennable d’un groupe localement compact sur une algèbre de von Neumann, \emph{Math. Scand.} \textbf{45} (1979), 289--304.
\bibitem[AD82]{AD82} C. Anantharaman-Delaroche, Action moyennable d'un groupe localement compact sur une alg\`ebre de von {N}eumann. {II}, \emph{Math. Scand.} \textbf{50} (1982), 251--268.
\bibitem[AK24]{AK24} B. Anderson-Sackaney and F. Khosravi, Topological Boundaries of Representations and Coideals, \emph{Adv. Math.} \textbf{425} (2024), 109830.
\bibitem[BC21]{BC21} A. Bearden and J. Crann, A weak expectation property for operator modules, injectivity and amenable actions, \emph{Internat. J. Math.} \textbf{32} (2021), Paper No. 2150005, 33.
\bibitem[BEW20]{BEW20} A. Buss, S. Echterhoff and R. Willett, Injectivity, crossed products, and amenable group actions, {$K$}-theory in algebra, analysis and topology, Contemp. Math., Amer. Math. Soc., [Providence], RI  \textbf{749} (2020), 105--137.
\bibitem[BS93]{BS93} S. Baaj and G. Skandalis, Unitaires multiplicatifs et dualit\'{e} pour les produits crois\'{e}s de C$^*$-alg\`{e}bres, \emph{Ann. scient. Ec. Norm. Sup.}, 4e s\'{e}rie \textbf{26} (1993), 425--488.
\bibitem[BT03]{BT03} E. Bédos and L. Tuset, Amenability and co-amenability for locally compact quantum groups, \emph{Internat. J. Math.} \textbf{14} (2003), 865--884.
\bibitem[Cr17a]{Cr17a} J. Crann, Amenability and covariant injectivity of locally compact quantum groups {II}, \emph{Canad. J. Math.} \textbf{69} (2017), 1064--1086.
\bibitem[Cr17b]{Cr17b} J. Crann, On hereditary properties of quantum group amenability, \emph{Proc. Amer. Math. Soc.} \textbf{145} (2017), 627--635.
\bibitem[Cr19]{Cr19} J. Crann, Inner amenability and approximation properties of locally compact quantum groups, \emph{Indiana Univ. Math. J.} \textbf{68} (2019), 1721--1766.
\bibitem[Cr21]{Cr21} J. Crann, Finite presentation, the local lifting property, and local approximation properties of operator modules, \emph{J. Funct. Anal.}, \textbf{281} (2021), Paper No. 109070, 54.
\bibitem[CN16]{CN16} J. Crann and M. Neufang, Amenability and covariant injectivity of locally compact quantum groups, \emph{Trans. Amer. Math. Soc.} \textbf{368} (2016), 495--513.
\bibitem[CZ17]{CZ17} J. Crann and Z. Tanko, On the operator homology of the {F}ourier algebra and its {$cb$}-multiplier completion, \emph{J. Funct. Anal.} \textbf{273} (2017), 2521--2545.
\bibitem[Daw22]{Daw22} M. Daws, Which elements live in the image of the canonical map $X \otimes_\mathcal{F} M \to B(M_*, X)$?, URL: \url{https://mathoverflow.net/q/430008} (version: 2022-09-08)
\bibitem[DC09]{DC09} K. De Commer, Galois coactions for algebraic and locally compact quantum groups, PhD thesis (2009), \url{https://lirias.kuleuven.be/retrieve/75632}.
\bibitem[DC17]{DC17} K. De Commer, Actions of compact quantum groups, Topological quantum groups, \textit{Banach Center Publ.} \textbf{111} (2017), Polish Acad. Sci. Inst. Math., Warsaw, 33--100.
\bibitem[DD24]{DD24} K. De Commer and J. De Ro, Approximation properties for dynamical W$^*$-correspondences, \emph{Adv. Math.} \textbf{458} (2024), 109958.
\bibitem[DH24]{DH24} J. De Ro and L. Hataishi, Actions of compact and discrete quantum groups on operator systems, \emph{Int. Math. Res. Not.} \textbf{15} (2024), 11190–11220.
\bibitem[DKSS12]{DKSS12} M. Daws, P. Kasprzak, A. Skalski and P. M. Sołtan, Closed quantum subgroups of locally compact quantum groups, \emph{Adv. Math.} \textbf{231} (2012), 3473--3501.
\bibitem[DKV24]{DKV24} M. Daws, J. Krajczok and C. Voigt, The approximation property for locally compact quantum groups, \textit{Adv. Math.} \textbf{438} (2024), 109452.
\bibitem[ER00]{ER00} E. Effros and Z.-J. Ruan, Operator Spaces, London Math. Soc. Monographs, New Series \textbf{23}, Oxford University Press, New York (2000).
\bibitem[FSW17]{FSW17} B. Forrest, N. Spronk, and M. Wiersma, Existence of tracial states on reduced group C*-algebras, arXiv:1706.05354v2.
\bibitem[Ham82]{Ham82} M. Hamana, Tensor products for monotone complete {$C\sp{\ast}$}-algebras. {I}, {II}, \emph{Japan. J. Math.} (N.S.) \textbf{8} (1982), 259--283, 285--295
\bibitem[Ham85]{Ham85} M. Hamana, Injective envelopes of {$C^\ast$}-dynamical systems, \emph{Tohoku Math. J.} (2) \textbf{37} (1985), 463--487
\bibitem[Ham91]{Ham91} M. Hamana, Injective envelopes of dynamical systems, \emph{Toyama Math. J.} \textbf{34} (2011), 23--86.
\bibitem[HHN22]{HHN22} E. Habbestad, L. Hataishi and S. Neshveyev, Noncommutative {P}oisson boundaries and {F}urstenberg-{H}amana boundaries of {D}rinfeld doubles, \emph{J. Math. Pures Appl.} (9) \textbf{159} (2022), 313--347.
\bibitem[HNR11]{HNR11} Z. Hu, M. Neufang and Z.J. Ruan, Completely bounded multipliers over locally compact quantum groups, \emph{Proc. Lond. Math. Soc.} (3) \textbf{103} (2011), 1--39.
\bibitem[HY22]{HY22} L. Hataishi and M. Yamashita, Injectivity for algebras and categories with quantum symmetry, arXiv:2205.06663.
\bibitem[Izu02]{Izu02} M. Izumi, Non-commutative {P}oisson boundaries and compact quantum group actions, \emph{Adv. Math.} \textbf{169} (2002), 1--57.
 \bibitem[KKSV22]{KKSV22} M. Kalantar, P. Kasprzak, A. Skalski, and R. Vergnioux, Noncommutative {F}urstenberg boundary, \emph{Anal. PDE} \textbf{15} (2022), 795--842. 
\bibitem[KNR13]{KNR13} M. Kalantar, M. Neufang and Z.J. Ruan, Poisson boundaries over locally compact quantum groups, \emph{Internat. J. Math.} \textbf{24} (2013).
\bibitem[Kus01]{Kus01} J. Kustermans, Locally compact quantum groups in the universal setting, \emph{Int. J. Math.} \textbf{12} (3) (2001), 289--338.
\bibitem[KV00]{KV00} J. Kustermans and S. Vaes, Locally compact quantum groups, \emph{Ann. Sci. \'{E}c. Norm. Sup\'{e}r.} \textbf{33} (6) (2000), 837--934.
\bibitem[KV03]{KV03} J. Kustermans and S. Vaes, Locally compact quantum groups in the von Neumann algebraic setting, \emph{Math. Scand.} \textbf{92} (1) (2003), 68--92.
\bibitem[Moa18]{Moa18} M. Moakhar, Amenable actions of discrete quantum groups on von Neumann algebras. arXiv: 1803.04828
\bibitem[MP22]{MP22} A. McKee and R. Pourshahami, Amenable and inner amenable actions and approximation properties for crossed products by locally compact groups, \emph{Canad. Math. Bull.} \textbf{65} (2) R. (2022), 381--399.
\bibitem[Ru09]{Ru09} V. Runde, Uniform continuity over locally compact quantum groups, \emph{J. Lond. Math. Soc.} (2) \textbf{80} (2009), 55--71.
\bibitem[Va01]{Va01} S. Vaes, The unitary implementation of a locally compact quantum group action, \emph{J. Funct. Anal.} \textbf{180} (2001), 426--480.
\bibitem[VV03]{VV03} S. Vaes and A. Van Daele, The Heisenberg commutation relations, commuting squares and the Haar measure on locally compact quantum groups, In: \emph{Operator algebras and mathematical physics: conference proceedings, Constanta (Romania), July 2-7, 2001}, Editors J.-M. Combes, J. Cuntz, G.A. Elliott, G. Nenciu, H. Siedentop and S. Stratila, \emph{Theta Foundation}, Bucarest (2003), 379--400.
\end{thebibliography}
\end{document}